\def\misajour{01/06//2009}    %

\documentclass{svmultA}




\usepackage[applemac]{inputenc}


\usepackage{makeidx}         
\usepackage{graphicx}        
\usepackage{multicol}        
\usepackage[bottom]{footmisc}
\usepackage{url}
%

\usepackage{amsmath}

\usepackage{latexsym}
\usepackage{amssymb}
\usepackage[english]{babel} 
\usepackage[hypertex,
colorlinks,bookmarks=true]{hyperref}


\makeindex             

\def\omegahat{\widehat{\omega}}
\def\lambdahat{\widehat{\lambda}}
\def\what{\widehat{w}}

\def\rmH{{\mathrm {H}}}
\def\rmL{{\mathrm {L}}}

\def\cprime{$'$}

\def\virgule{\raise 2pt \hbox{,}}

\newcommand{\m}[1]{${#1}$}

\newcommand{\M}[1]{$${#1}$$}

\def\bC{\mathbf{C}}

\def\bQ{\mathbf{Q}}

\def\bR{\mathbf{R}}

\def\bZ{\mathbf{Z}}
 
 \def\uu{\underline{u}}
 \def\uv{\underline{v}}
 
 \def\Qbar{\overline{\bQ}}

\def\bP{\mathbf{P}}

\newtheorem{thm}[equation] {Theorem} 
\newtheorem{lemme}[equation]{Lemma}  

\newtheorem{cor}[equation]{Corollary}

\newtheorem{conj}[equation]{Conjecture}

\newtheorem{prop}[equation]{Proposition}

\newtheorem{Pb}[equation]{Open Problem}


\sloppy

\newcommand{\ve}{\varepsilon}

\renewcommand{\le}{\leqslant}
\renewcommand{\ge}{\geqslant}

\newcommand{\N}{\mathbb{N}}
\newcommand{\Z}{\mathbb{Z}}

\newcommand{\R}{\mathbb{R}}
\newcommand{\C}{\mathbb{C}}

\newcommand{\cA}{\mathcal{A}}

\sloppy


\begin{document}

\title*{Report on some recent advances in Diophantine approximation}
 \titlerunning{Recent advances in Diophantine approximation} 
 
\author{Michel Waldschmidt
}
 \authorrunning{Michel Waldschmidt}
 
\institute{Université Pierre et Marie Curie--Paris 6, 
UMR 7586 IMJ Institut de Mathématiques de Jussieu, 
175 rue du Chevaleret,
Paris, F--75013 France
 \texttt{\href{mailto:miw@math.jussieu.fr}{miw@math.jussieu.fr}}
\\
 \texttt{\href{http://www.math.jussieu.fr/~miw/}%
{http{:}//www.math.jussieu.fr/$\sim$miw/}   \hfill   update: \misajour}
}

\maketitle


\tableofcontents


\medskip

\goodbreak

\section*{Acknowledgement}

Many thanks to 
Boris Adamczewski, 
Victor Beresnevich, 
Yann Bugeaud, 
Maurice Dodson, 
Michel Laurent,
Claude Levesque,
Damien Roy
for their enlightening remarks and their comments on preliminary versions of this paper.  
Sections 2.7 and 3.6, as well as part of section 1.2, have been written by Victor Beresnevich and Maurice Dodson. 
I wish also to thank Dinakar Ramakrishnan who completed the editorial work in a very efficient way.

\section*{Abstract}\label{S:Abstract}

A basic question of Diophantine approximation, which is the first issue we discuss, is to investigate the rational approximations to a single real number. Next, we consider the algebraic or polynomial approximations to a single complex number, as well as the  simultaneous approximation of powers of a real number by rational numbers with the same denominator. Finally we study generalisations of these questions to higher dimensions. 
Several recent advances have been made by B.~Adamczewski, Y.~Bugeaud,  S.~Fischler,  M.~Laurent, T.~Rivoal, D.~Roy and  W.M.~Schmidt, among others. We review some of these works.

\section*{Introduction}\label{S:Introduction}

The  history of Diophantine approximation is quite old: it includes, for instance, early estimates for $\pi$, computations related to astronomical studies,  the theory of continued fraction expansion. 

There are positive results: {\it any irrational number has good rational approximations}. One of the simplest  tools to see this  is Dirichlet's box principle, other methods are continued fraction expansions, Farey series, geometry of numbers (Minkowski's Theorem). There are negative results: {\it no number has too good (and at the same time too frequent) approximations}. Some results are valid for all (irrational) numbers, others only for restricted classes of numbers, like the class of algebraic numbers.  There is a metric theory (\S\ref{SS:RAAN-AURAMR}) which deals with almost all numbers in the sense of the Lebesgue measure.

   One main goal of the theory of Diophantine approximation is to compare, on the one hand, the distance between a given real number $\xi$ and a rational number $p/q$,  with, on the other hand, the denominator $q$ of the approximant. An approximation is considered as {\it sharp} if $|\xi-p/q|$ is {\it small} compared to $q$. 
This subject is  a classical one, there are a number of surveys, including those by S.~Lang
\cite{MR33:1286,MR44:6615,MR50:12914,MR90k:11032}. Further general references are \cite{MR0087708,MR22:2598,MR1451873,MR0214551,0421.10019,MR99a:11088b,MR1727177,MR2136100}.

   The  works by J.~Liouville, A.~Thue, C.L.~Siegel, F.J.~Dyson, A.O.~Gel'fond, Th.~Schneider and K.F.~Roth essentially solve the question for the case where $\xi$ is algebraic. In a different direction, a lot of results are known which are valid for almost all numbers, after Khintchine and others. 
      
   Several questions arise in this context. One may consider either {\it asymptotic} or else {\it uniform} approximation. The former only asks for infinitely many solutions to some inequality, the latter requires that occurrences of such approximations are not too lacunary. As a consequence, one introduces in \S~\ref{SS:RAAN-AURA} two exponents for the rational approximation to a single real number $\xi$, namely $\omega(\xi)$ for the asymptotic approximation  and $\omegahat(\xi)$ for the uniform approximation; a lower bound for such an exponent means that sharp rational approximations exist, an upper bound means that  too sharp estimates do no exist. To indicate with a ``hat'' the exponents of {\it uniform} Diophantine approximation is a convention which originates in \cite{MR2149403}. 
       
In this context a new exponent, $\nu(\xi)$, inspired by the pioneer work of R.~Apéry in 1976 on $\zeta(3)$,  has been introduced recently by  T.~Rivoal and S.~Fischler (\S~\ref{SS:RAAN-EFR}).

\medskip
   
After rational approximation to a single real number, several other questions naturally arise. One may investigate, for instance, the  {\it algebraic} approximation properties of real or complex numbers, replacing the set of rational numbers by the set of real or complex algebraic numbers. Again, in this context, there are two main points of view: either one considers the distance $|\xi-\alpha|$   between the given real or complex number $\xi$ and algebraic numbers $\alpha$, or else one investigates the smallness of $|P(\xi)|$ for $P$ a non--zero polynomial with integer coefficients. In both cases there are two parameters,  the degree and the height of the algebraic number or of the polynomial,  in place of a single one in degree $1$, namely $q$ for $\xi-p/q$ or for $P(X)=qX-p$. Algebraic and polynomial approximations are related: on the one hand (Lemma~\ref{L:EasyPart}), the irreducible polynomial of an algebraic number close to $\xi$ takes a small value at $\xi$, while on the other hand (Lemma~\ref{L:XimoinsGamma}),  a polynomial taking a small value at $\xi$ is likely to have a root close to $\xi$.  However these connections are not completely understood yet: for instance, while it is easy (by means of Dirichlet's box principle -- Lemma~\ref{L:BoxPrinciplePolynomes}) to prove the existence of polynomials $P$ having small values $|P(\xi)|$ at the point $\xi$, it is not so easy to show that sharp algebraic approximations exist (cf. Wirsing's Conjecture~\ref{C:Wirsing}).

The occurrence of two parameters raises more questions to investigate: often one starts by taking the degree fixed and looking at the behaviour of the approximations as the height tends to infinity; one might do the opposite, fix the height and let the degree tend to infinity:
this is is the starting point of a classification of complex numbers  by V.G.~Sprind{\v{z}}uk (see  \cite{MR2136100} Chap.~8 p.~166). Another option is to let the sum of the degree and the logarithm of the height tend to infinity: this is the choice of S.~Lang who introduced the notion of {\it size} \cite{MR35:5397} Chap.~V in connection with questions of algebraic independence \cite{MR33:1286}. 

The  approximation properties of a real or complex number $\xi$ by polynomials of degree at most $n$ (\S~\ref{SS:PASASN-PACN})  will give rise to two exponents, $\omega_n(\xi)$ and $\omegahat_n(\xi)$, which coincide with $\omega(\xi)$    and $\omegahat(\xi)$ for $n=1$. 
Gel'fond's  Transcendence Criterion  (\S~\ref{SS:PASASN-GTC})
is related to an upper bound for the asymptotic exponent of polynomial approximation $\omegahat_n(\xi)$ valid for all transcendental 
numbers. 

The  approximation properties of a real or complex number $\xi$ by algebraic numbers of degree at most $n$ (\S~\ref{SS:PASASN-AACN})  will give rise to two further exponents, an asymptotic one $\omega^*_n(\xi)$ and and a uniform one $\omegahat^*_n(\xi)$, which  also coincide with $\omega(\xi)$    and $\omegahat(\xi)$ for $n=1$. 

For a {\emph {real}} number $\xi$, there is a third way of extending the investigation of rational approximation, which is the study of simultaneous approximation by rational numbers of the $n$-tuple $(\xi,\xi^2,\ldots,\xi^n)$. Once more there is  an asymptotic exponent $\omega'_n(\xi)$ and a uniform one $\omegahat'_n(\xi)$, again they coincide with $\omega(\xi)$    and $\omegahat(\xi)$ for $n=1$. These two new exponents suffice to describe the approximation properties of a real number by algebraic numbers of degree at most $n$ (the star exponents), thanks to a  transference result (Proposition~\ref{P:Polar}) based on the theory of convex bodies of Mahler. 

Several relations among these exponents are known, but a number of problems remain open: for instance, for fixed  $n\ge 1$ the spectrum of the sextuple 
$$
(\omega_n(\xi),\; \omegahat_n(\xi),\; 
\omega'_n(\xi),\; \omegahat'_n(\xi),\; 
\omega^*_n(\xi),\; \omegahat^*_n(\xi))\in\bR^6
$$  
is far from being completely understood. As we shall see for almost all real numbers $\xi$ and for all algebraic numbers of degree $>n$ 
$$
\omega_n(\xi)=\omegahat_n(\xi)=\omega^*_n(\xi)=\omegahat^*_n(\xi)=
n\quad\text{and}\quad
\omega'_n(\xi)=\omegahat'_n(\xi)=1/n.
$$
A review of the known properties of these six exponents is given in  \cite{MR2149403}. We shall repeat some of these facts here (beware that our notation for $\omega'_n$ and  $\omegahat'_n$   are the $\lambda_n$ and $\lambdahat_n$ from  \cite{MR2149403}, which are the inverse of their  $w'_n$ and  $\what'_n$,  also used by Y.~Bugeaud in \S~3.6 of \cite{MR2136100}  --  here we wish to be compatible with the notation of M.~Laurent in  \cite{LaurentMichel0703146} for the higher dimensional case).

Among a number of new results in this direction, we shall describe those achieved by D.~Roy and others. In particular a number of   results for the case $n=2$ have been recently obtained. 

\medskip

Simultaneous approximations to a tuple of numbers is the next step. The Subspace Theorem of W.M.~Schmidt   (Theorem 1B in Chapter VI of \cite{0421.10019}), which is a powerful generalisation of the Thue-Siegel-Roth Theorem,  deals with the approximation of algebraic numbers. It says that tuples of algebraic numbers do behave like almost all tuples. It is  a  fundamental   tool with  a number of deep consequences     \cite{BiluBourbaki}.   Another point of view is the metrical one, dealing with almost all numbers. Further questions arise which should concern all tuples and these considerations raise many open problems.  We shall report on recent work by M.~Laurent who introduces a collection of new exponents for describing the situation. 

\medskip
 
We discuss these questions mainly  in the case of real  numbers. Most results (so far as they are not related to the density of $\bQ$ into $\bR$)  are valid also for complex numbers with some modifications (however see \cite{BugeaudEvertse2007}), as well and for non--Archimedean valuations, especially  $p$-adic numbers but also (to some extent) for function fields. We make no attempt to be exhaustive, there are a number of related issues which we do not study in detail here -- sometimes we just give a selection of recent references. Among them are

\noindent
$\bullet$ Questions of inhomogeneous approximation.

\noindent
$\bullet$ Littlewood's Conjecture (\cite{MR2136100}, Chap.~10). 

\noindent
$\bullet$ 
Measures of irrationality, transcendence, linear independence, algebraic independence of specific numbers. Effective refinements of Liouville's Theorem are studied in 
\cite{1115.11034}
(see also Chap.~2 of  \cite{MR2136100}). 

\noindent
$\bullet$ Results related to the complexity of the development of irrational algebraic numbers, automata, normality of classical constants (including irrational algebraic numbers) -- the Bourbaki lecture by Yu.~Bilu \cite{BiluBourbaki} on Schmidt's subspace Theorem and its applications describes recent results on this topic and gives further references.

\noindent
$\bullet$  Connection between Diophantine conditions and dynamical systems.

\noindent
$\bullet$ 
Diophantine questions related to Diophantine geometry. Earlier surveys dealing extensively with this issue have been written by S.~Lang. A recent reference on this topic is \cite{MR2465098}.


%

\noindent
$\bullet$ In a preliminary version of the present paper, the list of topics which were not covered included also refined results on Hausdorff dimension, Diophantine approximation of dependent quantities and approximation on manifolds,  hyperbolic manifolds, also the powerful approach initiated by Dani and Margulis, developed by many specialists. We quote here V.V.~Beresnevich,  V.I.~Bernik, H.~Dickinson, M.M.~Dodson, D.Y.~Kleinbock,  {\`E}.I.~Kovalevskaya, G.A.~Margulis, F.~Paulin, S.L.~Velani.  However, thanks to the contribution of Victor Beresnevich and Maurice Dodson who kindly agreed to write sections  
\ref{SS:OverviewMetricalPolynomials}
and 
\ref{SS:FurtherMetricalResults} (and also to contribute by adding remarks, especially  on
 \S~\ref{SS:RAAN-AURAMR}), these topics are no more excluded. 



We discuss only briefly a few questions of algebraic independence; there is much more to say on this matter, especially in connection with Diophantine approximation. Although we quote some recent transcendence criteria as well as criteria for algebraic independence, we do not cover fully the topic (and do not mention criteria for linear independence).

\goodbreak

\section{Rational approximation to a  real number}
\label{S:RAAN}

\subsection{Asymptotic and uniform rational approximation: $\omega$ and $\omegahat$}
\label{SS:RAAN-AURA}

Since $\bQ$ is dense in $\bR$, {\it for any \m{\xi\in\bR} and any  \m{\epsilon>0} there exists \m{b/a\in\bQ} for which 
\M{ 
\left| \xi-\frac{b}{a} \right|< \epsilon.
}
}
Let us write the conclusion
\M{ 
 |a\xi- b|< \epsilon a.
}   
     It is easy to improve this estimate: 
  {\it  Let \m{a\in\bZ_{>0}} and let \m{b} be the nearest integer to \m{a\xi}. Then
  \M{
 | a\xi- b|\le 1/2.
}   
}

A  much stronger estimate is due to Dirichlet (1842) and follows from the box or pigeonhole principle -- see for instance  \cite{MR33:1286},  \cite{MR0214551},  \cite{0421.10019} Chap.~I Th.\ 1A:  

\begin{thm}[Uniform Dirichlet's Theorem] \label{T:DirichletUniforme}
For  {each}  real number  \m{N>1},  there exist  \m{q}  and \m{p} in $\bZ$  with \m{1\le q<N}  such that
$$
 | q\xi-p |< \frac{1}{N}\cdotp
$$
 \end{thm}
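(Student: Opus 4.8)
The plan is to prove this by the box (pigeonhole) principle, which is the classical route and the one best adapted to producing the \emph{strict} inequalities in the statement. Write $\{x\}$ for the fractional part of $x\in\bR$ and set $Q=\lceil N\rceil$, an integer with $Q\ge 2$ and $Q-1<N\le Q$; consequently $1/Q\le 1/N$ and $Q-1<N$. One may assume $\xi\notin\bQ$: if $\xi=a/b$ with $b<N$ take $(q,p)=(b,a)$, and the remaining rational cases are an elementary direct check carrying no real content.

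First I would fix $Q+1$ test points in $[0,1]$, namely $0$, the fractional parts $\{\xi\},\{2\xi\},\dots,\{(Q-1)\xi\}$, and $1$, and partition $[0,1]$ into the $Q$ intervals $J_j=[(j-1)/Q,\,j/Q)$ for $1\le j\le Q-1$ together with $J_Q=[(Q-1)/Q,\,1]$, each of length $1/Q$. Since $Q+1>Q$, two of the test points lie in a common $J_j$. One then distinguishes cases according to which two collide: if two of the $\{k\xi\}$ collide, subtracting them yields $q\xi-p$ with $q$ a difference of indices, so $1\le q\le Q-2$; if $0$ collides with some $\{k\xi\}$, then $\{k\xi\}$ is within $1/Q$ of $0$; and if $1$ collides with some $\{k\xi\}$, then $\{k\xi\}$ is within $1/Q$ of $1$. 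In every case one reads off $q\in\{1,\dots,Q-1\}$ and $p\in\bZ$ with $|q\xi-p|$ at most the common box-length $1/Q$, and since $Q-1<N\le Q$ this already gives $1\le q<N$ and $|q\xi-p|\le 1/N$.

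The one point to watch — and essentially the only subtlety — is upgrading this to the \emph{strict} bound $|q\xi-p|<1/N$ while keeping $q$ strictly below $N$. Strictness is exactly what the half-open choice of the $J_j$ buys: two distinct test points in a common $J_j$ differ by strictly less than $1/Q\le 1/N$, using that an irrational $\{k\xi\}$ never coincides with an endpoint $j/Q$ nor with $1$. And placing $0$ in the leftmost box and $1$ in the rightmost is what confines the index $q$ to $\{1,\dots,Q-1\}$ rather than letting it reach $Q$; if one used $\{Q\xi\}$ as the extra test point instead of $1$, the unwanted possibility $q=Q$ would reappear. Beyond these two observations the checking of the cases is entirely routine, and the genuine content is just the count $Q+1>Q$.

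Alternatively one can invoke Minkowski's convex-body theorem for the symmetric convex set $\{(x,y)\in\bR^2:\ |y\xi-x|\le 1/N,\ |y|\le N\}$, which has area $4$ and hence contains a nonzero point $(p,q)$ of the unit-determinant lattice $\bZ^2$; here $q\ne 0$ because $|p|\le 1/N<1$ forces $p=0$, and after replacing $(p,q)$ by $(-p,-q)$ if needed one gets $1\le q\le N$. This delivers the non-strict form in one stroke, but squeezing out the strict inequalities still requires the same small extra argument, so I would present the box-principle proof as the main one.
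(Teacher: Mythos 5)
Your pigeonhole argument is the classical route the paper itself points to (it gives no proof of Theorem~\ref{T:DirichletUniforme}, only citations), and for \emph{irrational} $\xi$ your proof is complete and correct: with $Q=\lceil N\rceil$ the count $Q+1>Q$ forces a collision, the half-open boxes plus the fact that $\{k\xi\}$ is never an endpoint give the strict bound $<1/Q\le 1/N$, and placing $0$ and $1$ as the extra test points keeps $q\le Q-1<N$.

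The genuine gap is your dismissal of the rational case as ``an elementary direct check carrying no real content.'' The case you leave over is $\xi=a/b$ in lowest terms with $b\ge N$, and when $b=N$ (so $N$ is an integer) the statement as printed is in fact \emph{false}: for $\xi=1/2$ and $N=2$ one must take $q=1$, and $|q\xi-p|=|1/2-p|\ge 1/2$ for every $p\in\bZ$; more generally $|q\,a/N-p|=|qa-pN|/N\ge 1/N$ for all $1\le q<N$. So no direct check can close that case -- the statement needs either the non-strict inequality $|q\xi-p|\le 1/N$ (this is exactly the form of Theorem~1A in Schmidt's notes, which the paper cites) or the hypothesis that $\xi$ is irrational, which is all the paper uses afterwards (Corollary~\ref{Cor:DirichletAsymptotique}, Lemma~\ref{L:Khinchin}). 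Moreover, for rational $\xi$ with denominator $b>N$ the conclusion is true but your strictness argument does not apply as written, since it invokes the irrationality of $\{k\xi\}$ to exclude endpoint coincidences: e.g.\ for $\xi=5/6$, $N=Q=3$, the test point $1$ and $\{2\xi\}=2/3$ collide at distance exactly $1/3=1/N$, so you would need either to argue that a good colliding pair can always be selected or to give a separate argument (say via best approximations). I recommend stating and proving the theorem for irrational $\xi$ (or with $\le 1/N$), where your argument stands as is; the Minkowski alternative you sketch has the same limitation, as you note.
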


 As an immediate consequence (\cite{0421.10019} Chap.~I Cor.~1B):
 
 \begin{cor}[Asymptotic Dirichlet's Theorem] \label{Cor:DirichletAsymptotique}
 If  $\xi\in\bR$ is irrational, then there exist infinitely many  \m{p/q\in\bQ}  for which 
$$
\left| \xi- \frac{p}{q}\right|<\frac{1}{ q^2}\cdotp
$$
  
 \end{cor}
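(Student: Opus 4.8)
The plan is to deduce Corollary~\ref{Cor:DirichletAsymptotique} from the Uniform Dirichlet's Theorem (Theorem~\ref{T:DirichletUniforme}), the only substantial ingredient being the hypothesis that $\xi$ is irrational. First I would record what one gets from a single application of Theorem~\ref{T:DirichletUniforme}: given a real number $N>1$, there are integers $q,p$ with $1\le q<N$ and $|q\xi-p|<1/N$; dividing by $q$ and using $1\le q<N$ yields
\[
\left|\xi-\frac{p}{q}\right|<\frac{1}{qN}\le\frac{1}{q^2}
\qquad\text{and also}\qquad
\left|\xi-\frac{p}{q}\right|<\frac{1}{N}\cdotp
\]
So every parameter $N>1$ produces a rational $p/q$ satisfying the desired inequality $|\xi-p/q|<1/q^2$, with the extra information that $|\xi-p/q|$ can be forced to be as small as we like by taking $N$ large.

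The second step converts this into infinitely many solutions, and this is exactly where irrationality enters. Suppose, for contradiction, that only finitely many rationals $p_1/q_1,\dots,p_k/q_k$ satisfy $|\xi-p_i/q_i|<1/q_i^2$. Since $\xi$ is irrational, $\xi\neq p_i/q_i$ for each $i$, so $\epsilon:=\min_{1\le i\le k}|\xi-p_i/q_i|$ is a strictly positive real number (and $\epsilon<1$, so $1/\epsilon>1$). Applying the first step with any $N>1/\epsilon$ gives a rational $p/q$ with $|\xi-p/q|<1/q^2$ and simultaneously $|\xi-p/q|<1/N<\epsilon$. The last inequality forces $p/q$ to differ, as a rational number, from every $p_i/q_i$, since those all satisfy $|\xi-p_i/q_i|\ge\epsilon$. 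This contradicts the assumed finiteness, proving the corollary.

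As for the main obstacle: there is essentially no analytic difficulty here, since Theorem~\ref{T:DirichletUniforme} already does the work of producing approximations. The one point that requires care is the passage from ``one solution per $N$'' to ``infinitely many solutions'': a naive argument comparing $|q\xi-p|$ for the pairs $(p,q)$ returned by successive applications can fail, because that quantity is not an invariant of the fraction $p/q$ (replacing $(p,q)$ by $(mp,mq)$ multiplies it by $m$). Working instead with the genuine invariant $|\xi-p/q|$, as above, sidesteps this; and it is precisely at this point that irrationality is indispensable, since for rational $\xi$ only finitely many $p/q$ satisfy $|\xi-p/q|<1/q^2$.
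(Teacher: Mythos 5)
Your argument is correct and follows exactly the route the paper intends: Corollary~\ref{Cor:DirichletAsymptotique} is stated there as an immediate consequence of Theorem~\ref{T:DirichletUniforme}, deduced by dividing $|q\xi-p|<1/N$ by $q$ and using irrationality to force infinitely many distinct fractions as $N\to\infty$. Your observation that one should work with the invariant $\left|\xi-p/q\right|$ rather than $|q\xi-p|$ is a sound way to make the ``infinitely many'' step airtight.
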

 
Our first concern is to investigate whether it is 
 possible to improve the uniform estimate of Theorem~\ref{T:DirichletUniforme} as well as the asymptotic estimate of Corollary~\ref{Cor:DirichletAsymptotique}.
 
We start with
 Corollary~\ref{Cor:DirichletAsymptotique}. Using either the theory of continued fractions or Farey series, one deduces a slightly stronger statement, proved by A.~Hurwitz in 1891 (Theorem 2F in Chap.~I of \cite{0421.10019}): 
 
 \begin{thm}[Hurwitz]\label{T:Hurwitz}
  For any real irrational number $\xi$,  there exist infinitely many  \m{p/q\in\bQ}  such that  
 $$
\left| \xi- \frac{p}{q}\right|< \frac{1}{\sqrt{5} q^2}\cdotp
$$
\end{thm}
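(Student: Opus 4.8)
The plan is to read the inequality off the continued fraction expansion of $\xi$, which is the classical route to the constant $\sqrt5$. Write $\xi=[a_0;a_1,a_2,\dots]$ with convergents $p_n/q_n$ and complete quotients $\alpha_{n+1}=[a_{n+1};a_{n+2},\dots]>1$, so that $\xi=(p_n\alpha_{n+1}+p_{n-1})/(q_n\alpha_{n+1}+q_{n-1})$. Using $p_nq_{n-1}-p_{n-1}q_n=\pm1$ this yields the exact formula
$$
\left|\xi-\frac{p_n}{q_n}\right|=\frac{1}{q_n^2\,\beta_n},\qquad \beta_n:=\alpha_{n+1}+\frac{q_{n-1}}{q_n}>0 .
$$
Since $q_n\to\infty$ and consecutive convergents are distinct, the theorem is thus equivalent to the assertion that $\beta_n>\sqrt5$ for infinitely many $n$.

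Next I would prove the sharper combinatorial statement that among any three consecutive convergents at least one satisfies the Hurwitz bound, i.e.\ that $\beta_{n-1},\beta_n,\beta_{n+1}$ cannot all be $\le\sqrt5$. Put $r_n:=q_{n-1}/q_n\in(0,1)$. Two elementary identities are needed (valid for $n\ge2$): from $q_{n+1}=a_{n+1}q_n+q_{n-1}$ one gets $1/r_{n+1}=a_{n+1}+r_n$, and from $\alpha_n=a_n+1/\alpha_{n+1}$, after using $q_{n-2}/q_{n-1}=1/r_n-a_n$, one gets $\beta_{n-1}=1/r_n+1/\alpha_{n+1}$ (while $\beta_n=\alpha_{n+1}+r_n$). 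Now suppose $\beta_{n-1}\le\sqrt5$ and $\beta_n\le\sqrt5$. From $\beta_n\le\sqrt5$ we get $\alpha_{n+1}\le\sqrt5-r_n$, hence $\beta_{n-1}\ge 1/r_n+1/(\sqrt5-r_n)$; combined with $\beta_{n-1}\le\sqrt5$ this forces $r_n^2-\sqrt5\,r_n+1\le0$, and since $r_n<1$ the only possibility is $r_n\ge1/\varphi$, where $\varphi=(1+\sqrt5)/2$. Applying the same argument to the pair $(\beta_n,\beta_{n+1})$ gives $r_{n+1}\ge1/\varphi$. But then $1/r_{n+1}=a_{n+1}+r_n\ge 1+1/\varphi=\varphi$, so $r_{n+1}\le1/\varphi$, whence $r_{n+1}=1/\varphi$, which is impossible since $r_{n+1}=q_n/q_{n+1}$ is rational while $1/\varphi$ is irrational. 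Hence for every $n\ge2$ at least one of $\beta_{n-1},\beta_n,\beta_{n+1}$ exceeds $\sqrt5$; running over $n=3,6,9,\dots$ produces infinitely many indices $m$ with $\beta_m>\sqrt5$, and thus infinitely many rationals $p_m/q_m$ with $|\xi-p_m/q_m|<1/(\sqrt5\,q_m^2)$.

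The one genuinely delicate point — and the reason the statement is sharp — is that working with only two consecutive convergents is \emph{not} enough: it yields merely $r_n\ge1/\varphi$, which is no contradiction. One must bring in a third convergent and exploit the recursion $1/r_{n+1}=a_{n+1}+r_n$ together with the integrality $a_{n+1}\ge1$; the final contradiction then rests entirely on the irrationality of the golden ratio, which is also what pins $\sqrt5$ down as optimal, since for $\xi=(1+\sqrt5)/2$ one has $\alpha_{n+1}=\varphi$ and $r_n\to1/\varphi$, so $\beta_n\to\varphi+1/\varphi=\sqrt5$. An alternative I would keep in reserve is the Farey‑dissection (or Ford‑circle) argument: one examines three consecutive fractions of a given Farey order straddling $\xi$ and uses the mediant identity to obtain the same bound; the continued‑fraction bookkeeping above is, however, the most self‑contained.
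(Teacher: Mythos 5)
Your argument is correct: the exact formula $\left|\xi-\tfrac{p_n}{q_n}\right|=\tfrac{1}{q_n^2\beta_n}$ with $\beta_n=\alpha_{n+1}+q_{n-1}/q_n$, the identities $\beta_{n-1}=1/r_n+1/\alpha_{n+1}$ and $1/r_{n+1}=a_{n+1}+r_n$, and the three-consecutive-convergents contradiction via the irrationality of $(\sqrt5-1)/2$ are all sound, and this is precisely the classical continued-fraction proof. The paper itself gives no proof of Hurwitz's theorem — it is a survey and simply cites Theorem 2F in Chapter I of Schmidt's Lecture Notes, remarking that continued fractions or Farey series yield the result — so your write-up supplies exactly the route the paper alludes to.
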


For  the Golden Ratio  $\gamma=(1+\sqrt{5})/2$ and for the numbers related to the Golden Ratio  by a homographic transformation $(ax+b)/(cx+d)$ (where $a$, $b$, $c$, $d$ are rational integers satisfying $ad-bc=\pm 1$),  this asymptotic result 
  is optimal.  For all other  irrational real numbers, Hurwitz proved that the constant \m{\sqrt{5}} can be replaced by   \m{\sqrt{8}} (\cite{0421.10019} Chap.~I Cor.~6C).  
These are the first elements of  the
   {\it Lagrange spectrum}: $\sqrt{5}$, $\sqrt{8}$,  $\sqrt{221}/5$, $\sqrt{1517}/13$, \dots \quad (references are given in Chap.~I \S~6 of \cite{0421.10019}; the book \cite{MR1010419} is devoted to the Lagrange and Markoff spectra).

 Lagrange noticed as early as 1767 (see \cite{MR99a:11088b} Chap.~1 Theorem 1.2) that for all irrational quadratic numbers, the exponent $2$ in $q^2$ in the conclusion of Corollary~\ref{Cor:DirichletAsymptotique} is optimal: more generally, Liouville's inequality (1844) produces, for each algebraic number $\xi$ of degree $d\ge 2$,  a  constant $c(\xi)$ such that, for all rational numbers $p/q$,
 $$
\left| \xi- \frac{p}{q}\right|> \frac{c(\xi)}{q^d}\cdotp
$$
Admissible values for $c(\xi)$ are easy to specify (Th.\ 1 Chap.1 \S~1 of \cite{MR19:252f}, \cite{MR0214551} p.~6, Th.\ 1E of \cite{MR1176315}, Th.\ 1.2 of \cite{MR2136100}). 

A {\it Liouville number} is a real number $\xi$ for which the opposite estimate holds:  {\it for any $\kappa>0$, there exists a rational number $p/q$ such that}
\begin{equation}\label{E:Liouville}
0<\left| \xi- \frac{p}{q}\right|< \frac{1}{q^\kappa}\cdotp
\end{equation}
A {\it very well approximable number} is a real number $\xi$  for which there exists $\kappa>2$ such that  the inequality (\ref{E:Liouville}) has infinitely many solutions. 
A nice example of such a number
 is 
$$
\xi_\kappa:=2\sum_{n=1}^\infty 3^{-\lceil \kappa^n\rceil}.
$$ 
for $\kappa$ a real number $>2$.    
This number belongs to the  middle third  
Cantor set ${\mathcal K}$, which is the set of real numbers whose base three expansion are free of the digit $1$. 
In \cite{LevesleySalpVelani}, 
J.~Levesley, C.~Salp  and S.L.~Velani
 show that $\xi_\kappa$  is an element of ${\mathcal K}$ with irrationality exponent $\mu(\xi_{\kappa})= \kappa$ for $\kappa \ge (3+\sqrt{5})/2$ and $\ge \kappa$ for 
 $2<\kappa \le (3+\sqrt{5})/2$.  This example answers a question of K.~Mahler on the existence of very well approximable numbers which are not Liouville numbers in ${\mathcal K}$.  In \cite{pre05284763},  Y.~Bugeaud
shows that $ \mu(\xi_{\kappa})= \kappa$ for $\kappa \ge 2$, and more generally, that for $\kappa \ge 2$ and $\lambda>0$, the number
$$
\xi_{\lambda,\kappa}:= 2\sum_{n=n_0}^\infty 3^{-\lceil \lambda \kappa^n\rceil}
$$ 
has  $\mu(\xi_{\lambda,\kappa})= \kappa$.
%

Given $\kappa\ge 2$, denote by $E_\kappa$ the set of  real numbers $\xi$ satisfying the following property: 
 {\it   the inequality (\ref{E:Liouville}) has infinitely many solutions  in integers $p$ and $q$  and for any 
$c<1$  there exists $q_0$ such that, for $q\ge q_0$,}
\begin{equation}\label{E:exposantdetrdce}
\left| \xi- \frac{p}{q}\right|>\frac{c}{q^\kappa}\cdotp
\end{equation}
Then for any $\kappa\ge 2$ this set $E_\kappa$ is not empty. Explicit examples have been given by Jarn\'\i k in 1931
(see \cite{1126.11036}
for a variant). 
In \cite{MR1866488}, V.V.~Beresnevich, H.~Dickinson and S.L.~Velani 
  raised the question of the Hausdorff dimension of   the set $ E_\kappa$. The answer is given by Y.~Bugeaud in \cite{MR2006007}: this dimension is $2/\kappa$.


We now consider the uniform estimate of Theorem~\ref{T:DirichletUniforme}.
Let us show that for any irrational number $\xi$, Dirichlet's Theorem   is  essentially optimal: one cannot replace $1/N$ by $1/(2N)$. This was already observed by 
Khintchine in 1926
 \cite{JFM52.0183.01}:
 
    \begin{lemme} \label{L:Khinchin}
{\it  Let  $\xi$  be a real number. Assume that  there exists a positive integer $N_0$ such that,  for each   integer \m{N\ge N_0},  there exist  \m{a\in\bZ}  and \m{b\in\bZ}  with \m{1\le a<N}  and
\M{
 | a\xi-b |< \frac{1}{2N}\cdotp
 }
 Then $\xi$ is rational and \m{a\xi=b} for each \m{N\ge N_0}.
 }
  \end{lemme}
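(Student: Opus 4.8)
The plan is to prove the (a priori stronger) statement that \emph{every} pair $(a,b)$ furnished by the hypothesis already satisfies $a\xi=b$; both conclusions then follow at once, since such a pair exists for some $N\ge N_0$ and $\xi=b/a\in\bQ$. The mechanism will be an infinite descent on the denominator: assuming one pair is nontrivial, I produce another nontrivial pair with a strictly smaller positive denominator, which is impossible.

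\textbf{Setting up the descent.} Suppose, for contradiction, that for some $N_1\ge N_0$ the hypothesis provides a pair $(q,p)$ with $1\le q<N_1$ and $t:=|q\xi-p|\ne 0$, so $0<t<\frac{1}{2N_1}$. I claim one can extract from $(q,p)$ a pair $(a,b)$ with $1\le a<q$, with $a\xi-b\ne 0$, and satisfying $|a\xi-b|<\frac{1}{2M}$ with $1\le a<M$ for some $M\ge N_0$ — i.e.\ a nontrivial pair of exactly the same type but with a smaller denominator. Iterating gives an infinite strictly decreasing sequence of positive integers $q>a>\dots$, a contradiction.

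\textbf{The descent step.} Set $M:=\lceil 1/(2t)\rceil$. From $t<\frac{1}{2N_1}$ one gets $1/(2t)>N_1$, hence $M>N_1\ge N_0$, so the hypothesis applies at level $M$: there are integers $a,b$ with $1\le a<M$ and $s:=|a\xi-b|<\frac{1}{2M}$. Two estimates drop out of the definition of $M$. First, $M\ge 1/(2t)$ gives $\frac{1}{2M}\le t$, so $s<t$ (strictly) and, since $a<M$, $qs<q/(2M)<\tfrac12$ (using $q<N_1<M$). Second, $\lceil x\rceil<x+1$ gives $M-1<1/(2t)$, so $a\le M-1<1/(2t)$, i.e.\ $at<\tfrac12$. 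Hence the integer $m:=qb-ap=-q(a\xi-b)+a(q\xi-p)$ obeys $|m|\le qs+at<1$, so $m=0$, i.e.\ $qb=ap$. Then $b=ap/q$ and $a\xi-b=\tfrac{a}{q}(q\xi-p)$, whence $s=\tfrac{a}{q}t$; combined with $s<t$ this gives $a<q$, and $a\xi-b=\tfrac aq(q\xi-p)\ne 0$ since $q\xi-p\ne0$. This is exactly the required smaller nontrivial pair, completing the descent and hence the proof.

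\textbf{Main obstacle.} The only delicate point is the calibration of the auxiliary level $M$: it must be large enough for the hypothesis to apply there — which forces $M>N_1$ and is precisely where $t<\frac{1}{2N_1}$ enters — yet small enough that both $at$ and $qs$ stay below $\tfrac12$, so that the integer $qb-ap$ is pinned to $0$. The value $M=\lceil 1/(2t)\rceil$ is essentially the unique choice achieving both, and verifying the two ``$<\tfrac12$'' bounds, together with the strict inequality $s<t$ that yields $a<q$ rather than merely $a\le q$, is the heart of the matter. (Alternatively, one could invoke the theory of continued fractions: the best approximation denominators are the convergent denominators $q_k$, which satisfy $|q_k\xi-p_k|>1/(q_k+q_{k+1})$; applying the hypothesis at levels $q_k+1$ and $q_{k+1}$ quickly contradicts the irrationality of $\xi$, after which writing $\xi=u/v$ in lowest terms and testing the level $N=v$ disposes of the rational case.)
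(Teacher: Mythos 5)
Your proof is correct, but it follows a genuinely different route from the paper's. The paper compares the pairs furnished at the two \emph{consecutive} levels $N$ and $N+1$: writing them $(a,b)$ and $(a',b')$, it eliminates $\xi$ via the integer $ab'-a'b=a(b'-a'\xi)+a'(a\xi-b)$, whose absolute value is $<1$, hence zero; thus $b_N/a_N$ is a rational number independent of $N$, and since $|\xi-b_N/a_N|<1/(2Na_N)\to 0$ as $N\to\infty$, a limiting argument identifies this constant with $\xi$. You use the same elimination mechanism (an integer pinned below $1$ in absolute value must vanish), but instead of consecutive levels and a passage to the limit you run an infinite descent: assuming a pair $(q,p)$ with $t=|q\xi-p|\neq 0$, you calibrate the auxiliary level $M=\lceil 1/(2t)\rceil$ so that the hypothesis applies there while both $at<\tfrac12$ and $qs<\tfrac12$ hold, deduce $qb=ap$, and hence obtain a nontrivial pair with strictly smaller denominator (the strict inequality $s<t$ giving $a<q$ is correctly secured by $s<1/(2M)\le t$), which is absurd. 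What each approach buys: the paper's argument is shorter and the limit step is immediate, but it concludes $a_N\xi=b_N$ indirectly through the identity $b_N/a_N=\xi$; your descent is entirely finite and elementary at each step (no limit needed), proves directly that \emph{every} pair supplied by the hypothesis is exact, and makes transparent exactly where the factor $\tfrac12$ in $1/(2N)$ is used. The minor wording slip where you say ``since $a<M$'' to justify $qs<q/(2M)$ (what is actually used is $s<1/(2M)$ together with $q<M$) does not affect correctness, as the needed inequalities are stated and hold.
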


  \begin{proof} 
By assumption for each  integer \m{N\ge N_0}  there exist  \m{a_N\in\bZ}  and \m{b_N\in\bZ}  with \m{1\le a_N<N}  and
$$
 | a_N\xi-b_N |< \frac{1}{2N}\cdotp
 $$
Our goal is to check \m{a_N\xi=b_N} for each \m{N\ge N_0}.

   Let \m{N\ge N_0}. Write \m{(a,b)} for  \m{(a_N,b_N)} and  \m{(a',b')} for  \m{(a_{N+1},b_{N+1})}:
   \M{
 | a\xi-b |< \frac{1}{2N} \; (1\le a\le N-1), \quad
  | a'\xi-b' |< \frac{1}{2N+2}
   \; (1\le a'\le N).
  }
 Eliminate $\xi$ between $a\xi-b$ and $a'\xi-b'$:   the rational integer 
\M{
ab'-a'b=a(b'-a'\xi)+a'(a\xi-b)
} 
   satisfies
\m{
|ab'-a'b|<1
},
hence it  vanishes and  \m{ab'=a'b}.

   Therefore the rational number  \m{b_N/a_N=b_{N+1}/a_{N+1}} does not depend on \m{N\ge N_0}. Since
    \M{
 \lim_{N\rightarrow\infty }{b_N}/{a_N}=\xi,
    } 
    it follows that   \m{\xi=b_N/a_N} for all \m{N\ge N_0}.
  \hfill  \hbox{\qed} \null
    
\end{proof}

\noindent
{\bf Remark.} {\it As pointed out to me by M.~Laurent,   an alternative argument 
is based on    continued fraction expansions. }

  \medskip
 
Coming back to  Theorem~\ref{T:DirichletUniforme}  and Corollary~\ref{Cor:DirichletAsymptotique}, we associate to each real irrational numbers $\xi$  two exponents $\omega$ and $\omegahat$ as follows. 

Starting with Corollary~\ref{Cor:DirichletAsymptotique}, we introduce the {\it asymptotic  irrationality exponent  of  a real number $\xi$} which is denoted by $\omega(\xi) $: 
\begin{multline}\notag
\omega(\xi) =   \sup\Big\{w ;  \;   \text{there exist  infinitely many} \;     (p,q) \in \bZ^2\\
 \text{with}\; 
 q\ge 1\; \text{and}\;  0<| q \xi -p |  \le q^{-w}\Big\}.
\end{multline}
 Some authors prefer to introduce the {\it irrationality  exponent $\mu(\xi)=\omega(\xi) +1$ of  $\xi$} which is denoted by $\mu(\xi)$: 
\begin{multline}\notag
\mu(\xi)=  \sup\Big\{\mu ;  \; \text{there exist  infinitely many}     \;    (p,q) \in \bZ^2\\
 \text{with}\;  q\ge 1
 \; \text{and}\; 
0< \left|  \xi- 
\displaystyle \frac{p}{q} \right |  \le q^{-\mu}\Big\}.
\end{multline}
An upper bound for \m{\omega(\xi) } or $\mu(\xi)$ is an {\it irrationality measure} for $\xi$, namely a lower bound for 
\m{|\xi-p/q|} when \m{p/q\in\bQ}. 

Liouville numbers are the real numbers $\xi$ with $\omega(\xi)=\mu(\xi)= \infty$. 

Since no set $E_\kappa$  (see property~(\ref{E:exposantdetrdce})) with $\kappa\ge 2$ is empty,  the {\it spectrum} $\bigl\{ \omega(\xi)\; ;\; \xi\in\bR\setminus\bQ\bigr\}$ of $\omega$ is the whole interval $[1,+\infty]$, while the spectrum  $\bigl\{ \mu(\xi)\; ;\; \xi\in\bR\setminus\bQ\bigr\}$  of $\mu$ is $[2,+\infty]$.

According to the Theorem of Thue--Siegel--Roth \cite{BiluBourbaki}, {\it  for   any real algebraic number \m{\xi\in\bR\setminus\bQ}, }
$$
\omega(\xi)  = 1.
$$
We shall see (in \S~\ref{SS:RAAN-AURAMR}) that the same holds for almost all real numbers.

The other exponent related to Dirichlet's Theorem~\ref{T:DirichletUniforme} is the {\it uniform irrationality exponent of  $\xi$,} denoted by $\omegahat(\xi) $:
\begin{align}\notag
\omegahat(\xi) =     \sup\Big\{ w ;  \;  \text{for any $N   \ge 1$,     there exists $(p,q) \in \bZ^2$} \;
\hskip 2 true cm
\\
\notag
\hfill
 \text{with}\;  1 \le  q \le N \;   \text{and}  \; 0<| q \xi -p |  \le N^{-w}\Big\}.
\end{align}
In the singular case of a rational number $\xi$ we set 
$\omega(\xi) =\omegahat(\xi) =0$.
It is plain from the definitions that for any  \m{\xi\in\bR\setminus\bQ}, 
\M{
\omega(\xi) \ge \omegahat(\xi) \ge 1.
}
In fact Lemma~\ref{L:Khinchin} implies  that  
   {\it for any \m{\xi\in\bR\setminus\bQ},  
$   \omegahat(\xi)=1$.}
 Our motivation to introduce a notation for a number which is always equal to $1$ is that it will become non--trivial in more general situations ($\omegahat_n$ in \S~\ref{SS:PASASN-PACN}, $\omegahat'_n$ in \S~\ref{SS:PASASN-SRARN}, $\omegahat^*_n$ in \S~\ref{SS:PASASN-AACN}).

\subsection{Metric results}
\label{SS:RAAN-AURAMR}

The metric theory of Diophantine approximation provides statements which are valid for almost all (real or complex)  numbers, that means for all numbers outside a set of Lebesgue measure $0$. Among many references on this topic, we quote
\cite{MR0245527,
MR548467,
MR1727177,
Harman, 
MR2136100
}. See also \S~\ref{SS:OverviewMetricalPolynomials}
and 
 \S~\ref{SS:FurtherMetricalResults} below.

One of the early results is due to 
Capelli:  {\it  for almost all $\xi\in\bR$,}
$$
\omega(\xi)=\omegahat(\xi) =1
\quad\text{\it and}\quad
\mu(\xi)=2. 
$$
This is one of many instances where irrational algebraic numbers behave like almost all numbers. However one cannot expect that {\it all} statements from Diophantine Approximation which are satisfied by all numbers outside a set of measure $0$ will be satisfied by all irrational algebraic numbers, just because such an intersection of sets of full measure is empty.
As pointed out to me by B.~Adamczewski, S.~Schanuel (quoted by S.~Lang in 
\cite{MR33:1286} p.~184 and \cite{MR96h:11067} Chap.~II \S~2 Th.\ 6) gave a more precise formulation of such a remark as follows. 

Denote by  $\mathcal K$  (like Khintchine) the set of {\it non--increasing}  
functions $ \Psi $ from $ \bR_{\ge 1}  $ to $  \bR_{>0}$. 
Set
$$
{\mathcal K}_c=\left\{ \Psi\in {\mathcal K}\, ;\, 
\sum_{n\ge 1} \Psi(n) \; \text{converges} \right\},
\quad
{\mathcal K}_d=\left\{ \Psi\in {\mathcal K}\, ;\, 
\sum_{n\ge 1} \Psi(n) \;  \text{diverges} \right\}
$$
Hence ${\mathcal K}={\mathcal K}_c\cup{\mathcal K}_d$. 

A well-known  theorem of A.~Ya.~Khintchine in 1924  
 (see  \cite{Khintchine-1924}, \cite{MR1451873} and Th.\ 1.10 in \cite{MR2136100}) has been refined as follows (an extra condition that the function $x\mapsto x^2 \Psi(x)$ is decreasing has been dropped) by  Beresnevich, Dickinson and Velani \cite{Beresnevich-Dickinson-Velani-06:MR2184760}:

\begin{thm}[Khintchine] \label{T:Khinchin}
Let $\Psi \in {\mathcal K}$. Then for almost all real numbers $\xi$, the inequality 
\begin{equation}\label{E:Khinchin}
\left| q\xi-p
\right |<\Psi(q)
\end{equation}
has 
\begin{itemize}
\item 
only finitely many solutions in integers $p$ and $q$ if  $\Psi\in {\mathcal K}_c$ 
\item 
infinitely many solutions  in integers $p$ and $q$  if  $\Psi\in {\mathcal K}_d$.
\end{itemize}

\end{thm}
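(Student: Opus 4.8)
The convergence case is a direct application of the Borel--Cantelli lemma. By periodicity we may work on the circle $\bR/\bZ$. For a fixed integer $q\ge 1$ let $A_q$ be the set of $\xi\in\bR/\bZ$ for which $|q\xi-p|<\Psi(q)$ for some $p\in\bZ$; since $\xi\mapsto q\xi$ is a measure--preserving $q$--fold self--covering of $\bR/\bZ$, one gets $|A_q|=\min\{2\Psi(q),1\}\le 2\Psi(q)$. If $\Psi\in{\mathcal K}_c$ then $\sum_{q\ge 1}|A_q|<\infty$, so by Borel--Cantelli almost every $\xi$ lies in only finitely many $A_q$; this is exactly the statement that (\ref{E:Khinchin}) has only finitely many solutions for almost all $\xi$.

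The divergence case is the substantial one and needs a quantitative converse to Borel--Cantelli, because the sets $A_q$ are far from independent. First, a routine reduction: if $\Psi(q)\ge 1/2$ for infinitely many $q$ then every $\xi$ lies in infinitely many $A_q$ and we are done; otherwise $\Psi(q)<1/2$ eventually, so $|A_q|=2\Psi(q)$, and we may moreover replace $\Psi(q)$ by $\min\{\Psi(q),1/q\}$, which is still non--increasing and, by monotonicity, still has divergent sum, so that $q\Psi(q)\le 1$ for all $q$ while $\sum_q|A_q|=\infty$. The core step is a quasi--independence estimate for the pairwise overlaps: for $q\ne r$,
$$
|A_q\cap A_r|\ \ll\ \Psi(q)\Psi(r)\ +\ \frac{\gcd(q,r)}{qr},
$$
the first term being the ``expected'' independent contribution and the second an arithmetic error. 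Summing over $q,r\le Q$, the first terms contribute $O\bigl(\bigl(\sum_{q\le Q}|A_q|\bigr)^2\bigr)$, while --- and this is the delicate point --- the condition $q\Psi(q)\le 1$ (i.e. the monotonicity of $\Psi$) forces the total contribution of the $\gcd$--terms to be $O\bigl(\sum_{q\le Q}|A_q|\bigr)$. The Chung--Erd\H{o}s inequality, equivalently the divergence half of Borel--Cantelli for quasi--independent events, then gives that $A_\infty:=\limsup_q A_q$ has positive Lebesgue measure.

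Finally, a zero--one law for such $\limsup$ sets (Cassels, Gallagher) yields $|A_\infty|\in\{0,1\}$, hence $|A_\infty|=1$, which is precisely the assertion that (\ref{E:Khinchin}) has infinitely many solutions for almost all $\xi$. The main obstacle is the overlap bound together with the summation of the arithmetic error terms: in the classical form of the theorem one imposed the extra hypothesis that $x\mapsto x^2\Psi(x)$ be decreasing exactly to make this step work, and removing it (the Beresnevich--Dickinson--Velani refinement quoted above) requires replacing the crude second--moment argument by a more robust one, typically a regular system or ``ubiquity'' construction for the rationals $p/q$. An alternative, more hands--on route to the divergence statement, valid under mild regularity assumptions on $\Psi$, goes through the continued fraction expansion of $\xi$ and the Gauss--Kuzmin--L\'evy statistics of its partial quotients, exploiting $|q_n\xi-p_n|\asymp 1/q_{n+1}$ for the convergents $p_n/q_n$; but for the clean statement as given the measure--theoretic route above is the natural one.
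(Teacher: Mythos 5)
The paper does not prove this statement: Theorem \ref{T:Khinchin} is quoted as a known result, with references to Khintchine's 1924 paper and to Beresnevich--Dickinson--Velani for the removal of the classical extra hypothesis that $x\mapsto x^{2}\Psi(x)$ be decreasing, so your argument can only be judged against the literature. Your convergence half is correct and essentially complete: $|A_q|\le 2\Psi(q)$, the easy half of Borel--Cantelli, and the trivial remark that for each $q$ there are at most finitely many admissible $p$; monotonicity of $\Psi$ plays no role there. Your reduction to $q\Psi(q)\le 1$ in the divergence half is also legitimate (a non--increasing summable sequence $a_q$ has $qa_q\to 0$, so $\sum_q\min\{\Psi(q),1/q\}$ still diverges).

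The gap is exactly at the step you call delicate, and your assertion there is false as stated. With the overlap bound $|A_q\cap A_r|\ll\Psi(q)\Psi(r)+\gcd(q,r)/(qr)$, the total of the arithmetic error terms is $\sum_{q\ne r\le Q}\gcd(q,r)/(qr)=\sum_{d\le Q}d\bigl(\sum_{d\mid q,\,q\le Q}1/q\bigr)^2+O\bigl(\sum_{q\le Q}1/q\bigr)\asymp(\log Q)^{3}$, and even the sharper error $\gcd(q,r)\min\{\Psi(q)/q,\Psi(r)/r\}$ sums to something of size about $\sum_{r\le Q}\Psi(r)\,r^{-1}\sum_{q<r}\gcd(q,r)$, which for $\Psi(q)=1/(q\log q\log\log q)$ is of order $\log Q/\log\log Q$. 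Since $\sum_{q\le Q}\Psi(q)$ may diverge as slowly as $\log\log\log Q$, neither quantity is $O\bigl(\sum_{q\le Q}|A_q|\bigr)$, and the Chung--Erd\H{o}s ratio $\bigl(\sum_{q\le Q}|A_q|\bigr)^{2}/\sum_{q,r\le Q}|A_q\cap A_r|$ tends to $0$: the second--moment argument yields nothing, no matter how good the zero--one law (Cassels) invoked afterwards. You in effect concede this by saying that the classical argument needs $x^{2}\Psi(x)$ decreasing and that the general monotone case requires regular systems or ubiquity --- but that machinery is precisely the content of the theorem in the generality stated in the paper, and you only name it (likewise the continued--fraction route, which you restrict to ``mild regularity assumptions''). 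So the proposal establishes the convergence case, but for the divergence case it proves only a weaker statement and defers the actual theorem to results it does not reprove; to close it you would need either to carry out a localized/ubiquity-type counting argument for the rationals, or to run the second moment computation under the additional hypothesis and then supply the reduction removing it.
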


\def\remarkMMD{
The case in Khintchine's theorem when the sum converges is essentially
the easy half of the Borel-Cantelli Lemma and $\Psi$ does not need to
be decreasing.  The case when the sum diverges is much harder.
Originally proved using continued fractions~\cite{Kh24}, later proofs
involve measure and probabilistic ideas capable of
generalisation~\cite{HarmanMNT,Kh26,Sprindzuk}.  In their
paper~\cite{DuffinSchaeffer}, Duffin and Schaeffer also showed that
some monotonicity condition cannot be dropped in the divergent case by
producing a counter-example. They made the conjecture that when $\Psi$
is not necessarily monotonic, almost all or almost no $\xi\in\R$
satisfy
\begin{equation}
  \label{eq:1}
  |q\xi-p|< \Psi(q)
\end{equation}
for infinitely many integers 
$p,q$ with $(p,q)=1$, according as the sum
\begin{equation}
  \label{eq:2}
  \sum_{n\ge 1} \Psi(n)\frac{\varphi(n)}{n},
\end{equation}
where $\varphi$ is the Euler phi function, diverges or
converges~\cite{DuffinSchaeffer}.  Erd\"os~\cite{Erdos70} and
Vaaler~\cite{vaaler78} made some progress but the conjecture is still
unproved.  When coprimality is dropped, Catlin~\cite{catlin76} has
proposed a more complicated sum, also involving the Euler phi
function.

}
 
S.~Schanuel proved  that the set of real numbers which behave like almost all numbers from the point of view of Khintchine's Theorem in the convergent case has measure $0$. More precisely the set 
 of real numbers $\xi$ such that, for any {\it smooth convex} function $\Psi\in {\mathcal K}_c$, the inequality (\ref{E:Khinchin}) has only finitely many solutions, is the set of real numbers with bounded partial quotients  ({\it badly approximable numbers} -- see \cite{0421.10019} Chap.~I \S~5;  other characterisations of this set are given in \cite{MR96h:11067} Chap.~II \S~2 Th.\ 6).

 
 
 Moreover B. Adamczewski and Y. Bugeaud noticed that given any
irrational $\xi$, either there exists a $\Psi\in \mathcal{K}_d$ for
which
$$
|q\xi-p|<\Psi(q)
$$ has no integer solutions or there exists a $\Psi\in \mathcal{K}_c$ for
which
$$
|q\xi-p|<\Psi(q)
$$ has infinitely many integer solutions.

\subsection{The exponent $\nu$ of S.~Fischler and T.~Rivoal}
\label{SS:RAAN-EFR}

    Let \m{\xi\in\bR\setminus\bQ}. In \cite{FischlerRivoal},  S.~Fischler and T.~Rivoal introduce a new exponent $\nu(\xi) $ which they define as follows.

When \m{\uu=(u_n)_{n\ge 1}} is an increasing sequence of positive integers, define another sequence of integers  \m{\uv=(v_n)_{n\ge 1}} by \m{|u_n\xi-v_n|<1/2} (i.e{.} $v_n$ is the nearest integer to $u_n\xi$) and set 
\M{
\alpha_\xi(\uu)=\limsup_{n\rightarrow\infty} \frac{
|u_{n+1}\xi - v_{n+1}|}{|u_n\xi - v_n|}\virgule
\quad\beta(\uu)=\limsup_{n\rightarrow\infty} \frac{
u_{n+1}}{u_n}\cdotp
}
Then
\M{
\nu(\xi)= \inf  \log \sqrt{\alpha_\xi(\uu)\beta(\uu)},} 
where \m{\uu} ranges over the sequences which satisfy \m{\alpha_\xi(\uu)<1} and \m{\beta(\uu)<+\infty.}
Here we agree that \m{\inf {\emptyset}=+\infty}.

They establish a connection with the irrationality exponent by proving:
\M{
\mu(\xi)\le 1- \frac{ \log\beta(\uu)}{
\log\alpha_\xi(\uu)
}\cdotp
}
As a consequence,  {\it if \m{\nu(\xi)<+\infty}, then \m{\mu(\xi)<+\infty}. }

If $\xi$ is quadratic, Fischler and Rivoal produce a sequence \m{\uu} with \m{\alpha_\xi(\uu)\beta(\uu)=1}, hence \m{\nu(\xi)=0}.

This new exponent $\nu$  is motivated by Apéry-like proofs of irrationality  and  measures. 
Following the works of R.~Ap\'ery, A.~Baker, F.~Beukers, G.~Rhin and C.~Viola, M.~Hata among others,  S.~Fischler and  T.~Rivoal  deduce
\M{
\nu(2^{1/3})\le (3/2)\log 2,\quad
\nu(\zeta(3) ) \le 3,\quad 
\nu(\pi^2)\le 2,\quad
\nu(\log 2)\le 1.
}
Also \m{\nu(\pi)\le 21}.

 The spectrum of \m{\nu(\xi)} is not yet known. According to  \cite{FischlerRivoal}, {\it for any \m{\xi\in\bR\setminus\bQ}, the inequalities 
   \m{0\le \nu(\xi)\le +\infty} hold. } Further, {\it for almost all  \m{\xi\in\bR},  \m{ \nu(\xi)=0}.} Furthermore,  S.~Fischler and T.~Rivoal, completed by B.~Adamczewski \cite{AdamczewskiExposantDensite}, proved that 
{\it      any irrational algebraic real number $\xi$ has \m{\nu(\xi)<+\infty}. }

There are examples of \m{\xi\in\bR\setminus\bQ}  for which \m{\nu(\xi)=+\infty}, but 
all known examples with  \m{\nu(\xi)=+\infty} so far have \m{\mu(\xi)=+\infty}.

 Fischler and Rivoal ask whether 
{\it it is true that \m{\nu(\xi)<+\infty} implies \m{\mu(\xi)=2}.}
Another related question they raise in \cite{FischlerRivoal}  is whether {\it 
 there are numbers $\xi$ with \m{0<\nu(\xi)<+\infty}.}

\section{Polynomial, algebraic and simultaneous approximation to a  single number}
\label{S:PASASN}

We define the (usual) height $\rmH(P)$ of a polynomial 
$$
P(X)=a_0+a_1X+\cdots+a_nX^n
$$
 with complex coefficients as the maximum modulus of its coefficients, 
 while its length $\rmL(P)$ is the sum of the moduli of these coefficients:
 $$
 \rmH(P)=\max_{0\le i\le n} |a_i|,
 \quad
 \rmL(P)=\sum_{i=0}^n |a_i|.
  $$ 
 The height $\rmH(\alpha)$ and length $\rmL(\alpha)$  of an algebraic number $\alpha$ are the height and length of its minimal polynomial over $\bZ$.

\subsection{Connections between polynomial approximation 
and approximation by algebraic numbers}
\label{SS:PASASN-PASAAAN}

 Let $\xi$ be a complex number.  To produce a sharp {\it  polynomial approximation } to $\xi$ is  to find a non--zero polynomial \m{P\in\bZ[X]}
 for which 
   \m{|P(\xi)|} is small. An {\it algebraic approximation} to $\xi$ is an algebraic number $\alpha$ such that the distance   \m{|\xi-\alpha|} between $\xi$ and  $\alpha$ is small. There are close connections between both questions. On the one hand, if  \m{|P(\xi)|} is small, then $\xi$ is close to a root \m{\alpha} of \m{P}. On the other hand, if  \m{|\xi-\alpha|} is small then the minimal polynomial of \m{\alpha} assumes a small value at $\xi$. These connections explain that the classifications of transcendental numbers in  $S$, $T$ and $U$ classes by K.~Mahler coincide with the  classifications of transcendental numbers in  $S^*$, $T^*$ and $U^*$ classes by J.F.~Koksma (see \cite{MR19:252f}  Chap.~III  and \cite{MR2136100} Chap.~3).

The easy part is the next statement (Lemma 15 Chap.~III \S~3 of \cite{MR19:252f},
 \S~15.2.4 of \cite{MR1756786}), Prop.~3.2  \S~3.4 of \cite{MR2136100}.

       
\begin{lemme}\label{L:EasyPart}
Let $f\in\bC[X]$ be a non--zero polynomial of degree $D$ and length $L$, let $\alpha\in\bC$ be a root of $f$ and let $\xi\in\bC$ satisfy $|\xi-\alpha|\le 1$. Then
$$
|f(\xi)| \le |\xi-\alpha|LD(1+|\xi|)^{D-1}.
$$

\end{lemme}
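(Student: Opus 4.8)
The plan is to factor $f$ over $\bC$ and isolate the linear factor corresponding to the root $\alpha$, then bound each remaining factor at the point $\xi$ crudely in terms of $|\xi|$. Write $f(X)=a_D\prod_{i=1}^{D}(X-\alpha_i)$ with $\alpha_1=\alpha$, so that $f(\xi)=a_D(\xi-\alpha)\prod_{i=2}^{D}(\xi-\alpha_i)$. Since $|\xi-\alpha|\le 1$, each of the roots $\alpha_i$ (for $i\ge 2$) satisfies $|\xi-\alpha_i|\le |\xi-\alpha|+|\alpha-\alpha_i|$, but this is not immediately what we want; instead I would bound $|\xi-\alpha_i|$ directly. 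The cleanest route: note $|\xi - \alpha_i| \le |\xi| + |\alpha_i|$, and the issue becomes controlling $|\alpha_i|$ and $|a_D|$ in terms of the length $L=\rmL(f)$.

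First I would record the elementary fact that every root $\alpha_i$ of $f$ satisfies $|\alpha_i|\le \rmL(f)/|a_D| \le L$ when $|\alpha_i|\ge 1$ (if $|\alpha_i|<1$ the bound $|\alpha_i|\le 1$ is even better), hence in all cases $|\xi-\alpha_i|\le |\xi| + \max(1,L)$; more efficiently, one shows $|\xi-\alpha_i|\le (1+|\xi|)\cdot(\text{something involving }L)$. Actually the sharpest elementary bound that yields the stated constant is: for each root $\beta$ of $f$, $|f(\xi)|/|\xi-\beta|$, namely the value at $\xi$ of $f(X)/(X-\beta)$, has coefficients that are bounded sums of the coefficients of $f$, so that $|a_D\prod_{i\ne 1}(\xi-\alpha_i)| = |f(\xi)/(\xi-\alpha)|$ can be estimated by expanding $f(X)/(X-\alpha)$ as a polynomial of degree $D-1$ whose coefficients (partial sums $a_D, a_D\alpha + a_{D-1}, \ldots$) need bounding — but this reintroduces $\alpha$.

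The step I expect to be the main obstacle is getting the precise constant $LD(1+|\xi|)^{D-1}$ rather than something weaker like $L\cdot 2^D(1+|\xi|)^D$. The trick I would use is the mean value / telescoping identity: write
$$
f(\xi) = f(\xi) - f(\alpha) = \int_{\alpha}^{\xi} f'(t)\,dt
$$
(valid along the segment, since $f(\alpha)=0$), so that $|f(\xi)|\le |\xi-\alpha|\cdot\sup_{t\in[\alpha,\xi]}|f'(t)|$. Now $f'(X)=\sum_{k=1}^{D} k a_k X^{k-1}$ has length $\rmL(f')\le D\cdot\rmL(f)=DL$, and for $t$ on the segment from $\alpha$ to $\xi$ with $|\xi-\alpha|\le 1$ one has $|t|\le |\xi|+1$, hence $|f'(t)|\le \rmL(f')\max(1,|t|)^{D-1}\le DL(1+|\xi|)^{D-1}$. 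Combining gives exactly $|f(\xi)|\le |\xi-\alpha|\,LD(1+|\xi|)^{D-1}$, as required. The only point to be careful about is the estimate $|f'(t)|\le \rmL(f')\max(1,|t|)^{D-1}$, which follows termwise since $|t|^{k-1}\le\max(1,|t|)^{D-1}$ for $1\le k\le D$, and the bound $|t|\le 1+|\xi|$, which holds because $t=\alpha+s(\xi-\alpha)$ for some $s\in[0,1]$ gives $|t|\le|\alpha|+|\xi-\alpha|$ and $|\alpha|\le|\xi|+|\xi-\alpha|\le|\xi|+1$, so in fact $|t|\le |\xi| + 2$ — a slightly cruder bound; to recover $(1+|\xi|)^{D-1}$ exactly one instead writes $|t| \le \max(|\alpha|,|\xi|) \le |\xi| + |\xi - \alpha|$... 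I would sort out this last constant by noting that along the segment $|t|\le\max(|\xi|,|\alpha|)$ is false in general, so the honest bound uses $|t - \xi| = (1-s)|\xi - \alpha| \le 1$, giving $|t| \le |\xi| + 1$ directly. That settles it, and the lemma follows.
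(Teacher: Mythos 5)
Your final argument is correct and yields exactly the stated constant. Note first that the paper itself does not prove Lemma~\ref{L:EasyPart}; it refers to Schneider's and Bugeaud's books, where the standard argument is purely algebraic: one writes $f(\xi)=f(\xi)-f(\alpha)=\sum_{k=1}^{D}a_k(\xi^k-\alpha^k)$, factors $\xi^k-\alpha^k=(\xi-\alpha)(\xi^{k-1}+\xi^{k-2}\alpha+\cdots+\alpha^{k-1})$, and bounds each of the $k\le D$ terms by $\max(|\xi|,|\alpha|)^{k-1}\le(1+|\xi|)^{D-1}$ using $|\alpha|\le|\xi|+1$. Your route is genuinely different: you use $f(\xi)-f(\alpha)=\int_{[\alpha,\xi]}f'(t)\,dt$, the bound $\rmL(f')\le D\,\rmL(f)$, and the estimate $|t|\le|\xi|+1$ on the segment (your final correction via $|t-\xi|=(1-s)|\xi-\alpha|\le 1$ is the right one). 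Both proofs are one step long and give the same constant $LD(1+|\xi|)^{D-1}$; the telescoping identity has the small advantage of being purely algebraic (no integration, so it transposes directly to other absolute values), while your derivative bound is perhaps more conceptual, packaging the factor $D$ as the length inflation of $f'$ and the factor $(1+|\xi|)^{D-1}$ as a sup-norm bound on a disc of radius $1$ around $\xi$. Two remarks on the write-up: the first half of your text (factoring $f$ over $\bC$, bounding the roots by $L/|a_D|$) is a dead end that you yourself abandon and should simply be deleted, since bounding the individual roots cannot give a bound linear in $L$; and you should state explicitly at the outset that $D\ge 1$ (a nonzero constant has no root, so the hypothesis forces this), so that $f'$ is not identically zero and the exponent $D-1$ makes sense.
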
   

The other direction requires more work (see    Chap.~III \S~3 of \cite{MR19:252f},   \S~3.4 of \cite{MR2136100}).  
 The next result is due to G.~Diaz and M.~Mignotte \cite{MR1124725} (cf. Lemma 15.13 of  \cite{MR1756786}).
 

\begin{lemme}  \label{L:XimoinsGamma}
Let $f\in\bZ[X]$ be a non--zero polynomial of
degree $D$. Let $\xi$ be a complex number, $\alpha$ a root
of $f$ at minimal distance of $\xi$ and $k$ the multiplicity
of $\alpha$ as a root of $f$. Then
$$
|\xi-\alpha|^k\le D^{3D-2} \rmH(f)^{2D}|f(\xi)|.
$$

\end{lemme}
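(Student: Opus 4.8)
The plan is to write $f$ in factored form over $\bC$, isolate the contribution of the root $\alpha$ nearest to $\xi$, and bound the remaining factors from below using the fact that $\alpha$ is the closest root. Write $f(X)=a\prod_{i=1}^{D}(X-\alpha_i)$ where $a\in\bZ\setminus\{0\}$ is the leading coefficient (so $|a|\ge 1$), and group the roots so that $\alpha_1=\cdots=\alpha_k=\alpha$ and $\alpha_{k+1},\dots,\alpha_D$ are the other roots. Then
$$
|f(\xi)|=|a|\,|\xi-\alpha|^{k}\prod_{i=k+1}^{D}|\xi-\alpha_i|,
$$
so the whole task reduces to a lower bound for $|a|$ and for $\prod_{i>k}|\xi-\alpha_i|$ in terms of $D$ and $\rmH(f)$; since $|a|\ge 1$ it is the product over the other roots that matters.

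The first key step is the triangle-inequality observation that, because $\alpha$ minimizes $|\xi-\alpha_i|$ over all roots, we have $|\xi-\alpha_i|\ge\tfrac12|\alpha-\alpha_i|$ for every $i$: indeed $|\alpha-\alpha_i|\le|\alpha-\xi|+|\xi-\alpha_i|\le 2|\xi-\alpha_i|$. Hence
$$
\prod_{i=k+1}^{D}|\xi-\alpha_i|\ \ge\ 2^{-(D-k)}\prod_{i=k+1}^{D}|\alpha-\alpha_i|.
$$
The product $\prod_{i=k+1}^{D}|\alpha-\alpha_i|$ is, up to the factor $|a|$ and the $k$ vanishing factors involving $\alpha$ itself, exactly $|f^{(k)}(\alpha)|/k!$ (the leading nonzero derivative at the root $\alpha$ of multiplicity $k$); more precisely $f^{(k)}(\alpha)=k!\,a\prod_{i>k}(\alpha-\alpha_i)$. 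So a lower bound for $\prod_{i>k}|\alpha-\alpha_i|$ is the same as a lower bound for $|f^{(k)}(\alpha)|$. The second key step is therefore to bound $|f^{(k)}(\alpha)|$ from below. Here one uses that $\alpha$ is an algebraic number whose minimal polynomial over $\bZ$ divides $f$: the quantity $f^{(k)}(\alpha)$ is a nonzero algebraic number all of whose conjugates can be bounded above (via $\rmH(f)$, $D$, and upper bounds $|\alpha_j|\le 1+\rmH(f)$ for the moduli of roots of an integer polynomial), and whose ``leading coefficient / denominator'' is controlled by $a$; taking the product over conjugates of $\alpha$ (i.e.\ applying Liouville's inequality / a resultant estimate for the nonvanishing of a norm) yields $|f^{(k)}(\alpha)|$ bounded below by a negative power of $D^{\,D}\rmH(f)^{\,D}$ of the shape claimed. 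This is precisely where the exponents $D^{3D-2}$ and $\rmH(f)^{2D}$ will be produced, by keeping honest track of: the $k!$ and binomial factors from differentiating ($\le D^{D}$ crudely), the $2^{D-k}$ from the triangle inequality, the root-size bounds $(1+\rmH(f))$ raised to $O(D)$, and the Mahler-measure / discriminant-type denominator bound of size $\rmH(f)^{O(D)}$.

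Assembling these: from $|f(\xi)|=|a|\,|\xi-\alpha|^{k}\prod_{i>k}|\xi-\alpha_i|$ and the two lower bounds above, $|\xi-\alpha|^{k}\le |f(\xi)|\big/\big(|a|\,2^{-(D-k)}\prod_{i>k}|\alpha-\alpha_i|\big)$, and then substituting the lower bound for $\prod_{i>k}|\alpha-\alpha_i|$ (equivalently for $|f^{(k)}(\alpha)|/(k!\,|a|)$) gives exactly an inequality of the form $|\xi-\alpha|^{k}\le D^{3D-2}\rmH(f)^{2D}|f(\xi)|$ after absorbing all numerical constants into the stated powers of $D$ and $\rmH(f)$. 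The main obstacle is the second step — the explicit Liouville-type lower bound for $|f^{(k)}(\alpha)|$ with the right exponents — since it requires combining a bound on the size of $f^{(k)}(\alpha)$ and its conjugates with a bound on the denominator coming from the leading coefficient of the minimal polynomial of $\alpha$, and then bookkeeping the constants so that nothing worse than $D^{3D-2}$ and $\rmH(f)^{2D}$ appears; the geometric triangle-inequality step and the factorization step are routine by comparison. (One can also just cite Diaz--Mignotte \cite{MR1124725} or Lemma~15.13 of \cite{MR1756786} for the sharp form of this last estimate.)
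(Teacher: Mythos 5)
First, a point of comparison: the paper itself gives no proof of this lemma; it simply quotes it from Diaz--Mignotte \cite{MR1124725} (cf.\ Lemma 15.13 of \cite{MR1756786}). So your sketch has to stand on its own. Its first half does: factoring $f=a\prod_{i}(X-\alpha_i)$ with $\alpha_1=\cdots=\alpha_k=\alpha$, using minimality of $|\xi-\alpha|$ to get $|\xi-\alpha_i|\ge\frac12|\alpha-\alpha_i|$, and identifying $|a|\prod_{i>k}|\alpha-\alpha_i|=|f^{(k)}(\alpha)|/k!$ is the standard and correct opening of the argument.

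The genuine gap is the second half, which you yourself flag as ``the main obstacle'' and then do not carry out: an explicit lower bound for $|f^{(k)}(\alpha)|$ strong enough to yield precisely $D^{3D-2}\rmH(f)^{2D}$. Since a bound of the shape $c(D,\rmH(f))\,|f(\xi)|$ with some unspecified constant is easy, the entire content of the lemma lies in these explicit exponents, and they are never derived; moreover the crude ingredients you list ($k!$ and binomial factors of size up to $D^{D}$, the factor $2^{D-k}$, root bounds $(1+\rmH(f))^{O(D)}$, denominators $\rmH(f)^{O(D)}$) do not obviously fit under $D^{3D-2}\rmH(f)^{2D}$ --- already for $\rmH(f)=1$ the $D$-power must not be overshot. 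To land on exponents of this quality one has to argue more carefully, e.g.\ write $f=g^kh$ in $\bZ[X]$ with $g$ the minimal polynomial of $\alpha$ (Gauss's lemma), use $f^{(k)}(\alpha)=k!\,g'(\alpha)^k h(\alpha)$, bound $|g'(\alpha)|$ and $|h(\alpha)|$ from below through the nonzero rational integers $\mathrm{Res}(g,g')$ and $\mathrm{Res}(g,h)$, and control the heights of $g$ and $h$ via Mahler measure, using $M(g)^kM(h)=M(f)\le\sqrt{D+1}\,\rmH(f)$. Two further points your norm argument leaves unaddressed: the norm of $f^{(k)}(\alpha)$ is nonzero only because every conjugate of $\alpha$ has the same multiplicity $k$ in $f$ (as $g^k$ exactly divides $f$), and $f^{(k)}(\alpha)$ is not an algebraic integer, so the denominator $b^{D-k}$ ($b$ the leading coefficient of $g$, with $b^k\mid a$) must enter the bookkeeping. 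Finally, your fallback of ``just citing'' Diaz--Mignotte or Lemma 15.13 of \cite{MR1756786} for the last estimate is circular in this context: those are exactly the sources from which the lemma itself is quoted, so invoking them is not a proof but the paper's own citation restated.
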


 Further similar estimates are due to M.~Amou and Y.~Bugeaud 
 \cite{MR2428209}.

\subsection{Gel'fond's Transcendence Criterion}
\label{SS:PASASN-GTC}


The so--called {\it Transcendence Criterion, } proved by A.O.~Gel'fond in 1949,  is an auxiliary result in the method he introduced in  \cite{MR10:682d,MR11:83b,MR11:231d} (see also \cite{MR13:727b} and \cite{MR22:2598}) for proving  algebraic independence results. An example is the algebraic independence  of the two numbers  \m{2^{\root 3 \of 2}}  and \m{2^{\root 3 \of 4}}.    
More generally, he  proved that {\it  if $\alpha$ is a non--zero algebraic number, $\log\alpha$ a non--zero logarithm of $\alpha$ and  \m{\beta} an algebraic number of degree \m{d\ge 3}, then at least \m{2} among the  \m{d-1} numbers
   \M{
\alpha^{\beta}, \; \alpha^{\beta^2}, \; \dots \, , \alpha^{\beta^{d-1}
}
}
are algebraically independent.
}
Here $\alpha^z$ stands for $\exp(z\log\alpha)$. 

While Gel'fond--Schneider  transcendence method for solving Hilbert's seventh problem on the transcendence of $\alpha^\beta$ relies on a {\it Liouville type} estimate, namely a lower bound for a non--zero value \m{|P(\xi)|} of a polynomial $P$ at an algebraic point $\xi$, Gel'fond's method for algebraic independence requires a more sophisticated result, namely the fact that {\it there is no non--trivial uniform sequence of polynomials taking small values at a given transcendental number. }

 Here is a  version of this Transcendence Criterion
\cite{MR45:3333,
MR2000b:11088}.

\begin{thm}[Gel'fond's Transcendence Criterion] \label{T:GTC}
  Let  \m{\xi\in\bC}. Assume there is a sequence  \m{(P_N)_{N\ge N_0}} of non--zero polynomials in   \m{\bZ[X]}, where   \m{P_N} has degree  \m{\le N} and  height  \m{\rmH(P_N) \le e^N}, for which 
 \M{
|P_N(\xi)|\le e^{-6N^2}.
}
Then  $\xi$ is algebraic and  \m{P_N(\xi)=0} for all  \m{N\ge N_0}. 
\end{thm}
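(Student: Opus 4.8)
The plan is to run the classical Gel'fond-type argument, whose skeleton is: from one polynomial of controlled degree and height taking a tiny value at $\xi$, build a polynomial in a nearby variable that cannot be too small unless it vanishes, forcing a chain of resultant/elimination arguments that ultimately pins $\xi$ down as algebraic. Concretely, fix a large $N$ and consider the two polynomials $P_N$ and $P_{N+1}$ supplied by the hypothesis, each of degree $\le N+1$, height $\le e^{N+1}$, and with $|P_{N}(\xi)|, |P_{N+1}(\xi)| \le e^{-6N^2}$. First I would dispose of the trivial case: if $P_N$ and $P_{N+1}$ share a common factor for infinitely many $N$, one extracts from it a fixed polynomial vanishing at $\xi$ (up to bounding heights of factors via Gauss's lemma / Mahler measure estimates), and then $\xi$ is algebraic and a short induction shows $P_M(\xi)=0$ for all $M\ge N_0$. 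So the real work is the case where $P_N$ and $P_{N+1}$ are coprime.

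In that case I would form the resultant $R_N=\mathrm{Res}(P_N,P_{N+1})\in\bZ$, which is a nonzero integer, hence $|R_N|\ge 1$. On the other hand, the resultant admits the standard ``near-vanishing'' estimate at $\xi$: writing $R_N$ as a determinant of the Sylvester matrix, one substitutes the relation $P_N=(P_N(\xi)) + (P_N - P_N(\xi))$ and similarly for $P_{N+1}$, so that $R_N$ splits into a term that is a combination of $P_N(\xi)$ and $P_{N+1}(\xi)$ (hence of absolute value $\lesssim$ (height)$^{O(N)}\cdot e^{-6N^2}$) plus a term that measures how close the two curves come near $\xi$ — but since $P_N, P_{N+1}$ are coprime, $\mathrm{Res}$ is (up to roots and contributions) governed by the product of the $|P_{N+1}|$ evaluated at the roots of $P_N$, and the root of $P_N$ nearest $\xi$ is close to $\xi$ by Lemma~\ref{L:XimoinsGamma}. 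Quantitatively, combining the height bounds $\rmH(P_N)\le e^N$, the degree bound $\le N$, Lemma~\ref{L:XimoinsGamma} (to locate a root $\alpha$ of $P_N$ with $|\xi-\alpha|\le N^{3N}\rmH(P_N)^{2N}|P_N(\xi)| \le e^{N^2+2N^2}e^{-6N^2}=e^{-3N^2+o(N^2)}$), and Lemma~\ref{L:EasyPart} (to bound $|P_{N+1}(\alpha)|\le |P_{N+1}(\xi)| + |\xi-\alpha|\cdot LD(1+|\xi|)^{D-1}$, still of size $e^{-3N^2+o(N^2)}$), one gets $1\le |R_N|\le e^{-3N^2 + C N^2 \cdot \text{(something)}}$; the point of the constant $6$ in the exponent $e^{-6N^2}$ is exactly to make this final inequality $1\le e^{-cN^2}$ with $c>0$ for $N$ large, a contradiction.

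The contradiction shows $P_N$ and $P_{N+1}$ are \emph{not} coprime for all large $N$, which puts us back in the first case: a nontrivial gcd persists, and iterating (replacing $P_N$ by $\gcd(P_N,P_{N+1})$ and tracking that degrees are nonincreasing while heights stay polynomially controlled) produces a fixed nonzero $Q\in\bZ[X]$ with $Q(\xi)=0$, so $\xi$ is algebraic. Finally, knowing $\xi$ is algebraic of some degree $d$ and height $h$, Liouville's inequality for $|P_M(\xi)|$ when $P_M(\xi)\ne 0$ gives $|P_M(\xi)|\ge c(\xi)\rmH(P_M)^{-d}\ge c(\xi)e^{-dM}$, which for $M$ large contradicts $|P_M(\xi)|\le e^{-6M^2}$; hence $P_M(\xi)=0$ for all $M\ge N_0$ (the small initial range is handled by noting the same Liouville bound, or simply that the hypothesis is assumed for all $N\ge N_0$).

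I expect the main obstacle to be the bookkeeping in the coprime case: making the resultant estimate fully rigorous requires care in (i) bounding the height of the gcd and cofactors when factoring in $\bZ[X]$ (Gauss's lemma plus a Mahler-measure inequality $M(fg)=M(f)M(g)$ and $\rmH(f)\le 2^{\deg f}M(f)$), and (ii) tracking that all the ``lower order'' exponents really are $o(N^2)$ and not secretly $\Theta(N^2)$ with a bad constant — this is precisely what dictates the numerical value $6$ and is the place where an off-by-a-constant error would sink the argument. The appeals to Lemma~\ref{L:EasyPart} and Lemma~\ref{L:XimoinsGamma} are the clean inputs; everything else is the elimination-theory plumbing.
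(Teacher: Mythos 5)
The coprime half of your dichotomy is essentially sound: writing $\mathrm{Res}(P_N,P_{N+1})=U P_N+V P_{N+1}$ with $U,V\in\bZ[X]$ whose coefficients are minors of the Sylvester matrix (so of absolute value $e^{2N^2+o(N^2)}$), one gets $1\le |\mathrm{Res}(P_N,P_{N+1})|\le e^{2N^2+o(N^2)}e^{-6N^2}<1$ for large $N$, a contradiction. The genuine gap is in the non--coprime case, which is the case that actually occurs and carries all the content. From the vanishing of the resultant you only learn that $P_N$ and $P_{N+1}$ share \emph{some} irreducible factor; nothing forces that factor to vanish at $\xi$, to be small at $\xi$, or to be the same factor for different $N$. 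Concretely, nothing in your scheme excludes $P_N=A\cdot B_N$ with a fixed $A$ having no root near $\xi$ and with pairwise coprime $B_N$ carrying all the smallness $|B_N(\xi)|$: then every consecutive pair has a common factor, your resultant contradiction never fires, and $\gcd(P_N,P_{N+1})=A$ does not vanish at $\xi$, so the step ``iterate the gcd to produce a fixed $Q$ with $Q(\xi)=0$'' is unjustified. The missing idea, which is exactly how the paper's sketch proceeds, is to run the elimination not on $P_N,P_{N+1}$ but on the factor of $P_N$ responsible for the smallness: by Lemma~\ref{L:XimoinsGamma}, $\xi$ is very close to a root $\alpha_N$ of $P_N$; let $Q_N$ be the power of the irreducible polynomial of $\alpha_N$ dividing $P_N$ (heights of factors controlled by Gauss/Mahler, as you note). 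Then $|Q_N(\xi)|\le e^{-cN^2}$, and for the pair $Q_N,Q_{N+1}$ there is no escape: either the resultant is a nonzero integer, contradicting the size estimate, or it vanishes, and since both are powers of irreducible polynomials this forces the \emph{same} irreducible polynomial, hence $\alpha_N$ independent of $N$; since $|\xi-\alpha_N|\to 0$, we get $\xi=\alpha_N$, so $\xi$ is algebraic and $Q_N(\xi)=0$, whence $P_N(\xi)=0$ for every $N\ge N_0$.

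Two further slips are worth flagging. First, in your root--based estimate you dropped the multiplicity: Lemma~\ref{L:XimoinsGamma} bounds $|\xi-\alpha|^k$, not $|\xi-\alpha|$, so if the nearest root has multiplicity $k$ comparable to $N$ you only obtain $|\xi-\alpha|\le e^{-cN}$ rather than $e^{-cN^2}$, and your bound for $|P_{N+1}(\alpha)|$ via Lemma~\ref{L:EasyPart}, hence your product--over--roots estimate of the resultant, collapses; this is precisely why one works with the full power $Q_N$ (whose value at $\xi$ regains the $e^{-cN^2}$ smallness) or with the $UP+VQ$ identity. Second, your Liouville endgame yields $P_M(\xi)=0$ only for $M$ large compared with $\deg\xi$ and the Liouville constant, whereas the theorem asserts it for all $M\ge N_0$; on the route through $Q_M$ this is automatic, since $Q_M(\xi)=0$ and $Q_M$ divides $P_M$ for every $M\ge N_0$.
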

 

\noindent 
\begin{proof}[sketch of]
The idea of the proof is basically the same as for Lemma~\ref{L:Khinchin} which was dealing with degree $1$ polynomials: one eliminates the variable using two consecutive elements of the sequence of polynomials. In degree $1$ linear algebra was sufficient. For higher degree the resultant of polynomials is a convenient substitute. 

Fix \m{N\ge N_0}. Since \m{|P_N(\xi)|} is small, $\xi$ is close to a root \m{\alpha_N} of \m{P_N}, hence \m{P_N} is divisible by a power \m{Q_N} of the irreducible polynomial of \m{\alpha_N} and \m{|Q_N(\xi)|} is small. The resultant of the two polynomials  \m{Q_N} and \m{Q_{N+1}} has absolute value \m{<1}, hence it vanishes and therefore \m{\alpha_N} does not depend on \m{N}. 
  \hfill  \hbox{\qed} \null

\end{proof}

  In  1969,  H.~Davenport and W.M.~Schmidt (\cite{MR0246822} Theorem 2b) prove the next  variant of Gel'fond's Transcendence Criterion, where now the degree is fixed. 
   
\begin{thm}[Davenport and Schmidt] \label{T:DavenportSchmidtCriterion}
      Let $\xi$ be a real number and \m{n\ge 2} a positive integer.
    Assume that for each sufficiently large positive integer \m{N} 
 there exists a non--zero polynomial \m{P_N\in\bZ[X]} of degree \m{\le n} and usual height \m{\le N} for which 
    \M{
    |P_N(\alpha)|\le N^{-2 n+1}.
    }
Then $\xi$ is algebraic of degree $\le n$. 
\end{thm}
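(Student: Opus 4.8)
The plan is to imitate, in the fixed-degree setting, the proof of Gel'fond's Transcendence Criterion (Theorem~\ref{T:GTC}) — and one degree below it, of Lemma~\ref{L:Khinchin} — with the resultant of two consecutive polynomials playing the role that linear-algebra elimination played there; the \emph{uniformity} of the hypothesis (a good $P_N$ for \emph{every} large $N$, not just infinitely many) is what will be used, exactly as in Lemma~\ref{L:Khinchin}. Assume $\xi$ is not algebraic of degree $\le n$ and seek a contradiction; then $P_N(\xi)\neq 0$ for every $N$, since a nonzero polynomial of $\bZ[X]$ of degree $\le n$ cannot vanish at $\xi$.

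First I would pass from ``small value'' to ``close to a root''. If $P_N=a\prod_i(X-\beta_i)$ with $a\in\bZ\setminus\{0\}$, $\deg P_N=d\le n$, and $\alpha_N$ is a root nearest to $\xi$, then each $|\xi-\beta_i|\ge|\xi-\alpha_N|$ gives $|\xi-\alpha_N|^d\le|a|\,|\xi-\alpha_N|^d\le|P_N(\xi)|\le N^{-2n+1}$, so (using $d\le n$ and $1-2n<0$) $|\xi-\alpha_N|\le N^{(1-2n)/d}\le N^{1/n-2}\to 0$. Then I would replace $P_N$ by the primitive minimal polynomial $Q_N\in\bZ[X]$ of $\alpha_N$: since $Q_N\mid P_N$ in $\bZ[X]$, the standard bound on heights of factors gives $\deg Q_N\le n$ and $\rmH(Q_N)\ll_n\rmH(P_N)\le N$, and Lemma~\ref{L:EasyPart} applied to $Q_N$ and its root $\alpha_N$ yields $|Q_N(\xi)|\ll_{n,\xi}N\,|\xi-\alpha_N|\le N^{1/n-1}\to 0$.

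For the elimination step, form $R_N=\mathrm{Res}(Q_N,Q_{N+1})\in\bZ$ for $N$ large. If $Q_N$ and $Q_{N+1}$ are coprime then $R_N\neq 0$, so $|R_N|\ge 1$; writing a B\'ezout identity $R_N=A_N(X)Q_N(X)+B_N(X)Q_{N+1}(X)$ with $A_N,B_N\in\bZ[X]$ of degree $<n$ and coefficients among the minors of the Sylvester matrix of $Q_N$ and $Q_{N+1}$, one gets $|R_N|\le|A_N(\xi)|\,|Q_N(\xi)|+|B_N(\xi)|\,|Q_{N+1}(\xi)|$, and if this can be pushed below $1$ we reach a contradiction. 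If instead $Q_N=\pm Q_{N+1}$, then either $\alpha_N=\alpha_{N+1}$, in which case this common value $\alpha$ satisfies $|\xi-\alpha|\le N^{1/n-2}$ for all large $N$, forcing $\xi=\alpha$ and contradicting the assumption on $\xi$; or $\alpha_N\neq\alpha_{N+1}$ are distinct roots of one integer polynomial of degree $\le n$ and height $\ll_n N$, hence at distance $\gg_n N^{-(n-1)}$ by root separation, to be weighed against $|\alpha_N-\alpha_{N+1}|\le 2N^{1/n-2}$ (for $n=2$ this alone closes the case, the separation of two roots of a quadratic of height $\le N$ being $\ge 1/N>2N^{-3/2}$).

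The hard part is making these quantitative comparisons go through with exactly the exponent $2n-1$; a purely mechanical use of the resultant does not suffice. Indeed the B\'ezout coefficients only satisfy $\rmH(A_N)\ll_n N^{2n-1}$, which with $|Q_N(\xi)|\le N^{1/n-1}$ gives merely $|R_N|\ll_{n,\xi}N^{2n-2+1/n}$, far larger than $1$; and the root-separation bound $N^{-(n-1)}$ beats $N^{1/n-2}$ only when $n=2$. The obstruction is that the remaining roots of $Q_N$ may be as large as $\ll_n N$ and feed positive powers of $N$ into these estimates. To overcome it I expect one must either track the Mahler measure of $Q_N$ and exploit that $\alpha_N$ is the \emph{only} root of $Q_N$ near $\xi$ (so the other factors of $R_N$ are far more constrained), or — more robustly — abandon the single pair $(P_N,P_{N+1})$ for the family $\{P_M:\ N\le M\le cN\}$: all these nonzero integer vectors lie in the convex body $\{x\in\bR^{n+1}:\ \|x\|_\infty\le cN,\ |x_0+x_1\xi+\dots+x_n\xi^n|\le N^{-2n+1}\}$, of volume $\ll_{n,\xi}N^{1-n}<1$ for $N$ large, so a volume comparison shows they cannot span $\bR^{n+1}$ and hence lie in a proper rational subspace $V_N$; when $\dim V_N=1$ they are all proportional to one integer vector $v$ with $|v_0+v_1\xi+\dots+v_n\xi^n|\le N^{-2n+1}\to 0$, i.e. $\xi$ is algebraic of degree $\le n$, and the remaining cases follow by running the same scheme inside $V_N$, i.e. by induction on $n$ with Lemma~\ref{L:Khinchin} as the base case.
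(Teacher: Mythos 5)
The paper does not actually prove this statement: it quotes it from Davenport and Schmidt \cite{MR0246822} (their Theorem 2b), and the only proof it sketches in this circle of ideas is the resultant elimination for Gel'fond's criterion (Theorem~\ref{T:GTC}). So your proposal has to stand on its own, and it does not yet. Your first two attacks are fine as far as they go, and you diagnose their failure correctly yourself: the B\'ezout/resultant bound gives $|R_N|\ll N^{2n-2+1/n}$ instead of $<1$, and the root-separation argument only closes the case $n=2$. The problem is that the repair you then propose has a genuine gap exactly where the real difficulty of the theorem sits. The convex-body step is essentially sound (with small fixes: the threshold should be a dimensional constant such as $1/(n+1)!$ rather than $1$, and in the rank-one case the vector $v$ a priori depends on $N$, so you need the overlap of the windows $[N,cN]$ and $[N+1,c(N+1)]$ to chain them into a single $v$ before letting $N\to\infty$). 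But the concluding sentence ``the remaining cases follow by running the same scheme inside $V_N$, i.e.\ by induction on $n$'' is not a proof and cannot work as stated. First, $V_N$ varies with $N$ and nothing you say forces it to stabilize. More fundamentally, once you restrict to a proper rational subspace, the induced problem is the uniform approximation of a \emph{general} linear form (equivalently, of a general point of a lower-dimensional projective space) by integer points of a sublattice; it is no longer of the shape $(1,\eta,\eta^2,\ldots,\eta^k)$, so there is no smaller instance of the theorem to invoke. And for general tuples the analogous conclusion is simply false: uniform smallness for every large $N$ does not force a rational relation, by Khintchine's singular systems --- the very phenomenon the paper recalls just after (\ref{E:omegachapeau2}), where it notes that $\omegahat_2(\alpha,\beta)$ may be infinite while the theorem you are proving says this cannot happen when $\beta=\alpha^2$.

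In other words, your scheme uses the hypothesis that the point is $(1,\xi,\ldots,\xi^n)$ only through degree bookkeeping, whereas any correct proof must exploit this algebraic structure in an essential way (Davenport and Schmidt do so through their sequence of minimal points and the fact that the space of polynomials of degree $\le n$ interacts with multiplication by $X$, which lets them manufacture enough linearly independent integer points with small values before applying the determinant estimate $|\det|\ll N^{n}\cdot N^{-2n+1}<1$). Until you supply an argument of that kind for the degenerate case --- i.e.\ when the good polynomials persistently lie in a proper subspace --- the proposal establishes nothing beyond the case where the points span $\bR^{n+1}$, and the theorem remains unproved.
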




The next  sharp version of Gel'fond's Transcendence Criterion \ref{T:GTC}, restricted to  quadratic polynomials, is due to B.~Arbour and  D.~Roy   2004  \cite{MR2038064}.

\begin{thm}[Arbour and  Roy]\label{Th:ArbourRoy}
 Let $\xi$ be a complex number.  Assume that there exists $N_0>0$ such that, for any  \m{N\ge N_0}, there exists a polynomial \m{P_N\in\bZ[X]} of degree \m{\le 2} and height \m{\le N} satisfying
  \M{
  |P_N(\xi)|\le \frac{1}{4} N^{-\gamma-1}.
  }
 Then $\xi$ is algebraic of degree $\le 2$ and   \m{P_N(\xi)=0}  for all $N\ge N_0$. 
  
  \end{thm}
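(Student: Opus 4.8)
The plan is to adapt the elimination idea behind Gel'fond's Transcendence Criterion (Theorem~\ref{T:GTC}) and the proof of Lemma~\ref{L:Khinchin}: from two of the approximating polynomials one forms their resultant, a rational integer, and one plays ``a nonzero integer has absolute value $\ge 1$'' against an upper bound for that resultant coming from the smallness of the values $|P_N(\xi)|$. The point is that one cannot simply take two consecutive polynomials $P_N,P_{N+1}$ --- that only reaches the Davenport--Schmidt exponent $3$ of Theorem~\ref{T:DavenportSchmidtCriterion}, whereas here (recall $\gamma=(1+\sqrt5)/2$, so $\gamma+1=\gamma^2=2.618\ldots$) the exponent is strictly smaller. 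The sharp exponent must instead be extracted from a thinned-out subsequence of best-approximation polynomials, whose heights obey a Fibonacci-type recursion in which the golden ratio is precisely the extremal regime.

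First I would normalise. Dividing each $P_N$ by its content only decreases $\rmH(P_N)$ and $|P_N(\xi)|$, so assume every $P_N$ is primitive; and since the hypothesis is made for \emph{every} $N\ge N_0$, the function $w(M)=\min\{|P(\xi)| : P\in\bZ[X]\setminus\{0\},\ \deg P\le 2,\ \rmH(P)\le M\}$ is non-increasing, satisfies $0\le w(M)\le \tfrac14 M^{-\gamma^2}$ for all $M\ge N_0$, and (by Dirichlet's box principle, Lemma~\ref{L:BoxPrinciplePolynomes}) satisfies $w(M)\le c(\xi)M^{-2}$ for all $M$. Let $N_0\le N_1<N_2<\cdots$ be the heights at which $w$ strictly drops and choose primitive $Q_i\in\bZ[X]$, $\deg Q_i\le 2$, $\rmH(Q_i)=N_i$, with $|Q_i(\xi)|=w(N_i)$; then $w$ is constant equal to $|Q_i(\xi)|$ on $[N_i,N_{i+1})$ and $|Q_{i+1}(\xi)|<|Q_i(\xi)|$. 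Assume, for contradiction, that $\xi$ is not algebraic of degree $\le 2$, so this sequence is infinite and $N_i\to\infty$.

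The core is to estimate $\mathrm{Res}(Q_i,Q_{i+1})$. Two distinct primitive polynomials of degree $\le 2$ cannot share a quadratic factor, so either $Q_i$ and $Q_{i+1}$ are coprime or they have exactly one common linear factor; a case analysis over three consecutive indices shows the coprime case must occur infinitely often (a persistent common linear factor would produce rational approximations to $\xi$ contradicting $\omegahat(\xi)=1$ from Lemma~\ref{L:Khinchin}). In the coprime case $\mathrm{Res}(Q_i,Q_{i+1})\in\bZ\setminus\{0\}$; on the other hand, performing on the $4\times 4$ Sylvester matrix the column operation that introduces the values $\xi^{j}Q_i(\xi)$ and $\xi^{j}Q_{i+1}(\xi)$ into one of its columns and expanding along that column gives
$$
1\le |\mathrm{Res}(Q_i,Q_{i+1})|\le c_1(\xi)\bigl(|Q_i(\xi)|\,N_i N_{i+1}^{2}+|Q_{i+1}(\xi)|\,N_i^{2}N_{i+1}\bigr),
$$
together with a companion estimate for $\mathrm{Res}(Q_{i-1},Q_{i+1})$. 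Inserting $|Q_i(\xi)|=w(N_{i+1}-1)\le\tfrac14(N_{i+1}-1)^{-\gamma^2}$ (and similarly for $Q_{i+1}$ and $Q_{i-1}$), and using $\gamma^2=\gamma+1$ and $\gamma(\gamma-1)=1$, one extracts a two-sided relation tying $N_{i+1}$ to $N_i$ and $N_{i-1}$ whose extremal solution is the Fibonacci-type growth $N_i\asymp\theta^{\,F_i}$; tracing it back gives $w(N_i)\asymp N_i^{-\gamma^2}$, and the value of the implied constant --- the part that makes the golden ratio the forced exponent --- turns out to exceed $\tfrac14$ for all large $i$, contradicting $w(N_i)\le\tfrac14 N_i^{-\gamma^2}$. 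Hence $\xi$ is algebraic of degree $\le 2$. The step I expect to be the genuine obstacle is exactly this last one: turning the resultant inequalities and the height recursion into a lower bound for $|Q_i(\xi)|$ with a \emph{numerically explicit} constant larger than $\tfrac14$. This forces an essentially optimal treatment of the leading coefficients, of polynomials with a (near-)double root, of the degenerate degree-$1$ case, and of the loss in passing from $N_{i+1}-1$ to $N_{i+1}$; crude tools such as Lemma~\ref{L:XimoinsGamma} are far too lossy for this.

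It remains to deduce $P_N(\xi)=0$ for every $N\ge N_0$. Let $m\in\bZ[X]$ be the primitive minimal polynomial of $\xi$, of degree $1$ or $2$. If $P_N(\xi)\ne 0$ then $m\nmid P_N$, so $\gcd(m,P_N)=1$ and $\mathrm{Res}(m,P_N)\in\bZ\setminus\{0\}$; the same Sylvester-column device, now simplified by $m(\xi)=0$, gives
$$
1\le|\mathrm{Res}(m,P_N)|\le c_2(\xi)\,\rmH(m)^{2}\,|P_N(\xi)|\,N\le \tfrac14 c_2(\xi)\,\rmH(m)^{2}\,N^{\,1-\gamma^2},
$$
which is $<1$ because $\gamma^2>1$; this contradiction forces $P_N(\xi)=0$ once $N$ is beyond a bound depending only on $\xi$, and since $\xi$ is now known to be quadratic the remaining finitely many $N\ge N_0$ are handled by the same resultant inequality together with the constraint that the hypothesis itself already imposes on $\rmH(m)$ and $N_0$. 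Thus $P_N(\xi)=0$ for all $N\ge N_0$.
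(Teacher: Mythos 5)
There is nothing to compare against inside the paper itself: the survey only states this theorem and refers to Arbour and Roy \cite{MR2038064}, whose proof is indeed an elimination/resultant argument on a sequence of record (minimal) polynomials in the Davenport--Schmidt style, so your general strategy is the right one in spirit. But your text is a plan, not a proof, and it stops exactly where the theorem begins. The decisive step --- turning the resultant inequalities for $Q_{i-1},Q_i,Q_{i+1}$ into a height recursion and then into a lower bound $w(N_i)>\tfrac14 N_i^{-\gamma-1}$ with that \emph{explicit} constant --- is merely asserted (``the implied constant turns out to exceed $\tfrac14$''), and you yourself flag it as ``the genuine obstacle.'' This is not a routine constant chase: Theorem~\ref{T:Roy} shows that there are transcendental $\xi$ for which $|P_N(\xi)|\le c\,N^{-\gamma-1}$ is solvable for all large $N$ once $c$ is a suitable larger constant, so the conclusion is false if $\tfrac14$ is relaxed; the numerical constant is the whole content of the statement. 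Moreover your resultant bounds carry $\xi$-dependent constants $c_1(\xi),c_2(\xi)$, and no contradiction with the absolute constant $\tfrac14$ can be extracted until these are removed (by normalising $\xi$, exploiting invariance, or working with a modified height) --- none of which is addressed.

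Two further gaps. First, the disposal of common linear factors by saying a persistent one ``would produce rational approximations to $\xi$ contradicting $\omegahat(\xi)=1$ from Lemma~\ref{L:Khinchin}'' does not stand: $\xi$ is complex, Lemma~\ref{L:Khinchin} is a statement about real numbers and about approximations available for \emph{every} $N$, and a shared linear factor of two record polynomials need not take a small value at $\xi$ at all (the smallness of $|Q_i(\xi)|$ may sit entirely in the cofactor), so no rational approximation to $\xi$ is produced; the coprimality issue has to be handled inside the elimination scheme itself, with the same care about constants. Second, the final clause $P_N(\xi)=0$ for \emph{all} $N\ge N_0$ is again a matter of explicit constants: your bound $1\le c_2(\xi)\,\rmH(m)^2|P_N(\xi)|N$ only yields vanishing for $N$ beyond a $\xi$-dependent threshold, and the sentence claiming the remaining $N$ are ``handled by the same resultant inequality together with the constraint that the hypothesis itself already imposes'' is not an argument --- one must actually bound $\rmH(m)$ and the conjugate of $\xi$ in terms of the data (e.g.\ via $m\mid P_{N_0}$ and a Gel'fond-type height inequality) and check the numbers work down to $N_0$. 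As it stands, the proposal identifies the correct framework but leaves the quantitative heart of the theorem unproved.
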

  
 Variants of the transcendence criterion have been considered by D.~Roy in connection with his new approach towards Schanuel's Conjecture   \cite{MR1756786} 
\S~15.5.3:

 \begin{conj}[Schanuel] \label{C:Schanuel}
    Let \m{x_1,\ldots,x_n} be $\bQ$-linearly independent complex numbers. Then \m{n} at least of the \m{2n} numbers  \m{x_1,\ldots,x_n,\; e^{x_1}, \ldots, e^{x_n}} are algebraically independent.
    \end{conj}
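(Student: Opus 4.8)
\emph{Remark on the status of Conjecture~\ref{C:Schanuel}.} The plan is constrained by an inconvenient fact: this is one of the central open problems of transcendental number theory, and no proof is known. What follows is the natural line of attack together with an honest account of where it stalls. The first thing I would do is isolate the cases that \emph{are} accessible, to see what a proof would have to add. For $n=1$ the statement is precisely the Hermite--Lindemann theorem: if $x_1\neq 0$ is algebraic then $e^{x_1}$ is transcendental, and if $x_1$ is transcendental there is nothing to prove. For arbitrary $n$ with all $x_i$ algebraic it is the Lindemann--Weierstrass theorem (algebraic independence of $e^{x_1},\dots,e^{x_n}$ when the algebraic numbers $x_i$ are $\bQ$-linearly independent). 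When the $x_i$ are logarithms of algebraic numbers, only the linear-independence shadow of the conclusion is known --- that is Baker's theorem on linear forms in logarithms --- while the full assertion (algebraic independence of $\bQ$-linearly independent logarithms of algebraic numbers) is itself still open. So the real target is the genuinely mixed case, in which some $x_i$ and some $e^{x_i}$ are transcendental with no algebraic relation among them available a priori.

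The method of choice would be the Gel'fond--Schneider circle of ideas in the refined form that underlies Theorem~\ref{T:GTC}. Assume the conclusion fails, so that the field $\bQ(x_1,\dots,x_n,e^{x_1},\dots,e^{x_n})$ has transcendence degree at most $n-1$, and try to derive a contradiction by an auxiliary-function argument. Concretely one would build a nonzero polynomial, or a family $(P_N)$ of bounded degree and height, in these $2n$ quantities that vanishes to high order at many points of the additive group $\bZ x_1+\cdots+\bZ x_n$, using that $\exp$ carries such a point to a monomial in the $e^{x_i}$; then one plays a Liouville lower bound for the value of $P_N$ against an analytic upper bound produced by the many imposed zeros --- exactly the tension exploited in the sketch of Theorem~\ref{T:GTC}, where the resultant of two consecutive $P_N$ is forced to vanish. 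Roy's reformulation (the conjecture in \cite{MR1756786} \S~15.5.3) shows that Schanuel's Conjecture is equivalent to a purely approximation-theoretic statement about families of integer polynomials compatible with the exponential, so the plausible route is to push the transcendence criteria of \S~\ref{SS:PASASN-GTC}, including the sharp quadratic version of Arbour--Roy (Theorem~\ref{Th:ArbourRoy}), far enough to cover that reformulation.

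The hard part --- and the reason the conjecture is still open --- is the zero estimate that such an argument requires: one must bound the order of vanishing of a polynomial along the orbit of $\bZ x_1+\cdots+\bZ x_n$ under $\exp$, uniformly in the data, and the available multiplicity estimates on algebraic groups (Masser--W\"ustholz and their successors) are simply not strong enough to close the gap between the analytic and arithmetic sides once transcendental coordinates enter the picture. Equivalently, the induction on ``number of algebraically independent coordinates'' that succeeds in the algebraic case of Lindemann--Weierstrass cannot even be started here, because one has no way to manufacture the nonzero polynomial relation needed to feed the criterion. Even drastically restricted instances --- the four exponentials conjecture, or Schanuel for $n=2$ with a generic pair $x_1,x_2$ --- lie beyond current technique, which strongly suggests that a genuinely new idea, going past sharpening the resultant-and-auxiliary-function machinery of \S~\ref{SS:PASASN-GTC}, would be needed.
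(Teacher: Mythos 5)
You are right not to offer a proof: the statement is Schanuel's Conjecture, which the paper states as Conjecture~\ref{C:Schanuel} precisely because it is open, and the paper contains no proof of it -- it is quoted only as motivation for Roy's equivalent Diophantine approximation conjecture and the related transcendence criteria of \S~\ref{SS:PASASN-GTC}. Your survey of what is known is essentially accurate (the $n=1$ case is Hermite--Lindemann, the all-algebraic case is Lindemann--Weierstrass, Baker's theorem gives only the linear-independence shadow for logarithms, and even the four exponentials conjecture remains out of reach), and your identification of the obstruction -- the lack of sufficiently strong zero/multiplicity estimates once transcendental coordinates enter, so that the auxiliary-function-plus-elimination machinery behind Theorem~\ref{T:GTC} cannot be closed -- matches the standard understanding and the reason the paper treats Roy's reformulation as a possible path rather than a completed argument. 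In short, there is no gap to repair: an honest account of the state of the problem is the only correct response here, and yours is consistent with the paper.
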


In  \cite{MR2001m:11120,MR2002e:32017}, D.~Roy states a Diophantine approximation conjecture, which he shows to be equivalent to Schanuel's Conjecture \ref{C:Schanuel}.


Roy's Conjecture involves polynomials for which bounds for the degree, height and absolute values at given points are assumed. The first main difference with Gel'fond's situation is that the smallness of the values is not as strong as what is achieved by Dirichlet's box principle when a single polynomial is constructed. Hence elimination arguments cannot be used without further ideas. On the other hand the assumptions involve not only one polynomial for each $n$ as in Theorem \ref{T:GTC}, but a collection of polynomials, and they are strongly related. This new situation raises challenging questions on which some advances has already been achieved. In particular in \cite{MR2000b:11088} Laurent and Roy obtain variants of Gel'fond's Criterion \ref{T:GTC} involving multiplicities. Further progress has been subsequently made by D.~Roy in 
\cite{RoyDamienSmallValueAdditive,RoyDamienSmallValueMultplicative}. 

\smallskip

We shall consider extensions of the Transcendence Criterion  to criteria for algebraic independence in  \S~\ref{SS:SDAHD-CAI}.

\subsection{Polynomial approximation to a single number
}
\label{SS:PASASN-PACN}

  A simple application of Dirichlet's box principle (see the proof of  Lemma 8.1 in 
\cite{MR2136100}) yields the existence of polynomials with small values at a given real point:

   \begin{lemme}  \label{L:BoxPrinciplePolynomes}
Let $\xi$ be a real number and $n$ a positive integer.   
Set   $c=(n+1)\max\{1,|\xi|\}^n$. Then, for each positive integer  \m{N}, there exists a non--zero polynomial \m{P\in\bZ[X]}, of degree \m{\le n} and usual height \m{\rmH(P) \le N}, satisfying
    \M{
    |P(\xi)|\le c N^{- n}.
    }
\end{lemme}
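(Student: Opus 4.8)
The plan is to use Dirichlet's box (pigeonhole) principle applied to the values of candidate polynomials at $\xi$. First I would set up the family of test polynomials: for integers $a_0,a_1,\dots,a_n$ with $0\le a_i\le N$, consider $P(X)=a_0+a_1X+\cdots+a_nX^n$. There are $(N+1)^{n+1}$ such tuples, and each gives a value $P(\xi)=\sum_{i=0}^n a_i\xi^i$ lying in a bounded real interval. The point is to estimate the length of that interval: since $0\le a_i\le N$, we have $|P(\xi)|\le N\sum_{i=0}^n|\xi|^i\le N(n+1)\max\{1,|\xi|\}^n = cN$, so all the values $P(\xi)$ lie in an interval of length at most $2cN$ (or one can be slightly more careful and work with the interval $[\,\min, \max\,]$ of length $\le cN$ after noting the values with all $a_i$ of one sign range over an interval of that length).

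Next I would partition that interval into $(N+1)^{n+1}-1$ (or simply $N^{n+1}$, adjusting constants) subintervals of equal length, which is at most $2cN / ((N+1)^{n+1}-1) \le c' N^{-n}$ for a suitable constant depending only on $n$ and $\xi$. By the pigeonhole principle, since there are more tuples than subintervals, two distinct tuples $(a_0,\dots,a_n)$ and $(a_0',\dots,a_n')$ yield values $P(\xi)$ and $P'(\xi)$ falling in the same subinterval. Then $Q:=P-P'$ is a polynomial in $\bZ[X]$ with coefficients $a_i-a_i'$ satisfying $|a_i-a_i'|\le N$, so $\rmH(Q)\le N$, and $Q$ is nonzero because the tuples differ, while $|Q(\xi)|=|P(\xi)-P'(\xi)|$ is at most the length of one subinterval, hence $\le cN^{-n}$ after tracking the constant.

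The only genuine care needed — and the step most likely to be fiddly — is bookkeeping the constant so that it comes out exactly as $c=(n+1)\max\{1,|\xi|\}^n$ rather than some larger multiple. This is handled by being slightly smarter in the counting: rather than partitioning a symmetric interval of length $2cN$ into $N^{n+1}$ pieces, one observes that as the $a_i$ range over $\{0,1,\dots,N\}$ the values $P(\xi)$ all lie in the interval between $0$ (not quite, but one can translate) of length $cN$ exactly, and one uses $(N+1)^{n+1}$ tuples against $N^{n+1}$ subintervals, noting $(N+1)^{n+1}\ge N^{n+1}+1$. Then each subinterval has length $cN/N^{n+1}=cN^{-n}$, giving the claimed bound. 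Everything else is routine: the degree bound $\deg Q\le n$ is immediate, nonvanishing follows from distinctness of the tuples, and the height bound is automatic from the coefficient size.
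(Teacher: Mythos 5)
Your argument is correct and is precisely the standard Dirichlet box-principle proof that the paper invokes (by reference to Lemma 8.1 of Bugeaud's book): compare the $(N+1)^{n+1}$ values $a_0+a_1\xi+\cdots+a_n\xi^n$ with $0\le a_i\le N$, which lie in an interval of length $N\sum_{i=0}^n|\xi|^i\le cN$, against $N^{n+1}$ subintervals of length $cN^{-n}$, and take the difference of two colliding tuples. Your bookkeeping of the constant $c=(n+1)\max\{1,|\xi|\}^n$ and of the height, degree and nonvanishing of the difference polynomial is exactly right, so there is nothing to add.
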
  

Variants of this lemma rely on the geometry of numbers: for instance from Th.\ B2 in \cite{MR2136100} one deduces  that in the case 
  $0<|\xi|< 1/2$, if $N\ge 2$, then  the conclusion holds also with $c=1$ (see the proof of  Prop. 3.1 in \cite{MR2136100}).

Theorem \ref{T:DirichletUniforme} in  \S\ref{S:RAAN} yields a refined estimate for the special case $n=1$.  The statement is plain in the case where $\xi$ is algebraic of degree $\le n$ as soon as $N$ exceeds the height of the irreducible polynomial of $\xi$ -- this is why when $n$ is fixed we shall most often assume that $\xi$ is either transcendental or else algebraic of degree $>n$.

The fact that the exponent $n$ in Lemma~\ref{L:BoxPrinciplePolynomes}  cannot be replaced by a larger number  (even if we ask only such a solution for infinitely many $N$) was proved by 
Sprind{\v{z}}uk \cite{MR0245527}, who showed in 1965 that {\it  for $\xi$ outside a set of Lebesgue measure zero and for each $\epsilon>0$, there are only finitely many non--zero integer polynomials of degree at most $n$ with 
$$
|P(\xi)|\le \rmH(P)^{-n-\epsilon}.
$$
}

We introduce, for each positive integer $n$ and each real number   $\xi$,
 two exponents $\omega_n(\xi)$ and $\omegahat_n(\xi)$ as follows.

The number  \m{\omega_n(\xi)} denotes the supremum of the real numbers $w$  for which  there exist    {infinitely many} positive integers  \m{N} for which the system of inequalities 
\begin{equation}\label{E:omega_n}
0<   |x_0+x_1\xi+\cdots+x_n\xi^n|\le N^{-w},
   \quad 
  \max_{0\le i\le n} |x_i| \le N
\end{equation}
  has a solution in rational integers \m{x_0,x_1,\ldots,x_n}. The inequalities  (\ref{E:omega_n}) can be written
  $$
0<  |P(\xi)|\le \rmH(P)^{-w}
  $$
where $P$ denotes a non--zero polynomial with integer coefficients and degree $\le n$.    A   {\it transcendence measure} for 
  \m{ \xi} is a lower bound for $|P(\xi)|$ in terms of the height $\rmH(P)$ and the degree $\deg P$ of $P$. 
Hence one can view  an upper bound for   \m{ \omega_n(\xi)}   as a transcendence measure for 
  \m{ \xi}.

These numbers arise in Mahler's classification of complex numbers (\cite{MR19:252f}  Chap.~III \S~1  and \cite{MR2136100} \S~3.1).

A uniform version of  this exponent is  the supremum  \m{\omegahat_n(\xi)}   of the real numbers $w$  such that,  for any sufficiently large  integer  \m{N}, the same  system (\ref{E:omega_n}) has a solution.
   An upper bound for   \m{\omegahat_n(\xi)} is a {\it uniform  transcendence measure} for 
  \m{ \xi}. 
  
Clearly, from the definitions, we see that these exponent generalize the ones from \S\ref{SS:RAAN-AURA}:
   for \m{n=1},  \m{ \omega_1(\xi) =\omega(\xi)} and \m{ \omegahat_1(\xi)=\omegahat(\xi)}.
From Lemma~\ref{L:BoxPrinciplePolynomes} one deduces, 
for any $n\ge 1$ and any $\xi\in\bR$ which is not algebraic of degree $\le n$,
\begin{equation}\label{E:MinorationOmega-n}
n\le  \omegahat_n(\xi)\le \omega_n(\xi).
\end{equation}
Moreover, \m{\omega_n\le \omega_{n+1}} and  \m{\omegahat_n\le \omegahat_{n+1}}. As a consequence, 
Liouville numbers have \m{\omega_n(\xi)=+\infty} for   all \m{n\ge 1}.

 
 The value of the exponents $ \omega_n$ and $ \omegahat_n$ for almost all real numbers and for all algebraic numbers of degree $>n$ is $n$. The following metric result   is due to V.G.~Sprind{\v{z}}uk 
 \cite{MR0245527}:
  
\begin{thm}[Sprind{\v{z}}uk]\label{T:omega_nPresquePartout}
For almost all numbers \m{\xi\in\bR},  
$$
\omega_n(\xi)=\omegahat_n(\xi)=  n \;
\text{for all} \;  n\ge 1.
$$
\end{thm}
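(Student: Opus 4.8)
The plan is to prove the two inequalities $\omega_n(\xi)\ge n$ and $\omega_n(\xi)\le n$ for almost all $\xi$ separately, since $\omegahat_n(\xi)\le\omega_n(\xi)$ always and $\omegahat_n(\xi)\ge n$ for every $\xi$ not algebraic of degree $\le n$ was already recorded in \eqref{E:MinorationOmega-n}; this reduces the whole statement to showing that $\omega_n(\xi)\le n$ for all $\xi$ outside a null set, and then taking a countable union over $n\ge 1$ (a countable union of null sets is null). The lower bound half comes for free from Lemma~\ref{L:BoxPrinciplePolynomes} (or directly from Dirichlet's box principle), so no measure-theoretic argument is needed there.

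The core is thus the upper bound, which is exactly the metric statement of Sprind\v{z}uk quoted just before the theorem: for almost all $\xi$ and every $\varepsilon>0$ there are only finitely many nonzero $P\in\bZ[X]$ of degree $\le n$ with $|P(\xi)|\le\rmH(P)^{-n-\varepsilon}$. First I would fix $\varepsilon>0$ and a bounded interval $I\subset\bR$ (say $I=[-T,T]$), and for each integer $H\ge 1$ consider the set $A_H$ of $\xi\in I$ for which \emph{some} nonzero integer polynomial $P$ of degree $\le n$ with $\rmH(P)=H$ satisfies $|P(\xi)|\le H^{-n-\varepsilon}$. The number of such polynomials $P$ is $O(H^{n+1})$. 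For a fixed such $P$, the set $\{\xi\in I:|P(\xi)|\le H^{-n-\varepsilon}\}$ is contained in a neighbourhood of the real roots of $P$ lying in $I$; away from the (finitely many) points where $P$ and $P'$ both nearly vanish, one has a lower bound on $|P'|$ and the set has measure $O(H^{-n-\varepsilon})$, but near a double root one only gets measure $O(H^{-(n+\varepsilon)/2})$, which when multiplied by $O(H^{n+1})$ does \emph{not} give a summable series. Overcoming this clustering/near-multiple-root problem is the main obstacle, and it is precisely the technical heart of Sprind\v{z}uk's theorem.

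To handle it I would follow Sprind\v{z}uk's strategy: partition the polynomials according to the sizes of the leading coefficients and of the relevant derivatives at $\xi$, and use the classical fact that an integer polynomial of height $H$ cannot have two real roots closer than roughly $H^{-C}$ apart together with counting arguments that exploit that, for most $\xi$, the derivative $|P'(\xi)|$ cannot be too small for too many $P$ simultaneously. Concretely one shows that the contribution of polynomials with $|P'(\xi)|$ small is controlled by recursing on the lower-degree polynomial $P'$ (or by a direct combinatorial estimate on how many $P$ can share a near-multiple root near a given point), so that after summing over the finitely many regimes one obtains $|A_H|\le c(\varepsilon,I)\,H^{-1-\delta}$ for some $\delta=\delta(\varepsilon,n)>0$. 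Then $\sum_H |A_H|<\infty$, and by the (easy, convergence) half of the Borel--Cantelli lemma, almost every $\xi\in I$ lies in only finitely many $A_H$, i.e.\ satisfies $|P(\xi)|>\rmH(P)^{-n-\varepsilon}$ for all $P$ of sufficiently large height; equivalently $\omega_n(\xi)\le n+\varepsilon$ for a.e.\ $\xi\in I$.

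Finally I would remove the auxiliary parameters: intersect over $\varepsilon=1/k$, $k\ge 1$, to get $\omega_n(\xi)\le n$ for a.e.\ $\xi\in I$; take the union over $I=[-T,T]$, $T\in\bN$, to cover all of $\bR$; and take the union over $n\ge 1$. Combined with the already-established bound $n\le\omegahat_n(\xi)\le\omega_n(\xi)$ valid for every $\xi$ that is not algebraic of degree $\le n$ (and the algebraic numbers form a null set), this yields $\omega_n(\xi)=\omegahat_n(\xi)=n$ for all $n\ge 1$ simultaneously, for almost all $\xi\in\bR$, as claimed. The only genuinely hard input is the measure estimate $|A_H|=O(H^{-1-\delta})$; everything else is Borel--Cantelli bookkeeping and the trivial direction.
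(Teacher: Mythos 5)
Your bookkeeping matches what the paper itself does: the survey offers no proof of this theorem, it simply quotes Sprind{\v{z}}uk's 1965 metric result (stated verbatim in the paragraph preceding the theorem statement), namely that for almost all $\xi$ and every $\epsilon>0$ only finitely many non--zero $P\in\bZ[X]$ of degree $\le n$ satisfy $|P(\xi)|\le \rmH(P)^{-n-\epsilon}$, and combines it tacitly with the lower bound $n\le\omegahat_n(\xi)\le\omega_n(\xi)$ from (\ref{E:MinorationOmega-n}). Your reduction --- lower bound from Dirichlet's box principle, upper bound from the metric estimate, convergence Borel--Cantelli, then countable intersections/unions over $\epsilon=1/k$, over the intervals $[-T,T]$ and over $n$, discarding the (countable, hence null) set of algebraic numbers --- is exactly this intended route, and that part is carried out correctly.

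However, as a self-contained proof your text never establishes the one statement that carries all the difficulty: the measure bound $|A_H|\le c\,H^{-1-\delta}$. You correctly diagnose why the naive estimate fails (near a multiple or clustered root the exceptional set for a single $P$ only has measure of order $H^{-(n+\epsilon)/2}$, which the count $O(H^{n+1})$ of polynomials does not beat), but the paragraph meant to overcome this --- partitioning by the sizes of leading coefficients and of $|P'(\xi)|$, root separation for integer polynomials, ``recursing on the lower-degree polynomial $P'$'' --- is a description of Sprind{\v{z}}uk's program rather than an argument. That recursion is precisely the induction on the degree, with the splitting into essential and inessential domains and the counting of polynomials sharing a near-root on a given interval, which constitutes the solution of Mahler's problem and fills a substantial part of Sprind{\v{z}}uk's monograph; nothing in the sketch shows that the various regimes really sum to $O(H^{-1-\delta})$, nor identifies an admissible $\delta(\epsilon,n)$. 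So the proposal is a faithful outline of the standard proof, but the decisive measure estimate --- which \emph{is} the theorem --- is asserted, not proved; what you do prove (the lower bound and the Borel--Cantelli reduction) is the easy half, which the paper likewise treats as immediate.
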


As a consequence of W.M.~Schmidt's subspace Theorem one deduces  (see \cite{MR2136100} Th.\ 2.8 and 2.9) the value of  $ \omega_n(\xi)$ and $ \omegahat_n(\xi)$ for $\xi$ algebraic irrational: 
 
\begin{thm}[Schmidt]  \label{T:ConsequenceSubspace}
Let  \m{n\ge 1}  be an integer and $\xi$ an algebraic number of degree $d> n$.  Then
\M{
\omega_n(\xi)=\omegahat_n(\xi)= n.
}
\end{thm}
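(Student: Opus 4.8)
The bound $\omega_n(\xi)\ge\omegahat_n(\xi)\ge n$ is already contained in (\ref{E:MinorationOmega-n}) (it comes from Dirichlet's box principle, Lemma~\ref{L:BoxPrinciplePolynomes}), and $\omegahat_n(\xi)\le\omega_n(\xi)$ always holds, so the whole statement reduces to proving the upper bound $\omega_n(\xi)\le n$. Concretely, the plan is to show that for every $\epsilon>0$ there are only finitely many $\vv{x}=(x_0,\ldots,x_n)\in\bZ^{n+1}\setminus\{0\}$ with
$$
0<\bigl|x_0+x_1\xi+\cdots+x_n\xi^n\bigr|\le\Bigl(\max_{0\le i\le n}|x_i|\Bigr)^{-n-\epsilon}.
$$
Since $\xi$ is algebraic of degree $d>n$, the numbers $1,\xi,\ldots,\xi^n$ are $\bQ$--linearly independent (in particular no non--zero $P\in\bZ[X]$ of degree $\le n$ vanishes at $\xi$), so this is the instance $m=n+1$, $\gamma_i=\xi^{i-1}$, of the following statement, which I would isolate and prove first: \emph{if $\gamma_1,\ldots,\gamma_m$ are $\bQ$--linearly independent real algebraic numbers and $\epsilon>0$, then only finitely many $\vv{y}\in\bZ^m$ satisfy $0<|\gamma_1y_1+\cdots+\gamma_my_m|\le\|\vv{y}\|^{-(m-1)-\epsilon}$}, where $\|\cdot\|$ is the sup norm.

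I would prove this by induction on $m$, the case $m=1$ being trivial ($0<|\gamma_1y_1|\le|y_1|^{-\epsilon}$ forces $|y_1|$ bounded). For $m\ge 2$, assuming infinitely many solutions $\vv{y}$ (necessarily with $\|\vv{y}\|\to\infty$), I would apply Schmidt's Subspace Theorem to the $m$ linearly independent real algebraic linear forms $\Lambda(\vv{y})=\gamma_1y_1+\cdots+\gamma_my_m$ and $\Lambda_j(\vv{y})=y_j$, $1\le j\le m-1$ (their matrix has determinant $\pm\gamma_m\ne0$): along the solutions
$$
|\Lambda(\vv{y})|\prod_{j=1}^{m-1}|\Lambda_j(\vv{y})|\le\|\vv{y}\|^{-(m-1)-\epsilon}\cdot\|\vv{y}\|^{m-1}=\|\vv{y}\|^{-\epsilon},
$$
so all but finitely many of the solution vectors lie in finitely many proper rational subspaces of $\bQ^m$; some such subspace $T$ then contains infinitely many of them. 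Writing $T\cap\bZ^m=\{B\vv{z}:\vv{z}\in\bZ^{m'}\}$ for an integer $m\times m'$ matrix $B$ of rank $m'=\dim T\le m-1$, the form $\Lambda$ pulls back to $\Lambda(B\vv{z})=\sum_{k=1}^{m'}\gamma'_kz_k$ with $\gamma'_k=\sum_i B_{ik}\gamma_i$; these are real algebraic and still $\bQ$--linearly independent, because a rational relation $\sum_k a_k\gamma'_k=0$ unwinds, using independence of the $\gamma_i$, to $B\vv{a}=0$, hence $\vv{a}=0$. Since $\|B\vv{z}\|\asymp\|\vv{z}\|$, for $\|\vv{z}\|$ large one gets $0<|\sum_k\gamma'_kz_k|\le\|\vv{z}\|^{-(m'-1)-\epsilon}$ (the bound $m'-1\le m-2$ absorbs the implied constant), and the inductive hypothesis then leaves only finitely many such $\vv{z}$, contradicting the choice of $T$. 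This proves the auxiliary statement, hence $\omega_n(\xi)\le n$, and together with (\ref{E:MinorationOmega-n}) we conclude $\omega_n(\xi)=\omegahat_n(\xi)=n$.

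The substance of the argument lies entirely in that induction; the only two delicate spots are the uses of the hypothesis $d>n$ (to make $1,\xi,\ldots,\xi^n$ $\bQ$--linearly independent and to rule out $P(\xi)=0$) and the easy but indispensable stability of $\bQ$--linear independence under passage to a rational subspace, which is exactly what keeps the induction running. Everything else is the bookkeeping of feeding the inequality into the Subspace Theorem and comparing norms before reapplying it; for $n=1$, that is $m=2$, the auxiliary statement is precisely the Thue--Siegel--Roth theorem.
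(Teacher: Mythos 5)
Your argument is correct and follows exactly the route the paper itself indicates: the paper states this result as a consequence of Schmidt's Subspace Theorem and simply cites \cite{MR2136100} (Th.\ 2.8 and 2.9) for details, while you supply the standard deduction — lower bound from (\ref{E:MinorationOmega-n}) via Dirichlet, upper bound by feeding the linear form $x_0+x_1\xi+\cdots+x_n\xi^n$ together with the coordinate forms into the Subspace Theorem and handling the exceptional rational subspaces by induction on the dimension. The delicate points (nonvanishing of $P(\xi)$ and $\bQ$--linear independence of $1,\xi,\ldots,\xi^n$ from $d>n$, persistence of independence on a rational subspace, absorption of the norm-comparison constant thanks to $m'\le m-1$) are all treated correctly.
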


The spectrum of  the  exponent $\omega_n$ is $[n,+\infty]$. For $\omegahat_n$ it is not completely known.   From 
Theorem \ref{T:DavenportSchmidtCriterion} one deduces:

 \begin{thm}[Davenport and Schmidt] \label{T:CritereTdceDS}
For any real number $\xi$ which is not algebraic of degree $\le n$,
$$
\omegahat_n(\xi)\le 2n-1.
$$

\end{thm}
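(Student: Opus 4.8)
The plan is to deduce this directly from the fixed‑degree transcendence criterion of Theorem~\ref{T:DavenportSchmidtCriterion}, arguing by contradiction.

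First I would dispose of the trivial case $n=1$: here $\omegahat_1(\xi)=\omegahat(\xi)$, and Lemma~\ref{L:Khinchin} already gives $\omegahat(\xi)=1=2n-1$, so the claimed inequality holds (indeed with equality). For $n\ge 2$, suppose for contradiction that $\omegahat_n(\xi)>2n-1$. Since $\omegahat_n(\xi)$ is by definition the supremum of a set of real numbers $w$, there is then some $w>2n-1$ belonging to that set; that is, for every sufficiently large positive integer $N$ the system (\ref{E:omega_n}) has a solution $(x_0,\dots,x_n)\in\bZ^{n+1}$.

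Next I would repackage such a solution as a polynomial. Writing $P_N(X)=x_0+x_1X+\cdots+x_nX^n$, the solution yields a non‑zero $P_N\in\bZ[X]$ with $\deg P_N\le n$, height $\rmH(P_N)=\max_{0\le i\le n}|x_i|\le N$, and $0<|P_N(\xi)|\le N^{-w}$. Because $w>2n-1$ and $N\ge 1$, we have $N^{-w}\le N^{-2n+1}$, hence $|P_N(\xi)|\le N^{-2n+1}$ for all large $N$. This is precisely the hypothesis of Theorem~\ref{T:DavenportSchmidtCriterion} (which applies since $n\ge 2$), whose conclusion forces $\xi$ to be algebraic of degree $\le n$, contradicting the standing assumption. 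Therefore $\omegahat_n(\xi)\le 2n-1$.

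Since the argument is a formal consequence of an already‑stated theorem, there is no genuine obstacle in this deduction: the whole content sits inside Theorem~\ref{T:DavenportSchmidtCriterion}. If one wished to make the proof self‑contained, the hard part would be re‑proving that criterion, and the natural route is the one foreshadowed by Lemma~\ref{L:Khinchin}: use the geometry of numbers (Minkowski, or Mahler's theory of convex bodies) to pass from the single system (\ref{E:omega_n}) to enough \emph{linearly independent} integer polynomials of degree $\le n$ and controlled height that are all small at $\xi$, and then run a Gel'fond‑style elimination — resultants replacing the elementary linear algebra available in degree $1$ — to conclude that a fixed irreducible factor vanishes at $\xi$. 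The exponent $2n-1$ is exactly what this elimination can afford.
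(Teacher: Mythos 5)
Your proof is correct and is essentially the paper's own argument: the paper states Theorem~\ref{T:CritereTdceDS} simply as a direct consequence of the fixed-degree criterion in Theorem~\ref{T:DavenportSchmidtCriterion}, which is exactly the contradiction/supremum deduction you give. Your separate handling of $n=1$ via Lemma~\ref{L:Khinchin} (since the criterion is stated only for $n\ge 2$) and your closing sketch of how the criterion itself is proved are harmless additions beyond what the paper records.
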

 


  
For the special case $n=2$ a sharper estimate holds: from Theorem \ref{Th:ArbourRoy} of B.~Arbour and D.~Roy one deduces
\begin{equation}\label{E:omegadeuxhat}
\omegahat_2 (\xi)\le \gamma+1
\end{equation}
(recall that  \m{\gamma} denotes the Golden Ratio \m{(1+\sqrt 5)/2}).

In \cite{MR0246822}, 
Davenport and Schmidt comment:  

\begin{quote}
 {\it
``We have no reason to think that the exponents in these theorems are best possible.''
}
\end{quote}

It was widely believed during a while that $\omegahat_2 (\xi)$ would turn out to be equal to $2$ for all $\xi\in\bR$ which are not rational nor quadratic irrationals. Indeed, otherwise, for $\kappa $ in the interval 
$2<\kappa<\omegahat_2 (\xi)$, the inequalities 
\begin{equation}\label{E:omegachapeau2}
0<|x_0+x_1\xi+x_2\xi^2|\le c N^{-\kappa}, \quad 
\max\{|x_0|,\; |x_1|,\; |x_2|\}\le N
\end{equation}
would have, for a suitable constant $c>0$ and for all sufficiently large $N$, a non--trivial solution in  integers $(x_0,x_1,x_2)\in\bZ^3$. However these inequalities define a convex body whose volume tends to zero as $N$ tends to infinity.  In such circumstances one does not expect a non--trivial solution to exist \footnote{Compare with the definition of {\it singular systems} in \S~7, Chap.~V of \cite{MR0087708}.}.  In general $\omegahat_2(\alpha,\beta)$ may be infinite  (Khintchine, 1926 --- see  \cite{MR0087708}) - however, here, we have the restriction $\beta=\alpha^2$.

Hence it came as a surprise  in 2003 when  D.~Roy   \cite{MR2003m:11103}
showed  
 that the estimate   (\ref{E:omegadeuxhat}) is optimal, by constructing examples of real (transcendental) numbers $\xi$ for which  $  \omegahat_2 (\xi)= \gamma+1=2.618\dots$

By means of a transference principle of Jarn\'{\i}k (Th.\ 2 of
\cite{JFM64.0145.01}), 
Th.\ 1 of \cite{MR2018957}  
can be reformulated as follows (see also Th.\ 1.5 of \cite{MR2076601}). 


\begin{thm}[Roy]\label{T:Roy}
There exists a real number $\xi$ which is neither rational nor a quadratic irrational and which has the property that, for a suitable constant $c>0$,  for all sufficiently large  integers  \m{N},  the inequalities
(\ref{E:omegachapeau2}) 
have a solution $(x_0,x_1,x_2)\in\bZ^2$ with $\kappa=\gamma+1$. 
  Any such number is transcendental over $\bQ$ and the set of such real numbers is countable. 
   
\end{thm}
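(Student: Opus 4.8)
I would reduce the statement to one about simultaneous rational approximation of $\xi$ and $\xi^2$, and then use the exact $n=2$ transference identity of Jarn\'\i k \cite{JFM64.0145.01}, which for the pair $(\xi,\xi^2)$ takes the form $\tfrac1{\omegahat_2(\xi)}+\omegahat'_2(\xi)=1$; in particular $\omegahat_2=2$ corresponds to $\omegahat'_2=\tfrac12$, and, since $\gamma^2=\gamma+1$ and hence $\tfrac1{2-\gamma}=\gamma+1$, the value $\omegahat'_2=\gamma-1$ corresponds to $\omegahat_2=\gamma+1$. So it suffices to exhibit a $\xi$ that is neither rational nor quadratic and satisfies $\omegahat'_2(\xi)\ge\gamma-1$: this forces, for a suitable $c>0$ and all large $N$, a nonzero integer solution of (\ref{E:omegachapeau2}) with $\kappa=\gamma+1$, and combined with (\ref{E:omegadeuxhat}) (Theorem~\ref{Th:ArbourRoy}) it gives in fact $\omegahat_2(\xi)=\gamma+1$.

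\textbf{Construction of $\xi$.} Following Roy, I would build $\xi$ from the Fibonacci word. Fix distinct positive integers $a\neq b$; put $w_1=b$, $w_2=a$, $w_n=w_{n-1}w_{n-2}$, so that $w_n$ has length the Fibonacci number $F_n$ and the $w_n$ converge to the Fibonacci word on $\{a,b\}$, and let $\xi=[0;c_1,c_2,\dots]$ be the continued fraction whose successive partial quotients $c_i$ are the letters of this word (so $\xi$ is automatically irrational). With $A_c=\begin{pmatrix}c&1\\1&0\end{pmatrix}$ and $M_n$ the product of the matrices $A_{c_i}$ over the letters of $w_n$, the concatenation rule yields $M_n=M_{n-1}M_{n-2}$, so $\log\|M_n\|$ obeys the Fibonacci recursion and $\|M_{n+1}\|\asymp\|M_n\|^{\gamma}$. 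The decisive step — where the near-palindromic self-similarity of the Fibonacci word is used quantitatively — is to factor $M_n=Y_nR_n$ with $Y_n$ a \emph{symmetric} integer matrix and $R_n$ in a fixed finite subset of $\mathrm{GL}_2(\bZ)$; then $Y_n$ is symmetric, $\det Y_n=\pm1$, $\|Y_n\|\asymp\|M_n\|$, and $\|Y_{n+1}\|\asymp\|Y_n\|\|Y_{n-1}\|$. Regard $Y_n$ as the point $\mathbf y_n=((Y_n)_{00},(Y_n)_{01},(Y_n)_{11})\in\bZ^3$; its class in $\bP^2(\bR)$ tends to $[\xi^2:\xi:1]$, and, because $|\det Y_n|$ stays bounded and nonzero while $\|Y_n\|\to\infty$ and (again by the Fibonacci structure) this approach is tangential to the determinant cone, the convergence occurs at the optimal rate $\asymp\|Y_n\|^{-2}$. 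Hence $|(Y_n)_{11}\xi-(Y_n)_{01}|\asymp\|Y_n\|^{-1}$ and $|(Y_n)_{11}\xi^2-(Y_n)_{00}|\asymp\|Y_n\|^{-1}$, so $\mathbf y_n$ is a simultaneous approximation of $(\xi,\xi^2)$ with denominator $\asymp\|Y_n\|$ and both errors $\asymp\|Y_n\|^{-1}$. A filling argument then concludes: for large $N$ pick $n$ with $\|Y_n\|\le N<\|Y_{n+1}\|$ and use $\mathbf y_n$; since $\|Y_{n+1}\|\asymp\|Y_n\|^{\gamma}$ and $\gamma(\gamma-1)=1$, the error $\asymp\|Y_n\|^{-1}$ is $\le cN^{-(\gamma-1)}$ throughout that range, giving $\omegahat'_2(\xi)\ge\gamma-1$. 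Finally $\xi$ is not quadratic: the $\mathbf y_n$ produce quadratic numbers at bounded distance from $\xi$ whose minimal polynomials are pairwise distinct, so $\xi$ satisfies no fixed quadratic equation over $\bZ$.

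\textbf{Transcendence and countability.} Let $\xi$ be any real number that is neither rational nor quadratic for which (\ref{E:omegachapeau2}) is solvable with $\kappa=\gamma+1$ for all large $N$; then $\omegahat_2(\xi)\ge\gamma+1>2$. If $\xi$ were algebraic, its degree $d$ would satisfy $d\ge3$ (the cases $d\le2$ being excluded by hypothesis), and then Theorem~\ref{T:ConsequenceSubspace} (Schmidt's subspace theorem) would give $\omegahat_2(\xi)=2$, a contradiction; so $\xi$ is transcendental. For countability, such a $\xi$ has $\omegahat_2(\xi)=\gamma+1$ by (\ref{E:omegadeuxhat}), i.e. it is ``extremal''; I would show that extremality is rigid, namely that the sequence of minimal points $(\mathbf x_i)_{i\ge1}\subset\bZ^3$ of $\xi$ for the convex bodies in (\ref{E:omegachapeau2}) satisfies, from some index $i_0$ onward, a \emph{fixed} recursion $\mathbf x_{i+1}=\pm(\mathbf x_i\star\mathbf x_{i-1})$, where $\star$ is the bilinear law on $\bZ^3$ coming from the symmetrised product of $2\times2$ symmetric matrices. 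Then the whole tail of $(\mathbf x_i)$, and with it $\xi=\lim_i x_{i,1}/x_{i,2}$, is determined by the finite data $(i_0,\mathbf x_{i_0},\mathbf x_{i_0+1})$; there are only countably many such data, so the set of extremal numbers — and a fortiori the set of numbers meeting the conclusion of the theorem — is countable.

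\textbf{Main difficulty.} The technical heart is the construction: producing from $M_n=M(w_n)$ the symmetric integer matrix $Y_n$ with $\det Y_n=\pm1$ and $\|Y_n\|\asymp\|M_n\|$ (a precise exploitation of the self-similar, near-palindromic structure of the Fibonacci word) and then establishing the sharp $\asymp\|Y_n\|^{-2}$ rate at which $[\mathbf y_n]$ approaches $[\xi^2:\xi:1]$, which is exactly what is needed for $\omegahat'_2(\xi)=\gamma-1$ rather than a weaker lower bound. The rigidity of the minimal-point sequence that yields countability is the second, independent, delicate point.
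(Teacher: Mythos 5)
Your outline is, at the level of strategy, exactly the route the paper points to but does not itself write out: Theorem~\ref{T:Roy} is stated here as Roy's Th.~1 of \cite{MR2018957} (first announced in \cite{MR2003m:11103}) reformulated by transference, and your construction (continued fraction driven by the Fibonacci word, the palindromic factorisation $M_n=Y_nR_n$ with $Y_n$ symmetric and $R_n$ in a fixed finite set, $\det Y_n=\pm1$, $\|Y_{n+1}\|\asymp\|Y_n\|^{\gamma}$, the filling argument, and the minimal-point rigidity for countability) is a faithful reconstruction of Roy's argument. The two steps you flag as the technical heart (the sharp approximation rate for the $Y_n$ and the eventual recursion satisfied by minimal points of an extremal number) are indeed precisely Roy's key lemmas; in an outline it is acceptable that they are asserted rather than proved, and your transcendence argument via Theorem~\ref{T:ConsequenceSubspace} is correct.

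There is, however, one genuine logical gap in your reduction. You claim that exhibiting $\xi$ with $\omegahat'_2(\xi)\ge\gamma-1$ ``forces, for a suitable $c>0$ and all large $N$, a nonzero integer solution of (\ref{E:omegachapeau2}) with $\kappa=\gamma+1$'', invoking only the exponent identity (\ref{E:JarnikDim2}). That identity compares suprema of exponents: from $\omegahat_2(\xi)\ge\gamma+1$ you get solvability of (\ref{E:omegachapeau2}) for every $\kappa<\gamma+1$, but the theorem is a statement \emph{at} the critical exponent $\kappa=\gamma+1$ with a constant $c$, and by (\ref{E:omegadeuxhat}) this critical value is exactly the supremum, so whether it is attained is precisely the issue; an exponent inequality cannot deliver it. The correct reduction is to apply the quantitative, constant-preserving transference (Th.~2 of \cite{JFM64.0145.01}, i.e. the Mahler--Jarn\'{\i}k transfer for the two families of convex bodies) to the statement your construction actually provides, namely: there is $c'>0$ such that for all large $N$ the system $0<\max\{|x_0|,|x_1|,|x_2|\}\le N$, $\max\{|x_1-x_0\xi|,\,|x_2-x_0\xi^2|\}\le c'N^{-1/\gamma}$ has an integer solution. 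This is exactly how the paper says the reformulation is obtained, and it is why it cites Th.~2 of Jarn\'{\i}k rather than the identity (\ref{E:JarnikDim2}). Two smaller repairs: non-quadraticity of the constructed $\xi$ should be deduced from Lagrange's theorem (the Fibonacci word is not eventually periodic, hence neither is the continued fraction), since ``quadratic numbers at bounded distance with distinct minimal polynomials'' proves nothing as stated; and in the countability step the rigidity must be run for numbers satisfying the constant-level property of the theorem (the paper's definition of extremal), not merely $\omegahat_2(\xi)=\gamma+1$.
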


In \cite{RoyDamienTwoExponents},
answering a question of Y. Bugeaud and M. Laurent \cite{MR2149403}, D.~Roy  shows that the exponents $  \omegahat_2 (\xi)$, where $\xi$ ranges through all real numbers which are not algebraic of degree $\le 2$, form a dense subset of the interval $[2, 1+\gamma]$.  

D.~Roy calls {\it extremal} a number which satisfies the conditions of Theorem \ref{T:Roy}; from the point of view of approximation by quadratic polynomials,  these numbers present a closest behaviour to quadratic real numbers. 

Here is the first example \cite{MR2003m:11103} of an extremal number $\xi$.
 Recall that {\it the Fibonacci word}
\M{
w=abaababaabaababaababaabaababaabaab\dots
}
is the fixed point of the morphism \m{a\mapsto ab},  \m{b\mapsto a}. It is the limit of the sequence of words starting with 
\m{f_1=b} and
\m{f_2=a}
and defined inductively by concatenation as
\m{
f_n=f_{n-1}f_{n-2}
}. 
Now let \m{A} and \m{B} be two distinct positive integers and
let \m{\xi\in(0,1)} be the real number whose continued fraction  expansion  is obtained from the Fibonacci word \m{w} by replacing the letters \m{a} and \m{b} by   \m{A} and \m{B}:
\M{
[0; A, B, A, A, B, A, B, A, A, B, A, A, B, A, B, A, A, 
\dots]
} 
Then $\xi$ is extremal. 

In \cite{MR2003m:11103,MR2018957,RoyDamienContinuedFraction},  D.~Roy  investigates the approximation properties of extremal numbers  by rational numbers, by quadratic numbers 
as well as by cubic integers. 

\begin{thm}[Roy]\label{T:RoyExtremal}
Let $\xi$ be an extremal number. There exist positive constants $c_1,\ldots,c_5$ with the following properties:
\\
(1) For any rational number $\alpha\in\bQ$ we have
$$
|\xi-\alpha|\ge c_1 H^{-2}(\log H)^{-c_2}
$$
with $H=\max\{2,\rmH(\alpha)\}$. 
\\
(2) For any algebraic number $\alpha$  of degree at most $2$ we have
$$
|\xi-\alpha|\ge c_3\rmH(\alpha)^{-2\gamma-2}
$$\\
(3) There exist infinitely many quadratic real numbers $\alpha$   with
$$
|\xi-\alpha|\le c_4\rmH(\alpha)^{-2\gamma-2}
$$\\
(4) For any algebraic integer $\alpha$  of degree at most $3$ we have
$$
|\xi-\alpha|\ge c_5\rmH(\alpha)^{-\gamma-2}.
$$
\end{thm}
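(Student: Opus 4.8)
\noindent
The plan is to route all four estimates through the sequence of \emph{minimal points} of $\xi$ relative to the curve $t\mapsto(1,t,t^{2})$, taking as a black box the shape of that sequence for an extremal $\xi$, as it comes out of Roy's construction (Theorem~\ref{T:Roy}). So I first record what I will use: there is a strictly increasing sequence of primitive integer points $\underline{x}_{k}=(x_{k,0},x_{k,1},x_{k,2})$ with $X_{k}:=\max_{i}|x_{k,i}|\to\infty$ such that, writing $P_{k}(X)=x_{k,0}+x_{k,1}X+x_{k,2}X^{2}$ and $L_{k}=|P_{k}(\xi)|$, one has the golden-ratio asymptotics $X_{k+1}\asymp X_{k}^{\gamma}$ and $L_{k}\asymp X_{k}^{-(2\gamma+1)}$ (equivalently $L_{k}X_{k+1}X_{k+2}\asymp 1$); the triples $(\underline{x}_{k-1},\underline{x}_{k},\underline{x}_{k+1})$ are linearly independent with $1\le|\det(\underline{x}_{k-1},\underline{x}_{k},\underline{x}_{k+1})|\le c$; and a Fibonacci-type relation links $\underline{x}_{k+1}$ bilinearly to $\underline{x}_{k}$ and $\underline{x}_{k-1}$ with bounded error (mirroring $f_{n}=f_{n-1}f_{n-2}$). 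Finally $|x_{k,2}|\asymp X_{k}$ and $\mathrm{disc}\,P_{k}\asymp X_{k}^{2}$, so $P_{k}$ has a single real root $\beta_{k}$ very close to $\xi$, its companion root (and, when $P_{k}$ is irreducible, the algebraic conjugate $\beta_{k}'$) staying at distance $\asymp 1$ from $\xi$; from $P_{k}(\xi)=x_{k,2}(\xi-\beta_{k})(\xi-\beta_{k}')$ this yields $|\xi-\beta_{k}|\asymp L_{k}/X_{k}\asymp X_{k}^{-(2\gamma+2)}$ with $\rmH(\beta_{k})\asymp X_{k}$.

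\noindent
\textbf{Part (3)} is then immediate: discarding the finitely many $k$ for which $P_{k}$ is reducible, the $\beta_{k}$ are infinitely many pairwise distinct quadratic real numbers — distinct because $X_{k}\to\infty$ — with $|\xi-\beta_{k}|\le c_{4}\rmH(\beta_{k})^{-2\gamma-2}$.

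\noindent
For the lower bounds \textbf{(1), (2) and (4)} the plan is the determinant/resultant method. Let $\alpha$ be algebraic of degree $D$ ($D\le 2$ in (1)--(2); $D\le 3$ and $\alpha$ an algebraic integer in (4)), with minimal polynomial $Q\in\bZ[X]$ (monic in (4)) and $H=\rmH(\alpha)=\rmH(Q)$; I choose an index $k$ with a suitable power of $X_{k}$ comparable to a power of $H$ and build a nonzero rational integer from $Q$ and one or a few consecutive minimal points. In (1), with $Q=bX-a$, the resultant $\mathrm{Res}(Q,P_{k})=b^{2}P_{k}(a/b)$ — equivalently a $3\times3$ determinant formed from $(a^{2},-2ab,b^{2})$ and two minimal points — is a nonzero integer, hence $\ge 1$ in absolute value; in (2) and (4), $\mathrm{Res}(Q,P_{k})=|x_{k,2}|^{D}\,|Q(\beta_{k})|\,|Q(\beta_{k}')|$ is nonzero because $\xi$ is transcendental, so $\alpha$ is not a conjugate of $\beta_{k}$ for large $k$ (the finitely many exceptional $k$ are discarded). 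I then bound this integer from above: Lemma~\ref{L:EasyPart} gives $|Q(\xi)|\le cH|\xi-\alpha|$, whence $|Q(\beta_{k})|\le cH\bigl(|\xi-\alpha|+X_{k}^{-2\gamma-2}\bigr)$ and $|Q(\beta_{k}')|\le cH$; feeding in the asymptotics above leaves an inequality of the form $1\le(\text{power of }H)(\text{power of }X_{k})\bigl(|\xi-\alpha|+X_{k}^{-2\gamma-2}\bigr)$, and optimising over $k$ gives $|\xi-\alpha|\ge cH^{-e}$ with $e=2\gamma+2$ in (2) and $e=\gamma+2$ in (4). In (1) one reaches $e=2$ only up to a factor $(\log H)^{c_{2}}$, the logarithm being the price of the geometric spacing of the $X_{k}$, which forces the optimal cutoff index to overshoot its ideal position by a bounded power of $\log H$.

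\noindent
\textbf{The hard part} is that a crudely placed single minimal point only yields lower bounds of the right shape $|\xi-\alpha|\gg H^{-e}$ in the easy ranges, with exponents $e$ strictly worse than the claimed $2\gamma+2$ and $\gamma+2$ (and worse than $2$ in the rational case), while in the critical range — when $|\xi-\alpha|$ is of the size of the $\beta_{k}$ themselves — it gives nothing at all, since the term $|\xi-\beta_{k}|$ then swamps $|\xi-\alpha|$ in the estimate for $|Q(\beta_{k})|$. To reach the sharp exponents one must exploit the Fibonacci bilinear recursion among the $\underline{x}_{k}$ — not merely their growth rates — working with a whole window $\dots,\underline{x}_{k-1},\underline{x}_{k},\underline{x}_{k+1},\dots$ and showing that $Q$ cannot stay ``aligned'' with that window, the companion ingredient being that the minimal points keep away from the discriminant cone $\{x_{1}^{2}=4x_{0}x_{2}\}$ (which is what makes each $\beta_{k}$ a genuine quadratic approximation with a tame conjugate). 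This is the real technical content; once it is in place, parts (1)--(4) reduce to the bookkeeping sketched above, with (4) the deepest — it amounts, via Jarn\'{\i}k's transference, to the non-improvability of the Davenport--Schmidt cubic-integer exponent for extremal numbers.
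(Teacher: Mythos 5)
This theorem is quoted in the survey without proof (it is cited from Roy's papers \cite{MR2003m:11103,MR2018957,RoyDamienContinuedFraction}), so there is no in-paper argument to match your proposal against; judged on its own, your text is a plan rather than a proof, and it has a genuine gap that you yourself flag. All four estimates hinge on the behaviour in the critical range, namely when $|\xi-\alpha|$ is of the same order as the quadratic approximations $\beta_k$ coming from the minimal points: there, as you note, the single-resultant bound $1\le|\mathrm{Res}(Q,P_k)|$ degenerates because the error term $X_k^{-2\gamma-2}$ swamps $|\xi-\alpha|$, and your remedy --- ``exploit the Fibonacci bilinear recursion'' and show that ``$Q$ cannot stay aligned with the window'' --- is asserted, not proved. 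That alignment obstruction is precisely the technical core of Roy's proofs (it is carried out through identities for the bounded $3\times3$ determinants $\det(\underline{x}_{k-1},\underline{x}_k,\underline{x}_{k+1})$, the bounded $2\times2$ quantities $x_{k,0}x_{k,2}-x_{k,1}^2$, and the recurrence, with separate and rather different manipulations for the rational, quadratic and monic cubic cases); without it, none of (1), (2), (4) is obtained with the stated exponents $2$, $2\gamma+2$, $\gamma+2$, only the weaker ``easy-range'' bounds you concede. Part (3) is the only item your sketch genuinely delivers, and even it rests on unproved input.

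A second problem is the size of your black box. Theorem~\ref{T:Roy}, as stated in this survey, only says that the uniform simultaneous exponent $1/\gamma$ is attained; it does not give the structure you assume for the minimal points --- the growth law $X_{k+1}\asymp X_k^{\gamma}$, the two-sided bound $L_k\asymp X_k^{-(2\gamma+1)}$, the nonvanishing bounded triple determinants, the bilinear recurrence, the bound $|x_{k,0}x_{k,2}-x_{k,1}^2|\ll 1$ keeping the points off the conic, and hence $|\xi-\beta_k|\asymp X_k^{-2\gamma-2}$ with $\rmH(\beta_k)\asymp X_k$. These facts are themselves theorems of Roy whose proofs occupy a substantial part of \cite{MR2018957}; assuming them is legitimate in a sketch, but then the remaining content of Theorem~\ref{T:RoyExtremal} is exactly the window/alignment argument you leave out. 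Finally, your explanation of the $(\log H)^{c_2}$ factor in (1) as ``the price of the geometric spacing of the $X_k$'' is not a derivation: the geometric spacing by itself would only cost a fixed power of $H$, and the logarithmic loss in Roy's statement comes from a finer analysis of the recurrence coefficients, which your sketch does not supply.
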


Moreover, in \cite{MR2031862}, he shows that for some extremal numbers $\xi$,  property (4)  holds with the exponent $-\gamma-1$ in place of $-\gamma-2$.

In  \cite{MR2076601} D.~Roy  describes the method of Davenport and Schmidt and he gives a sketch of proof of his construction of extremal numbers. 

In
\cite{RoyDamienContinuedFraction} he gives a sufficient condition for an extremal number to have bounded quotients and constructs new examples of such numbers. 
     
The values of the different exponents for the extremal numbers which are associated with Sturmian words (including the Fibonacci word) have been obtained by Y.~Bugeaud and M.~Laurent \cite{MR2149403}. Furthermore, they show that the spectrum $\{\omegahat_2(\xi)\; ;\; \xi\in\bR\setminus\Qbar\}$ is not countable.   
See also their joint works \cite{BugeaudLaurentInhom,BugeaudLaurentExpDiophAppxDimTwo}.
Their method involves words with many palindromic prefixes. 
S.~Fischler in \cite{MR2110935,FischlerPalindromes} defines new exponents of approximation which allow him to obtain a characterization of the values of $\omegahat_2(\xi)$ obtained by these authors. 
    
In     \cite{AdamczewskiBugeaudMesures}, B.~Adamczewski and Y.~Bugeaud prove that for any extremal number $\xi$, there exists a constant $c=c(\xi)$ such that for any integer $n\ge 1$, 
$$
\omega_n(\xi)\le \exp\bigl\{c \bigl(\log (3n)\bigr)^2\bigl(\log\log(3n)\bigr)^2\bigr\}.
$$
 In particular an extremal number is either a $S$--number or a $T$--number in Mahler's classification.

Recent results on simultaneous approximation to a number and its square, on approximation to real numbers by quadratic integers and on quadratic approximation to numbers associated with {\it Sturmian words}  have been obtained by
{M.~Laurent, Y.~Bugeaud, S.~Fischler, D.~Roy\dots}

\subsection{Simultaneous rational approximation to powers of a real number
}
\label{SS:PASASN-SRARN}

     Let $\xi$ be a real number and \m{n} a positive integer. 

We consider first the simultaneous rational approximation of successive powers of $\xi$. We denote by  \m{\omega'_n(\xi)} the supremum of the real numbers $w$ for which there exist    {infinitely many} positive integers  \m{N} for which the system 
\begin{equation}\label{E:simultaneousappxpowers}
  0< \max_{1\le i\le n} |x_i-x_0\xi^i |\le N^{-w},
   \quad 
    \text{with}\quad
\max_{0\le i\le n} |x_i| \le N,
  \end{equation}
  has a solution in rational integers \m{x_0,x_1,\ldots,x_n}.
  
     An upper bound for  \m{ \omega'_n(\xi)}  yields a {\it simultaneous approximation measure} for 
  \m{\xi,\xi^2,\ldots,\xi^n}.

 Next   the uniform simultaneous approximation measure is the supremum   \m{\omegahat'_n(\xi)}  of the real numbers $w$ such that, for    any sufficiently large  integer  \m{N}, the same  system 
(\ref{E:simultaneousappxpowers})  has a solution in rational integers \m{x_0,x_1,\ldots,x_n}.

Notice that for \m{n=1},  \m{ \omega'_1(\xi) =\omega(\xi)} and \m{ \omegahat'_1(\xi)=\omegahat(\xi)}.

 According to Dirichlet's box principle, {\it for all $\xi$ and \m{n}, }
$$
\frac{1}{n}\le \omegahat'_n(\xi)\le 
\omega'_n(\xi).
$$

Khintchine's transference principle (see Th.\ B.5 in \cite{MR2136100} and Theorem \ref{T:KhinchineTransfer} below) yields relations between $\omega'_n$ and $\omega_n$. As remarked in Theorem 2.2 of \cite{MR2149403},  the same proof yields similar relations  between $\omegahat'_n$ and $\omegahat_n$. 

\begin{thm} 
Let $n$ be a positive integer and $\xi$ a real number which is not algebraic of degree $\le n$. Then
$$
\frac{1}{n}\le
\frac{\omega_n(\xi) }{(n-1)\omega_n(\xi)+n}\le 
\omega'_n(\xi)\le  
\frac{\omega_n(\xi)-n+1}{n}
$$
and
$$
\frac{1}{n}\le
\frac{\omegahat_n(\xi) }{(n-1)\omegahat_n(\xi)+n}\le 
\omegahat'_n(\xi)\le  
\frac{\omegahat_n(\xi)-n+1}{n}
\cdotp
$$

\end{thm}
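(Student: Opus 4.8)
The plan is to recognise the two displayed chains as Khintchine's transference principle (Theorem~B.5 of \cite{MR2136100}; see also Theorem~\ref{T:KhinchineTransfer} below) applied to the $n$-tuple $(\xi,\xi^2,\dots,\xi^n)$, and to deduce the version for the uniform exponents by running the very same proof scale by scale, as observed by Bugeaud and Laurent (Theorem~2.2 of \cite{MR2149403}). The two leftmost inequalities cost nothing: by (\ref{E:MinorationOmega-n}) one has $\omega_n(\xi)\ge n$ and $\omegahat_n(\xi)\ge n$, hence $(n-1)\omega_n(\xi)+n\le n\,\omega_n(\xi)$, i.e. $1/n\le\omega_n(\xi)/\bigl((n-1)\omega_n(\xi)+n\bigr)$, and likewise with hats. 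So the substance is the two transference inequalities, each of which I would prove by an elementary pigeonhole construction.

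\emph{From a simultaneous approximation to a polynomial one} ($\omega'_n(\xi)\le(\omega_n(\xi)-n+1)/n$). For $w<\omega'_n(\xi)$ I would take a solution $(x_0,\dots,x_n)$ of (\ref{E:simultaneousappxpowers}) with $N$ large; one checks that $x_0\neq0$ and, on taking $N=\max_i|x_i|$, that $N\asymp|x_0|=:q$. Writing $L_i=x_i-x_0\xi^i$, so $\max_i|L_i|\le N^{-w}$, I would use Minkowski's theorem to find a nonzero $(y_1,\dots,y_n)\in\bZ^n$ with $\max_i|y_i|\ll q^{1/n}$ solving the single congruence $y_1x_1+\dots+y_nx_n\equiv0\pmod q$; then $y_0:=-(y_1x_1+\dots+y_nx_n)/x_0$ is an integer and $y_0+y_1\xi+\dots+y_n\xi^n=-(y_1L_1+\dots+y_nL_n)/x_0$, whence $|y_0+y_1\xi+\dots+y_n\xi^n|\ll q^{1/n-1}N^{-w}$. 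Because $1,\xi,\dots,\xi^n$ are $\bQ$-linearly independent, the vector $(y_0,\dots,y_n)$ is nonzero and this value is nonzero; it has height $H\ll q^{1/n}\asymp N^{1/n}$, so the bound reads $\ll H^{-(nw+n-1)}$. Letting $w\to\omega'_n(\xi)$ gives $\omega_n(\xi)\ge n\,\omega'_n(\xi)+n-1$.

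\emph{From a polynomial approximation to a simultaneous one} ($\omega'_n(\xi)\ge\omega_n(\xi)/((n-1)\omega_n(\xi)+n)$). For $w<\omega_n(\xi)$ I would take a primitive $\mathbf b=(y_0,\dots,y_n)\in\bZ^{n+1}$ of height $H$ with $|y_0+y_1\xi+\dots+y_n\xi^n|\le H^{-w}$; then the hyperplane $\mathbf b^\perp$ makes an angle $\ll H^{-w-1}$ with the line $\bR(1,\xi,\dots,\xi^n)$. Inside the rank-$n$ lattice $\mathbf b^\perp\cap\bZ^{n+1}$, whose covolume is $\ll H$, a box/pigeonhole argument yields, for a parameter $Q$ at our disposal, a nonzero $\mathbf x=(x_0,\dots,x_n)$ with $\max_i|x_i|\ll Q$ whose component orthogonal to the line, measured inside the $(n-1)$-dimensional space $\mathbf b^\perp\cap(1,\xi,\dots,\xi^n)^\perp$, is $\ll(H/Q)^{1/(n-1)}$. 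Adding the tilt of $\mathbf b^\perp$, the distance of $\mathbf x$ from the line is $\ll(H/Q)^{1/(n-1)}+QH^{-w-1}$; the choice $Q\asymp H^{((n-1)w+n)/n}$ balances these terms and makes the distance $\ll H^{-w/n}$, that is, $\max_i|x_i-x_0\xi^i|\ll H^{-w/n}$ with $\max_i|x_i|\ll Q$ (and $x_0\neq0$, again by linear independence). In terms of $N=\max_i|x_i|$ this reads $\max_i|x_i-x_0\xi^i|\ll N^{-w/((n-1)w+n)}$, and $w\to\omega_n(\xi)$ finishes it. Alternatively, both constructions come at once from Mahler's theory of polar convex bodies: the two systems of linear forms above have coefficient matrices $M$ and $(M^\top)^{-1}$, so the associated families of parallelepipeds are polar dual up to a diagonal rescaling, and Mahler's inequalities --- bounding each product $\mu_j(K)\,\mu_{n+2-j}(K^\ast)$ of corresponding successive minima of a symmetric convex body and its polar, above and below, by constants depending only on $n$ --- translate directly into the two inequalities between $\omega_n(\xi)$ and $\omega'_n(\xi)$ (this is the circle of ideas behind Proposition~\ref{P:Polar}).

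Finally, the statement for $\omegahat'_n$ and $\omegahat_n$ follows by performing the same two constructions at \emph{every} large scale: given a large target height for the output, one feeds the construction the approximation that the uniform hypothesis supplies at the matching input height, so that the scale changes $N\mapsto N^{1/n}$ (respectively $N\mapsto N^{((n-1)w+n)/n}$) carry ``for all large $N$'' to ``for all large $N$''. The step I expect to be the main obstacle is the second construction: making the pigeonhole in the non-coordinate lattice $\mathbf b^\perp\cap\bZ^{n+1}$ quantitatively honest --- controlling its covolume and the angle it makes with the line $\bR(1,\xi,\dots,\xi^n)$, and choosing $Q$ so that the two competing error terms balance without the implied constants eroding the exponent --- together with the bookkeeping needed to ensure that the integer vectors produced are nonzero and have heights tending to infinity, which is precisely the point at which the hypothesis that $\xi$ is not algebraic of degree $\le n$ (equivalently, that $1,\xi,\dots,\xi^n$ are linearly independent over $\bQ$) comes in.
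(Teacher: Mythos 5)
Your overall route is the paper's: the two chains are Khintchine's transference principle (Theorem~\ref{T:KhinchineTransfer}; Th.~B.5 of \cite{MR2136100}) applied to $(\xi,\xi^2,\dots,\xi^n)$, the leftmost bounds reduce to $\omegahat_n(\xi)\ge n$ from (\ref{E:MinorationOmega-n}), and the hat chain is the uniform analogue which the paper attributes to the remark in Theorem~2.2 of \cite{MR2149403} and to inequalities (4) and (5) of Jarn\'{\i}k's Theorem~3 in \cite{JFM64.0145.01}; the paper itself gives no proof beyond these citations. Your two asymptotic constructions are correct in outline and are in substance the classical proofs.

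The genuine gap is the final step, where you claim the hat inequalities follow by feeding the same two constructions the witness supplied by the uniform hypothesis at the matching scale. This fails for $\omegahat'_n(\xi)\le(\omegahat_n(\xi)-n+1)/n$: the output of your congruence construction is governed by the height $q=|x_0|$ of the particular witness, not by the scale $N$ --- you get $H(P)\ll q^{1/n}$ and $|P(\xi)|\ll q^{1/n-1}N^{-w}$, and the latter is $\le (N^{1/n})^{-(nw+n-1)}$ only when $q\asymp N$. The uniform hypothesis guarantees, for every large $N$, some solution of height \emph{at most} $N$, but nothing forces its height to be comparable to $N$; your normalisation ``take $N=\max_i|x_i|$'' is exactly the move that is harmless for $\omega'_n$ but illegitimate for $\omegahat'_n$, since ``for all large $N$'' does not transport to ``for all large $\max_i|x_i|$''. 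If the good simultaneous approximations have sparse heights, your construction certifies polynomial approximations only at the sparse scales $q^{1/n}$, not at every large target scale, so $\omegahat_n\ge n\omegahat'_n+n-1$ is not reached. The repair is precisely the polar-body argument you mention only as an alternative for the asymptotic case: for $K_N=\{\,|x_0|\le N,\ \max_i|x_i-x_0\xi^i|\le N^{-w}\,\}$ the hypothesis says $\lambda_1(K_N)\le 1$; Minkowski's second theorem then forces $\lambda_{n+1}(K_N)\gg \mathrm{vol}(K_N)^{-1/n}\asymp N^{(nw-1)/n}$, and Mahler's duality bounds the first minimum of the polar body purely in terms of $N$, producing for \emph{every} large $N$ a nonzero $P$ with $H(P)\ll N^{1/n}$ and $|P(\xi)|\ll N^{-(nw+n-1)/n}$; this is how ``the same proof'' yields the hat chain, and it is what Jarn\'{\i}k's cited inequalities encode. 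Your second construction, by contrast, can be made uniform, but only after rebalancing $Q$ against the actual covolume $\Vert\mathbf b\Vert$ of $\mathbf b^{\perp}\cap\bZ^{n+1}$ (your angle bound $\ll H^{-w-1}$ tacitly assumes $\Vert\mathbf b\Vert\asymp H$); once $Q\asymp \Vert\mathbf b\Vert\, H^{(n-1)w/n}$, the witness's height enters only favourably and that half of the hat statement does go through.
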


The second set of inequalities follows from the inequalities (4) and
(5) of V.~Jarn\'{\i}k in Th.\ 3 of
\cite{JFM64.0145.01}, 
with conditional
refinements given by the inequalities (6) and (7) of the same
theorem.


 In particular, $\omega_n(\xi) =n$ if and only if $\omega'_n(\xi) =1/n$. 
Also, $\omegahat_n(\xi) =n$ if and only if $\omegahat'_n(\xi) =1/n$. 

\medskip
 
The spectrum of  $\omega'_n(\xi) $, where $\xi$ ranges over the set of real numbers which are not algebraic of degree $\le n$, is investigated by Y.~Bugeaud and M.~Laurent in \cite{BugeaudLaurentInhom}.   Only the case $n=2$ is completely solved.

It follows from Theorem \ref{T:omega_nPresquePartout} that {\it for almost all real numbers $\xi$, }
$$
\omega'_n(\xi) = \omegahat'_n(\xi) =  \frac{1}{n}
\quad\text{for all $n\ge 1$}.
$$
Moreover, a consequence of Schmidt's Theorem \ref{T:ConsequenceSubspace} is that {\it     for all $n\ge 1$   and for all algebraic real numbers $\xi$ of degree $d>n$, }
$$
\omega'_n(\xi) = \omegahat'_n(\xi) =  \frac{1}{n}
= \frac{1}{\omega_n(\xi)} 
\cdotp 
$$
 
Theorems 2a and 4a of the paper  \cite{MR0246822} by H.~Davenport and W.M.~Schmidt (1969)
imply that upper bounds for   \m{\omegahat'_n(\xi)} are valid for all  real numbers $\xi$ which are not algebraic of degree \m{\le n}.   
For instance,
$$
\omegahat'_1(\xi)=1,\quad \omegahat'_2(\xi)\le 1/\gamma=0.618 \dots, \quad \omegahat'_3(\xi)\le 1/2. 
$$
A slight  refinement was obtained by  M.~Laurent \cite{MR2015598} in 2003 (for the odd values of $n\ge 5$). 

     \begin{thm}[Davenport and Schmidt, Laurent]\label{T:DS2a}
 Let \m{\xi\in\bR\setminus\bQ} and \m{n\ge 2}.     
      Assume $\xi$ is not algebraic of degree \m{\le \lceil n/2\rceil}. Then   
$$
     \omegahat'_n(\xi)\le \lceil n/2\rceil^{-1}=\begin{cases}
      2/n & \text{if \m{n} is even,}\\
      2/(n+1) & \text{if \m{n} is odd.}\\
      \end{cases}
$$ 
\end{thm}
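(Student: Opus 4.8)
The plan is to argue by contradiction, pushing the ``elimination between two consecutive approximations'' idea already used for Lemma~\ref{L:Khinchin} and in the sketch of Gel'fond's Criterion (Theorem~\ref{T:GTC}), but now exploiting the Hankel/moment--curve structure of the vector $(1,\xi,\ldots,\xi^n)$. Set $m=\lceil n/2\rceil$ and suppose $\omegahat'_n(\xi)>1/m$; fix $\lambda$ with $1/m<\lambda<\omegahat'_n(\xi)$. First I would pass from the hypothesis to the sequence of \emph{best approximations} to $(\xi,\ldots,\xi^n)$: for increasing values of a parameter $Q$ one obtains nonzero integer points $\underline x_1,\underline x_2,\ldots\in\bZ^{n+1}$ with heights $X_1<X_2<\cdots$ and errors $L_j:=\max_{1\le i\le n}|x^{(j)}_i-x^{(j)}_0\xi^i|$ strictly decreasing, consecutive points being $\bQ$--linearly independent; and since $\xi$ has uniform exponent $\ge\lambda$, the minimal point at a scale $Q$ with $X_j\le Q<X_{j+1}$ is $\underline x_j$, whose error is then forced to be $\le Q^{-\lambda}$, so that $L_j\ll X_{j+1}^{-\lambda}$ for all large $j$.

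The crucial geometric remark is that every length-$(m+1)$ ``window'' $\underline w_k(\underline x):=(x_k,x_{k+1},\ldots,x_{k+m})\in\bZ^{m+1}$ of a good approximation $\underline x$ lies within $O(L)$ of the single line $\bR\underline\theta$, where $\underline\theta=(1,\xi,\ldots,\xi^m)$: indeed $x_{k+i}-x_k\xi^i=(x_{k+i}-x_0\xi^{k+i})-\xi^i(x_k-x_0\xi^k)$ has absolute value $\ll L$. For $n$ even there are $m+1$ windows inside one $\underline x_j$, for $n$ odd only $m$; in either case I would select $m+1$ windows, taken from $\underline x_j$ and, when $n$ is odd, one extra window from $\underline x_{j+1}$, and form the $(m+1)\times(m+1)$ integer determinant $D$ of the matrix with these windows as rows (a Hankel determinant when all of them come from a single $\underline x_j$). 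Expanding multilinearly and using that $\underline\theta$ may appear among the rows at most once without killing the determinant, at least $m$ rows must be replaced by their $O(L)$--size error parts, which yields, in the generic case $D\ne0$, the estimate $1\le|D|\ll X_{j+1}L_j^{\,m}$. Combined with $L_j\ll X_{j+1}^{-\lambda}$ this gives $X_{j+1}^{1/m-\lambda}\gg1$ for arbitrarily large $X_{j+1}$, contradicting $\lambda>1/m$.

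It remains to treat the \emph{degenerate case}, in which for all large $j$ every such family of $m+1$ windows is $\bQ$--linearly dependent. Then there is a primitive integer vector $\underline c=(c_0,\ldots,c_m)$ annihilating all the windows of $\underline x_j$, i.e. $\sum_i c_i x^{(j)}_{k+i}=0$ for every admissible $k$; substituting $x^{(j)}_{k+i}=x^{(j)}_0\xi^{k+i}+O(L_j)$ shows $|x^{(j)}_0|\,|C_j(\xi)|\ll\|\underline c\|\,L_j$, where $C_j(T)=\sum_i c_iT^i$ is a nonzero integer polynomial of degree $\le m$, nonzero at $\xi$ because $\xi$ is not algebraic of degree $\le m$. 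I would then use the minimality of the best approximations to bound the height of $C_j$, so that the $C_j$ form a uniform family of polynomials of degree $\le m$ taking abnormally small values at $\xi$; this contradicts either the Davenport--Schmidt transcendence criterion (Theorem~\ref{T:CritereTdceDS}, which bounds $\omegahat_m(\xi)$ since $\xi$ is not algebraic of degree $\le m$), or, after one further elimination, the resultant of $C_j$ and $C_{j+1}$ being a nonzero integer of absolute value $<1$, exactly as in the proof sketch of Theorem~\ref{T:GTC}. The main obstacle is precisely this degenerate analysis: controlling the height of the recurrence polynomial $C_j$ from the minimality of the approximation and bridging between two successive scales $X_j$, $X_{j+1}$; handling the parity of $n$ (the odd case, where one is forced to borrow a window from the next best approximation and the bookkeeping becomes tighter) is exactly the point at which the refinement of M.~Laurent over Davenport and Schmidt enters.
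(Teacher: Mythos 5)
Your overall strategy -- passing to the sequence of minimal points $\underline x_j$ with heights $X_j$ and errors $L_j\ll X_{j+1}^{-\lambda}$, observing that each window $(x_k,\ldots,x_{k+m})$, $m=\lceil n/2\rceil$, lies within $O(L)$ of the line $\bR(1,\xi,\ldots,\xi^m)$, and estimating an $(m+1)\times(m+1)$ window determinant by $|D|\ll X_{j+1}L_j^m$ in the non--degenerate case -- is indeed the right one; note that the paper itself offers no proof of Theorem~\ref{T:DS2a} (it is quoted from Davenport--Schmidt \cite{MR0246822} and Laurent \cite{MR2015598}), and those original arguments are of exactly this minimal-point/determinant type. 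The non--degenerate half of your argument is correct as far as it goes: $1\le |D|\ll X_{j+1}^{\,1-m\lambda}$ does contradict $\lambda>1/m$ for arbitrarily large $X_{j+1}$.

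The genuine gap is the degenerate case, which you acknowledge but do not resolve, and your two proposed shortcuts fail as stated. First, when $n$ is odd a nonzero integer vector $\underline c_j$ annihilating the $m$ windows of $\underline x_j$ \emph{always} exists (an $m\times(m+1)$ system has nontrivial kernel), so its mere existence carries no information; everything depends on quantitative control of $\underline c_j$. Second, the only height bound available cheaply (Cramer, i.e.\ the $m\times m$ minors of the window matrix, themselves estimated by the same multilinear trick) is $\rmH(C_j)\ll X_jL_j^{m-1}$, whence $|C_j(\xi)|\ll \rmH(C_j)L_j/X_j\ll L_j^{m}$, which relative to the height $\rmH(C_j)\ll X_j$ is only of the order $X_j^{-1-\epsilon}$: far too weak either to feed into Theorem~\ref{T:CritereTdceDS} (which moreover requires a polynomial of small value at \emph{every} sufficiently large height $N$, not just along the sparse sequence $X_j$, and with smallness $N^{-2m+1}$) or to force the resultant of $C_j$ and $C_{j+1}$ to have absolute value $<1$. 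What the actual proofs do at this point -- and what is missing from your plan -- is a finer analysis showing that the kernel vectors stabilize (the same primitive $\underline c$ works for infinitely many $j$, via comparison of consecutive minimal points and an induction on the rank of the window matrices), after which dividing the exact relations $\sum_i c_i x^{(j)}_{k+i}=0$ by $x^{(j)}_0$ and letting $j\to\infty$ gives $C(\xi)=0$ with $\deg C\le m$, contradicting the hypothesis that $\xi$ is not algebraic of degree $\le\lceil n/2\rceil$. This stabilization step, together with the parity bookkeeping for odd $n$ where Laurent's refinement lies, is the heart of the theorem and is not supplied by the proposal.
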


The definition of $\omegahat'_n$ with a supremum does not reflect the accuracy of the results in  \cite{MR0246822}; for instance, the upper bound $ \omegahat'_2(\xi)\le 1/\gamma$ is not as sharp as Theorem 1a of \cite{MR0246822} which is the following:

\begin{thm}[Davenport and Schmidt]\label{T:omegahat2}
Let $\xi$ be a real number which is not rational nor a quadratic irrational. There exists a constant $c>0$ such that, for arbitrarily large values of $N$, the inequalities 
$$
\max \{|x_1-x_0\xi |, |x_2-x_0\xi^2 | \}
\le c N^{-1/\gamma},
   \quad  
 |x_0|  \le N
  $$
have no solution $(x_0,x_1,x_2)\in\bZ^3$.

\end{thm}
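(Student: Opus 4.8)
The plan is to run the classical method of successive minimal points for the simultaneous rational approximation of $(\xi,\xi^2)$, the only extra arithmetic input being that the golden ratio satisfies $\gamma^2=\gamma+1$ (so that $1/\gamma$ is the positive root of $t^2+t-1=0$, and $1-1/\gamma=1/\gamma^2$, $1+1/\gamma=\gamma$, $\gamma^2-1=\gamma$). Suppose the conclusion fails. Since $\xi$ is irrational the best‑approximation function
$$
L(N)=\min\bigl\{\max(|x_1-x_0\xi|,|x_2-x_0\xi^2|)\ :\ x_0\in\bZ,\ 1\le x_0\le N\bigr\}
$$
tends to $0$, and, after discarding the degenerate solution $x_0=0$, the failure of the statement is equivalent to $L(N)N^{1/\gamma}\to 0$. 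Let $\vv{x}_1,\vv{x}_2,\dots\in\bZ^3$ be the associated minimal points, with first coordinates $X_i$ strictly increasing to $\infty$ and values $L_i=\max(|x_1^{(i)}-X_i\xi|,|x_2^{(i)}-X_i\xi^2|)$ strictly decreasing to $0$, so that $L(N)=L_i$ for $X_i\le N<X_{i+1}$; each $\vv{x}_i$ is primitive and consecutive ones are $\bQ$‑linearly independent. In these terms the failure hypothesis reads $\sigma_i:=L_iX_{i+1}^{1/\gamma}\to 0$ (hence also $L_iX_i^{1/\gamma}\le\sigma_i\to 0$, and $L_{i-1}X_i^{1/\gamma}=\sigma_{i-1}\to 0$).

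\emph{A lower estimate.} Put $\vv{u}=(1,\xi,\xi^2)$ and write $\vv{x}_i=X_i\vv{u}+\vv{e}_i$, where $\vv{e}_i=(0,x_1^{(i)}-X_i\xi,x_2^{(i)}-X_i\xi^2)$ and $\|\vv{e}_i\|=L_i$. If $\vv{x}_{i-1},\vv{x}_i,\vv{x}_{i+1}$ are linearly independent then $\det(\vv{x}_{i-1},\vv{x}_i,\vv{x}_{i+1})$ is a nonzero integer, of absolute value $\ge1$. Expanding this determinant multilinearly in the splittings $\vv{x}_j=X_j\vv{u}+\vv{e}_j$, every term containing two copies of $\vv{u}$ vanishes by antisymmetry and the term $\det(\vv{e}_{i-1},\vv{e}_i,\vv{e}_{i+1})$ vanishes because the three vectors $\vv{e}_j$ all lie in the plane $\{x_0=0\}$; each of the three remaining terms has the shape $O_\xi(X\cdot L\cdot L)$ with the subscripts read off, and the monotonicity of $(X_i)$ and $(L_i)$ shows that $X_{i+1}L_{i-1}L_i$ dominates. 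Hence $X_{i+1}L_{i-1}L_i\gg_\xi 1$ for every linearly independent triple. Because $\xi$ is neither rational nor quadratic the $\vv{x}_i$ cannot from some point on all lie in one rational plane through the origin (an integral normal $\vv{n}$ to such a plane would give $n_0+n_1\xi+n_2\xi^2=\lim_i\vv{n}\cdot\vv{x}_i/X_i=0$), so independent triples occur with $i$ arbitrarily large; the remaining case, where a block of consecutive $\vv{x}_j$ lies in a common plane $V$, is disposed of by the same device applied to the first index $m$ with $\vv{x}_m\notin V$, since the integral normal $\vv{n}$ of $V$ then satisfies $0\ne|\vv{n}\cdot\vv{u}|\le\|\vv{n}\|L_m/X_m$ and one extracts an inequality of the same shape with $X_m$ in place of $X_{i+1}$.

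\emph{The reverse estimate, and the golden ratio.} Substituting $L_i=\sigma_iX_{i+1}^{-1/\gamma}$ and $L_{i-1}=\sigma_{i-1}X_i^{-1/\gamma}$ into $X_{i+1}L_{i-1}L_i\gg_\xi 1$ and using $1-1/\gamma=1/\gamma^2$ and $\gamma^2/\gamma=\gamma$, one gets
$$
X_{i+1}\ \ge\ \bigl(c_0/(\sigma_i\sigma_{i-1})\bigr)^{\gamma^2}X_i^{\gamma}\ =:\ \rho_iX_i^{\gamma},\qquad \rho_i\longrightarrow\infty .
$$
What must now be proved is the matching upper bound
$$
X_{i+1}\,L_{i-1}\ \ll_\xi\ X_i ,
$$
which I would obtain by using $\vv{x}_{i-1}$ — a point of quality $L_{i-1}$ — together with $\vv{x}_i$ and the failure hypothesis to exhibit, at a scale $\le c_\xi X_i/L_{i-1}$, an integer point approximating $(\xi,\xi^2)$ better than $L_i$; such a point is necessarily some $\vv{x}_k$ with $k\ge i+1$, whence $X_{i+1}\le c_\xi X_i/L_{i-1}$. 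Granting this, $L_{i-1}=\sigma_{i-1}X_i^{-1/\gamma}$ and $1+1/\gamma=\gamma$ give $X_{i+1}\le(c_\xi/\sigma_{i-1})X_i^{\gamma}$, and comparing with the lower bound yields $\rho_i\le c_\xi/\sigma_{i-1}$, i.e. (using $\gamma^2-1=\gamma$) $\ \sigma_i^{-\gamma^2}\ll_\xi\sigma_{i-1}^{\gamma}$, which is absurd since the left side tends to $\infty$ and the right side to $0$.

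I expect the reverse estimate $X_{i+1}L_{i-1}\ll_\xi X_i$ to be the main obstacle: all the easy manipulations with minimal points and their exterior products produce only lower bounds of the shape $X_{i+1}L_{i-1}L_i\gg 1$ (a cross product $\vv{y}_i=\vv{x}_i\wedge\vv{x}_{i+1}$ is a nonzero integer vector with $\|\vv{y}_i\|\ll X_{i+1}L_i$, hence $X_{i+1}L_i\gg 1$, and so on), whereas the reverse estimate requires manufacturing a genuinely new approximation at a controlled intermediate scale and with an absolute implied constant — precisely the step where the golden ratio is forced, the relation $X_{i+1}L_{i-1}\asymp X_i$ being attained on Roy's extremal numbers. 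Making the degenerate, coplanar configurations rigorous is a secondary but still delicate point.
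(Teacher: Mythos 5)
Your first half is sound and does correspond to part of Davenport and Schmidt's own argument in \cite{MR0246822} (the survey only cites their Theorem 1a): the minimal points, the nonvanishing $3\times 3$ determinant of three consecutive minimal points when they are independent, the recurrence of such independent triples because $1,\xi,\xi^2$ are linearly independent over $\bQ$, and the resulting growth bound $X_{i+1}\ge\rho_iX_i^{\gamma}$ with $\rho_i\to\infty$ are all correct. But the second half, your ``reverse estimate'' $X_{i+1}L_{i-1}\ll_\xi X_i$, is only postulated, and the route you sketch for it (use $\vv{x}_{i-1}$, $\vv{x}_i$ and the failure hypothesis to manufacture a better approximation at scale $\ll X_i/L_{i-1}$) cannot work as described: nothing in your argument up to that point uses the relation $\eta=\xi^2$, so everything you have proved holds verbatim for an arbitrary pair $(\xi,\eta)$ with $1,\xi,\eta$ linearly independent over $\bQ$ -- and for such pairs the conclusion is false, since the uniform exponent $\omegahat(^t\theta)$ can take any value in $[1/2,1]$, in particular values $>1/\gamma$ (Khintchine's singular systems; see the dimension-two spectrum recalled in \S\ref{SS:D2}). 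Hence the missing step is exactly where all the arithmetic content lies, and it must invoke the quadratic relation explicitly; it cannot follow from further ``soft'' manipulations of minimal points.

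Concretely, the ingredient you are missing is the integer invariant $D_i=x_0^{(i)}x_2^{(i)}-(x_1^{(i)})^2$ attached to each minimal point, which encodes whether the point lies on the conic $x_0x_2=x_1^2$. From the identity $x_0x_2-x_1^2=x_0(x_2-x_0\xi^2)-(x_1-x_0\xi)(x_1+x_0\xi)$ one gets $|D_i|\ll_\xi X_iL_i$; when $D_i\ne0$ this yields $X_iL_i\gg 1$, hence under your hypothesis $X_{i+1}\ll c^{\gamma}X_i^{\gamma}$, which (using $L_{i-1}\le cX_i^{-1/\gamma}$ and $1+1/\gamma=\gamma$) is exactly the upper growth bound equivalent to your reverse estimate. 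When $D_i=0$ the point is proportional to $(p^2,pq,q^2)$, and this degenerate case needs a separate argument: for two consecutive such points the cross product factors through $p_iq_{i+1}-q_ip_{i+1}$, giving $\|\vv{x}_i\wedge\vv{x}_{i+1}\|\ge(X_iX_{i+1})^{1/2}$, which is incompatible with $\|\vv{x}_i\wedge\vv{x}_{i+1}\|\ll X_{i+1}L_i\le cX_{i+1}^{1-1/\gamma}$ for large $i$; one must then also mesh the indices where $D_i\ne0$ with those where the triple determinant is nonzero before running your final comparison of the two growth bounds (which, once both are available at common indices, does produce the contradiction as you computed, with $c$ small in terms of $\xi$). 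Your diagnosis that this is the step where the golden ratio is forced, and that it is sharp on Roy's extremal numbers (whose minimal points keep $|D_i|$ bounded), is accurate -- but as it stands the proposal has a genuine gap at precisely this point.
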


Before restricting ourselves to the small values of $n$, we emphasise that there is a huge lack in our knowledge of  the spectrum of the set 
 \M{
 \bigl(\omega_n(\xi),\; \omegahat_n(\xi), \; \omega'_n(\xi),\; \omegahat'_n(\xi)\bigr)\in\bR^4,
 }
 where $\xi$ ranges over the set of real numbers which are not algebraic of degree \m{\le n}.

Consider  the special case \m{n=2} and the question of quadratic approximation.
 As pointed out by Y.~Bugeaud, a formula due to V.~Jarn\'\i k   (1938) 
(Theorem 1 of \cite{JFM64.0145.01};
see also Corollary A3 in \cite{RoyDamienTwoExponents} and \cite{LaurentMichel0703146}) relates $\omegahat_2$ and $\omegahat'_2$:
\begin{equation}\label{E:JarnikDim2}
\omegahat'_2(\xi)=1-\frac{1}{\omegahat_2(\xi)}\cdotp
\end{equation}
Therefore the properties of $\omegahat_2$ which we considered in \S~\ref{SS:PASASN-PACN} can be translated into properties of $\omegahat'_2$. 
For instance,
$\omegahat'_2(\xi)=1/2$ if and only if $\omegahat_2(\xi)=2$, and this holds for almost all $\xi\in\bR$ (see Theorem \ref{T:omega_nPresquePartout}) and for all algebraic real numbers $\xi$ of degree \m{\ge 3} (see Theorem \ref{T:ConsequenceSubspace}).
 If $\xi\in\bR$ is neither rational nor a quadratic irrational,  
 Davenport and Schmidt  have proved
 \begin{equation}\label{E:DSomegadeuxhat}
 \omegahat'_2(\xi)\le 1/\gamma=0.618\dots  
\end{equation}
 The extremal numbers of D.~Roy in Theorem \ref{T:Roy} satisfy
$\omegahat'_2 (\xi)=1/\gamma$. More precisely, they are exactly the numbers $\xi\in\bR$ which are not rational nor quadratic irrationals and satisfy the following property:   {\it there exists a constant $c>0$ such that, for any sufficiently large number $N$,   the inequalities 
$$
\max \{|x_1-x_0\xi |, |x_2-x_0\xi^2 | \}
\le c N^{-1/\gamma},
   \quad  
  0<\max \{|x_0| , \; |x_1| ,\; |x_2| \}\le N,
$$
  have a solution in rational integers $x_0,x_1, x_2$.} (This was the original definition). 

In \cite{RoyDamienTwoExponents}, using Jarn\'\i k's formula (\ref{E:JarnikDim2}),  D.~Roy shows that the set of $(\omegahat'_2(\xi),\omegahat'_2(\xi))\in\bR^2$, where  $\xi$ ranges over the set of real numbers which are not algebraic of degree $\le 2$, is dense in the piece of curve
$$
\{(1-t^{-1},t)\; ; \; 2\le t\le \gamma+1\}.
$$

We conclude with  the  case \m{n=3} and the question of cubic approximation. When $\xi\in\bR$ is not algebraic of degree $\le 3$, the estimate for $\omegahat'_3(\xi)$  by  Davenport and Schmidt  \cite{MR0246822}  is 
\M{
\frac{1}{3}\le \omegahat'_3(\xi)\le  \frac{1}{2}\cdotp
}
As we have seen, the lower bound is optimal (equality holds for almost all numbers and all algebraic numbers of degree $>n$). The upper bound has  been improved by 
D.~Roy in 
 \cite{RoyDamienContinuedFraction} 
$$
\omegahat'_3(\xi)\le  \frac{1}{2}
(2\gamma+1-\sqrt{4\gamma^2+1})=0.4245\dots 
$$

\subsection{Algebraic approximation to a single number
}
\label{SS:PASASN-AACN}

  Let $\xi$ be a real number and \m{n} a positive integer. 

   Denote by \m{\omega^*_n(\xi)} the supremum of the real numbers $w$ for which  there exist  infinitely many positive integers  \m{N}  with the following property: {\it  there exists an algebraic number $\alpha$ of degree $\le n$ and height $\le N$ satisfying\footnote{The occurrence of $-1$ in the exponent of the right hand side of (\ref{E:moinsun}) is already plain for degree $1$ polynomials, comparing $|\alpha-p/q|$ and $|q\alpha-p|$.}
   }
\begin{equation}\label{E:moinsun}
0<   |\xi-\alpha|\le N^{-w-1}.
 \end{equation}    
 
An upper bound for   \m{ \omega^*_n(\xi)}   is a {\it measure of algebraic approximation} for 
  \m{ \xi}. 
   These numbers arise in Koksma's classification of complex numbers  (Chap.~III  \S~3 of \cite{MR19:252f}   and \S~3.3 of \cite{MR2136100}).
   

  Next, denote by \m{\omegahat^*_n(\xi)} the supremum of the real numbers $w$ such that, 
   {\it for any sufficiently large  integer  \m{N},  there exists an algebraic number $\alpha$ of degree $\le n$ and height $\le N$ satisfying}
      \M{
   |\xi-\alpha|\le \rmH(\alpha)^{-1}N^{-w}.
  }

   
  An upper bound for   \m{\omegahat^*_n(\xi)} yields a {\it uniform measure of algebraic approximation} for 
  \m{ \xi}. 
  
  From Schmidt's Subspace Theorem one deduces, for a real algebraic number $\xi$ of degree $d$ and for $n\ge 1$, 
 $$
 \omegahat^*_n(\xi)= \omega^*_n(\xi)=\min\{n, d-1\}.
 $$
 See \cite{MR2136100} Th.\ 2.9 and 2.11. 
  
       That there are relations between $\omega_n$ and $\omega^*_n$  (and, for the same reason, between $\omegahat_n$ and $\omegahat^*_n$) can be expected from Lemmas  \ref{L:EasyPart} and \ref{L:XimoinsGamma}. Indeed, a lot of information on these numbers has been devised in  order to compare the classifications of Mahler and Koksma. The estimate 
$$
\omega_n(\xi)\ge \omega_n^*(\xi),
$$ 
which follows from Lemma \ref{L:EasyPart}, was known by Koksma
(see also Wirsing's paper \cite {MR0142510}). In the reversed direction, the inequalities 
\begin{equation}\label{E:WirsingA}
\omega_n^*(\xi) \ge \omega_n(\xi)-n+1,\quad   \omega_n^*(\xi) \ge \frac{\omega_n(\xi)+1}{2}
\end{equation}
and
\begin{equation}\label{E:Wirsing}
       \omega^*_n(\xi)\ge \frac{\omega_n(\xi)}{\omega_n(\xi)-n+1}
\end{equation}
have been obtained by  E.~Wirsing in 1960
         \cite {MR0142510} (see \S~3.4 of \cite{MR2136100}). 
         
A consequence  is that for a real number $\xi$ which is not algebraic of degree $\le n$, if
$\omega_n(\xi) =n$  then  $\omega^*_n(\xi)=n$.

The  inequality (\ref{E:Wirsing})  of Wirsing has been refined in Theorem 2.1 of  \cite{MR2149403} as follows.

\begin{thm}[Bugeaud and Laurent]\label{T:BugeaudLaurentFourier}
Let $n$ be a positive integer and $\xi$ a real number which is not algebraic of degree $\le n$. Then
$$
 \omegahat^*_n(\xi)\ge \frac{\omega_n(\xi)}{\omega_n(\xi)-n+1}
\quad\text{
and
} \quad
 \omega^*_n(\xi) \ge \frac{\omegahat_n(\xi)}{\omegahat_n(\xi)-n+1}\cdotp
$$

\end{thm}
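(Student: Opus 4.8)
The plan is to prove the two inequalities of Theorem~\ref{T:BugeaudLaurentFourier} by constructing, from a good polynomial approximation to $\xi$, an algebraic number $\alpha$ close to $\xi$, and controlling simultaneously its degree, its height, and the distance $|\xi-\alpha|$. The starting point is the defining property of $\omega_n(\xi)$ (resp. $\omegahat_n(\xi)$): for a suitable exponent $w$ slightly below $\omega_n(\xi)$ (resp. $w$ slightly below $\omegahat_n(\xi)$, but now for \emph{all} large $N$) there are non-zero integer polynomials $P$ of degree $\le n$ with $\rmH(P)\le N$ and $0<|P(\xi)|\le N^{-w}$. Following the classical Wirsing argument, I would fix such a $P$ for a given large $N$ and look at the root $\alpha$ of $P$ closest to $\xi$; by Lemma~\ref{L:XimoinsGamma} the smallness of $|P(\xi)|$ forces $\alpha$ to be close to $\xi$, namely $|\xi-\alpha|^k \le D^{3D-2}\rmH(P)^{2D}|P(\xi)|$ where $k$ is the multiplicity of $\alpha$ and $D=\deg P\le n$.

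The key refinement over the crude Wirsing bound is to take $P$ not just to be \emph{one} polynomial of small value, but to exploit an \emph{entire interval} of admissible values of $N$ — this is where the ``Fourier'' idea of Bugeaud and Laurent enters and is what yields the \emph{uniform} star exponent $\omegahat^*_n$ on the left-hand side. Concretely, for each large parameter $X$ one has a polynomial $P_X$ with $\rmH(P_X)\le X$ and $|P_X(\xi)|\le X^{-w}$; one wants to produce, for \emph{every} sufficiently large $N$, an algebraic $\alpha$ of degree $\le n$, height $\le N$, with $|\xi-\alpha|\le \rmH(\alpha)^{-1}N^{-\omegahat^*_n(\xi)}$. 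The mechanism is: choose $X$ as a function of $N$ so that the height of the produced $\alpha$ (which is roughly $\rmH(P_X)$, up to the multiplicity/irreducible-factor bookkeeping) is $\le N$, while the distance $|\xi-\alpha|$, estimated by Lemma~\ref{L:XimoinsGamma}, is governed by $\rmH(P_X)^{2D}|P_X(\xi)|\le X^{2n-w}$. Optimizing the trade-off between ``$\rmH(\alpha)$ close to $N$'' and ``$|\xi-\alpha|$ as small as possible'' produces exactly the exponent $\omega_n(\xi)/(\omega_n(\xi)-n+1)$ in the first inequality, and symmetrically, feeding the uniform hypothesis on $\omegahat_n$ into the \emph{asymptotic} conclusion on $\omega^*_n$ gives the second inequality.

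There are two technical points to handle carefully. First, the multiplicity $k$ and the passage from $P$ to the minimal polynomial of $\alpha$: one must replace $P$ by the irreducible factor vanishing at $\alpha$ (raised to its multiplicity), check that $\rmH(\alpha)$ is still bounded by (a constant times) $\rmH(P)$ via Gelfond's inequality on heights of factors, and note that the degree only decreases; the factor $D^{3D-2}$ and the height loss from $\rmH(P)^{2D}$ are absorbed because $n$ is fixed and we take suprema of exponents. Second, one must ensure $\alpha\ne\xi$ (i.e. $|\xi-\alpha|>0$): since $\xi$ is assumed not algebraic of degree $\le n$, no such $\alpha$ can equal $\xi$, so the strict inequality $0<|\xi-\alpha|$ in the definition of $\omega^*_n$ and $\omegahat^*_n$ is automatic.

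The main obstacle, in my view, is organizing the quantitative bookkeeping so that the \emph{uniform} nature of the output exponent $\omegahat^*_n$ is genuinely obtained and not merely the asymptotic $\omega^*_n$: one needs the polynomial $P_X$ to be available for a \emph{continuous range} of scales $X$ (which is exactly what the definition of $\omega_n$ via ``infinitely many $N$'' does \emph{not} directly give, whereas $\omegahat_n$ does), and then to interpolate so that for every $N$ there is a usable $\alpha$ of height at most $N$. Reconciling ``infinitely many $N$'' in the hypothesis with ``all large $N$'' in the conclusion for the first inequality is the delicate part, and it is precisely here that the argument of \cite{MR2149403} is more subtle than Wirsing's original estimate; the second inequality, going from uniform hypothesis to asymptotic conclusion, is comparatively the easier direction.
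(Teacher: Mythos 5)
Your overall plan (turn small polynomial values into nearby algebraic numbers) points in a reasonable general direction, but the quantitative engine you propose cannot yield the stated exponents, and this is a genuine gap rather than bookkeeping. Lemma~\ref{L:XimoinsGamma} loses a factor $\rmH(P)^{2D}\le N^{2n}$: from $\rmH(P)\le N$ and $|P(\xi)|\le N^{-w}$ it only gives $|\xi-\alpha|^{k}\le c\,N^{2n-w}$, which is vacuous unless $w>2n$ and in particular says nothing in the central case $\omega_n(\xi)=n$ (almost all $\xi$, and all algebraic $\xi$ of degree $>n$), where Theorem~\ref{T:BugeaudLaurentFourier} asserts $\omegahat^*_n(\xi)\ge n$. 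No choice of $X=X(N)$ can ``optimize'' this into $\omega_n(\xi)/(\omega_n(\xi)-n+1)$; the sentence claiming the trade-off ``produces exactly'' that exponent is precisely the step that needs proof, and a loss-free ``small value implies comparably close root'' statement is not available -- that obstruction (clusters of roots, small derivative) is the whole difficulty behind Wirsing-type theorems and Conjecture~\ref{C:Wirsing}. Note also that $t\mapsto t/(t-n+1)$ is decreasing for $n\ge2$, so if your mechanism (better polynomial approximations give better algebraic approximations) were the engine, the conclusion would get stronger as $\omega_n(\xi)$ grows, which is the opposite of what the theorem says.

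This points to the deeper issue: you use the hypothesis from the wrong side. The proof of Bugeaud and Laurent (Theorem 2.1 of \cite{MR2149403}; the present survey only quotes it, and ``Fourier'' in the label refers to the journal, not to any Fourier-analytic idea) exploits the upper-bound content of the exponents: if $w>\omega_n(\xi)$ then every integer polynomial $Q$ of degree $\le n$ and large height satisfies $|Q(\xi)|>\rmH(Q)^{-w}$ (resp., if $w>\omegahat_n(\xi)$, there are arbitrarily large $N$ for which no $Q$ with $\rmH(Q)\le N$ has $|Q(\xi)|\le N^{-w}$). This is combined, following Wirsing, with Dirichlet's box principle (Lemma~\ref{L:BoxPrinciplePolynomes}), which furnishes for \emph{every} $N$ a polynomial $P$ with $\rmH(P)\le N$ and $|P(\xi)|\le cN^{-n}$; writing the relevant irreducible factor (via Gelfond's lemma) as $a_d\prod_i(X-\alpha_i)$ and using the universal lower bound to prevent the roots other than the nearest one from being too close to $\xi$, one converts $|P(\xi)|\le cN^{-n}$ into $|\xi-\alpha|\ll\rmH(\alpha)^{-1}N^{-w/(w-n+1)}$ with $\rmH(\alpha)\ll N$. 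This is also what resolves the asymptotic/uniform tension you flag but leave open: the uniformity of $\omegahat^*_n$ comes from Dirichlet (available at every scale) plus the universal lower bound, not from the exceptionally good polynomials of the definition of $\omega_n$; if those occur only at very lacunary heights $N_k$, the algebraic numbers they produce cannot witness the uniform inequality at intermediate scales, so your interpolation ``choose $X$ as a function of $N$'' cannot be repaired. Symmetrically, the second inequality is only asymptotic because the lower bound coming from $\omegahat_n$ is available only at infinitely many $N$. Since your sketch never invokes this side of the definitions, the decisive step of the proof is missing.
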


         A number of recent papers are devoted to this topic, including  the survey given in the first part of \cite{MR2149403}
as well as  Bugeaud's  papers
\cite{MR2001g:11113,MR2007546,MR1981928,BugeaudMahlerClassification2,MR2107949}
where further references can be found. 

We quote Proposition 2.1 of  \cite{MR2149403} which gives connections between the six exponents 
$\omega_n$, $\omegahat_n$, $\omega'_n$, $\omegahat'_n$, $\omega^*_n$, $\omegahat^*_n$.

\begin{prop}
Let $n$ be a positive integer and $\xi$ a real number which is not algebraic of degree $\le n$. Then
$$
\frac{1}{n}\le \omegahat'_n(\xi)\le \min\{1,\omega'_n(\xi)\}
$$
and
$$
1\le \omegahat^*_n(\xi)\le \min\bigl\{
\omega_n^*(\xi),\; \omegahat_n(\xi)\bigr\}
\le \max\bigl\{
\omega_n^*(\xi),\; \omegahat_n(\xi)\bigr\}
\le \omega_n(\xi).
$$

\end{prop}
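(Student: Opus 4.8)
The plan is to unwind the definitions of the six exponents and to notice that every inequality in the statement is of one of three kinds: a lower bound coming from Dirichlet's box principle; a ``uniform implies asymptotic'' inclusion, which is essentially a tautology; or a comparison between a ``starred'' exponent (approximation by algebraic numbers) and an ``unstarred'' one (approximation by integer polynomials), which is precisely the content of the easy Lemma~\ref{L:EasyPart}. The slightly less formal point, namely $\omegahat'_n(\xi)\le 1$, will be reduced to the degree-one case by projecting onto the first two coordinates and invoking Lemma~\ref{L:Khinchin}.

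\textbf{First display.} The lower bound $1/n\le\omegahat'_n(\xi)$ is the simultaneous form of Dirichlet's box principle already quoted above. The inequality $\omegahat'_n(\xi)\le\omega'_n(\xi)$ is immediate, since the two exponents are attached to the \emph{same} system (\ref{E:simultaneousappxpowers}) and ``a solution for all large $N$'' is stronger than ``a solution for infinitely many $N$''. For $\omegahat'_n(\xi)\le 1$, fix $w<\omegahat'_n(\xi)$ and, for each large $N$, a solution $(x_0,\dots,x_n)\in\bZ^{n+1}$ of (\ref{E:simultaneousappxpowers}) with exponent $w$. If $x_0=0$ then $\max_{1\le i\le n}|x_i|\le N^{-w}<1$ for $N$ large, forcing all $x_i=0$ and contradicting the strict positivity in (\ref{E:simultaneousappxpowers}); hence $x_0\ne 0$, and then $x_1-x_0\xi\ne 0$ because $\xi$, not being algebraic of degree $\le 1$, is irrational. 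Thus $(|x_0|,\,\mathrm{sgn}(x_0)\,x_1)$ is a solution of the defining system of $\omegahat(\xi)=\omegahat'_1(\xi)$ at level $N$ with exponent $w$, so $\omegahat(\xi)\ge w$; since $\omegahat(\xi)=1$ by Lemma~\ref{L:Khinchin}, we conclude $w\le 1$, hence $\omegahat'_n(\xi)\le 1$. Together with $\omegahat'_n(\xi)\le\omega'_n(\xi)$ this gives the bound $\min\{1,\omega'_n(\xi)\}$.

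\textbf{Second display.} The middle inequality $\min\le\max$ is trivial, and $\omegahat_n(\xi)\le\omega_n(\xi)$ is again the ``uniform implies asymptotic'' inclusion, here for the single system (\ref{E:omega_n}). The lower bound $1\le\omegahat^*_n(\xi)$ comes from rational approximants: for any $w<1$, Dirichlet's Theorem~\ref{T:DirichletUniforme} yields, for every large $N$, a fraction $p/q$ in lowest terms with $\rmH(p/q)\le N$ and $|\xi-p/q|\le\rmH(p/q)^{-1}N^{-w}$, the implied constants (arising from $|p|\le q|\xi|+1$) being absorbed by replacing $N$ by a fixed fraction of itself. For $\omegahat^*_n(\xi)\le\omegahat_n(\xi)$ and $\omega^*_n(\xi)\le\omega_n(\xi)$ one replaces an algebraic approximant $\alpha$ of degree $\le n$ and height $\le N$ by its minimal polynomial $f\in\bZ[X]$: then $\deg f\le n$, $\rmH(f)=\rmH(\alpha)\le N$, and $f(\xi)\ne 0$ since $\xi$ is not algebraic of degree $\le n$; by Lemma~\ref{L:EasyPart} and $\rmL(f)\le(n+1)\rmH(f)$ one gets $|f(\xi)|\le c_1\,|\xi-\alpha|\,\rmH(\alpha)$ for a constant $c_1=c_1(n,\xi)$. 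Inserting $|\xi-\alpha|\le\rmH(\alpha)^{-1}N^{-w}$ (respectively $|\xi-\alpha|\le N^{-w-1}$, using $\rmH(\alpha)\le N$) gives $0<|f(\xi)|\le c_1N^{-w}$ for all large (respectively infinitely many) $N$, whence $\omegahat_n(\xi)\ge w$ (respectively $\omega_n(\xi)\ge w$). Finally $\omegahat^*_n(\xi)\le\omega^*_n(\xi)$ is a change of normalisation: $\rmH(\alpha)\le N$ gives $|\xi-\alpha|\le\rmH(\alpha)^{-1}N^{-w}\le\rmH(\alpha)^{-w-1}$, and the heights $\rmH(\alpha)$ occurring for large $N$ are unbounded (otherwise $\xi$ would be a limit of finitely many algebraic numbers of degree $\le n$, hence one of them); taking $N'=\rmH(\alpha)$ then produces infinitely many admissible levels for $\omega^*_n$.

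\textbf{Where the work sits.} None of the steps is a real obstacle: the substantive inputs, Lemma~\ref{L:Khinchin} and Lemma~\ref{L:EasyPart}, are already available. The only care required is the uniform absorption $c_iN^{-w}\le N^{-w+\epsilon}$ (valid for $N$ large) when passing from explicit estimates to statements about the exponents, and the non-vanishing assertions $x_0\ne 0$, $f(\xi)\ne 0$, $\alpha\ne\xi$, which are exactly the places where the hypothesis that $\xi$ is not algebraic of degree $\le n$ enters. If a single ``hardest'' ingredient had to be named it would be $\omegahat'_n(\xi)\le 1$, but there the whole difficulty has been offloaded onto Lemma~\ref{L:Khinchin}.
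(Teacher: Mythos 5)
Your proof is correct. Note first that the survey itself contains no proof of this statement: it is quoted verbatim as Proposition 2.1 of Bugeaud and Laurent \cite{MR2149403}, so there is no argument in the paper to compare with step by step; your write-up supplies the standard definition-chasing proof using exactly the tools the survey makes available. Concretely, you use Dirichlet's box principle for the lower bounds $1/n\le\omegahat'_n(\xi)$ and $1\le\omegahat^*_n(\xi)$ (the latter via rational approximants, with the constant coming from $|p|\le q|\xi|+1$ absorbed by shrinking $N$, and $\xi\ne p/q$ because $\xi$ is irrational), the tautological ``uniform implies asymptotic'' inequalities for $\omegahat'_n\le\omega'_n$ and $\omegahat_n\le\omega_n$, Lemma~\ref{L:EasyPart} applied to the minimal polynomial of an algebraic approximant (with $\rmL(f)\le(n+1)\rmH(f)$ and $f(\xi)\ne0$) for $\omegahat^*_n\le\omegahat_n$ and $\omega^*_n\le\omega_n$, the renormalisation $\rmH(\alpha)\le N$ for $\omegahat^*_n\le\omega^*_n$, where your remark that the heights occurring must be unbounded is indeed what guarantees infinitely many admissible levels, and Lemma~\ref{L:Khinchin} for $\omegahat'_n(\xi)\le1$ after projecting a solution of (\ref{E:simultaneousappxpowers}) onto $(x_0,x_1)$, the hypothesis that $\xi$ is not algebraic of degree $\le n$ entering exactly where you say it does. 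One cosmetic point: since $\omegahat(\xi)$ is defined in \S\ref{SS:RAAN-AURA} with ``for any $N\ge1$'' rather than ``for all sufficiently large $N$'', it is cleaner to phrase the last step either through $\omegahat'_1(\xi)$ (whose definition does use ``sufficiently large $N$'' and which the paper identifies with $\omegahat(\xi)$), or by invoking Lemma~\ref{L:Khinchin} directly, whose hypothesis is precisely ``for each $N\ge N_0$''; this does not affect the validity of your conclusion.
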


A further relation connecting 
$\omega^*_n$ and $\omegahat'_n$ has been discovered by H.~Davenport and W.M.~Schmidt in 1969  \cite{MR0246822}. We discuss their contribution in \S~\ref{SS:PASASN-AAI}. For our immediate concern here we only quote the following result:

\begin{thm}
\label{T:DSdualite}
Let $n$ be a positive integer  and  $\xi$ a real number which is not  algebraic of degree \m{\le n}, 
Then
$$
\omega^*_n(\xi)\omegahat'_n(\xi)\ge 1.
$$
\end{thm}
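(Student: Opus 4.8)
The plan is to show that a very good algebraic approximation $\alpha$ to $\xi$ (of degree $\le n$, witnessing $\omega^*_n(\xi)$) forces the existence of reasonably good simultaneous rational approximations to $\xi,\xi^2,\dots,\xi^n$, in such a way that the uniform exponent $\omegahat'_n(\xi)$ cannot be too small. Concretely, fix $w<\omega^*_n(\xi)$. Then there exist infinitely many $N$ and algebraic numbers $\alpha=\alpha_N$ of degree $\le n$ with $\rmH(\alpha)\le N$ and $0<|\xi-\alpha|\le N^{-w-1}$. The first step is to use $\alpha$ to build an explicit integer vector $(x_0,x_1,\dots,x_n)$: take $x_0$ to be a suitable denominator attached to $\alpha$ (roughly of size $\rmH(\alpha)$) and $x_i$ the nearest integer to $x_0\xi^i$; equivalently, one works with $x_0$ a power of the leading coefficient of the minimal polynomial of $\alpha$ together with symmetric functions, so that $x_0\alpha^i\in\bZ$ for $0\le i\le n$. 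Then $|x_i-x_0\xi^i|\le |x_0||\xi^i-\alpha^i| + (\text{rounding error})$, and $|\xi^i-\alpha^i|\le i\max\{1,|\xi|\}^{i-1}|\xi-\alpha|$, so all these quantities are $\ll \rmH(\alpha)\cdot N^{-w-1}\ll N^{-w}$ while $\max_i|x_i|\ll N$. This produces, for infinitely many $N$, a nonzero solution of the system \eqref{E:simultaneousappxpowers} with exponent essentially $w$, which already gives $\omega'_n(\xi)\ge\omega^*_n(\xi)$, but not yet the \emph{uniform} statement we want.

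The second, more delicate step is to upgrade ``infinitely many $N$'' to ``all sufficiently large $N$''. The standard device here is a gap/interpolation argument: between two consecutive good values $N<N'$ of the parameter (for which we have good algebraic approximations), one must still exhibit a solution of \eqref{E:simultaneousappxpowers} for every intermediate integer $M$ with $N\le M\le N'$, with exponent close to $\omegahat'_n(\xi)\cdot\omega^*_n(\xi)\ge 1$ forced from below. The cleanest route is actually to argue by contradiction against the conclusion $\omega^*_n(\xi)\,\omegahat'_n(\xi)\ge 1$: suppose $\omegahat'_n(\xi)<1/\omega^*_n(\xi)$, i.e.\ $\omega^*_n(\xi)>1/\omegahat'_n(\xi)$. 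Pick $w$ strictly between $1/\omegahat'_n(\xi)$ and $\omega^*_n(\xi)$, and pick $w'$ with $\omegahat'_n(\xi)<w'<1/w$. By definition of $\omegahat'_n$, for arbitrarily large $N$ the system \eqref{E:simultaneousappxpowers} with exponent $w'$ has \emph{no} solution. On the other hand, by the definition of $\omega^*_n$, there are infinitely many good algebraic approximations, and one wants to feed such an approximation (for a parameter roughly matching these bad $N$) into the construction of the previous paragraph and derive a solution of \eqref{E:simultaneousappxpowers} at exponent $w'$ — a contradiction. Making the two scales of $N$ match up is where the uniformity of $\omegahat'_n$ is exactly what is needed.

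I expect the main obstacle to be precisely this matching of scales, together with controlling the height of $x_0$: one needs that the natural integer denominator $x_0$ obtained from $\alpha$ has size comparable to $\rmH(\alpha)$ (not some larger power), so that $\max_i|x_i|\ll N$ rather than $N^{n}$. For $n$ general, $x_0\alpha^i$ need not be an algebraic integer unless one multiplies by the $i$-th power of the leading coefficient $a_n$ of the minimal polynomial; the fix is to note $|a_n|\le\rmH(\alpha)$ and to take $x_i$ to be the nearest integer to $x_0\xi^i$ with $x_0=$ (an appropriate integer of size $O(\rmH(\alpha))$, e.g.\ a common denominator of the power basis), absorbing the loss into the constant $c$ hidden in \eqref{E:simultaneousappxpowers}; since $\omega'_n$ and $\omegahat'_n$ are defined by a supremum this constant is harmless. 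The secondary technical point is the elementary estimate $|\xi^i-\alpha^i|\le i(1+|\xi|)^{i-1}|\xi-\alpha|$ valid once $|\xi-\alpha|\le 1$ (compare Lemma~\ref{L:EasyPart}), plus a rounding term $\le 1/2$, which together bound $|x_i-x_0\xi^i|$ by $O(\rmH(\alpha)\,|\xi-\alpha|)$ — and this is the inequality that converts the exponent $-w-1$ for $|\xi-\alpha|$ into the exponent $-w$ for the simultaneous system, explaining the shape of the claimed inequality $\omega^*_n(\xi)\,\omegahat'_n(\xi)\ge 1$.
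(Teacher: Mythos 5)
There is a genuine gap, and it is in the very first step: for $n\ge 2$ your construction of the integer vector $(x_0,\dots,x_n)$ from an algebraic approximant $\alpha$ does not exist. If $\alpha$ has degree $d$ with $2\le d\le n$, the numbers $\alpha,\alpha^2,\dots,\alpha^n$ are irrational, so no nonzero integer $x_0$ can make the $x_0\alpha^i$ rational integers (multiplying by powers of the leading coefficient of the minimal polynomial only produces algebraic integers, not elements of $\bZ$); and if instead you take $x_i$ to be the nearest integer to $x_0\xi^i$, the rounding error is of order $1$, not $O(\rmH(\alpha)\,|\xi-\alpha|)$, so the vector does not satisfy (\ref{E:simultaneousappxpowers}) with any nontrivial exponent. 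In fact the conclusion you draw from this step, $\omega'_n(\xi)\ge\omega^*_n(\xi)$, is false in general: for almost all real $\xi$ one has $\omega'_n(\xi)=1/n$ while $\omega^*_n(\xi)=n$ (see \S~\ref{SS:PASASN-SRARN} and \S~\ref{SS:PASASN-AACN}, and Theorem~\ref{T:omega_nPresquePartout}), so a direct passage from a good algebraic approximation of degree $\ge 2$ to a good simultaneous rational approximation of $(\xi,\dots,\xi^n)$ cannot exist. Your contradiction scheme is also internally inconsistent: $\omegahat'_n(\xi)<1/\omega^*_n(\xi)$ is equivalent to $\omega^*_n(\xi)<1/\omegahat'_n(\xi)$, not $>$, so with the correct reading there is no admissible $w$, and with your reading the interval $(\omegahat'_n(\xi),1/w)$ needed for $w'$ is empty; and in any case exhibiting simultaneous solutions at the sparse sequence of scales where algebraic approximants happen to exist could never contradict the failure of the \emph{uniform} property at other scales, which is exactly the scale-matching problem you flag but do not resolve.

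The actual argument (Davenport and Schmidt \cite{MR0246822}, as indicated in \S~\ref{SS:PASASN-AAI}) runs in the opposite direction and is a duality, not a substitution of $\alpha$ for $\xi$: integer polynomials of degree $\le n$ are the linear forms dual to simultaneous approximation of $(\xi,\dots,\xi^n)$. One fixes $\lambda>\omegahat'_n(\xi)$, so that for arbitrarily large $N$ the convex body defined by $\max_{1\le i\le n}|x_i-x_0\xi^i|\le N^{-\lambda}$, $|x_0|\le N$ contains no nonzero integer point; Mahler's theory of polar convex bodies (transference, geometry of numbers) converts this non-existence into the existence of nonzero polynomials $P\in\bZ[X]$ of degree $\le n$ with height $\ll N^{\lambda}$ and $|P(\xi)|\ll N^{-1}$, indeed enough independent ones to extract a root $\alpha$ of degree $\le n$ and height $\ll N^{\lambda}$ with $|\xi-\alpha|\ll N^{-1-\lambda}\asymp \rmH(\alpha)^{-1-1/\lambda}$ (compare Lemma~\ref{L:XimoinsGamma}). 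This yields $\omega^*_n(\xi)\ge 1/\lambda$ for every $\lambda>\omegahat'_n(\xi)$, hence $\omega^*_n(\xi)\,\omegahat'_n(\xi)\ge 1$, and it is this same mechanism that underlies Proposition~\ref{P:Polar}. Your proposal contains no substitute for this duality step, which is the heart of the theorem.
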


The spectral question  for $\omega^*_n$ is one of the main challenges in this domain. 
Wirsing's conjecture  states that  for any integer $n\ge 1$ and any real number $\xi$ which is not algebraic of degree $\le n$,  we have $\omega^*_n(\xi)\ge n$. In other terms:

   \begin{conj}[Wirsing]\label{C:Wirsing}
For any \m{\epsilon>0} there is a constant \m{c(\xi,n,\epsilon) >0} for which there are infinitely many algebraic numbers \m{\alpha} of degree \m{\le n} with
\M{
|\xi-\alpha|\le c(\xi,n,\epsilon) \rmH(\alpha)^{-n-1+\epsilon}.
}
\end{conj}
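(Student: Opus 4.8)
Wirsing's conjecture is open for $n\ge 3$, so what follows is a plan of attack rather than a proof, together with the point at which every known approach stalls. The first step is to observe that the existing lower bounds for $\omega_n^*$ already confine the problem to a bounded range of $\xi$: by Wirsing's inequality (\ref{E:WirsingA}) one has $\omega_n^*(\xi)\ge(\omega_n(\xi)+1)/2$, so $\omega_n^*(\xi)\ge n$ follows as soon as $\omega_n(\xi)\ge 2n-1$; by (\ref{E:Wirsing}), if $\omega_n(\xi)=n$ then $\omega_n^*(\xi)\ge n$ (indeed $=n$, since $\omega_n\ge\omega_n^*$ by Lemma~\ref{L:EasyPart}); and by Theorem~\ref{T:BugeaudLaurentFourier}, $\omega_n^*(\xi)\ge\omegahat_n(\xi)/(\omegahat_n(\xi)-n+1)$, which forces $\omega_n^*(\xi)\ge n$ whenever $\omegahat_n(\xi)=n$. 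I would therefore reduce at once to the real numbers $\xi$ with
\[
n<\omegahat_n(\xi)\le\omega_n(\xi)<2n-1,
\]
i.e. those admitting a non-trivial \emph{uniform} polynomial approximation of order $n$; for $n=1$ this range is empty (and the assertion is Dirichlet's theorem), and for $n=2$ it is covered by the work of Davenport and Schmidt, so only $n\ge 3$ survives.

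Next I would try to convert a good polynomial approximation into a good algebraic one. For infinitely many large $N$, pick by the definition of $\omega_n(\xi)$ a non-zero $P_N\in\bZ[X]$ with $\deg P_N\le n$, $\rmH(P_N)\le N$ and $|P_N(\xi)|\le N^{-\omega_n(\xi)+o(1)}$, and apply Lemma~\ref{L:XimoinsGamma} to get a root $\alpha_N$ of $P_N$, of multiplicity $k_N$, with
\[
|\xi-\alpha_N|^{k_N}\le D^{3D-2}\,\rmH(P_N)^{2D}\,|P_N(\xi)|,\qquad D=\deg P_N\le n.
\]
Since $\rmH(\alpha_N)\ll_n N$, if one could arrange $k_N=1$ and a $P_N$ whose height does not much exceed the naive bound, this would yield $|\xi-\alpha_N|\le N^{-\omega_n(\xi)+2n+o(1)}$ and hence, through the distance-to-height passage built into (\ref{E:moinsun}), a lower bound for $\omega_n^*(\xi)$ only of the order of $\omega_n(\xi)-2n$, which is negative in the critical window. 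So a single polynomial at a time is hopelessly lossy, and the real plan must use the whole family $(P_N)_N$ at once, exploiting the geometry of the convex bodies $\{\,|x_0+x_1\xi+\cdots+x_n\xi^n|\le T,\ \max_i|x_i|\le N\,\}$: follow their successive minima, study the sublattice spanned by the polynomials that are small at $\xi$, and build from it an algebraic number of degree $\le n$ and controlled height. For $n=2$ this is exactly the mechanism behind Theorem~\ref{Th:ArbourRoy} and the duality results of \S~\ref{SS:PASASN-AAI} — there the resultant of two consecutive near-minimal polynomials furnishes the common near-root — but it now has to be carried out in genuinely higher dimension.

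The hard part, and the reason the conjecture remains open for $n\ge 3$, is the uniform control of the irreducible factors of $P_N$. When $\omega_n(\xi)$ lies strictly between $n$ and $2n-1$, the smallness of $|P_N(\xi)|$ may be shared among several Galois conjugates of one root, or concentrated at a root of high multiplicity $k_N$ of a reducible $P_N$; in either case no irreducible factor of $P_N$ of degree $\le n$ need take a value at $\xi$ small enough, relative to its own height and degree, to produce an algebraic number $\alpha$ satisfying (\ref{E:moinsun}) with exponent $n+1$. An argument that either forces a simple near-root of small height, or directly manufactures the required algebraic numbers from the lattice of small polynomials, would settle the conjecture; none is known, and the problem is tightly intertwined with the general spectral question for $\omega_n^*$.
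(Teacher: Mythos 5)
You have not proved the statement, but that is the correct outcome: Conjecture~\ref{C:Wirsing} is an open conjecture, and the paper offers no proof of it either --- it records only that the case $n=2$ was settled by Davenport and Schmidt \cite{MR0219476} and that the best general bound is Wirsing's $\omega^*_n(\xi)\ge (n+1)/2$ (exponent $(n+3)/2$). So there is nothing in the paper to compare your argument against, and your honesty in presenting a plan of attack rather than a proof is exactly right.

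As for the content of your plan: the reduction is sound. From (\ref{E:WirsingA}) one indeed gets $\omega^*_n(\xi)\ge n$ once $\omega_n(\xi)\ge 2n-1$; from (\ref{E:Wirsing}) (or its consequence stated in \S~\ref{SS:PASASN-AACN}) the case $\omega_n(\xi)=n$ is done; and Theorem~\ref{T:BugeaudLaurentFourier} disposes of the case $\omegahat_n(\xi)=n$, since $t\mapsto t/(t-n+1)$ is decreasing and $\ge n$ exactly when $t\le n$. Together with $\omegahat_n\ge n$ from (\ref{E:MinorationOmega-n}) and $\omegahat_n\le 2n-1$ from Theorem~\ref{T:CritereTdceDS}, this correctly isolates the window $n<\omegahat_n(\xi)\le\omega_n(\xi)<2n-1$, empty for $n=1$ and covered for $n=2$. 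Your computation showing that a single application of Lemma~\ref{L:XimoinsGamma} to one small polynomial is hopelessly lossy is also correct, and your identification of the true obstruction (reducible $P$ or multiple roots, smallness shared among conjugates, no control on the irreducible factor that is actually close to $\xi$) matches the known state of the art, reflected in the Bernik--Tishchenko line of work cited in the paper. One small correction: the solved case $n=2$ is the 1967 Davenport--Schmidt theorem on approximation by quadratic irrationals \cite{MR0219476}, not Theorem~\ref{Th:ArbourRoy} of Arbour and Roy, which is a Gel\cprime fond-type transcendence criterion in degree two (related, via the uniform exponent $\omegahat_2$ and the duality of \S~\ref{SS:PASASN-AAI}, but a different statement). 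If you wanted to push further, the natural next step in your programme would be to make precise how Wirsing extracts the exponent $(\omega_n+1)/2$ from a pair of small polynomials via resultants, since that is the mechanism any improvement in the window $n<\omega_n<2n-1$ must beat.
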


In 1960, E.~Wirsing   \cite {MR0142510} proved that for any real number which is not algebraic of degree \m{\le n},  the lower bound 
 $\omega^*_n(\xi)\ge (n+1)/2$ holds: it suffices to combine (\ref{E:WirsingA})  with the lower bound $\omega_n(\xi)\ge n$ from (\ref{E:MinorationOmega-n}) (see \cite{0421.10019} Chap.~VIII Th.\ 3B). More precisely, he proved  that   for such a $\xi\in\bR$ 
  {\it there is a constant $ c(\xi,n)>0 $ for which  there exist infinitely many algebraic numbers \m{\alpha} of degree \m{\le n} with
\M{
|\xi-\alpha|\le c(\xi,n) \rmH(\alpha)^{-(n+3)/2}.
}
}
The special case $n=2$ of this estimate was improved in 1967 when
H.~Davenport and W.M.~Schmidt   \cite{MR0219476}  replaced 
\m{(n+3)/2=5/2} by \m{3}.
This is optimal for the approximation to a real number by quadratic algebraic numbers. This is the only case where   Wirsing's Conjecture is solved. 
More recent estimates are due to V.I.~Bernik and K.~Tishchenko 
\cite{MR1282119,
MR1827705,
MR1826201,
MR1843249,
MR1850023,
TishchenkoJNT2007}. This question is studied by Y.~Bugeaud in his book   \cite{MR2136100} (\S~3.4) where he proposes the following {\it Main Problem}:

\begin{conj}[Bugeaud]\label{C:BugeaudMainPb}
Let $(w_n)_{n\ge 1}$ and $(w^*_n)_{n\ge 1}$ be two non--decreasing sequences in $[1,+\infty]$ for which 
$$
n\le w^*_n\le w_n\le w^*_n+n-1\quad\text{for any $n\ge 1$}.
$$
Then there exists a transcendental real number $\xi$ for which 
$$
\omega_n(\xi)=w_n \quad\text{and}\quad
\omega^*_n(\xi)=w^*_n\quad\text{for any $n\ge 1$}.
$$ 
\end{conj}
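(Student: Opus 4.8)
Conjecture~\ref{C:BugeaudMainPb} is an existence statement, so the plan is to write down $\xi$ explicitly as a limit of real algebraic numbers, with degrees, heights and distances scheduled so as to hit the prescribed sequences $(w_n)$ and $(w^*_n)$. First I would note that the hypotheses $n\le w^*_n\le w_n\le w^*_n+n-1$ are exactly the obstructions we already know: $\omega^*_n(\xi)\le\omega_n(\xi)$ is Lemma~\ref{L:EasyPart}, the bound $\omega_n(\xi)\le\omega^*_n(\xi)+n-1$ is a rearrangement of Wirsing's inequality~(\ref{E:WirsingA}), the bound $\omega_n(\xi)\ge n$ is~(\ref{E:MinorationOmega-n}), and the remaining requirement $w^*_n\ge n$ is the content of Wirsing's Conjecture~\ref{C:Wirsing}, imposed here on the target data. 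So the construction only has to realise admissible pairs of sequences. The key idea is that, for a polynomial $f\in\bZ[X]$ of degree $n$ having a root $\alpha_1$ very close to $\xi$, the factorisation $|f(\xi)|=|\text{lc}(f)|\prod_i|\xi-\alpha_i|$ separates two independent effects: the single closest root $\alpha_1$ governs $\omega^*_n(\xi)$, while the contributions of the remaining $n-1$ roots, kept at moderate but nontrivial distance from $\xi$, can make $|f(\xi)|$ as much as $\rmH(f)^{-(n-1)}$ smaller, raising $\omega_n(\xi)$ above $\omega^*_n(\xi)$ by up to $n-1$ without touching $\omega^*_n$.

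\textbf{The construction.} I would build, inductively, polynomials $f_k\in\bZ[X]$ with $\deg f_k=d_k$, the $d_k$ running through every positive integer infinitely often, together with a distinguished root $\theta_k\in\bR$ of $f_k$; then $\xi:=\lim_k\theta_k$, the heights $H_k=\rmH(f_k)$ being forced to grow fast enough (say $H_{k+1}\ge H_k^{C}$ for a large constant $C$) that the limit exists and the tails are negligible. At step $k$ the polynomial $f_k$ is chosen — using the geometry of numbers as in Lemma~\ref{L:BoxPrinciplePolynomes}, or a direct perturbation argument — so that $|\theta_k-\theta_{k-1}|$ has the right size and, writing $\alpha_{k,1}=\theta_k$, the other roots $\alpha_{k,2},\dots,\alpha_{k,d_k}$ sit at distances $|\xi-\alpha_{k,j}|\asymp H_k^{-\delta_{k,j}}$ with the $\delta_{k,j}$ tuned so that $|\xi-\theta_k|\asymp H_k^{-1-w^*_{d_k}}$ and $\sum_{j\ge 2}\delta_{k,j}=w_{d_k}-w^*_{d_k}$. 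Such an $f_k$ exists for every admissible choice because, by the constraint, $w_{d_k}-w^*_{d_k}\le d_k-1$ and one may take each $\delta_{k,j}\in[0,w^*_{d_k}+1]$. Then the closest root at each stage with $d_k\le n$ gives $\omega^*_n(\xi)\ge w^*_n$ in the limit, and by Lemma~\ref{L:EasyPart} (or directly from the factorisation) the $f_k$ with $d_k\le n$ satisfy $|f_k(\xi)|\asymp \rmH(f_k)^{-w_{d_k}}$, so $\omega_n(\xi)\ge w_n$. Matching the exponent bookkeeping to $(w_n,w^*_n)$ is a delicate but essentially routine optimisation.

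\textbf{The upper bounds.} The substance of the proof is to show nothing overshoots: $\omega^*_n(\xi)\le w^*_n$ and $\omega_n(\xi)\le w_n$ for all $n$. For this I would establish a Liouville-type lower bound for $|P(\xi)|$, and for $|\xi-\beta|$, valid for every non-zero $P\in\bZ[X]$ of degree $\le n$ (resp.\ every algebraic $\beta$ of degree $\le n$) that is not one of the designated objects $f_k$ (resp.\ is not $\theta_k$): expressing $\xi$ through the nearby $\theta_k$, either $P(\theta_k)\ne 0$, in which case one bounds $|P(\xi)|$ from below by a resultant inequality — precisely the elimination argument used in the proof of Theorem~\ref{T:GTC}, now applied to $P$ and $f_k$ — or $P$ and $f_k$ share a factor and one argues by descent on the degree. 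If the gaps $H_{k+1}\ge H_k^{C}$ are large enough, then on every height range $[H_k,H_{k+1})$ no polynomial (resp.\ algebraic number) of degree $\le n$ beats what stage $k$ already provides, and a short interpolation between consecutive stages yields $\omega_n(\xi)\le w_n$ and $\omega^*_n(\xi)\le w^*_n$. Together with the lower bounds this gives equality for every $n$.

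\textbf{The main obstacle.} The hard part is making all of this hold \emph{simultaneously for every $n$}: the growth conditions on the $H_k$, the distance exponents $1+w^*_{d_k}$, and the splitting $(\delta_{k,j})$ used to open the gap $w_n-w^*_n$ all interact across degrees, and the resultant estimates that rule out spurious approximants of degree $n$ involve the data from the other degrees as well. At present this bookkeeping is under control only in small degree — the case $n\le 2$ via Roy's extremal numbers (Theorems~\ref{T:Roy} and~\ref{T:RoyExtremal}) and the Bugeaud--Laurent analysis, with partial results for $n=3$ — and I expect that finding a single schedule $(d_k,H_k,w^*_{d_k},\delta_{k,j})$ internally consistent for all $n$ while still admitting provable Liouville lower bounds is exactly where a genuinely new idea is needed.
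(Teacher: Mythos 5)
The statement you were asked about is not a theorem of the paper at all: it is Conjecture~\ref{C:BugeaudMainPb}, Bugeaud's ``Main Problem'', which the survey records as open (it only points to \S~7.8 of Bugeaud's book for a summary of partial results). So there is no proof in the paper to compare with, and your text is not a proof either: it is a strategy sketch, and you yourself concede in the final paragraph that ``a genuinely new idea is needed''. That concession is exactly where the genuine gap lies. Your reduction of the hypotheses to the known constraints is fine (the inequality $w^*_n\le w_n$ corresponds to $\omega_n\ge\omega^*_n$ via Lemma~\ref{L:EasyPart}, the bound $w_n\le w^*_n+n-1$ to Wirsing's inequality~(\ref{E:WirsingA}), and $w_n\ge n$ to~(\ref{E:MinorationOmega-n})), and the general shape of your construction (a rapidly converging sequence of algebraic numbers, with the gap $\omega_n-\omega^*_n$ opened by placing the remaining roots of $f_k$ at controlled distances from $\xi$, and upper bounds obtained by resultant elimination as in the proof of Theorem~\ref{T:GTC}) is indeed the standard template behind the known partial results. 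But the two load-bearing steps are only asserted: (i) the existence, at every stage, of an integer polynomial $f_k$ of prescribed degree and height whose full root configuration realises the exponents $1+w^*_{d_k}$ and $(\delta_{k,j})$ relative to a point $\xi$ that is itself still under construction; and (ii) the ``nothing overshoots'' Liouville-type lower bounds holding \emph{simultaneously for every degree $n$}, where the constructed polynomials of different degrees, their products, and their common factors all interfere. Step (ii) is precisely the obstruction that confines current knowledge to small degree and to special ranges of the sequences, so your proposal does not establish the conjecture; it reproduces the state of the art and stops at the same wall.

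If you want to salvage something verifiable from your write-up, you could restrict to cases where the interference between degrees can be controlled, e.g.\ a single fixed $n$ with $(w_n,w^*_n)$ in a range where the elimination estimates are known to close (this is the territory of the results of Baker and of Bugeaud on the spectrum of $\omega_n-\omega^*_n$ quoted in \S~\ref{SS:PASASN-AACN}, and of Roy's extremal numbers for $n=2$), and present that as a partial result rather than as a proof of Conjecture~\ref{C:BugeaudMainPb}. As it stands, the claim that ``matching the exponent bookkeeping is a delicate but essentially routine optimisation'' is not warranted: that bookkeeping is the open problem.
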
  

A summary of known results on this problem is given in \S~7.8 of  \cite{MR2136100}. 
   
The spectrum 
$$
\bigl\{ \omega_n(\xi)- \omega^*_n(\xi) \; ; \;
\text{ $\xi\in\bR$ not algebraic of degree $\le n$}\bigr\}\subset [0,n-1] 
$$
of $\omega_n-\omega_n^*$ for $n\ge 2$ was studied by R.C.~Baker in 1976 who showed that it contains $[0,1-(1/n)]$. 
This has been improved by  Y.~Bugeaud 
  in \cite{MR2007546}:  it contains the interval $[0,n/4]$.
   

Most results concerning $\omega^*_n(\xi)$ and  $\omegahat^*_n(\xi)$ for $\xi\in\bR$ have extensions to complex numbers, only the numerical estimates are slightly different. However see \cite{BugeaudEvertse2007}.

\subsection{Approximation by algebraic integers
}
\label{SS:PASASN-AAI}

An innovative and  powerful  approach was initiated in the seminal paper \cite{MR0246822}  by H.~Davenport and W.M.~Schmidt (1969). It 
 rests on the  
 transference principle arising from the geometry of numbers and Mahler's theory of {\it  polar convex bodies} and allows   to deal with approximation by algebraic integers of  bounded degree. The next statement includes  a refinement by  Y.~Bugeaud and O.~Teulié (2000) \cite{MR1760090} who observed that one may treat approximations by algebraic integers of given degree; the sharpest results in this direction are due to M.~Laurent \cite{MR2015598}.

From the estimate $\omega_n^*(\xi)\omegahat'_n(\xi)\ge 1$ in Theorem~\ref{T:DSdualite} one deduces the following statement. Let $n$ be a positive integer  and let $\xi$ be a real number which is not   algebraic of degree \m{\le n}.  Let $\lambda$ satisfy  \m{\omegahat'_n(\xi) < \lambda}. Then
for  $\kappa=(1/\lambda)+1$,   there is a constant \m{ c(n,\xi,\kappa)>0} such that the equation
\begin{equation}\label{E:PolarBodies}
|\xi-\alpha| \le c(n,\xi,\kappa) \rmH(\alpha)^{-\kappa} 
\end{equation}
has infinitely many  solutions in
algebraic numbers  \m{\alpha} of degree \m{n}. 
In this statement one may replace ``algebraic numbers  \m{\alpha} of degree \m{n}''  
by 
``algebraic integers  \m{\alpha} of degree \m{n+1}'' 
and also by 
``algebraic units  \m{\alpha} of degree \m{n+2}''. 
   
\begin{prop}\label{P:Polar}
  Let $\kappa>1$ be a   real number,  \m{n} be a positive integer and $\xi$ be a real number which  is not algebraic of degree \m{\le n}.    
 Assume \m{\omegahat'_n(\xi)<  1/(\kappa-1) }. Then there exists a constant  \m{ c(n,\xi,\kappa)>0} such that there are infinitely many 
algebraic integers  \m{\alpha} of degree \m{n+1} satisfying (\ref{E:PolarBodies})
and there  are infinitely many 
algebraic units  \m{\alpha} of degree \m{n+2} satisfying (\ref{E:PolarBodies}).

\end{prop}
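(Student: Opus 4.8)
The plan is to derive the statement from Theorem~\ref{T:DSdualite} together with the transference method of Davenport and Schmidt. Since $\omegahat'_n(\xi)<1/(\kappa-1)$ and $\omegahat'_n(\xi)\ge 1/n>0$, I can fix a real number $\lambda$ with $\omegahat'_n(\xi)<\lambda<1/(\kappa-1)$, so that $1/\lambda>\kappa-1$. Theorem~\ref{T:DSdualite} gives $\omega^*_n(\xi)\,\omegahat'_n(\xi)\ge 1$, hence $\omega^*_n(\xi)\ge 1/\omegahat'_n(\xi)>1/\lambda>\kappa-1$. The strict inequality $\omega^*_n(\xi)>\kappa-1$ already yields, by the very definition of $\omega^*_n(\xi)$, infinitely many algebraic numbers $\alpha$ of degree $\le n$ with $0<|\xi-\alpha|\le \rmH(\alpha)^{-\kappa}$ --- here one even has room to spare in the exponent, so no constant is needed. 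This is the easy half; the two assertions of the Proposition require upgrading such approximants to algebraic integers of degree \emph{exactly} $n+1$ and to algebraic units of degree \emph{exactly} $n+2$, which I would carry out as follows.

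The inequality $\omega^*_n(\xi)>\kappa-1$ is itself obtained, as in \cite{MR0246822}, by an argument of a different flavour: since $\omegahat'_n(\xi)<\lambda$, uniform simultaneous rational approximation to $(\xi,\xi^2,\dots,\xi^n)$ fails at infinitely many scales $N$, and then Mahler's duality between a convex body and its polar produces, at those scales, a non-zero integer polynomial of degree $\le n$, of height at most a constant multiple of $N$, taking a correspondingly small value at $\xi$. Rerunning the same transference, but now with the leading coefficient of the polynomial prescribed to equal $1$, produces instead a monic $P\in\bZ[X]$ of degree $n+1$; prescribing in addition the constant term to equal $\pm1$ produces a monic $Q\in\bZ[X]$ of degree $n+2$ with $Q(0)=\pm1$; in both cases the height is at most a constant multiple of $N$ and $|P(\xi)|$, $|Q(\xi)|$ are as small as the target exponent $\kappa$ demands. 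Being monic, $P$ factors as $\prod_j(X-\alpha_j)$, so $|P(\xi)|=\prod_j|\xi-\alpha_j|$, and similarly for $Q$; bounding the heights of the irreducible factors of $P$ and $Q$ by Gel'fond's lemma, one gets a root $\alpha$ of $P$ (resp.\ of $Q$) with $|\xi-\alpha|\le c\,\rmH(\alpha)^{-\kappa}$, i.e.\ satisfying (\ref{E:PolarBodies}) --- a sharp form of the mechanism behind Lemma~\ref{L:XimoinsGamma}. A root of $Q$ has minimal polynomial dividing $Q$, hence with constant term dividing $Q(0)=\pm1$, so it is an algebraic unit. Finally, the refinement of Bugeaud and Teuli\'e \cite{MR1760090}, sharpened by M.~Laurent \cite{MR2015598}, is what secures the \emph{exact} degrees $n+1$ and $n+2$: factors of $P$ or $Q$ of lower degree lying close to $\xi$ are excluded by a Liouville-type lower bound, using that $\xi$ is not algebraic of degree $\le n$.

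The hard part is the transference step itself, not the arithmetic around it. One must convert a single scalar inequality --- $\omega^*_n(\xi)>\kappa-1$, equivalently the failure of good uniform simultaneous rational approximation to $(\xi,\dots,\xi^n)$ at infinitely many $N$ --- into \emph{monic} integer polynomials, with a prescribed constant term in the unit case, while losing nothing in the exponent $\kappa$, and then locate a genuine root near $\xi$ and control its exact degree, with all constants depending only on $n$, $\xi$ and $\kappa$. Mahler's theory of polar convex bodies, together with the multiplicativity of the successive minima of a convex body, is the engine that makes this possible; forcing the leading and constant coefficients, controlling the nearest root, and discarding the close lower-degree factors are the delicate points, and they are exactly what is carried through in \cite{MR0246822,MR1760090,MR2015598}.
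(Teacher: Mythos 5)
Your overall route coincides with the paper's: Proposition~\ref{P:Polar} is presented there, without a self-contained proof, as a consequence of the duality estimate $\omega^*_n(\xi)\,\omegahat'_n(\xi)\ge 1$ of Theorem~\ref{T:DSdualite} together with the Davenport--Schmidt transference machinery (Mahler's polar convex bodies) of \cite{MR0246822}, the passage from ``algebraic numbers of degree $n$'' to ``algebraic integers of degree $n+1$'' and ``units of degree $n+2$'' of exact degree being credited to Bugeaud--Teuli\'e \cite{MR1760090} and Laurent \cite{MR2015598}. You follow exactly this scheme and defer the same technical steps to the same references, so in architecture your proposal matches the paper; your opening reduction (choosing $\lambda$ with $\omegahat'_n(\xi)<\lambda<1/(\kappa-1)$, using $\omegahat'_n(\xi)\ge 1/n$) is also fine.

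Two of the explanatory glosses you add, however, are not correct as stated. First, extracting a root $\alpha$ of the monic polynomial $P$ with $|\xi-\alpha|\le c\,\rmH(\alpha)^{-\kappa}$ is not a consequence of Gel'fond's lemma or of ``a sharp form of the mechanism behind Lemma~\ref{L:XimoinsGamma}'': that lemma loses a factor $\rmH(P)^{2D}$, which would destroy the exponent. In \cite{MR0246822} the construction must in addition secure a lower bound of the shape $|P'(\xi)|\gg \rmH(P)$ (or an equivalent device) for the polynomials produced, and only then does the nearest root inherit the full exponent; you do list ``controlling the nearest root'' among the deferred points, but the mechanism you name would not deliver it. Second, and more substantively, the exact-degree assertion cannot be obtained ``by a Liouville-type lower bound, using that $\xi$ is not algebraic of degree $\le n$'': for transcendental $\xi$ there is no lower bound at all on $|\xi-\alpha|$ over algebraic $\alpha$ of degree $\le n$ (Liouville numbers already show this for $n=1$), so nothing of this kind prevents the nearest root of $P$ or $Q$ from having low degree. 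The refinement of \cite{MR1760090,MR2015598} excludes this by a different (perturbation/irreducibility) argument. Since you ultimately cite those papers for exactly this step, your write-up is no less complete than the paper's own treatment, but the mechanism you describe for it would fail if taken literally.
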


Suitable values for $\kappa$ are deduced from Theorem \ref{T:DS2a} and estimate  (\ref{E:omegadeuxhat}). 
For instance, from Theorem~\ref{T:DSdualite} and the estimate $ \omegahat'_2(\xi)\le 1/\gamma$ of Davenport and Schmidt in (\ref{E:DSomegadeuxhat}) one deduces 
$ \omega^*_2(\xi)\ge \gamma$. Hence for any $\kappa<1+\gamma$ the 
 the assumptions of Proposition \ref{P:Polar} are satisfied. More precisely, the duality (or transference) arguments used by Davenport and Schmidt to prove  Theorem~\ref{T:DSdualite} together with their Theorem \ref{T:omegahat2} enabled them to deduce the next statement (\cite{MR0246822}, Th.\ 1).

\begin{thm}[Davenport and Schmidt]\label{T:DSomega2etoile}
Let $\xi\in\bR$ be a real number which is neither rational nor a quadratic irrational. Then there is a constant $c>0$ with the following property: there are infinitely many algebraic integers $\alpha$ 
of degree at most $3$ which satisfy
\begin{equation}\label{E:DSomega2etoile}
0<|\xi-\alpha|\le c \rmH(\alpha)^{-\gamma-1}.
\end{equation}
\end{thm}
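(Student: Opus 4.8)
The plan is to reprise the transference argument of Davenport and Schmidt (the polar‑reciprocal duality that already underlies Theorem~\ref{T:DSdualite}), but to feed in the \emph{sharp} form of Theorem~\ref{T:omegahat2} --- the assertion that, for a \emph{fixed} constant $c$, the relevant simultaneous system has \emph{no} solution for arbitrarily large $N$ --- rather than the bound $\omegahat'_2(\xi)\le 1/\gamma$ read off a supremum. Only the sharp form reaches the endpoint exponent $\gamma+1$; the supremum form would deliver every exponent strictly less than $\gamma+1$.

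\emph{Step 1 (geometry of numbers reformulation).} For $N>1$ let $\mathcal{C}_N\subset\bR^3$ be the symmetric convex body cut out by $|x_0|\le N$, $|x_1-x_0\xi|\le cN^{-1/\gamma}$ and $|x_2-x_0\xi^2|\le cN^{-1/\gamma}$. Theorem~\ref{T:omegahat2} says there is a fixed $c>0$ and arbitrarily large $N$ for which $\mathcal{C}_N$ contains no nonzero point of $\bZ^3$, i.e. the first successive minimum satisfies $\lambda_1(\mathcal{C}_N)\ge 1$. I would fix one such $N$, run the construction below for it, and at the end let $N$ range over this infinite set. By Mahler's inequalities linking the successive minima of a symmetric convex body to those of its polar reciprocal (in dimension $3$, $1\le\lambda_i(\mathcal{C})\lambda_{4-i}(\mathcal{C}^{\circ})\le C$ for an absolute $C$), the bound $\lambda_1(\mathcal{C}_N)\ge 1$ forces $\lambda_3(\mathcal{C}_N^{\circ})\le C$. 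A direct computation identifies $\mathcal{C}_N^{\circ}$, up to bounded factors, with $\{z\in\bR^3:\ |z_0+z_1\xi+z_2\xi^2|\le N^{-1},\ |z_1|\le c^{-1}N^{1/\gamma},\ |z_2|\le c^{-1}N^{1/\gamma}\}$.

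\emph{Step 2 (producing a monic cubic).} Since $\lambda_3(\mathcal{C}_N^{\circ})\le C$, a bounded dilate $\Lambda\mathcal{C}_N^{\circ}$ has the property that $\mathbf{v}+\Lambda\mathcal{C}_N^{\circ}$ meets $\bZ^3$ for every $\mathbf{v}\in\bR^3$ (inhomogeneous Minkowski theorem, via $\mu(K)\le\tfrac{1}{2}\sum_i\lambda_i(K)$). Apply this with $\mathbf{v}$ the vector corresponding to $\xi^3$: this yields integers $p_0,p_1,p_2$ with
$$
|\xi^3-p_2\xi^2-p_1\xi-p_0|\ll N^{-1},\qquad \max\{|p_0|,|p_1|,|p_2|\}\ll N^{1/\gamma},
$$
so that $P(X)=X^3-p_2X^2-p_1X-p_0$ is a monic cubic in $\bZ[X]$ with $\rmH(P)\ll N^{1/\gamma}$ and $|P(\xi)|\ll N^{-1}$. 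Let $\alpha$ be a root of $P$ nearest to $\xi$; since $P$ is monic, $\alpha$ is an algebraic integer of degree $\le 3$ with $\rmH(\alpha)\ll\rmH(P)\ll N^{1/\gamma}$.

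\emph{Step 3 (locating the root, and the obstacle).} The crux is to show that $\alpha$ is a \emph{simple} root well separated from the other roots of $P$, equivalently that $|P'(\xi)|\gg\rmH(P)$; this must be extracted, with some care, from the relation $P(\xi)\approx 0$ together with the height constraints on the $p_i$ (and is where the hypothesis that $\xi$ is not a quadratic irrational enters, to exclude degenerate $P$). Granting it, expanding $P$ about $\alpha$ gives
$$
|\xi-\alpha|\ \ll\ \frac{|P(\xi)|}{|P'(\xi)|}\ \ll\ N^{-1}\rmH(P)^{-1}\ \ll\ \rmH(P)^{-\gamma-1}\ \ll\ \rmH(\alpha)^{-\gamma-1},
$$
where the middle inequality uses $\rmH(P)\ll N^{1/\gamma}$ (hence $N^{-1}\ll\rmH(P)^{-\gamma}$) and $\gamma^2=\gamma+1$. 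Letting $N$ run through the infinite set of Step~1 produces infinitely many such $\alpha$ (a bounded family would comprise only finitely many algebraic numbers of degree $\le 3$, and the possibility $\xi=\alpha$ can arise for at most one $P$ and only when $\xi$ is itself a cubic irrational, so it is discarded), which is the assertion. The main obstacle is precisely the derivative lower bound $|P'(\xi)|\gg\rmH(P)$: Lemma~\ref{L:XimoinsGamma} is far too lossy here (its factor $\rmH(f)^{2D}$ would destroy the exponent), so one cannot avoid controlling the root separation of the constructed polynomial directly; and one must keep the actual constant $c$ of Theorem~\ref{T:omegahat2} alive through the polar transference and the inhomogeneous step, since only that secures the endpoint exponent $\gamma+1$ rather than something strictly smaller.
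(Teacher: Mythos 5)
Your overall route is exactly the one the paper points to: the survey gives no proof of Theorem~\ref{T:DSomega2etoile}, only the remark that the Davenport--Schmidt duality/transference argument, fed with the sharp constant-level form of Theorem~\ref{T:omegahat2} rather than the mere exponent bound $\omegahat'_2(\xi)\le 1/\gamma$, yields the endpoint $\gamma+1$ --- and you correctly identified that subtlety. Your Steps 1--2 are sound: the identification of the polar body, Mahler's duality turning $\lambda_1(\mathcal{C}_N)\ge 1$ into $\lambda_3(\mathcal{C}_N^{\circ})\le C$, the covering-radius (inhomogeneous Minkowski) step producing a monic cubic $P$ with $|P(\xi)|\ll N^{-1}$ and $\rmH(P)\ll N^{1/\gamma}$, and the exponent bookkeeping via $\gamma^2=\gamma+1$.

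However, the proposal stops short of a proof at exactly the decisive point, and granting it is not legitimate: the bound $|P'(\xi)|\gg\rmH(P)$ cannot be ``extracted with some care'' from $P(\xi)\approx 0$ together with the height constraints, because no such implication holds --- nothing in the covering-radius construction prevents the selected cubic from having a critical point, or a near-double root, close to $\xi$, in which case the nearest root is only at distance of order $|P(\xi)|^{1/2}\approx N^{-1/2}$, far weaker than the required $N^{-\gamma}$. Cheap repairs do not reach the needed strength either: for instance, adjusting $P$ by integer multiples of a quadratic lattice point of $\mathcal{C}_N^{\circ}$ and using that an integer determinant of three independent dual minima is at least $1$ only forces a derivative of order $N^{1-1/\gamma}=N^{2-\gamma}$, well below the needed $N^{1/\gamma}$. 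In Davenport and Schmidt's paper this is precisely where the substantive work lies: the non-solubility statement of Theorem~\ref{T:omegahat2} and the structure of the associated sequence of minimal points are exploited a second time to control the constructed polynomial near $\xi$, and that analysis is what your Step 3 would have to supply; as written, the argument is an incomplete proof with its main ingredient missing. Two secondary loose ends: if $\xi$ happens to be a cubic algebraic integer, the covering argument may return the minimal polynomial of $\xi$ for every $N$, so the exclusion $0<|\xi-\alpha|$ and the production of infinitely many \emph{distinct} $\alpha$ (when the constructed heights could stay bounded) need more than the parenthetical remark you give.
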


Lemma \ref{L:EasyPart} shows that under the same assumptions, for another constant $c>0$  there are infinitely many monic polynomials $P\in\bZ[X]$ of degree at most $3$ satisfying
\begin{equation}\label{EasyPart}
|P(\xi)|\le c \rmH(P)^{-\gamma}.
\end{equation}
Estimates (\ref{E:DSomega2etoile}) and (\ref{EasyPart}) are optimal for certain classes of {\it extremal} numbers \cite{MR2031862}.    
Approximation of extremal numbers by cubic integers are studied by D.~Roy in  \cite{MR2031862,
MR2018957}.
Further papers dealing with approximation by algebraic integers include
\cite{MR1760090,MR1850023,MR2041771,1131.11046}. 

 Another development of the general and powerful method of Davenport and Schmidt deals with the question of approximating simultaneously several numbers by conjugate algebraic numbers: this is done in \cite{MR2041771} and refined in \cite{MR2175097} by D.~Roy. Also in \cite{MR2175097} D.~Roy gives variants of Gel'fond's Transcendence Criterion involving not only   a single number $\xi$ but  sets $\{\gamma+\xi_1,\ldots,\gamma+\xi_m\}$ or  $\{\gamma\xi_1,\ldots,\gamma\xi_m\}$. 
 In two recent manuscripts \cite{RoyDamienSmallValueAdditive,RoyDamienSmallValueMultplicative}, D.~Roy produces new criteria for the additive and for the multiplicative groups.
 
A different  application of  transference theorems is to link 
inhomogeneous Diophantine approximation problems   with homogeneous ones     \cite{BugeaudLaurentInhom}.

\subsection{Overview of metrical results for polynomials}\label{SS:OverviewMetricalPolynomials}

Here we give a brief account of some significant
results that have
produced new ideas and generalisations, as well as some interesting
problems and conjectures.
We begin with the probabilistic theory (that is, Lebesgue measure
statements) and continue with the more delicate Hausdorff
measure/dimension results. Results for multivariable polynomials, in
particular, the recent proof of a conjecture of Nesterenko on the
measure of algebraic independence of almost all real $m$-tuples,
will be sketched in \S\,\ref{SS:FurtherMetricalResults}, as will metrical results on
simultaneous approximation. Note that many of the results suggested
here have been established in the far more general situation of
Diophantine approximation on manifolds. However, for simplicity, we
will only explain this Diophantine approximation for the case of
integral polynomials.

\bigskip

Mahler's problem \cite{Mahler-1932b}, which arose from his
classification of real (and complex) numbers, remains a major
influence over the metrical theory of Diophantine approximation. As
mentioned in \S\ref{SS:PASASN-PACN}, 
the problem has been settled by Sprindzuk in
1965.  Answering a question posed by A.~Baker in
\cite{Baker-1966-On-a-theorem-of-Sprindzuk},
Bernik~\cite{Bernik-1989} established a generalisation of Mahler's
problem akin to Khintchine's one-dimensional convergence result in Theorem \ref{T:Khinchin},
involving the critical sum
\begin{equation}\label{e1}
    \sum_{h=1}^\infty \Psi(h)
\end{equation}
of values of the function $\Psi:\N\to\R^+$ that defines the error of
approximation.
\begin{thm}[Bernik, 1989]\label{bernik1}
Given a monotonic $\Psi$ such that the critical sum~\eqref{e1}
converges, for almost all $\xi\in\R$ the inequality
\begin{equation}\label{e2}
    |P(\xi)|<H(P)^{-n+1}\Psi(H(P))
\end{equation}
has only finitely many solutions in $P\in\Z[x]$ with $\deg P\le n$.
\end{thm}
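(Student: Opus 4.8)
The plan is to establish the convergence (``only finitely many solutions'') assertion by a Borel--Cantelli argument; the statement refines Sprind{\v{z}}uk's Theorem~\ref{T:omega_nPresquePartout} (which is essentially the case $\Psi(h)=h^{-\varepsilon}$), and the exponent $-n+1$ is precisely the one making the critical sum $\sum_h\Psi(h)$ the relevant quantity. First I would localise: since $\bR$ is a countable union of unit intervals and the conclusion concerns a null set, it suffices to fix a bounded interval $J$ and prove that
$$
\bigl\{\xi\in J\ :\ |P(\xi)|<H(P)^{-n+1}\Psi(H(P))\ \text{for infinitely many}\ P\in\bZ[X]\ \text{with}\ \deg P\le n\bigr\}
$$
has Lebesgue measure zero. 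One reduces at once to \emph{primitive squarefree} polynomials: the content may be divided out, and by Lemma~\ref{L:XimoinsGamma} a point where $|P(\xi)|$ is small lies close to a root of $P$, so $|Q(\xi)|$ is small for the squarefree divisor $Q$ of $P$ vanishing at the nearest root, with $H(Q)$ of controlled size. Writing $\sigma(P)=\{\xi\in J:|P(\xi)|<H(P)^{-n+1}\Psi(H(P))\}$, by the Borel--Cantelli lemma it suffices to show $\sum_P|\sigma(P)|<\infty$, the sum being over such $P$.

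Next I would organise this sum by height. For each $h$ there are $\ll h^n$ integer polynomials of degree $\le n$ and height $h$. Fix such a $P$ and the root $\alpha$ of $P$ nearest to a given $\xi\in\sigma(P)$; if $\alpha$ is simple then $|P(\xi)|\gg|P'(\alpha)|\,|\xi-\alpha|$, so the corresponding piece of $\sigma(P)$ has measure $\ll h^{-n+1}\Psi(h)/|P'(\alpha)|$. Since $|P'(\alpha)|\ll h$ always on a fixed bounded interval, in the ``generic'' regime $|P'(\alpha)|\asymp h$ this piece has measure $\ll h^{-n}\Psi(h)$; multiplying by the $\ll h^n$ polynomials of height $h$ and summing over $h$ gives $\ll\sum_h\Psi(h)<\infty$. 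This disposes of the contribution of polynomials that are nondegenerate at the relevant root, and exhibits the exact role of the hypothesis.

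The hard part will be the \emph{degenerate} polynomials, those for which $|P'|$ is abnormally small on the relevant piece of $\sigma(P)$ (for instance because $P$ has two nearby roots): there the measure bound above only gives $\ll h^{-n+\rho}\Psi(h)$ with $\rho>0$, and after multiplying by $\ll h^n$ the sum over $h$ diverges, so one must instead prove that such polynomials are scarce. I would run Sprind{\v{z}}uk's method of \emph{essential and inessential domains}: fix $h$ and a value $h^{1-\rho}$ of the size of $|P'|$ chosen from a grid of $\ll\log h$ values, cover the ``bad'' subset of $J$ by short intervals, and on each such interval distinguish whether only ``few'' or ``many'' polynomials of height $h$ are active. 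In the ``many'' case two active polynomials are so close that a suitable nonzero integer linear combination of them has strictly smaller height (equivalently one bounds the resultant of $P$ and $P'$, i.e.\ the discriminant), which feeds an induction on the degree $n$, the base case $n=1$ being Khintchine's convergence theorem (Theorem~\ref{T:Khinchin}, essentially the easy half of Borel--Cantelli for $|q\xi-p|<\Psi(q)$). Summing the generic and degenerate contributions over all $h$ and all $\rho$ in the grid yields a finite total; Borel--Cantelli and the localisation then give the theorem. The combinatorial counting of polynomials with small derivative in the essential-domain step is the technical heart of the argument, and the point at which Bernik's proof goes beyond Sprind{\v{z}}uk's original one.
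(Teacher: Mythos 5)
You should first note that the paper does not prove this statement at all: Theorem~\ref{bernik1} occurs in the survey section contributed by Beresnevich and Dodson and is quoted with a citation to Bernik's 1989 paper, so there is no in-text argument to compare yours against. Judged on its own, your outline is the historically correct strategy (Sprind\v{z}uk's method, as refined by Bernik): localisation to a bounded interval, reduction to (essentially) irreducible primitive polynomials via Lemma~\ref{L:XimoinsGamma} and a Gel'fond-type height comparison, the Borel--Cantelli reduction, and the correct ``generic'' computation $|\sigma(P)|\ll H^{-n+1}\Psi(H)/|P'(\alpha)|\ll H^{-n}\Psi(H)$ when $|P'(\alpha)|\asymp H$, which multiplied by the $\ll H^{n}$ polynomials of height $H$ explains exactly why $\sum_h\Psi(h)$ is the critical sum. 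Your identification of the degenerate polynomials (small $|P'|$ near the approximated point, i.e.\ clustered roots) as the obstruction, and of the essential/inessential domain dichotomy with induction on $n$ down to Khintchine's Theorem~\ref{T:Khinchin}, is likewise the right road map.

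However, as a proof there is a genuine gap, and it sits precisely where you place ``the technical heart'': the claim that degenerate polynomials are scarce enough that their total contribution converges is asserted, not established. This is not a routine estimate --- Sprind\v{z}uk's original machinery only yields the weaker statement with $\Psi(h)=h^{-1-\epsilon}$ (Theorem~\ref{T:omega_nPresquePartout}), and pushing it to the critical exponent, i.e.\ answering Baker's 1966 question, is exactly the content of Bernik's paper; the counting of polynomials of height $h$ with $|P'(\alpha)|\le h^{1-\rho}$ active on an essential interval, and the resultant/discriminant argument feeding the induction, require a careful case analysis by the configuration of roots that your sketch does not supply, and it is not clear a priori that the losses $h^{\rho}\Psi(h)$ you exhibit can be recovered without it. A second, smaller omission: you never say where the monotonicity of $\Psi$ enters. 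It is genuinely used (e.g.\ to get $\Psi(h)\ll 1/h$ from convergence and to sum over dyadic height ranges), and the paper's subsequent remark that Beresnevich later removed this hypothesis by a different argument shows it is not a cosmetic assumption in Bernik's proof. So: right approach, but the core of the theorem remains unproved in your proposal.
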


In the case $n=1$, inequality~\eqref{e2} reduces to rational
approximations of real numbers and is covered by Khintchine's
theorem   \ref{T:Khinchin} \cite{Khintchine-1924}. Khintchine's theorem  \ref{T:Khinchin} also covers the
solubility of~\eqref{e2} in case when $n=1$ and~\eqref{e1} diverges.
For arbitrary $n$ the complementary divergence case of
Theorem~\ref{bernik1} has been established by Beresnevich, who has
shown in \cite{Beresnevich-99:MR1709049} that if\/ $\eqref{e1}$
diverges then for almost all real $\xi$ inequality $\eqref{e2}$ has
infinitely many solutions  $P\in\Z[x]$ with $\deg P=n$. In fact the
latter statement follows from the following analogue of Khintchine's
theorem  \ref{T:Khinchin} for approximation by algebraic numbers, also established in
\cite{Beresnevich-99:MR1709049}.

\begin{thm}[Beresnevich, 1999]\label{beresnevich1} Let $n\in\N$,
$\Psi:\N\to\R^+$ be a monotonic error
function and $\cA_n(\Psi)$ be the set of real $\xi$ such that
\begin{equation}\label{e3}
    |\xi-\alpha|<H(\alpha)^{-n}\Psi(H(\alpha))
\end{equation}
has infinitely many solutions in real algebraic numbers of degree
$\deg\alpha=n$. Then $\cA_n(\Psi)$ has full Lebesgue measure if the
sum $(\ref{e1})$ diverges and zero Lebesgue measure otherwise.
\end{thm}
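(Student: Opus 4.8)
The assertion is a Khintchine zero--one dichotomy. The convergence half, $|\cA_n(\Psi)|=0$, is a soft Borel--Cantelli argument, so I dispose of it first. Assume $\sum_h\Psi(h)<\infty$. For an algebraic number $\alpha$ of degree $n$ the set $B_\alpha=\{\xi\in\R:|\xi-\alpha|<\rmH(\alpha)^{-n}\Psi(\rmH(\alpha))\}$ is an interval of length $\le 2\rmH(\alpha)^{-n}\Psi(\rmH(\alpha))$, and $\cA_n(\Psi)$ is precisely the set of $\xi$ lying in infinitely many $B_\alpha$. Grouping the $\alpha$ by the dyadic block $\rmH(\alpha)\in[2^k,2^{k+1})$, the minimal polynomials of such $\alpha$ have coefficient vectors in a cube of side $\ll 2^k$ in $\Z^{n+1}$ and each contributes at most $n$ roots, so there are $\ll 2^{k(n+1)}$ of them; since $\Psi$ is non-increasing each corresponding $B_\alpha$ has length $\ll 2^{-kn}\Psi(2^k)$, so the block contributes total measure $\ll 2^{k}\Psi(2^k)$. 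By Cauchy condensation $\sum_k 2^k\Psi(2^k)\asymp\sum_h\Psi(h)<\infty$, whence $\sum_k\bigl|\bigcup_{\text{block }k}B_\alpha\bigr|<\infty$ and Borel--Cantelli gives $|\cA_n(\Psi)|=0$.

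\emph{Divergence case: reduction to a regular system.} The substantial half is proved by showing that the real algebraic numbers of degree $n$ form a \emph{regular system} in the sense of Baker and Schmidt, with the optimal counting function, and then feeding this into the general Khintchine-type theorem for regular systems. Recall that $(S,N)$, with $S\subset\R$ countable and $N\colon S\to\R_{>0}$, is a regular system if for every bounded interval $I$ there are $c_1(I)>0$ and $T_0(I)$ such that for all $T\ge T_0$ one can find $\gamma_1,\dots,\gamma_t\in S\cap I$ with $N(\gamma_i)\le T$, $|\gamma_i-\gamma_j|\ge T^{-1}$ for $i\neq j$, and $t\ge c_1(I)\,|I|\,T$. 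The plan is: (i) prove that $S=\{\alpha\in\R:\alpha\text{ algebraic of degree exactly }n\}$ with $N(\alpha)=c_2\rmH(\alpha)^{n+1}$ is a regular system; (ii) invoke the general theorem (Beresnevich, sharpening Baker--Schmidt) that for a regular system $(S,N)$ and a non-increasing positive $\Theta$ whose associated series diverges, almost every $\xi$ satisfies $|\xi-\gamma|<\Theta(N(\gamma))/N(\gamma)$ for infinitely many $\gamma\in S$. Choosing $\Theta$ with $\Theta(N(\alpha))/N(\alpha)\asymp\rmH(\alpha)^{-n}\Psi(\rmH(\alpha))$, i.e. $\Theta(t)\asymp t^{1/(n+1)}\Psi(t^{1/(n+1)})$ -- whose condensed sum diverges exactly when $\sum_h\Psi(h)$ does -- gives, for a.e.\ $\xi$, infinitely many degree-$n$ algebraic $\alpha$ with $|\xi-\alpha|\ll\rmH(\alpha)^{-n}\Psi(\rmH(\alpha))$; the implied constant is absorbed at the cost of a null set by the usual monotonicity trick (replace $\Psi$ by $c'\Psi$; the two $\limsup$ sets differ by a null set). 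Hence $|\R\setminus\cA_n(\Psi)|=0$.

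\emph{The core: step (i).} Fix $I$, take $T$ large, and set $Q=\lfloor(T/c_2)^{1/(n+1)}\rfloor$; one must exhibit $\gg|I|Q^{n+1}$ real algebraic numbers of degree $n$ and height $\le Q$ lying in $I$, pairwise $\gg Q^{-(n+1)}$-separated. This requires two inputs. The first is a \emph{lower bound for the count}: the number of real algebraic numbers of degree $\le n$, height $\le Q$, in $I$ is $\gg|I|Q^{n+1}$ -- a near-equidistribution statement provable, e.g., by constructing integer polynomials of degree $n$ and height $\le Q$ that change sign on prescribed short subintervals of $I$ (so contributing a real root there), or via the geometry of numbers -- followed by the removal of degenerate contributions: polynomials that are reducible, or whose relevant root is rational, of degree $<n$, non-real, or outside $I$. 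One checks these amount to only $\ll Q^{n}\log Q$ points, of strictly smaller order, so $\gg|I|Q^{n+1}$ genuine degree-$n$ real points survive. The second input is an \emph{anti-clustering estimate}: from $|\mathrm{Res}(P_1,P_2)|\ge1$ for distinct minimal polynomials, together with Mahler-type bounds for the resultant in terms of the heights and $|\alpha_1-\alpha_2|$, one controls the number of pairs within $Q^{-(n+1)}$ of each other, so that after discarding an asymptotically negligible subset the survivors can be organised into a $\gg Q^{-(n+1)}$-separated family of cardinality still $\gg|I|Q^{n+1}$, as required.

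\emph{Main obstacle.} The crux is step (i), and within it the production of enough \emph{irreducible}, degree-exactly-$n$ polynomials with sign changes at many distinct, well-spaced points of $I$: Dirichlet's box principle (Lemma~\ref{L:BoxPrinciplePolynomes}) and the root-localisation bound (Lemma~\ref{L:XimoinsGamma}) only deliver polynomials of degree $\le n$ with no grip on factorisation or on the reality and multiplicity of the nearby root, and peeling off the lower-degree and reducible strata while \emph{simultaneously keeping the counting function logarithm-free}, $N(\alpha)\asymp\rmH(\alpha)^{n+1}$, is precisely Beresnevich's sharpening of the older Baker--Schmidt regular system (which carried a power-of-$\log$ loss, adequate for Hausdorff dimension but not for a full-measure conclusion). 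Everything else -- Borel--Cantelli, Cauchy condensation, the abstract ``regular system $\Rightarrow$ Khintchine'' implication, the passage from degree $\le n$ to degree $=n$ via negligible lower-order terms, and the removal of implied constants -- is routine once step (i) is secured.
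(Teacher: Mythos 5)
This statement appears in the survey only as a quoted result, attributed to \cite{Beresnevich-99:MR1709049}, so there is no in-paper proof to compare with; the paper merely remarks that the convergence half is a trivial Borel--Cantelli consequence. Your convergence argument (dyadic blocks in the height, $\ll 2^{k(n+1)}$ minimal polynomials per block, interval lengths $\ll 2^{-kn}\Psi(2^k)$, Cauchy condensation, Borel--Cantelli) is complete and correct, and your overall architecture for the divergence half --- show that the real algebraic numbers of degree $n$ form a regular system with the optimal, logarithm-free counting function $N(\alpha)\asymp H(\alpha)^{n+1}$, then apply the general ``regular system $\Rightarrow$ Khintchine divergence'' theorem --- is indeed the route of Beresnevich's original paper.

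However, step (i) is not an ingredient you may wave at: it \emph{is} the theorem, and your sketch of it has a genuine gap. The two inputs you name do not deliver it as described. First, to get $\gg|I|Q^{n+1}$ degree-$n$ algebraic numbers in $I$ you need, for most short subintervals of length $\asymp Q^{-n-1}$, an integer polynomial of height $\le Q$ with a \emph{real root} there; pigeonhole (Lemma~\ref{L:BoxPrinciplePolynomes}) only gives $|P(\xi)|\ll Q^{-n}$ at a point, and Lemma~\ref{L:XimoinsGamma} converts this into $|\xi-\alpha|\ll Q^{-n-1}$ only if one also has a lower bound $|P'(\xi)|\gg Q$ (and $\alpha$ real, simple); producing such derivative control outside an exceptional set of controllably small measure is precisely the metric core of Beresnevich's proof (his a.e.\ uniform approximation statement with derivative bounds), and ``constructing polynomials that change sign on prescribed subintervals'' is not a routine substitute --- a sign change of a height-$Q$ polynomial on an interval of length $Q^{-n-1}$ is exactly what has to be proven. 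Second, the separation step: the resultant inequality $|\mathrm{Res}(P_1,P_2)|\ge1$ bounds how close two algebraic numbers of \emph{bounded} heights can be, but it does not by itself prevent many points from clustering in a single $T^{-1}$-interval nor organise the survivors into a $T^{-1}$-separated family of cardinality still $\gg|I|T$; in the actual argument the separation comes essentially for free from selecting one approximant per short interval of the a.e.\ statement, not from a pair-counting estimate. Likewise the removal of reducible and degree-$<n$ polynomials is where the older Baker--Schmidt construction lost its logarithm, so calling it ``routine'' hides the very sharpening the theorem requires. As it stands, the divergence half is a correct plan with its central lemma unproved, so the proposal does not constitute a proof.
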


\noindent Bugeaud \cite{Bugeaud-?2} has proved an analogue of
Theorem~\ref{beresnevich1} for approximation by algebraic integers:

\begin{thm}[Bugeaud, 2002]\label{bugeaud1} Let $n\in\N$, $n\ge 2$,
 $\Psi:\N\to\R^+$ be a monotonic error
function and $\mathcal{I}_n(\Psi)$ be the set of real $\xi$ such
that
\begin{equation}\label{e4}
    |\xi-\alpha|<H(\alpha)^{-n+1}\Psi(H(\alpha))
\end{equation}
has infinitely many solutions in real algebraic integers of degree
$\deg\alpha=n$. Then $\mathcal{I}_n(\Psi)$ has full Lebesgue measure
if the sum $(\ref{e1})$ diverges and zero Lebesgue measure
otherwise.
\end{thm}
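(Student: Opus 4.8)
The plan is to prove the two halves by quite different means: the convergence case (null set) by a direct Borel--Cantelli estimate, and the divergence case (full measure) by the theory of regular systems, the new ingredient being a regular-system property for real algebraic integers of degree $n$, proved in the spirit of Beresnevich's Theorem~\ref{beresnevich1}. Note that a naive transference from the polynomial problem (Bernik's Theorem~\ref{bernik1}) will not do: the factor $H(f)^{2D}$ in Lemma~\ref{L:XimoinsGamma} is far too lossy to recover the sharp exponent $-n+1$, so a genuinely metrical construction is needed.

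\emph{Convergence case.} Fix a bounded interval $I$. For $t\ge1$ let $E_t$ be the set of $\xi\in I$ admitting a real algebraic integer $\alpha$ of degree $n$ with $H(\alpha)\in[2^{t-1},2^t)$ and $|\xi-\alpha|<H(\alpha)^{-n+1}\Psi(H(\alpha))$. The minimal polynomial of such an $\alpha$ is monic of degree $n$, and there are only $O(2^{tn})$ monic integer polynomials of degree $n$ with height below $2^t$, each with at most $n$ real roots. Since $n\ge2$, the map $H\mapsto H^{-n+1}\Psi(H)$ is non-increasing, so every such $\alpha$ contributes to $E_t$ an interval of length $\ll 2^{-t(n-1)}\Psi(2^{t-1})$, whence $|E_t|\ll 2^t\Psi(2^{t-1})$. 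By Cauchy condensation $\sum_t 2^t\Psi(2^{t-1})\asymp\sum_{h\ge1}\Psi(h)<\infty$, so $\sum_t|E_t|<\infty$ and Borel--Cantelli gives $|\limsup_t E_t|=0$. Thus $\mathcal{I}_n(\Psi)\cap I$ is null, and letting $I$ exhaust $\R$ proves this half. (No irreducibility is needed, as we only want an upper bound.)

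\emph{Divergence case.} Here the strategy is to show that the real algebraic integers of degree $n$, weighted by $N(\alpha)=c_n H(\alpha)^n$, form a \emph{regular system}: for every interval $J$ and every large $T$ there are $\gg|J|\,T$ such $\alpha\in J$ with $N(\alpha)\le T$ that are pairwise $\gg T^{-1}$ apart. Granting this, Beresnevich's transference principle for regular systems (the divergence half) turns the hypothesis into the conclusion: writing the target inequality as $|\xi-\alpha|<\rho(N(\alpha))$ with $\rho(N)=N^{-(n-1)/n}\Psi(N^{1/n})$ (a non-increasing function), the series governing the criterion is $\sum_t 2^t\rho(2^t)=\sum_t 2^{t/n}\Psi(2^{t/n})$, which by the same condensation comparison is equivalent to $\sum_h\Psi(h)$; its divergence therefore gives $|\mathcal{I}_n(\Psi)\cap I|=|I|$ for every $I$, hence full measure.

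It remains to outline the regular-system property, which is the heart of the matter and parallels Beresnevich's treatment of algebraic numbers, the difference being the passage from arbitrary to \emph{monic} integer polynomials. Over a fine grid on $J$ one applies Minkowski's theorem to the convex body $\{|x_0+x_1\gamma+\dots+x_{n-1}\gamma^{n-1}+\gamma^n|\le\delta,\ |x_i|\le H\}$ to produce many monic integer polynomials of degree $n$ and height $\le H$ with small values at grid points; with care to ensure a real root of the correct degree (controlling also the derivative at the grid point) and via Lemma~\ref{L:XimoinsGamma}, this yields $\gg|J|H^n$ real algebraic integers of degree $\le n$ in $J$ with height $\ll H$. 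One then discards those of degree $<n$: each is a root of an irreducible monic integer factor of degree $<n$ and height $\ll H$, and there are only $O(|J|H^{n-1})$ such roots in $J$, a negligible fraction. Finally, to extract a subfamily separated by $\gg H^{-n}$ one uses that distinct monic integer polynomials $P,Q$ of degree $n$ have $|\operatorname{Res}(P,Q)|\ge1$, combined with the upper bound for $|\operatorname{Res}(P,Q)|$ in terms of the heights and of $|\alpha-\beta|$ for the close pair of roots, together with lower bounds for $|P'(\alpha)|$ for the polynomials just constructed. This last ingredient, which rules out excessive clustering and is what forces the separation down to the correct order, is the main obstacle; everything else is bookkeeping or a direct adaptation of the algebraic-number case.
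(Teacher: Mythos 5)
This survey does not actually prove Theorem~\ref{bugeaud1}: it is quoted from Bugeaud's 2002 paper \cite{Bugeaud-?2}, so your attempt can only be measured against the literature. Your convergence half is correct and is exactly the standard argument (the text itself remarks that this half is a routine Borel--Cantelli consequence): counting monic integer polynomials of bounded height, using monotonicity of $H\mapsto H^{-n+1}\Psi(H)$ and condensation, is all that is needed. Your divergence half also identifies the route that Bugeaud in fact follows -- reduce to the statement that the real algebraic integers of degree $n$ form an (optimal) regular/ubiquitous system with weight $N(\alpha)\asymp H(\alpha)^{n}$, then invoke the Khintchine-type divergence theorem for regular systems; your bookkeeping $\rho(N)=N^{-(n-1)/n}\Psi(N^{1/n})$ and the comparison of $\sum_t 2^{t/n}\Psi(2^{t/n})$ with $\sum_h\Psi(h)$ are fine. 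You are also right that Lemma~\ref{L:XimoinsGamma} is too lossy for a transference from Theorem~\ref{bernik1}.

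The genuine gap is that the regular-system lemma is essentially the whole theorem (its divergence half), and your sketch of it does not go through as written. First, Minkowski's theorem cannot be applied to the body $\{|x_0+x_1\gamma+\cdots+x_{n-1}\gamma^{n-1}+\gamma^n|\le\delta,\ |x_i|\le H\}$: because of the fixed term $\gamma^n$ it is not symmetric about the origin, so producing monic polynomials of height $\le H$ with $|P(\gamma)|\ll H^{-(n-1)}$ is an \emph{inhomogeneous} approximation problem; it genuinely fails at exceptional points (e.g.\ where the homogeneous system at $\gamma$ is singular), and one must show metrically that such points occupy a small proportion of the grid -- this is not a formality. Second, even granted small values, passing to an algebraic integer within $\ll H^{-n}$ of the grid point, at the sharp scale and for a positive proportion of points, requires a lower bound of order $H$ for $|P'|$ at those points, i.e.\ a measure estimate excluding the set where a small-value monic polynomial simultaneously has small derivative; you explicitly defer this (``the main obstacle''), but it is precisely the Bernik--Beresnevich-type core of the proof, here in the monic setting where Dirichlet gives one fewer free coefficient. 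Finally, the resultant argument you propose for separation is both doubtful as stated (it again needs the missing derivative bounds) and unnecessary: once one knows that most $\xi$ in an interval have an algebraic integer of degree $n$ and height $\ll T^{1/n}$ within $c/T$, a maximal $T^{-1}$-separated subfamily and a covering argument already give $\gg|J|T$ separated points. So the architecture is the right one, but the decisive metrical lemma is assumed rather than proved.
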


No analogue of Theorem~\ref{bernik1} is known for the monic polynomial case
--- however see the final section of
\cite{Beresnevich-Velani-08-Inhom}.

\bigskip

Unlike Theorem~\ref{bernik1}, the convergence parts of
Theorems~\ref{beresnevich1} and \ref{bugeaud1} are rather trivial
consequences of the Borel-Cantelli Lemma, which also implies that
the monotonicity condition is unnecessary in the case of
convergence. The intriguing question now arises whether the
monotonicity condition in Theorem~\ref{bernik1} and in the
divergence part of Theorems~\ref{beresnevich1} and \ref{bugeaud1}
can be dropped. Beresnevich \cite{Beresnevich-05:MR2110504} has
recently shown that the monotonicity condition on $\Psi$ can indeed
be safely removed from Theorem~\ref{bernik1}. Regarding
Theorems~\ref{beresnevich1} and \ref{bugeaud1} removing the
monotonicity condition is a fully open problem --~\cite{Beresnevich-05:MR2110504}. In fact, in dimension $n=1$
removing the monotonicity from Theorem~\ref{beresnevich1} falls
within the Duffin and Schaeffer problem
\cite{Duffin-Schaeffer-41:MR0004859}. Note, however, that the higher
dimensional Duffin-Schaeffer problem has been settled in the affirmative
\cite{Pollington-Vaughan-1990}.

It is interesting to compare Theorems~\ref{beresnevich1} and
\ref{bugeaud1} with their global counterparts. In the case of
approximation by algebraic numbers of degree $\le n$, the
appropriate statement is known as the Wirsing conjecture
\ref{C:Wirsing}
(see \S\,2.5). The latter has been verified for $n=2$ by Davenport
and Schmidt but is open in higher dimensions.
Theorem~\ref{beresnevich1} implies that the statement of the
Wirsing conjecture
\ref{C:Wirsing}
 holds for almost all real $\xi$ -- the
actual conjecture states that it is true at least for all
transcendental $\xi$. In the case of approximation by algebraic
integers of degree $\le n$, Roy has shown that the statement
analogous to Wirsing's conjecture is false \cite{MR2031862}.
However, Theorem~\ref{bugeaud1} implies that the statement holds for
almost all real $\xi$. In line with the recent `metrical' progress
on Littlewood's conjecture by Einsiedler, Katok and Lindenstrauss
\cite{Einsiedler-Katok-Lindenstrauss-06:MR2247967}, it would be
interesting to find out whether the set of possible exceptions to
the Wirsing-Schmidt conjecture is of Hausdorff dimension zero.  A
similar question can also be asked about approximation by algebraic
integers; this would shed light on the size of the set of
exceptions, shown to be non--empty by Roy.

\bigskip

A.~Baker \cite{Baker-75:MR0422171} suggested a strengthening of
Mahler's problem in which the height
$H(P)=\max\{|a_n|,\dots,|a_0|\}$ of polynomial
$P(x)=a_nx^n+\cdots+a_1x+a_0$ is replaced by
$$
H^\times(P)=\prod_{i=1}^n\max\{1,|a_i|\}^{1/n}.
$$
The corresponding statement has been established by Kleinbock and
Margulis \cite{Kleinbock-Margulis-98:MR1652916} in a more general
context of Diophantine approximation on manifolds. Specialising their
result to polynomials gives the following

\begin{thm}[Kleinbock \& Margulis, 1998]\label{km98}
Let $\ve>0$. Then for almost all $\xi\in\R$ the inequality
\begin{equation}\label{e5}
    |P(\xi)|<H^\times(P)^{-n-\ve}
\end{equation}
has only finitely many solutions in $P\in\Z[x]$ with $\deg P\le n$.
\end{thm}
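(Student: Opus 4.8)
My plan is to follow the homogeneous-dynamics approach of Kleinbock and Margulis. The inequality $(\ref{e5})$ is the dual (linear-form) Diophantine problem for the Veronese curve $\xi\mapsto(\xi,\xi^2,\dots,\xi^n)$, so the first step is Dani's correspondence. To $P(x)=a_nx^n+\cdots+a_1x+a_0$ with $\mathbf{a}=(a_0,\dots,a_n)\in\Z^{n+1}$ I attach the unipotent matrix $u_\xi\in\mathrm{SL}_{n+1}(\R)$ with $u_\xi\mathbf{a}=\bigl(P(\xi),a_1,\dots,a_n\bigr)$, and for $\mathbf{t}=(t_1,\dots,t_n)$ with all $t_i\ge 0$ the diagonal element $g_{\mathbf{t}}=\mathrm{diag}(e^{T},e^{-t_1},\dots,e^{-t_n})$, $T=t_1+\cdots+t_n$. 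Then $g_{\mathbf{t}}u_\xi\mathbf{a}=\bigl(e^{T}P(\xi),\,e^{-t_1}a_1,\dots,e^{-t_n}a_n\bigr)$, and choosing $\mathbf{t}=\mathbf{t}(P)$ with $e^{t_i}$ slightly larger than $\max\{1,|a_i|\}$ so that $T\asymp\log H^{\times}(P)$, the hypothesis $|P(\xi)|<H^{\times}(P)^{-n-\ve}$ (which beats $e^{-T}$ by a fixed power of the height) forces every coordinate of the nonzero vector $g_{\mathbf{t}}u_\xi\mathbf{a}$ to be at most $e^{-cT}$ for some $c=c(n,\ve)>0$. So the set of $\xi$ with infinitely many solutions of $(\ref{e5})$ is contained in the set of $\xi$ for which $g_{\mathbf{t}}u_\xi\Z^{n+1}$ contains a nonzero vector of norm $\le e^{-cT}$ for infinitely many discretised $\mathbf{t}\in(\tfrac12\Z_{\ge 0})^n$ (a solution gives such a $\mathbf{t}$ after rounding, with only bounded loss in the constant).

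The crux is then a quantitative non-divergence estimate for the trajectories $\xi\mapsto g_{\mathbf{t}}u_\xi\Z^{n+1}$ in the space of unimodular lattices: there should be $C,\alpha>0$ (depending only on $n$) so that, partitioning $\R$ into intervals $B$ of a length chosen suitably for the given $\mathbf{t}$, one has for every $\delta\in(0,1)$
$$
\bigl|\{\xi\in B:\ g_{\mathbf{t}}u_\xi\Z^{n+1}\ \text{contains a nonzero vector of norm}<\delta\}\bigr|\ \le\ C\,\delta^{\alpha}\,|B|.
$$
Two facts drive this. First, for any rational subspace $V\subseteq\Q^{n+1}$ the covolume function $\xi\mapsto\|g_{\mathbf{t}}u_\xi(V)\|$ (computed in the appropriate exterior power) is, in coordinates, a vector of polynomials in $\xi$ of degree bounded in terms of $n$, hence a $(C,\alpha)$-good function: the set where it drops below $\eta$ times its supremum over $B$ has measure $\le C\eta^{\alpha}|B|$. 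Second, the Veronese curve is non-degenerate --- $1,\xi,\dots,\xi^n$ are linearly independent, equivalently their Wronskian is a nonzero constant --- so no such covolume function is uniformly small on an interval of the chosen length, which supplies the lower bounds on suprema needed to run the Dani--Margulis induction over flags of rational subspaces. Assembling these over all rational subspaces via that induction yields the displayed bound.

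The last step is Borel--Cantelli. I apply the estimate with $\delta=e^{-cT}$: for each discretised $\mathbf{t}$ the corresponding bad set has measure $\le Ce^{-\alpha cT}$ per unit length, and since the number of $\mathbf{t}\in(\tfrac12\Z_{\ge 0})^n$ with $T\in[m,m+1)$ is $\ll m^{\,n-1}$, the total measure on any fixed bounded interval is $\ll\sum_{m}m^{\,n-1}e^{-\alpha cm}<\infty$. By the Borel--Cantelli lemma, for almost every $\xi$ only finitely many of the excursion events occur; covering $\R$ by countably many bounded intervals and unwinding the correspondence, $(\ref{e5})$ has only finitely many solutions $P$ for almost every $\xi\in\R$.

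The main obstacle is entirely in the middle step: proving the $(C,\alpha)$-good property for the whole family of covolume functions attached to all rational subspaces simultaneously, extracting the quantitative consequences of non-degeneracy of the Veronese curve, and organising the estimate over the combinatorics of flags of rational subspaces. This is the technical heart of Kleinbock--Margulis and is genuinely hard; the correspondence and the Borel--Cantelli summation are routine once it is in place. (The additive case --- with $H(P)$ in place of $H^{\times}(P)$ --- is the Mahler problem settled by Sprindzuk through the method of essential and inessential domains, a route which in principle also works here but is considerably more intricate than the dynamical argument.)
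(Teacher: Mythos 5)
The paper itself offers no proof of Theorem~\ref{km98}: it is a survey statement obtained by specialising the Kleinbock--Margulis theorem on nondegenerate manifolds to the Veronese curve, with a citation to their 1998 Annals paper. So the only meaningful comparison is with that original argument, and your sketch reproduces its structure faithfully: the dual (linear-form) Dani correspondence $\mathbf{a}\mapsto g_{\mathbf{t}}u_\xi\mathbf{a}$ with $g_{\mathbf{t}}=\mathrm{diag}(e^{T},e^{-t_1},\dots,e^{-t_n})$, the reduction of ``infinitely many solutions of (\ref{e5})'' to infinitely many excursions of the lattice $g_{\mathbf{t}}u_\xi\Z^{n+1}$ below height $e^{-cT}$ over a discretised set of $\mathbf{t}$, the quantitative non-divergence estimate for $(C,\alpha)$-good covolume functions combined with the non-degeneracy of $1,\xi,\dots,\xi^n$, and the Borel--Cantelli summation over $T\in[m,m+1)$ with $\ll m^{n-1}$ lattice points $\mathbf{t}$ per shell. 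Two precisions are worth making. First, ``$e^{t_i}$ slightly larger than $\max\{1,|a_i|\}$'' must mean larger by a factor $e^{s}$ with $s$ proportional to $\ve T$ (redistributing part of the gain $H^\times(P)^{-\ve}$ from the first coordinate to the others); with $t_i=\log\max\{1,|a_i|\}$ exactly, the coordinates $e^{-t_i}a_i$ are only $\le 1$, not $\le e^{-cT}$, so the contraction constant $c(n,\ve)$ comes precisely from this redistribution. Second, your displayed measure bound is Kleinbock--Margulis' non-divergence theorem specialised to this family, and as you state it, it is only valid together with the lower bound $\sup_B\|g_{\mathbf{t}}u_\xi\Gamma\|\ge\rho$ for every rational subgroup $\Gamma$, which is exactly what the non-degeneracy of the Veronese curve and the choice of the interval length supply; you acknowledge this, but it means your text is an outline that quotes the technical heart of the original proof rather than a self-contained argument. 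That is the same level of completeness as the paper's own treatment (a citation), so as a reconstruction of the intended proof it is correct; as a proof from scratch it leaves the $(C,\alpha)$-good/flag-induction machinery unestablished.
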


A multiplicative analogue of Theorem~\ref{bernik1} with $H(P)$
replaced by $H^\times(P)$ has been obtained by Bernik, Kleinbock and
Margulis \cite{Bernik-Kleinbock-Margulis-01:MR1829381} (also within
the framework of manifolds). Note that in their theorem the
convergence of (\ref{e1}) must be replaced by the stronger condition
that $\sum_{h=1}^\infty \Psi(h)(\log h)^{n-1}<\infty$. This condition
is believed to be optimal but it is not known if the multiplicative
analogue of Theorem~\ref{beresnevich1}, when $H(P)$ is replaced by
$H^\times(P)$, holds. In \cite{Beresnevich-Velani-08-Inhom},
Beresnevich and Velani have proved an inhomogeneous version of the
theorem of Kleinbock and Margulis; and, in particular, an
inhomogeneous version of Theorem~\ref{km98}.

\bigskip

With \cite{BakerSchmidt-1970}, A.~Baker and W.M.~Schmidt pioneered
the use of Hausdorff dimension in the context of approximation of
real numbers by algebraic numbers with a natural generalisation of
the Jarn\'\i k--Besicovitch theorem:

\begin{thm}[Baker \& Schmidt, 1970]\label{BakerSchmidt}
Let $w\ge n$. Then the set of $\xi\in\R$ for which
\begin{equation}
  \label{eq:ineq}
  |\xi-\alpha|<H(\alpha)^{-w}
\end{equation}
holds for infinitely many algebraic numbers $\alpha$ with
$\deg\alpha\le n$ has Hausdorff dimension  \makebox{$(n+1)/(w+1)$}.
\end{thm}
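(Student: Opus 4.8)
The plan is to establish the two estimates $\dim W\le(n+1)/(w+1)$ and $\dim W\ge(n+1)/(w+1)$ separately, where $W\subset\bR$ denotes the set in \eqref{eq:ineq}; throughout I write the defining inequality in the form $|\xi-\alpha|<H(\alpha)^{-w-1}$ (the normalisation for which the dimension comes out as $(n+1)/(w+1)$, consistently with the exponent $\omega^*_n$ and the $-1$ of \eqref{E:moinsun}). Since Hausdorff dimension is translation invariant and countably stable, one may assume $\xi$ ranges over a fixed bounded interval, say $I=[-\tfrac12,\tfrac12]$. For the lower bound it is then enough to produce $\alpha$ of degree exactly $n$, while for the upper bound one counts all $\alpha$ of degree $\le n$ at once; the degree-$m$ part with $m<n$ has dimension at most $(m+1)/(w+1)\le(n+1)/(w+1)$ by the same covering, so it never governs the answer.

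\emph{Upper bound.} This is a soft covering argument. Write $W$ as the $\limsup$ set $\bigcap_{Q\ge1}\bigcup_{H(\alpha)\ge Q}B\bigl(\alpha,H(\alpha)^{-w-1}\bigr)$, the inner union over real algebraic $\alpha$ of degree $\le n$ with $|\alpha|\le1$. Grouping the centres into dyadic blocks $2^{k}\le H(\alpha)<2^{k+1}$, and using that there are $\ll 2^{k(n+1)}$ integer polynomials of degree $\le n$ with height $<2^{k+1}$, each having at most $n$ roots, the $k$-th block contributes $\ll 2^{k(n+1)}\bigl(2^{-k(w+1)}\bigr)^{s}=2^{k(n+1-(w+1)s)}$ to the $s$-dimensional Hausdorff sum of $W$. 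For $s>(n+1)/(w+1)$ this is summable over $k$ with tail tending to $0$, so $\mathcal{H}^{s}(W)=0$ and hence $\dim W\le(n+1)/(w+1)$. This direction uses no arithmetic beyond counting polynomials and no hypothesis on $w$.

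\emph{Lower bound.} This is the substantial direction, and it is where the hypothesis $w\ge n$ is used. The method is that of \emph{regular systems}, introduced by Baker and Schmidt precisely for this problem. The key arithmetic input is a distribution statement: the set $\Lambda$ of real algebraic numbers of degree $n$ in $I$, weighted by $N(\alpha):=H(\alpha)^{n+1}$, is a regular system, i.e.\ there is $c_0>0$ such that for every subinterval $J\subseteq I$ and every sufficiently large $T$ one can find $\alpha_1,\dots,\alpha_t\in\Lambda$ with $N(\alpha_i)\le T$, pairwise distances $\ge T^{-1}$, and $t\ge c_0\,|J|\,T$. Granting this, the general principle for regular systems yields, for every $\gamma\ge1$, that the set of $\xi\in I$ with $|\xi-\alpha|<N(\alpha)^{-\gamma}$ for infinitely many $\alpha\in\Lambda$ has dimension $\ge 1/\gamma$; taking $\gamma=(w+1)/(n+1)$, which is $\ge1$ exactly because $w\ge n$, and observing that $N(\alpha)^{-\gamma}=H(\alpha)^{-(w+1)}$, we get $\dim W\ge 1/\gamma=(n+1)/(w+1)$. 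That general principle is in turn proved by the usual Cantor construction: one builds nested compact sets $I=K_0\supseteq K_1\supseteq\cdots$, each $K_j$ a disjoint union of balls $B\bigl(\alpha,\tfrac12 N(\alpha)^{-\gamma}\bigr)$ centred at well-separated elements of $\Lambda$ at a rapidly increasing weight scale (disjointness is forced by the $T^{-1}$-separation in the regular system, since $\gamma\ge1$), carries the uniform Cantor measure $\mu$ on $K=\bigcap_jK_j\subseteq W$, checks $\mu\bigl(B(x,r)\bigr)\ll r^{s}$ for every $s<(n+1)/(w+1)$ by balancing the branching numbers against the scale ratios, and concludes $\dim K\ge s$ by the mass distribution principle, letting $s\uparrow(n+1)/(w+1)$.

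\emph{Main obstacle.} The heart of the matter is the regular-system statement with the sharp counting exponent $n+1$. Dirichlet's box principle, in the form of Lemma~\ref{L:BoxPrinciplePolynomes}, produces, near any prescribed point, integer polynomials of degree $n$ and height $\ll T^{1/(n+1)}$ taking small values there; but turning ``many small polynomial values, densely spread through $I$'' into ``many pairwise well-separated algebraic numbers of degree $n$ and controlled height'' cannot be done through the lossy estimate of Lemma~\ref{L:XimoinsGamma}, whose factor $H(f)^{2D}$ is far too large. One instead needs the geometry-of-numbers and polar-bodies transference of Davenport and Schmidt (cf.\ \S\ref{SS:PASASN-AAI}), or a direct study of the real roots of primitive irreducible polynomials of bounded height, to locate roots of the right height near prescribed locations and to control their multiplicities and mutual separation. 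Once the regular system is available, the Cantor construction, the disjointness of the generating balls, and the mass-distribution estimate are routine.
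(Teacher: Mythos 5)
The paper states this result without proof, as a quoted theorem of Baker and Schmidt \cite{BakerSchmidt-1970}, so there is no in-paper argument to compare with; judged against the literature, your outline does follow Baker and Schmidt's actual route (a covering argument for the upper bound, a regular-system Cantor construction for the lower bound). Your normalisation remark is also justified: read literally, $|\xi-\alpha|<H(\alpha)^{-w}$ with $n\le w<n+1$ defines a set of full Lebesgue measure (Theorem~\ref{beresnevich1} with $\Psi(h)=h^{-1}$), and for $n=1$ the Jarn\'\i k--Besicovitch exponent would be $2/w$ rather than $2/(w+1)$, so the statement is indeed to be read in the sense of $\omega^*_n$, i.e.\ with the $-1$ of \eqref{E:moinsun}. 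Your dyadic covering argument for $\dim\le (n+1)/(w+1)$ is complete and correct.

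The lower bound, however, is not proved: you explicitly \emph{grant} the assertion that the real algebraic numbers of degree $n$ in $I$, weighted by $N(\alpha)=H(\alpha)^{n+1}$, form a regular system, and that assertion is the entire arithmetic content of the theorem --- as you say yourself, everything after it (separation, disjointness, the mass distribution principle) is routine. Moreover, the tools you point to would not supply it. The polar-body transference of \S~\ref{SS:PASASN-AAI} converts an upper bound on $\omegahat'_n(\xi)$ into the existence of \emph{infinitely many} good approximants at the single point $\xi$; even combined with the metrical fact $\omegahat'_n(\xi)=1/n$ a.e., this gives only an asymptotic statement with no control on which heights occur, whereas a regular system requires, for \emph{every} sufficiently large $T$ and every subinterval $J$, at least $c|J|T$ points of weight $\le T$ that are $T^{-1}$-separated; for an individual unrestricted $\xi$ the transference yields nothing beyond Wirsing-type exponents anyway. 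The known proofs proceed metrically: starting from the Dirichlet polynomial of Lemma~\ref{L:BoxPrinciplePolynomes} at every scale, one must discard the set of $\xi$ at which all small-valued polynomials of degree $\le n$ also have small derivative, by showing this exceptional set has small measure (a Sprind\v{z}uk-type estimate in the spirit of Theorem~\ref{T:omega_nPresquePartout}); at the remaining points $|\xi-\alpha|\ll |P(\xi)|/|P'(\xi)|$ produces roots of the right height, which a Vitali-type selection then makes $T^{-1}$-separated in the required number. Even so, Baker and Schmidt obtain the weight only up to an $\epsilon$ (or logarithmic) loss, $N(\alpha)=H(\alpha)^{n+1-\epsilon}$, which still suffices for the dimension bound since $\epsilon$ is arbitrary; the clean count you assert is essentially Beresnevich's later theorem \cite{Beresnevich-99:MR1709049}. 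Until a proof of this regular-system lemma (in either form) is supplied, the inequality $\dim\ge (n+1)/(w+1)$, and with it the theorem, remains unestablished.
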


In particular, Theorem~\ref{BakerSchmidt} implies that the set
\begin{equation}\label{e7}
   A(w)=\Big\{\xi\in\R:|P(\xi)|<H(P)^{-w}
\text{ for infinitely many }P\in\Z[x],\ \deg P\le n\Big\}
\end{equation}
has Hausdorff dimension at least $(n+1)/(w+1)$. Baker and Schmidt
conjectured that this lower bound is sharp, and this was established by Bernik in
\cite{Bernik-1983a}:

\begin{thm}[Bernik, 1983]\label{bernik2}
Let $w\ge n$. Then $\dim A(w)=\dfrac{n+1}{w+1}$.
\end{thm}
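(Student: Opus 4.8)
The lower bound $\dim A(w)\ge (n+1)/(w+1)$ is already contained in Theorem~\ref{BakerSchmidt} of Baker and Schmidt, so the task is to prove the matching upper bound $\dim A(w)\le (n+1)/(w+1)$. Since $A(w)$ is a countable union of its intersections with bounded intervals and Hausdorff dimension is stable under countable unions, it suffices to bound $\dim\bigl(A(w)\cap I\bigr)$ for a fixed compact interval $I$. Writing, for $P\in\Z[x]$ with $\deg P\le n$ and $H:=H(P)$,
$$
\sigma(P)=\bigl\{\xi\in I : |P(\xi)|<H^{-w}\bigr\},
$$
one has $A(w)\cap I\subseteq\limsup_{H(P)\to\infty}\sigma(P)$. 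The plan is to run the Hausdorff--Cantelli test: for any $s>(n+1)/(w+1)$ it is enough to exhibit, for each $k\ge 1$, a cover of $\bigcup_{2^{k}\le H(P)<2^{k+1}}\sigma(P)$ by intervals the sum of whose $s$-th powers of lengths is $\ll 2^{-\delta k}$ for some $\delta=\delta(s)>0$; then $\mathcal{H}^{s}\bigl(A(w)\cap I\bigr)=0$, hence $\dim\bigl(A(w)\cap I\bigr)\le s$, and letting $s\downarrow(n+1)/(w+1)$ finishes the upper bound.

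First I would treat the \emph{non-degenerate} polynomials. By Lemma~\ref{L:XimoinsGamma} (or a direct Taylor estimate), if $\xi\in\sigma(P)$ and $\alpha$ is a root of $P$ nearest to $\xi$, then $\xi$ lies close to $\alpha$; and if in addition $\alpha$ is simple with $|P'(\alpha)|\asymp H$ — the generic situation, corresponding to well-separated roots — then the connected component of $\sigma(P)$ containing $\xi$ is an interval of length $\ll H^{-w-1}$. Since there are $\ll H^{n+1}$ integer polynomials of degree $\le n$ and height $\asymp H$, and each contributes at most $n$ such components, the $s$-content of their union over $H\asymp 2^k$ is
$$
\ll 2^{k(n+1)}\bigl(2^{-k(w+1)}\bigr)^{s},
$$
and $\sum_{k\ge1}2^{k(n+1)}2^{-k(w+1)s}$ is geometric and converges precisely when $s>(n+1)/(w+1)$. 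This disposes of the generic part.

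The heart of the argument is the \emph{degenerate} part: polynomials possessing a root $\alpha$ near $I$ at which $|P'(\alpha)|$ is small, or with clustering roots, or with higher multiplicity, for which the component of $\sigma(P)$ can be far longer than $H^{-w-1}$. Here I would partition the degenerate polynomials according to dyadic scales $|P'(\alpha)|\asymp H^{\theta}$ with $0\le\theta<1$ (refined, when needed, by parameters recording how close $\alpha$ is to the next root and the multiplicity of $\alpha$). The key input — essentially the resultant mechanism appearing in the sketch of Theorem~\ref{T:GTC} — is a separation estimate: two coprime integer polynomials of degree $\le n$ and height $\asymp H$ cannot both take a value $<H^{-w}$ together with a small derivative at two points lying within a sufficiently short interval, since their resultant is a nonzero rational integer and a routine bound forces its absolute value to be $<1$. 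Hence, on each short interval, only a number of mutually coprime degenerate polynomials bounded in terms of $n$ can contribute at a given scale, so the relevant part of $\sigma(P)$ can be covered efficiently: the inflation of the interval length by a factor $\asymp H^{1-\theta}$ caused by the small derivative is offset by the scarcity of the polynomials producing it. One then optimises the scale thresholds so that in every class the exponent in the Hausdorff--Cantelli sum still strictly exceeds $(n+1)/(w+1)$, uniformly in the auxiliary parameters.

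I expect this last bookkeeping to be the main obstacle. The whole difficulty of the theorem — as opposed to the measure-zero statement that follows from Sprind\v{z}uk's Theorem~\ref{T:omega_nPresquePartout}, or the lower bound of Baker and Schmidt — lies in showing that the potentially very small quantity $|P'(\alpha)|$, which enlarges the bad intervals, is always compensated by the rarity of the offending polynomials, and that this trade-off survives simultaneously over all the refinement parameters and all height scales $2^{k}$. Once every class is controlled, $A(w)\cap I$ is covered by finitely many $\limsup$ sets, each of Hausdorff dimension $\le(n+1)/(w+1)$; combining this with the Baker--Schmidt lower bound of Theorem~\ref{BakerSchmidt} yields $\dim A(w)=\dfrac{n+1}{w+1}$.
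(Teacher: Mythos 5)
The paper itself contains no proof of this statement: it is a survey, and Theorem~\ref{bernik2} is simply quoted as Bernik's 1983 theorem settling the Baker--Schmidt conjecture, so there is nothing in the text to compare your argument against; it has to be judged on its own. On those terms, what you have is a plan, not a proof. The lower bound via Theorem~\ref{BakerSchmidt}, the reduction to a Hausdorff--Cantelli covering bound on a compact interval, and the generic case (simple root with $|P'(\alpha)|\asymp H$, at most $n$ components of length $\ll H^{-w-1}$, $\ll H^{n+1}$ polynomials per dyadic height block, convergence exactly for $s>(n+1)/(w+1)$) are all correct and standard.

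The gap is the degenerate case, and you flag it yourself. Everything that makes Theorem~\ref{bernik2} harder than the measure statement of Sprind{\v{z}}uk (Theorem~\ref{T:omega_nPresquePartout}) or the Baker--Schmidt lower bound is the uniform control of polynomials having a root near the interval at which $|P'(\alpha)|$ is small, and your proposal replaces this with an unproved assertion: that two coprime integer polynomials of height about $H$, both of size $<H^{-w}$ and with small derivative on a short interval, force their resultant below $1$ by ``a routine bound''. That is not routine. The resultant involves all pairs of roots and can be as large as a power $H^{2n}$; smallness of the two polynomials near one point controls only a few factors, and how many depends quantitatively on the interval length, on the dyadic scale $H^{\theta}$ of the derivative, on the multiplicity of $\alpha$ and on its distance to the neighbouring roots. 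Whether the resulting scarcity of degenerate polynomials really offsets the inflation of the bad intervals by $H^{1-\theta}$, simultaneously over all these parameters and all heights, and does so down to the exact exponent $(n+1)/(w+1)$ for every $w\ge n$, is precisely the content of Bernik's paper, and it is exactly the step you defer (``I expect this last bookkeeping to be the main obstacle''). As written, the proposal proves the lower bound and the generic half of the upper bound, but not the theorem.
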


This theorem has an important consequence for the
spectrum of Diophantine exponents already discussed (see Chap.~5 of \cite{MR2136100}). 
Bugeaud
\cite{Bugeaud-?2} has obtained an analogue of
Theorem~\ref{BakerSchmidt} in the case of algebraic integers.
However, obtaining an analogue of Theorem~\ref{bernik2} for the case
of algebraic integers is as yet an open problem.

Recently, Beresnevich, Dickinson and Velani have established a
sharp Hausdorff measure version of Theorem~\ref{BakerSchmidt}, akin
to a classical result of Jarn\'\i k. In order to avoid introducing
various related technicalities, we refer the reader to
\cite[\S\,12.2]{Beresnevich-Dickinson-Velani-06:MR2184760}. Their
result implies the corresponding divergent statement for the
Hausdorff measure of the set of $\xi\in\R$ such that (\ref{e2})
holds infinitely often. Obtaining the corresponding convergent
statement represents yet another open problem.

There are various generalisations of the above results
to the case of complex and $p$-adic numbers and more generally to
the case of $S$-arithmetic (for instance by D.~Kleinbock and G.~Tomanov in \cite{1135.11037}).


\section{Simultaneous Diophantine approximation in higher dimensions}
\label{S:SDAHD}

In \S~\ref{SS:PASASN-PACN},  we considered polynomial approximation to a complex number $\xi$, which is the study of \m{|P(\xi)|}  for \m{P\in \bZ[X]}. As we have seen, negative results on the existence of polynomial approximations lead to {\it transcendence measures}. A more general situation is to fix several complex numbers \m{x_1,\ldots,x_m}
and to study the smallness of polynomials in these numbers -- negative results provide {\it measures of algebraic independence} to \m{x_1,\ldots,x_m}. 

This is a again a special case, where
 \m{\xi_i= x_1^{a_1}\cdots  x_m^{a_m}},  
of the study of linear combinations in \m{ \xi_1,\, \dots, \, \xi_n},  where $ \xi_1,\, \dots, \, \xi_n$ are given complex numbers. Now, negative results are {\it measures of linear independence
} to  $ \xi_1,\, \dots, \, \xi_n$. 

 There are still more general situations which we are not going to consider thoroughly but which are worth mentioning, namely the study of 
 {\it simultaneous approximation of dependent quantities} and {\it  approximation on a manifold} (see for instance
\cite{MR1727177}). 
   
   We start with the question of algebraic independence (\S~\ref{SS:SDAHD-CAI}) in connection with extensions to higher dimensions of Gel'fond's Criterion \ref{T:GTC}. Next (\S\S~\ref{SS:SDAHD-FourExponents}  and \ref{SS:SDAHD-FE}) we discuss a recent work by M.~Laurent 
   \cite{LaurentMichel0703146}, who introduces further coefficients for the study of simultaneous approximation. The special case of two numbers (\S~\ref{SS:D2})  is best understood so far. 
   
There is a very recent common generalisation of the question of Diophantine approximation to a point in $\bR^n$ which is considered in \S~\ref{SS:SDAHD-FE} on the one hand, and of the question of approximation to a real number by algebraic numbers of bounded degree considered in \S~\ref{SS:PASASN-AACN} on the other hand. It consists in the investigation of the approximation to a point in $\bR^n$ by algebraic hypersurfaces, or more generally algebraic varieties defined over the rationals. This topic has been recently investigated  by W.M.~Schmidt in   \cite{SchmidtTransactions2007}  and  \cite{SchmidtFunctionesApproximatio2006}.

\subsection{Criteria for algebraic independence}
\label{SS:SDAHD-CAI}

In    \S~\ref{SS:PASASN-GTC} we quoted  Gel'fond's algebraic independence results  of two numbers of the form $\alpha^{\beta^i}$ ($1\le i\le d-1$). His method has been extended after the work of several mathematicians including A.O.~Gel'fond, A.A.~Smelev, W.D.~Brownawell, G.V.~Chudnovskii, P.~Philippon, Yu.V.~Nesterenko, G.~Diaz  (see
\cite{MR22:2598}, 
 \cite{MR35:5397}, 
\cite{MR45:3333}, 
\cite{MR85i:11061,MR87e:11086}, 
 \cite{MR88h:11048,MR1837830}, 
 \cite{MR99a:11088b} Chap.~6 
 and \cite{MR1837822}). 
 So far the best known result, due to G.~Diaz \cite{MR978097}, proves ``half'' of what is expected. 

\begin{thm}\label{T:ai}
Let  \m{\beta} be an algebraic number of degree \m{d\ge 2} and $\alpha $ a non--zero algebraic number. Moreover, let $\log\alpha$ be any non--zero logarithm of $\alpha$.  Write $\alpha^z$ in place of $\exp(z\log\alpha)$. Then  among the numbers
 \M{
\alpha^{\beta}, \; \alpha^{\beta^2}, \; \dots \, , \alpha^{\beta^{d-1}}, 
}
at least \m{\lceil(d+1)/2\rceil} are algebraically independent.
\end{thm}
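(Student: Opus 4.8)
The plan is to follow the Gel'fond elimination philosophy already described in the sketch of Theorem~\ref{T:GTC}, but iterated and organized so as to keep track of a growing transcendence degree. Suppose, for contradiction, that among the numbers $\theta_i=\alpha^{\beta^i}$, $1\le i\le d-1$, fewer than $\lceil(d+1)/2\rceil$ are algebraically independent; equivalently, the field $K=\bQ(\theta_1,\ldots,\theta_{d-1})$ has transcendence degree $t\le \lceil(d+1)/2\rceil-1$, so $t\le (d-1)/2$. The goal is to reach a contradiction, and the engine is a suitable algebraic independence criterion (a higher-dimensional Gel'fond criterion, in the form due to Philippon/Nesterenko): if a family of polynomials in $t+1$ variables, with controlled degrees and heights, takes values at a point that are much too small to be compatible with transcendence degree $t$, then in fact the point is algebraic of transcendence degree $\le t-1$ (or the polynomials have a common zero there), contradicting minimality of $t$.

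First I would set up the auxiliary construction: using the Gel'fond--Schneider type interpolation, produce, for each large parameter $N$, a nonzero polynomial $P_N\in\bZ[X_1,\ldots,X_{d-1}]$ (or better, exploit the multiplicative structure $\theta_i\theta_j$-relations forced by $\beta^i+\beta^j$ lying in the $\bQ$-span of $1,\beta,\ldots,\beta^{d-1}$) of bounded degree and height $e^{O(N)}$, whose associated entire function $F(z)=P_N(\alpha^{z\beta},\ldots,\alpha^{z\beta^{d-1}})$ has a high order of vanishing, hence is extremely small at the relevant points $z=m$. Here the key structural input — exactly the one Gel'fond used for $d=3$ — is that the numbers $\beta^i\beta^j$ and $\beta^i+\beta^j$ are $\bQ$-linear combinations of $1,\beta,\ldots,\beta^{d-1}$, so derivatives and products of the $\theta_i$ stay inside a controlled finite-dimensional space; this is what lets a single-variable analytic estimate (Schwarz lemma on $F$) feed back into the multivariable arithmetic. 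The "half of what is expected" in the statement reflects precisely that this trick controls about $(d-1)/2$ worth of independent data rather than all $d-1$.

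Next I would run the elimination/transference step: combine the small values of $P_N$ at consecutive $N$ (or at a well-chosen arithmetic progression of evaluation points) via resultants, as in the proof of Theorem~\ref{T:GTC}, to manufacture from the $P_N$ a new family of polynomials in fewer variables that still takes very small values at the point $(\theta_1,\ldots,\theta_{d-1})$ — small enough to violate the algebraic independence criterion at level $t$. Iterating, one descends the transcendence degree by one at each stage, and after finitely many steps reaches the contradiction that $(\theta_1,\ldots,\theta_{d-1})$ lies on too small a variety, i.e.\ that the $\theta_i$ generate a field of transcendence degree strictly less than the minimal $t$ we started from.

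The hard part will be the bookkeeping in the multivariable algebraic independence criterion: unlike the degree-$1$ elimination in Lemma~\ref{L:Khinchin} or the single resultant in Theorem~\ref{T:GTC}, one must control simultaneously the degrees, the (logarithmic) heights, and the number of the auxiliary polynomials through each elimination step, and ensure that the smallness of the values survives the resultant operation with the right quantitative margin — this is where the factor $\lceil(d+1)/2\rceil$ rather than $d-1$ is forced, because the analytic Schwarz-lemma estimate only gains on roughly half the exponents $\beta,\beta^2,\ldots,\beta^{d-1}$. A secondary technical obstacle is the zero estimate (a multiplicity / multihomogeneous Bézout bound) needed to guarantee that the auxiliary function $F$ does not vanish identically and that the constructed polynomials are genuinely nonzero along the descent; one invokes here the zero estimates for the group $\bGm\times\bGa$ (Philippon, Masser--Wüstholz) in the precise form compatible with the parameter choices above.
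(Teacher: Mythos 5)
The first thing to note is that the paper does not prove Theorem~\ref{T:ai} at all: this is a survey, and the statement is quoted as a deep theorem of G.~Diaz \cite{MR978097}, obtained by Gel'fond's method as developed by Brownawell, Chudnovsky, Philippon and Nesterenko. So there is no internal proof to compare with, and your proposal must be judged as a sketch of Diaz's theorem itself. As such it names the right ingredients (an auxiliary construction exploiting the fact that $\beta^i\beta^j$ reduces modulo the minimal polynomial of $\beta$ to a $\bQ$-combination of $1,\beta,\ldots,\beta^{d-1}$, a Schwarz-lemma estimate, zero estimates for the multiplicative group, and a criterion for algebraic independence), but it is a roadmap rather than a proof, and the step you treat as a black box is precisely where the substance lies.

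The engine you invoke --- ``if polynomials of controlled degree and height take values at the point too small to be compatible with transcendence degree $t$, then the transcendence degree drops'' --- is false in that naive form. As the paper itself notes immediately after the statement, an example going back to Khintchine (Cassels, Chap.~V, Th.~14) shows that in dimension $>1$ integer polynomials of controlled size can take arbitrarily small nonzero values at points with algebraically independent coordinates; this is exactly why Gel'fond's Criterion~\ref{T:GTC} does not extend verbatim, why the criteria of Philippon and Nesterenko must be formulated with extra hypotheses (for ideals or Chow forms, or with control of the distance to the zero set), and why their proofs via multihomogeneous elimination constitute the bulk of the work. Your descent ``combine small values at consecutive $N$ via resultants and iterate to lower the transcendence degree'' does not survive quantitatively beyond transcendence degree one: a resultant of two polynomials in several variables does not eliminate down to a point, and preserving the smallness through each elimination step is exactly what the criterion packages and what your sketch assumes. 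Finally, the bound $\lceil(d+1)/2\rceil$ is asserted (``the Schwarz lemma gains on roughly half the exponents'') rather than derived; without explicit parameter choices, the zero estimate for $\bGm$ (or $\bGa\times\bGm$) in the precise form required, and the exponent appearing in the criterion, the stated count is not established. In short, the proposal is a plausible outline of the known strategy, but it has a genuine gap at its core: the algebraic independence criterion in transcendence degree $t\ge 1$ is used as though it were an elementary elimination statement, whereas its correct formulation and proof are the actual content of the theorem.
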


   \medskip
In order to prove such a result, as pointed out by S.~Lang in \cite{MR33:1286}, it would have been sufficient to replace 
 the Transcendence Criterion Theorem ~\ref{T:GTC}   by a criterion for algebraic independence. 
 However, an example, going back to A.Ya.~Khintchine in 1926  \cite{JFM52.0183.01} and quoted in J.W.S.~Cassels's book 
(\cite{MR0087708} Chap.~V, Th.\ 14; see also the appendix of \cite{MR88h:11048}  and appendix A of \cite{RoyDamienSmallValueAdditive}), shows that in higher dimensions, some extra hypothesis cannot be avoided (and this is a source of difficulty in the proof of Theorem \ref{T:ai}).
After the work of W.D.~Brownawell and G.V.~{\v{C}}udnovs{\cprime}ki\u{\i}, such criteria were proved by  P.~Philippon \cite{MR88h:11048,MR1837830}, Yu.V.~Nesterenko
   \cite{MR85i:11061,MR87e:11086}, M.~Ably, C.~Jadot (further references are given in \cite{MR99a:11088b} and 
    \cite{MR1756786} \S~15.5).  Reference  \cite{MR1837822} is an introduction to algebraic independence theory which 
 includes a chapter on multihomogeneous elimination 
by   G.~Rémond  \cite{MR1837827} and a discussion of criteria for algebraic independence  by P.~Philippon \cite{MR1837830}.  

Further progress has been made   by M.~Laurent and D.~Roy in 1999 who produced  criteria with multiplicities \cite{MR1475689,MR2000b:11088} (see also \cite{MR2002d:11091})  and  considered questions of approximation by algebraic sets. Moreover, in 
   \cite{MR2000b:11088} they investigate the approximation properties, by algebraic numbers of bounded degree and height, of a $m$-tuple which generates a field of transcendence degree $1$ -- this means that the corresponding point in  $\bC^m$ belongs to an affine curve defined over  $\bQ$.  For $m=1$ they proved in
   \cite{MR1475689} the
   existence of approximation; this has been improved by G.~Diaz in \cite{MR1451234}. Further  contributions are due to P.~Philippon (see for instance \cite{MR1752252}). 
   
A very special case of the investigation of Laurent and Roy  is a result related to Wirsing's lower bound for $\omega_n^*$ (see \S~\ref{SS:PASASN-AACN}), with a weak numerical constant, but with a lower bound for the degree of the approximation. Their result (Corollary 1 of \S~2 of \cite{MR2000b:11088})
has been improved by 
 Y.~Bugeaud and O.~Teulié (Corollary 5 of \cite{MR1760090}) who prove that the approximations $\alpha$ can be required to be algebraic numbers of exact degree $n$ or algebraic    integers  of exact degree $n+1$.

\begin{thm}[Bugeaud and Teulié]\label{T:BugeaudTeulie}
Let 
$\epsilon>0$ be a real number,
$n\ge 2$ be an integer and $\xi$  a real  number which is not algebraic of degree $n$. Then 
the inequality 
$$
|\xi-\alpha|\le  \rmH(\alpha)^{-((n+3)/2)+\epsilon}
$$
has infinitely many solutions in algebraic integers  $\alpha$ of degree $n$.

\end{thm}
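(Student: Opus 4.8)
The plan is to combine Wirsing's lower bound $\omega_n^*(\xi)\ge(n+1)/2$ (established in 1960 and quoted in \S\ref{SS:PASASN-AACN}, it follows from (\ref{E:WirsingA}) together with $\omega_n(\xi)\ge n$) with the transference/duality machinery of Davenport and Schmidt, but one must be careful: the raw Wirsing estimate only guarantees approximation by algebraic numbers of degree \emph{at most} $n$, with no control on the exact degree and no control on whether the approximants are algebraic integers. The whole content of the theorem is precisely that extra control. So first I would set up the auxiliary construction: for a given large height bound $H$, one uses Dirichlet's box principle (in the form of Lemma~\ref{L:BoxPrinciplePolynomes}, or rather the geometry-of-numbers refinement via Minkowski's theorem on the convex body cut out by $|P(\xi)|$ small and the coefficients bounded) to produce a non--zero $P\in\bZ[X]$ of degree $\le n$ with $|P(\xi)|$ as small as the volume heuristic permits. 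The key modification, due to Bugeaud and Teuli\'e, is to impose the \emph{monic} (or integer-coefficient, unit) constraint on the leading coefficient(s): one works inside the affine lattice $\{P=X^n+a_{n-1}X^{n-1}+\dots+a_0\}$ rather than the full lattice $\bZ[X]_{\le n}$, and applies a transference inequality of Mahler (polar convex bodies) to convert the existence of small values of monic polynomials at $\xi$ into the existence of algebraic integers close to $\xi$. This is exactly the mechanism behind Proposition~\ref{P:Polar} and Theorem~\ref{T:DSomega2etoile}, carried out with Wirsing's exponent $(n+1)/2$ in place of the degree-$2$ value $\gamma$.

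The second step is to upgrade ``degree $\le n$'' to ``degree exactly $n$''. Here one argues as follows: if infinitely many of the approximants $\alpha$ produced by the construction had degree $d<n$, then $\xi$ would admit infinitely many approximations of the stated quality by algebraic numbers of some fixed degree $d<n$; but an algebraic number $\xi$ is excluded by hypothesis only from degree $n$, and for $\xi$ not algebraic of degree $\le d$ one can invoke the known value of $\omega^*_d(\xi)$ for $\xi$ algebraic, or simply a counting/separation argument: algebraic numbers of bounded degree and height are spaced apart (Liouville-type separation), so too many of them clustering near $\xi$ with height growing forces a contradiction with the degree being strictly less than $n$. More precisely, one shows that the generic polynomial produced by the box-principle construction, being of degree $\le n$ with the monic normalisation, is irreducible of degree $n$ for all but finitely many $H$ — because a reducible or lower-degree factorisation would make $|P(\xi)|$ too large by Lemma~\ref{L:EasyPart} applied to a proper factor, contradicting the smallness we arranged. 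This irreducibility-forcing is the technical heart of passing from degree $\le n$ to degree $=n$, and symmetrically from integer-of-degree-$\le n$ to integer-of-degree-$n+1$ when one drops to the unit/monic normalisation of one degree higher.

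The exponent bookkeeping then goes: Wirsing gives algebraic $\alpha$ of degree $\le n$ with $|\xi-\alpha|\le c\,\rmH(\alpha)^{-(n+3)/2}$; the duality argument (applying Lemma~\ref{L:EasyPart} to pass from the monic polynomial inequality to the root, and Lemma~\ref{L:XimoinsGamma} in the reverse direction to control $\rmH(\alpha)$ versus $\rmH(P)$) preserves the exponent up to the loss absorbed into the $\epsilon$, yielding $|\xi-\alpha|\le \rmH(\alpha)^{-((n+3)/2)+\epsilon}$ with $\alpha$ now of exact degree $n$ (resp.\ an algebraic integer of exact degree $n$ when one starts from monic polynomials of degree $n$). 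The $\epsilon$ is exactly what buys room to (i) drop the constant $c$, and (ii) discard the finitely many bad $H$ where irreducibility or the degree condition fails.

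\textbf{Main obstacle.} The genuinely delicate point is not the transference estimate — that is by now standard Davenport--Schmidt/Mahler technology — but guaranteeing that the approximants have the \emph{exact} prescribed degree (and are algebraic integers, not merely algebraic numbers) while keeping the exponent $(n+3)/2$ essentially intact. Controlling the degree forces one to prove that the polynomials manufactured by Minkowski's theorem cannot degenerate (cannot have a small-degree irreducible factor responsible for the smallness at $\xi$), which requires a separate irreducibility argument feeding off Lemma~\ref{L:EasyPart}; and working in the \emph{affine} lattice of monic polynomials rather than a linear lattice means the convex-body/transference step must be redone in the inhomogeneous setting, where the volume bounds and the polar-body duality are more fragile. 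Getting these two refinements simultaneously, with only an $\epsilon$-loss, is the crux of the Bugeaud--Teuli\'e improvement over the original Laurent--Roy corollary.
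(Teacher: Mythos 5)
You should first note that the paper does not actually prove Theorem~\ref{T:BugeaudTeulie}: it quotes it from Corollary 5 of \cite{MR1760090}, so the only comparison available is with the method of that paper, i.e.\ the Davenport--Schmidt machinery recalled in \S~\ref{SS:PASASN-AAI}. Measured against that, your sketch has a genuine gap at the step you yourself call the crux: forcing the approximants to have exact degree $n$. You argue that the polynomial produced by Minkowski's theorem must be irreducible of degree $n$ because ``a proper factor would make $|P(\xi)|$ too large by Lemma~\ref{L:EasyPart}''. Lemma~\ref{L:EasyPart} only bounds $|f(\xi)|$ \emph{from above} in terms of the distance from $\xi$ to a root; it gives no lower bound, and nothing prevents the minimizing polynomial from being, say, a high power of an excellent linear or quadratic factor --- this degeneracy is precisely the known obstruction in Wirsing's problem. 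Since the hypothesis only excludes $\xi$ algebraic of degree $n$, $\xi$ may be transcendental (even a Liouville number), so there is no Liouville-type separation to rule out lower-degree approximants clustering at $\xi$. In \cite{MR1760090} the exact-degree conclusion is obtained by a separate perturbation argument on the approximating polynomials, not by showing that the box-principle polynomial is automatically irreducible.

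The second gap is the exponent bookkeeping for the integrality constraint, which is not an ``$\epsilon$-loss'', and the duality is being fed the wrong input. Proposition~\ref{P:Polar} converts an upper bound for the \emph{uniform} exponent $\omegahat'_{n-1}(\xi)$ (a statement valid for all large $N$) into approximation by algebraic integers of degree $n$; Wirsing's bound $\omega^*_n(\xi)\ge (n+1)/2$ is an \emph{asymptotic} existence statement and cannot serve as that input. With the known uniform bounds (Theorem~\ref{T:DS2a} applied with $n-1$) this transference route yields exponents at most $1+\lceil (n-1)/2\rceil\le (n+2)/2$, not $(n+3)/2$; and if instead you run Wirsing's argument directly on monic polynomials of degree $n$, the box principle only gives $|P(\xi)|\ll \rmH(P)^{-(n-1)}$, again costing strictly more than $\epsilon$. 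This is why \cite{MR1760090} reaches the Wirsing exponent $(n+3)/2$ only for algebraic numbers of exact degree $n$ or algebraic \emph{integers of exact degree $n+1$}, as the sentence preceding the theorem in the paper says. Indeed, if your ``the duality preserves the exponent up to $\epsilon$'' step were correct, then for $n=3$ you would contradict Roy's optimality result \cite{MR2031862} quoted after Theorem~\ref{T:RoyExtremal} (some extremal $\xi$ satisfy $|\xi-\alpha|\ge c\,\rmH(\alpha)^{-\gamma-1}$ for all algebraic integers of degree at most $3$, and $\gamma+1<3$), and for $n=2$ you would contradict the metric Theorem~\ref{bugeaud1}. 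So that step is not merely unproved: it cannot be repaired without either raising the degree of the integers to $n+1$ or lowering the exponent.
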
   

The $\epsilon$ in the exponent can be removed by introducing a constant factor. 
Further, for almost all $\alpha\in\bR$,  the result holds with the exponent  replaced by $n$, as shown by Y.~Bugeaud in \cite{1020.11049}.

There are close connections between questions of algebraic independence and simultaneous approximation of numbers. We shall not discuss this subject thoroughly here; it would deserve another survey. We just quote a few recent papers. 

 Applications of Diophantine approximation questions to transcendental number theory are considered by  P.~Philippon in 
   \cite{MR1453214}. 
M.~Laurent  \cite{0933.11039}  gives heuristic motivations in any transcendence degree.  
Conjecture 15.31 of \cite{MR1756786} 
 on simultaneous approximation of complex numbers suggests a path towards results on large transcendence degree.

 In \cite{MR1816056} D.~Roy shows some limitations of the conjectures of algebraic approximation by constructing points in $\bC^m$ which do not have good algebraic approximations of bounded degree and height, when the bounds on the degree and height are taken from specific sequences. The coordinates of these points are Liouville numbers.

\subsection{Four exponents: asymptotic or uniform simultaneous approximation by linear forms or by rational numbers}
\label{SS:SDAHD-FourExponents}

   Let  $ \xi_1,\, \dots, \, \xi_n$ be  real numbers. Assume that the numbers $1,  \xi_1,\, \dots, \, \xi_n$ are linearly independent over $\bQ$.  There are (at least) two points of view for studying  approximation to  \m{ \xi_1,\, \dots, \, \xi_n}.  On the one hand, one may consider  linear forms  (see for instance   \cite{MR33:1286})
      \M{
   |x_0+x_1\xi_1+\cdots+x_n\xi_n|.
   }
 On the other hand, one may investigate the existence of    simultaneous approximation by rational numbers  
   \M{
\max_{1\le i\le n} \left|\xi_i -\frac{x_i}{x_0}\right | .
} 
Each of these two points of view has two versions, an asymptotic one (with exponent denoted  \m{\omega})   and a uniform one (with exponent denoted  \m{\omegahat}). 
This gives rise to four  exponents   introduced in \cite{BugeaudLaurentInhom} (see also \cite{LaurentMichel0703146}),  
\M{
\omega(\theta), \quad \omegahat(\theta), \quad \omega(^t\theta), \quad \omegahat(^t\theta),
}
where
      \M{
      \theta=(\xi_1,\ldots,\xi_n) 
    \quad
    \text{and}\quad
    ^t\theta=\left(
\begin{matrix}
\xi_1\\
\vdots\\
\xi_n\\
\end{matrix}
\right) .
      }
We shall recover the situation of \S\S~\ref{SS:PASASN-PASAAAN} and \ref{SS:PASASN-SRARN}  in the special case where  \m{\xi_i=\xi^{i}}, \m{1\le i\le n}: for \m{\theta=(\xi,\xi^2,\ldots,\xi^n)}, 
   \M{
   \omega(\theta)=\omega_n(\xi),
   \quad
   \omegahat(\theta)=\omegahat_n(\xi),
     \quad
   \omega(^t\theta)=\omega'_n(\xi),
     \quad
   \omegahat(^t\theta)=\omegahat'_n(\xi).
   }
   Notice that the index $n$ is implicit in the notations involving $\omega$, since it is the number of components of $\theta$.

We start with the question of {\it asymptotic   approximation by linear forms}. We denote by \m{\omega(\theta)} the supremum of the real numbers $w$ for which  there exist    {infinitely many} positive integers  \m{N} for which the system 
\begin{equation}\label{E:OmegaTheta}
   |x_0+x_1\xi_1+\cdots+x_n\xi_n|\le N^{-w},
   \quad 
  0<\max_{0\le i\le n} |x_i| \le N,
\end{equation}
  has a solution in rational integers \m{x_0,x_1,\ldots,x_n}. An upper bound for   \m{ {\omega}(\theta)} is a {\it linear independence measure} for 
  \m{1,\xi_1,\ldots,\xi_n}. 
   
The hat version of $\omega(\theta)$ is, as expected,  related to the study of {\it uniform   approximation by linear forms}: we denote by \m{\omegahat(\theta)} the supremum of the real numbers $w$ such that, 
   {for any sufficiently large  integer  \m{N}}, the same  system (\ref{E:OmegaTheta}) has a solution.
  
Obviously  \m{ \omegahat (\theta) \le \omega(\theta)}.

   The second question is that of asymptotic simultaneous  approximation by rational numbers. Following again \cite{LaurentMichel0703146}, we denote by \m{\omega(^t\theta)} the supremum of the real numbers $w$  for which  there exist    {infinitely many} positive integers  \m{N} for which the system 
\begin{equation}\label{E:omegatranspose}
\max_{1\le i\le n} |x_i-x_0\xi_i |\le N^{-w},
   \quad 
    \text{with}\quad
  0<\max_{0\le i\le n} |x_i| \le N
\end{equation}
  has a solution in rational integers \m{x_0,x_1,\ldots,x_n}.  An upper bound for   \m{{\omega}(^t\theta)}  is a {\it simultaneous approximation measure} for 
  \m{1,\xi_1,\ldots,\xi_n}. 
      
The uniform simultaneous  approximation by rational numbers is measured by the hat version of $\omega$: we denote by   \m{\omegahat(^t\theta)} the supremum of the real numbers $w$ such that, for    any sufficiently large  integer  \m{N}, the same  system (\ref{E:omegatranspose}) has a solution.

Again \m{ \omegahat (^t\theta) \le \omega(^t\theta)}.

Transference principles provide relations between $\omega(\theta)$ and  $ \omega(^t\theta)$. The next result (Khintchine, 1929
 \cite{JFM52.0183.01}) shows that $\omega(\theta)=n$ if and only if  $ \omega(^t\theta)=1/n$.

\begin{thm}[Khintchine     transference principle]\label{T:KhinchineTransfer}
If we set  $\omega=\omega(\theta)$ and $^t\omega=\omega(^t\theta)$, then we have
$$
\omega  \ge n \; ^t\omega + n-1
\quad\text{and} \quad 
^t\omega  \ge
\displaystyle \frac{ \omega }{ (n-1) \omega +n}\cdotp
$$
\end{thm}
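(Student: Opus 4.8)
The plan is to prove the two inequalities by elementary elimination arguments on the relevant systems of integer linear forms, in the spirit of the proof of Lemma~\ref{L:Khinchin}. Write $L_0(\mathbf{x})=x_0+x_1\xi_1+\cdots+x_n\xi_n$ for the linear form, and let $\omega=\omega(\theta)$, ${}^t\omega=\omega({}^t\theta)$. For the second inequality, ${}^t\omega\ge \omega/((n-1)\omega+n)$, I would start from a solution $\mathbf{x}=(x_0,\dots,x_n)\in\bZ^{n+1}$, with $\max|x_i|\le N$ and $|L_0(\mathbf{x})|\le N^{-w}$ for $w$ slightly below $\omega$, for arbitrarily large $N$ (such solutions exist by definition of $\omega$ along a suitable sequence of $N$'s). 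From such an $\mathbf{x}$ one wants to manufacture a good simultaneous rational approximation $(y_0,\dots,y_n)$ to $(\xi_1,\dots,\xi_n)$. The standard device is to build, for each $j=1,\dots,n$, the integer vector obtained by a ``cross'' combination: set $y_0=x_0$ and $y_j=x_j$, so that $y_j-y_0\xi_j = x_j-x_0\xi_j$; the point is that the small linear form $L_0(\mathbf{x})$ together with the trivial bound on $|x_i|$ controls the quantities $x_j-x_0\xi_j$ only after one passes through an intermediate ``dual'' system. The cleanest route is: given that $|L_0(\mathbf{x})|$ is tiny, consider the $n$ vectors $\mathbf{z}^{(j)} = x_0\mathbf{e}_j - x_j\mathbf{e}_0$ inside the lattice of relations, estimate the form values $L_0$ on these, and apply the box principle (Theorem~\ref{T:DirichletUniforme} in $n$ variables) to a rescaled system to extract a simultaneous approximation with the claimed exponent. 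The bookkeeping of the exponents — tracking how $w$, $N$, and the size $\max|x_i|\le N$ combine — is what produces the denominator $(n-1)\omega+n$.

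For the first inequality, $\omega\ge n\,{}^t\omega+n-1$, I would run the argument in the other direction. Start from a simultaneous rational approximation: integers $(x_0,\dots,x_n)$ with $0<\max|x_i|\le N$ and $\max_{1\le i\le n}|x_i-x_0\xi_i|\le N^{-v}$ for $v$ slightly below ${}^t\omega$, for infinitely many $N$. Now I want a linear form $L_0(\mathbf{y})$ that is very small, i.e.\ integers $y_0,\dots,y_n$ not too large with $|y_0+y_1\xi_1+\cdots+y_n\xi_n|$ tiny. The mechanism is Minkowski-style: the existence of a single very-good simultaneous approximation forces, by a volume/geometry-of-numbers argument on the polar (dual) convex body, the existence of $n$ linearly independent integer vectors on which the linear form $L_0$ is bounded by roughly $N^{-1}$; combining these (taking an integer combination, or equivalently taking a determinant) with one more vector coming from the approximation itself yields a single relation on which $L_0$ is as small as $N^{-n v - (n-1)}$ while the coefficients stay of size $\lesssim N$. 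Taking suprema over the admissible sequences of $N$ and letting $v\to {}^t\omega$, $w$-side $\to\omega$, gives $\omega\ge n\,{}^t\omega+n-1$. Both directions can alternatively be packaged through Mahler's duality for convex bodies (as invoked for Proposition~\ref{P:Polar}), which is probably the slickest way to avoid doing the two elimination arguments separately.

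The main obstacle I anticipate is the exponent bookkeeping in the second (harder) inequality: the passage from ``$L_0(\mathbf{x})$ small with $|x_i|\le N$'' to ``$\max_i|y_i-y_0\xi_i|\le N^{-w'}$ with $|y_i|\le N'$'' is not a one-step elimination as in Lemma~\ref{L:Khinchin}; one genuinely needs $n$ independent small vectors and must control that they are independent (otherwise the constructed approximation could be degenerate, i.e.\ $y_0=0$), and the loss incurred at each step is exactly what degrades the naive exponent $n\,{}^t\omega$ down to $\omega/((n-1)\omega+n)$. Ensuring the nondegeneracy and non-vanishing $y_0\neq 0$ — which is where the hypothesis that $1,\xi_1,\dots,\xi_n$ are $\bQ$-linearly independent gets used — is the delicate point. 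Once that is handled, the estimates are routine, and the same proof applies verbatim with ``sufficiently large $N$'' in place of ``infinitely many $N$'' to give the corresponding inequalities for $\omegahat(\theta)$ and $\omegahat({}^t\theta)$, as remarked in the text following Theorem~2.2 of~\cite{MR2149403}.
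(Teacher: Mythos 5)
Your overall instinct is sound --- both inequalities are a transference statement for a pair of mutually transposed systems of linear forms, most cleanly obtained from Mahler's duality, and the paper itself gives no proof (it quotes Khintchine \cite{JFM52.0183.01} and points to Th.~B.5 of \cite{MR2136100}), so your sketch has to stand on its own. It does not, because at both places where you replace the appeal to that duality by a concrete mechanism, the mechanism fails. For ${}^t\omega\ge\omega/((n-1)\omega+n)$ your device is to evaluate $L_0$ on the vectors $\mathbf{z}^{(j)}=x_0\mathbf{e}_j-x_j\mathbf{e}_0$; but $L_0(\mathbf{z}^{(j)})=x_0\xi_j-x_j$ is exactly the quantity you are trying to make small, and nothing in the hypothesis $|L_0(\mathbf{x})|\le N^{-w}$, $\max_i|x_i|\le N$ controls it: the good simultaneous approximation is in general \emph{not} carried by $\mathbf{x}$ itself but by a new vector constructed from the dual (transposed) system. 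The subsequent ``apply the box principle to a rescaled system'' is precisely the missing proof: Dirichlet alone gives only the exponent $1/n$, and how the small value $L_0(\mathbf{x})$ is to be injected into the system, together with the exponent computation that produces the denominator $(n-1)\omega+n$, is never specified --- and that is the entire content of the statement. Similarly, for $\omega\ge n\,{}^t\omega+n-1$ the asserted step that one very good simultaneous approximation ``forces'' $n$ linearly independent integer vectors with $|L_0|\lesssim N^{-1}$ and height $\le N$ is unsupported: knowing only that the corresponding convex body contains one nonzero lattice point gives no upper bound on the relevant successive minima of the polar body (if anything, a small first minimum pushes the complementary dual minima up), and your bookkeeping ($|L_0|\le N^{-n v-(n-1)}$ with coefficients $\lesssim N$) is not what any standard construction yields; one gets height $\asymp N^{1/n}$ and $|L_0|\lesssim N^{-v-(n-1)/n}$, which gives the exponent $nv+n-1$ only after measuring against the actual height.

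A correct and short completion runs as follows. Apply Mahler's transference theorem (\cite{MR0087708}, Chap.~V) to the transposed pair of systems $\bigl(x_0+x_1\xi_1+\cdots+x_n\xi_n,\;x_1,\ldots,x_n\bigr)$ and $\bigl(y_0,\;y_1-y_0\xi_1,\ldots,y_n-y_0\xi_n\bigr)$, whose matrices are inverse--transpose to one another with determinant $1$. From a nonzero $\mathbf{x}\in\bZ^{n+1}$ with $|L_0(\mathbf{x})|\le X^{-w}$, $\max_i|x_i|\le X$, it produces a nonzero $\mathbf{y}\in\bZ^{n+1}$ with $|y_0|\le c\,X^{1+w(n-1)/n}$ and $\max_i|y_i-y_0\xi_i|\le c\,X^{-w/n}$, whence ${}^t\omega\ge w/\bigl((n-1)w+n\bigr)$ after letting $w\to\omega$ (the constants are absorbed by slightly decreasing $w$, and $y_0\neq0$ for large $X$ since otherwise all $|y_i|<1$). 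In the other direction, from $|y_0|\le Y$ and $\max_i|y_i-y_0\xi_i|\le Y^{-v}$ it produces a nonzero $\mathbf{x}$ with $\max_i|x_i|\le c\,Y^{1/n}$ and $0<|L_0(\mathbf{x})|\le c\,Y^{-v-(n-1)/n}$, i.e.\ exponent $nv+n-1$ relative to the height $cY^{1/n}$; the nonvanishing of $L_0(\mathbf{x})$ --- your nondegeneracy worry --- is exactly where the $\bQ$-linear independence of $1,\xi_1,\ldots,\xi_n$ is used. Without this computation (or the direct relation-lattice argument it replaces), your text records the correct strategy but not a proof; your closing remark that the same argument gives the analogous inequalities for the uniform exponents is consistent with what the paper asserts via Theorem~2.2 of \cite{MR2149403}.
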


In order to study these numbers, M.~Laurent introduces in \cite{LaurentMichel0703146}  further exponents as follows.

\subsection{Further exponents, following M.~Laurent}
\label{SS:SDAHD-FE}

For each \m{d} in the range \m{0\le d\le n-1},  M.~Laurent
\cite{LaurentMichel0703146}
 introduces two exponents, one for  {asymptotic} approximation \m{\omega_d(\theta)} and one for {uniform} approximation \m{\omegahat_d(\theta)}, which measures the quality of simultaneous approximation to the given tuple $\theta=(\xi_1,\ldots,\xi_n)$ from various points of view. First embed $\bR^n$ into $\bP^n(\bR)$ by mapping $ \theta= (\xi_1,\ldots,\xi_n)$ to $(\xi_1: \cdots : \xi_n: 1)$. 
 
Now for $0  \le d \le n-1$, define 
\begin{align}\notag  
\omega_d(\theta)
= \sup\Big\{w   ;   \; \text{there exist  infinitely many vectors } 
\hskip 2 true cm
\hfill
\\
\notag
\hfill
\hskip 2 true cm \;
 X = x_0\wedge \dots \wedge x_d  \in \Lambda^{d+1}(\bZ^{n+1}) 
 \; \text{ for which}  \; 
 | X \wedge \theta | \le | X |^{-w} \Big\}
\end{align}
  and   
\begin{align}\notag   
\omegahat_d(\theta)
= \sup\Big\{ w  ;  \;
 \text{for any  sufficiently large \m{N}, there exists } 
  \hskip 2 true cm
  \hfill
\\
\notag
\hfill
 \; 
 X = x_0\wedge \dots \wedge x_d \in \Lambda^{d+1}(\bZ^{n+1}) \; 
  \text{such that}    \; 
   0 < | X | \le N  \quad\text{and}\quad  
 | X \wedge \theta | \le N^{-w} \Big\}.
\end{align}
Hence \m{\omega_d(\theta)\ge \omegahat_d(\theta)}.

The multivector \m{X =x_0\wedge \dots \wedge x_d} is a system of Plücker coordinates of the linear projective subvariety  $L= \langle x_0, \dots , x_d \rangle \subset \bP^n(\bR)$.
Then
\M{
\frac{ | X \wedge \theta | }{ | X | | \theta |}  
}
is essentially the  distance $d(\theta, L) = \min_{ x\in L} d(\theta,x)$
between the image of $\theta$ in    $\bP^n(\bR)$ to $L$.
As a consequence, equivalent definitions are as follows, where   $\rmH(L)$ denotes the Weil height of any system of Plücker coordinates of $L$. 
\begin{align}\notag   
\omega_d(\theta)
= \sup\Big\{ w ;  \; 
 \text{there exist  infinitely many \m{L}, rational over $\bQ$,} 
 \hskip 2 true cm
\\
\notag
\hskip 2 true cm
\dim L =d
\; \text{ and }\; d(\theta , L)  \le \rmH(L)^{-w-1} \Big\}
\end{align}
and
\begin{align}\notag    
\omegahat_d(\theta)
= \sup\Big\{ w  ;   \; 
 \text{for any sufficiently large \m{N}, there exists   \m{L}, rational over $\bQ$, }
 \hfill
\\
\notag
\hfill
 \dim L =d,  \; 
  \rmH(L)  \le N \; \text{and}\;
 d(\theta , L)  \le \rmH(L)^{-1} N^{-w} \Big\}.
\end{align}
In the extremal cases \m{d=0} and \m{d=n-1}, one recovers the exponents of \S~\ref{SS:SDAHD-FourExponents}: 
  $$
\omega_0(\theta)=\omega(^t\theta),\quad
\omegahat_0(\theta)=\omegahat(^t\theta),
\quad
\omega_{n-1}(\theta)  =
\omega(\theta),  \quad
\quad
\omegahat_{n-1}(\theta)  =
\omegahat(\theta).
$$
The lower bound
 \M{
\omegahat_d(\theta) \ge \frac{d+1}{n-d}\quad\text{ for all} \quad d=0,\ldots , n-1
}
 valid for all \m{\theta} (with $1,  \xi_1,\, \dots, \, \xi_n$ linearly independent over $\bQ$) follows from the results of W.M.~Schmidt  in his foundational paper 
\cite{MR0213301}
 (see \cite{BugeaudLaurentTransfer}). 
 In particular for  \m{d=n-1}  and \m{d=0} respectively, this lower bound yields
\M{
\omegahat(\theta)  \ge n 
\quad\text{and} \quad
\omegahat(^t\theta)  \ge 1/ n 
}
and in the special case $\xi_i=\xi^i$ ($1\le i\le n$)  one recovers the lower bounds 
\M{
 \omegahat_n(\xi) \ge n 
\quad\text{and}  \quad
 \omegahat'_n(\xi) \ge 1/ n ,
}
which we deduced in \S~\ref{SS:PASASN-PACN} and \S~\ref{SS:PASASN-SRARN} respectively from Dirichlet's box principle. 

It was proved by Khintchine in 1926 \cite{JFM52.0183.01} that    \m{\omega(\theta)=n} if and only if \m{\omega(^t\theta)=1/n}. In \cite{LaurentMichel0703146}, M.~Laurent slightly improves on earlier inequalities due to W.M.~Schmidt \cite{MR0213301}  splitting the classical Khintchine's  transference principle (Theorem \ref{T:KhinchineTransfer}) into intermediate steps.

 \begin{thm}[Schmidt, Laurent]\label{T:MichelLaurent}
Fix $n\ge 1$ and $\theta= (\xi_1,\ldots,\xi_n) \in\bR^n$. Set
$\omega_d =\omega_d (\theta)$,  $0 \le d \le n-1$.
The ``going up  transference principle'' is
\M{
\omega_{d+1} \ge \frac{ (n-d)\omega_d +1 }{ n-d-1}\virgule \quad 0 \le d \le n-2,
}
while the ``going down   transference principle'' is
\M{
\omega_{d-1} \ge \frac{ d \omega_d }{ \omega_d +d+1}\virgule  \quad  1 \le  d \le  n-1.
} 
Moreover  these estimates are optimal.
\end{thm}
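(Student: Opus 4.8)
The plan is to argue directly with the exterior‑algebra description of the exponents recalled above, with $\theta$ identified with its affine representative $(\xi_1,\dots,\xi_n,1)\in\bR^{n+1}$: a rational projective subspace $L$ of dimension $d$ corresponds to a primitive decomposable multivector $X=x_0\wedge\dots\wedge x_d\in\Lambda^{d+1}(\bZ^{n+1})$ with $\rmH(L)\asymp|X|$ and $d(\theta,L)\asymp|X\wedge\theta|/(|X|\,|\theta|)$. The two transference inequalities will be obtained by manufacturing, from a subspace $L$ realising (up to $\epsilon$) the exponent $\omega_d(\theta)$, a new rational subspace of dimension one larger (``going up'') or one smaller (``going down''), estimating its height and its distance to $\theta$, and reading off from the defining inequality the exponent which it realises; letting $\epsilon\downarrow0$ along the infinite family then yields the lower bound for $\omega_{d\pm1}(\theta)$. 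Throughout, Schmidt's inequality $\rmH(L_1\cap L_2)\,\rmH(L_1\vee L_2)\le\rmH(L_1)\,\rmH(L_2)$ (from \cite{MR0213301}) is the convenient bookkeeping device for heights under joins and intersections, and Minkowski's convex body theorem supplies the new lattice vectors.

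For the \emph{going‑up} step I would start from infinitely many $d$‑dimensional rational $L$ with $\rmH(L)\to\infty$ and $d(\theta,L)\le\rmH(L)^{-w-1}$, $w$ close to $\omega_d(\theta)$. Setting $V=\operatorname{span}(L)$, one passes to the quotient lattice $\bZ^{n+1}/(V\cap\bZ^{n+1})$, of rank $n-d$ and covolume $\asymp\rmH(L)^{-1}$ in $\bR^{n+1}/V$; the image $\bar\theta$ of $\theta$ has length $\asymp d(\theta,L)$. Applying Minkowski's theorem (with a free parameter $T$) to a box adapted to the direction $\bar\theta$ produces a nonzero lattice point $\bar e$ that is short and nearly parallel to $\bar\theta$; then $L':=L\vee\langle e\rangle$ is rational of dimension $d+1$, contains $L$, has $\rmH(L')\asymp\rmH(L)\,|\bar e|$, and $d(\theta,L')$ is governed by the defect of parallelism of $\bar e$ with $\bar\theta$. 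Substituting the bound on $d(\theta,L)$ and optimising $T$ (as a power of $\rmH(L)$) gives $\omega_{d+1}(\theta)\ge\bigl((n-d)\omega_d(\theta)+1\bigr)/(n-d-1)$. The \emph{going‑down} step is dual: one projects $\theta$ orthogonally onto $V$ and, inside the rank‑$(d+1)$ lattice $V\cap\bZ^{n+1}$ (covolume $\asymp\rmH(L)$) — or rather its dual, equivalently the $d$‑th compound lattice — uses Minkowski's theorem to produce a rational hyperplane $L'$ of $L$, hence a rational projective subspace of dimension $d-1$, passing near the projected point; estimating $\rmH(L')$ and $d(\theta,L')\le d(\theta,L)+d(\pi_V\theta,L')$ and again optimising the free parameter yields $\omega_{d-1}(\theta)\ge d\,\omega_d(\theta)/(\omega_d(\theta)+d+1)$. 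In the end one checks that iterating these steps from $d=0$ up to $d=n-1$, and back down, recovers exactly the classical Khintchine transference principle of Theorem~\ref{T:KhinchineTransfer}.

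The main obstacle I anticipate is making the exponents come out \emph{exactly} $\frac{(n-d)\omega_d+1}{n-d-1}$ and $\frac{d\omega_d}{\omega_d+d+1}$ rather than something weaker: the lattice $V\cap\bZ^{n+1}$ and its quotient/dual can have arbitrarily skew shape, so a crude use of the first minimum loses a factor, and one must instead invoke Minkowski's \emph{second} theorem and balance the construction against the worst admissible configuration of successive minima — it is exactly this extremal balancing that manufactures the clean fractional formulas. A second, and arguably deeper, point is the ``moreover, these estimates are optimal'' clause: this demands explicit tuples $\theta$ for which equality holds in each inequality (ideally simultaneously along the whole chain), in the spirit of the constructions of D.~Roy and of Bugeaud–Laurent built from Fibonacci/Sturmian continued fractions and subspaces with many palindromic prefixes; verifying extremality of such examples is a substantial computation on its own and is where the genuinely new input lies.
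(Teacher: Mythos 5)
You should first be aware that this survey does not prove Theorem~\ref{T:MichelLaurent} at all: it quotes it from M.~Laurent \cite{LaurentMichel0703146}, who sharpens inequalities going back to Schmidt's paper \cite{MR0213301} on heights of rational subspaces, and the optimality is established there and in the sequel \cite{BugeaudLaurentTransfer}. Measured against those sources, your outline points in the right general direction --- Pl\"ucker coordinates, joins and intersections of rational subspaces, the height inequality $\rmH(L_1\cap L_2)\rmH(L_1\vee L_2)\le \rmH(L_1)\rmH(L_2)$, geometry of numbers on quotient and sub-lattices --- since this is indeed the framework of Schmidt's ``going up'' and ``going down'' theorems. But as a proof it has a genuine gap precisely at the decisive step: producing a short lattice vector $\bar e$ nearly parallel to $\bar\theta$ in the rank-$(n-d)$ quotient and ``optimising $T$'' does not by itself yield the exponent $\bigl((n-d)\omega_d+1\bigr)/(n-d-1)$; the distance $d(\theta,L\vee\langle e\rangle)$ is governed not just by the defect of parallelism of one vector but by the whole configuration of successive minima of a lattice that may be arbitrarily skew, and controlling this worst case is exactly the content of Schmidt's theorems, not a routine application of Minkowski. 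You acknowledge this (``one must invoke Minkowski's second theorem and balance against the worst admissible configuration''), but you do not carry it out, and there is an additional warning sign: the constants as stated are Laurent's \emph{improvement} of Schmidt's original going-up/going-down inequalities, so the extra input Laurent uses (work with sequences of best approximations and duality of compound convex bodies) cannot be expected to fall out of the crude construction you describe.

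The second gap is that the clause ``Moreover these estimates are optimal'' is part of the statement, and your proposal explicitly defers it, calling it ``where the genuinely new input lies.'' Optimality requires exhibiting, for the admissible values, points $\theta\in\bR^n$ (with $1,\xi_1,\dots,\xi_n$ linearly independent over $\bQ$) whose intermediate exponents $\omega_0(\theta),\dots,\omega_{n-1}(\theta)$ realise equality in the relevant inequality; this is a construction in its own right (carried out by Laurent, and further in \cite{BugeaudLaurentTransfer}), not something that follows from the transference argument, and gesturing at Roy or Bugeaud--Laurent Sturmian examples does not substitute for it. So the proposal is a reasonable plan of attack in the spirit of the actual proofs, but both the exact quantitative form of the two inequalities and the entire optimality assertion remain unproved.
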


 As a consequence of Theorem \ref{T:MichelLaurent}, one deduces that if $\omega_d=(d+1)/(n-d)$ for one value of $d$ in the range $0\le d\le n-1$, then the same equality holds for all $d=0,1,\ldots,n-1$. Hence, for almost all \m{\theta\in\bR^n},
\M{
\omega_d(\theta)=
\omegahat_d(\theta) =\frac{d+1}{n-d} \quad \text{for} \quad 0\le d\le n-1.
}
 A complement to  Theorem \ref{T:MichelLaurent},
 involving the hat coefficients,  
 is given in  \cite{LaurentMichel0703146}  Th.\ 3.
 
 A problem raised in \cite{LaurentMichel0703146} is to {\it find the spectrum in $(\bR\cup\{+\infty\})^n$ of the $n$-tuples 
 $$
 \bigl(\omega_0(\theta),\dots,\omega_{n-1}(\theta) \bigr),
 $$
 where $\theta$ ranges over the elements $(\xi_1,\ldots,\xi_n)$ in $\bR^n$ with $1,\xi_1,\ldots,\xi_n$ linearly independent over $\bQ$. } 
 Partial results are given in  \cite{LaurentMichel0703146}.
 
 In \cite{BugeaudLaurentInhom} Y.~Bugeaud and M.~Laurent define and study exponents of {\it inhomogeneous} Diophantine approximation. Further progress on this topic has been achieved by M.~Laurent in \cite{LaurentInhomogeneousHausdorff}.

\subsection{Dimension $2$}
\label{SS:D2}

We consider the special case $n=2$ of \S~\ref{SS:SDAHD-FE}: we replace $(\xi_1,\xi_2)$ by $(\xi,\eta)$. So    
let $\xi$ and \m{\eta} be two real numbers with   \m{1,\xi,\eta}  linearly independent over  $\bQ$.

Khintchine's  transference Theorem  \ref{T:KhinchineTransfer} reads in this special case
\M{
\frac{ \omega(\xi,\eta)}{ \omega(\xi, \eta)+2} \le \omega\left(\begin{matrix}\xi
\\
 \eta
\\
 \end{matrix}
 \right)
\le \frac{ \omega(\xi,\eta) -1}{ 2}\cdotp
}
V.~Jarn\'\i k  studied these numbers in a series of papers from 1938 to 1959 (see 
\cite{MR2136100,BugeaudLaurentInhom,LaurentMichelExpDimTwo}). He proved that 
both sides are optimal. Also 
Jarn\'\i k's formula (of which  (\ref{E:JarnikDim2}) is a special case)  reads:
\begin{equation}\label{E:Jarnik}
 \omegahat\left(\begin{matrix}\xi
\\
 \eta
 \end{matrix}
 \right) =  1 -\frac{1}{ \omegahat(\xi,\eta)}\cdotp
 \end{equation}
The spectrum  of each of our four exponents is as follows:  
 \\
\null\qquad
   \m{\omega(\xi,\eta) }  takes any value in the range  \m{[2,+\infty]},
 \\
\null\qquad
    \m{ \omega\left(\begin{matrix}\xi
\\
 \eta
\\
 \end{matrix}
 \right) }  takes any value in the range 
   \m{[1/2,1]},
 \\
\null\qquad
  \m{\omegahat(\xi,\eta) }  takes any value in the range  \m{[2,+\infty]},
   \\
\null\qquad
     \m{ \omegahat\left(\begin{matrix}\xi
\\
 \eta
\\
 \end{matrix}
 \right) }  takes any value in the range 
   \m{[1/2,1]}.
 
 Moreover, for almost all \m{(\xi,\eta)\in\bR^2}, 
   \M{
\omega(\xi,\eta) = \omegahat(\xi,\eta) =2,
\quad
\omega\left(\begin{matrix}\xi
\\
 \eta
\\
 \end{matrix}
 \right) 
 =
  \omegahat\left(\begin{matrix}\xi
\\
 \eta
\\
 \end{matrix}
 \right) 
 =\frac{1}{2}\cdotp
}  
A more precise description of the spectrum of the quadruple is due to M.~Laurent \cite{LaurentMichelExpDimTwo}:

\begin{thm}[Laurent]
Assume  \m{1,\xi , \eta}  are linearly independent over  $\bQ$.  The four exponents
\M{
\omega= \omega(\xi,\eta),\quad  \omega'= \omega\left(\begin{matrix}\xi
\\
 \eta
\\
 \end{matrix}
 \right),
\quad  \omegahat = \omegahat(\xi,\eta), \quad \omegahat'=  \omegahat\left(\begin{matrix}\xi
\\
 \eta
\\
 \end{matrix}
 \right)  
}
are related by  
\M{
2\le \omegahat\le +\infty ,\quad  \omegahat' =\frac{\omegahat-1}{ \omegahat}\virgule 
\quad \frac{\omega(\omegahat-1)}{ \omega+\omegahat} \le \omega' \le \frac {\omega-\omegahat+1}{ \omegahat} 
}
with the obvious interpretation if   \m{\omega=+\infty}.
Conversely, for any  \m{(\omega,\omega',\omegahat,\omegahat')}  in \m{(\bR_{>0}\cup\{+\infty\})^4} 
satisfying the previous inequalities,
there exists   \m{(\xi,\eta)\in \bR^2}, with   \m{1,\xi , \eta} linearly independent over $\bQ$,
for which 
$$
\omega= \omega(\xi,\eta),\quad  \omega'= \omega
\left(\begin{matrix}\xi
\\
 \eta
\\
 \end{matrix}
 \right),
\quad  \omegahat = \omegahat(\xi,\eta), \quad \omegahat'=  \omegahat\left(
\begin{matrix}\xi
\\
 \eta
\\
 \end{matrix}
 \right) . 
$$
\end{thm}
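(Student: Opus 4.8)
The statement has two halves: a \emph{direct} part, that the four inequalities hold for every $(\xi,\eta)$ with $1,\xi,\eta$ linearly independent over $\bQ$, and a \emph{converse} part, that every quadruple satisfying them is attained. For the direct part the bounds $2\le\omegahat\le+\infty$ are the trivial upper bound together with the case $n=2$, $d=n-1$ of the Dirichlet-type lower bound $\omegahat_d(\theta)\ge(d+1)/(n-d)$ recalled in \S\ref{SS:SDAHD-FE}, while the exact relation $\omegahat'=(\omegahat-1)/\omegahat$ is Jarn\'\i k's formula, already quoted as (\ref{E:Jarnik}). It then remains to establish
$$
\frac{\omega(\omegahat-1)}{\omega+\omegahat}\ \le\ \omega'\ \le\ \frac{\omega-\omegahat+1}{\omegahat}\,.
$$
The coarser bounds $\omega/(\omega+2)\le\omega'\le(\omega-1)/2$ are the case $n=2$ of Khintchine's transference principle (Theorem~\ref{T:KhinchineTransfer}); the sharpenings above come from inserting the uniform exponent $\omegahat$. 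I would obtain them by the classical elimination argument in $\Lambda^{2}(\bZ^{3})$: to a nonzero linear form $\underline x\in\bZ^{3}$ of norm $\le N$ with $|\underline x\wedge{}^{t}\theta|$ of order $N^{-\omegahat}$ (one exists for every large $N$, by definition of $\omegahat$) one attaches, via the exterior product, a system of rational approximations to $(\xi,\eta)$, and conversely; since $\omegahat$ governs the gap between two consecutive best linear forms, the error terms in this transference can be calibrated so as to produce exactly the displayed inequalities. These are inequalities (4)--(7) of Theorem~3 of Jarn\'\i k \cite{JFM64.0145.01}.

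For the converse part---the substantial half---one is given an admissible quadruple (so $\omegahat'$ is forced by $\omegahat$, and $\omega'$ must lie in the possibly degenerate interval above, which in turn forces $\omega\ge\omegahat(\omegahat-1)$), and one must exhibit $(\xi,\eta)$ with $1,\xi,\eta$ linearly independent over $\bQ$ realising all four exponents. The plan follows the standard ``inverse problem'' recipe in the geometry of numbers: construct, by induction on $k$, an increasing sequence of primitive integer vectors $\underline y_{k}=(y_{k,0},y_{k,1},y_{k,2})\in\bZ^{3}$, together with prescribed asymptotics for its Euclidean norm $Y_{k}=|\underline y_{k}|$, for the defect of $\underline y_{k}$ as a simultaneous rational approximation, and for the value of $\underline y_{k}$ as a linear form; the orders of magnitude of these quantities encode the target $\omega,\omegahat,\omega'$. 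One takes $\underline y_{k+1}$ to be a small integer combination $a_{k}\underline y_{k}+b_{k}\underline y_{k-1}+c_{k}\underline y_{k-2}$ with multipliers of carefully chosen size, arranges that $y_{k,1}/y_{k,0}$ and $y_{k,2}/y_{k,0}$ converge---defining $(\xi,\eta)$---and that no eventual linear relation holds among $1,\xi,\eta$. The decisive ingredient is then a Davenport--Schmidt type verification lemma (``three consecutive minimal points''): every integer point lying outside the plane spanned by two consecutive $\underline y_{k}$ has provably large defect, hence is irrelevant to all four exponents, while points inside such a plane are controlled directly from the recurrence; reading off the exponents of the limit point then gives the prescribed values.

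The heart of the matter---and the step I expect to be the main obstacle---is that the four exponents are not free of one another: the exterior-product (polar body) duality in $\bR^{3}$ binds the linear-form data to the simultaneous-approximation data, and the inequalities of the theorem are precisely the ensuing compatibility constraints. The work is therefore to choose the inductive multipliers $a_{k},b_{k},c_{k}$ so that \emph{all four} exponents---asymptotic and uniform, for linear forms and for simultaneous approximation---come out correctly at once, and to prove the ``no unexpected approximation'' lemma simultaneously in each of these four regimes, uniformly in $k$. I expect one must distinguish the cases $\omegahat<+\infty$ and $\omegahat=+\infty$, and treat the endpoints of the $\omega'$-interval separately: at its lower endpoint $\omega'=\omega(\omegahat-1)/(\omega+\omegahat)$, forced when $\omega=\omegahat(\omegahat-1)$, the construction should collapse onto the extremal-number constructions of D.~Roy and of Bugeaud--Laurent (Theorem~\ref{T:Roy}), and this limiting case guides the choice of the recurrence in general.
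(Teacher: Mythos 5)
Note first that the survey you are being compared against contains no proof of this statement at all: the theorem is quoted from M.~Laurent's article \cite{LaurentMichelExpDimTwo}, with the two corollaries read off from it, so the only meaningful comparison is with Laurent's original argument. Measured against that, the direct half of your plan is sound and follows the known route: $2\le\omegahat$ is the case $n=2$, $d=1$ of Schmidt's lower bound $\omegahat_d(\theta)\ge (d+1)/(n-d)$, the identity $\omegahat'=(\omegahat-1)/\omegahat$ is Jarn\'\i k's formula (\ref{E:Jarnik}), and the two mixed inequalities refining Khintchine's transference (Theorem~\ref{T:KhinchineTransfer}) by the uniform exponent do come from the dimension-two transference results of Jarn\'\i k \cite{JFM64.0145.01} as reworked by Laurent; citing these is legitimate for a survey-level statement, though you prove none of them.

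The genuine gap is the converse half, which is the substance of the theorem and occupies essentially all of \cite{LaurentMichelExpDimTwo}. You describe the expected construction (an inductive sequence of primitive integer points with prescribed norms, linear-form values and approximation defects, plus a Davenport--Schmidt style ``no unexpected approximation'' lemma), but you neither specify the recurrence nor prove the verification lemma, and nothing in the plan shows that the four exponents can be prescribed \emph{independently} throughout the whole admissible region --- interior points, the boundary $\omega=\omegahat(\omegahat-1)$, and the infinite values --- rather than merely at special points such as Roy's extremal numbers (Theorem~\ref{T:Roy}). The sentence asserting that the multipliers ``can be calibrated so that all four exponents come out correctly at once'' is exactly the point at issue: the duality between the best linear forms and the best simultaneous approximations constrains the construction severely, and making the asymptotic and uniform behaviours of both families match an arbitrary admissible quadruple is the hard analytic content of Laurent's proof. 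As written, your proposal is a correct road map for the easy half and a statement of intent, not a proof, for the hard half.
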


As a consequence:

  \begin{cor}
The   exponents \m{\omega= \omega(\xi,\eta)}, \m{  \omegahat=  \omegahat(\xi,\eta)}
are related by
\M{
\omega \ge \omegahat(\omegahat-1) 
\quad\text{and} \quad
 \omegahat\ge 2.
}
Conversely, for any   \m{(\omega,\omegahat)} satisfying these conditions, there exists  \m{(\xi,\eta)} for which  
\M{
\omega(\xi, \eta)=\omega
\quad\text{and}\quad
\omegahat(\xi,\eta)  = \omegahat.
}
\end{cor}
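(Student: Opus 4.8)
The plan is to deduce this Corollary from the preceding Theorem of Laurent, which describes the full four-dimensional spectrum of $(\omega,\omega',\omegahat,\omegahat')$, by projecting onto the two coordinates $(\omega,\omegahat)$. The direct part amounts to noticing that, once $\omega'$ is eliminated, the two Jarn\'\i k-type bounds of the Theorem on $\omega'$ are compatible only when $\omega\ge\omegahat(\omegahat-1)$; the converse part amounts to choosing $\omega'$ inside an interval that is nonempty precisely under that hypothesis and then quoting the converse half of the Theorem.

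\emph{Direct part.} Let $(\xi,\eta)$ be given with $1,\xi,\eta$ linearly independent over $\bQ$, and write $\omega,\omega',\omegahat,\omegahat'$ for its four exponents. The Theorem yields $\omegahat\ge 2$ at once, together with $\frac{\omega(\omegahat-1)}{\omega+\omegahat}\le\omega'$ and $\omega'\le\frac{\omega-\omegahat+1}{\omegahat}$. Discarding $\omega'$ forces $\frac{\omega(\omegahat-1)}{\omega+\omegahat}\le\frac{\omega-\omegahat+1}{\omegahat}$, with the obvious reading when $\omega=+\infty$ (in which case $\omega\ge\omegahat(\omegahat-1)$ is trivial). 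For $\omega$ finite, clearing the positive denominators $\omega+\omegahat$ and $\omegahat$ and expanding turns this into $\omega\omegahat^2-\omega\omegahat\le\omega^2-\omegahat^2+\omega+\omegahat$, i.e. $\omega^2+\omega(1-A)-A\ge 0$ where $A=\omegahat(\omegahat-1)=\omegahat^2-\omegahat$; since $1-\omegahat^2+\omegahat=1-A$, the left-hand side factors as $(\omega+1)(\omega-A)$, and $\omega+1>0$ gives $\omega\ge A=\omegahat(\omegahat-1)$.

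\emph{Converse part.} Now let $(\omega,\omegahat)$ be any pair with $\omegahat\ge 2$ and $\omega\ge\omegahat(\omegahat-1)$; note this already forces $\omega\ge\omegahat$. Set $\omegahat':=\frac{\omegahat-1}{\omegahat}$ (the value forced by Jarn\'\i k's formula~(\ref{E:Jarnik})) and $\omega':=\frac{\omega-\omegahat+1}{\omegahat}$. By the factorization above, $\omega\ge\omegahat(\omegahat-1)$ is exactly equivalent to $\frac{\omega(\omegahat-1)}{\omega+\omegahat}\le\omega'$, so the quadruple $(\omega,\omega',\omegahat,\omegahat')$ satisfies all the relations $2\le\omegahat\le+\infty$, $\omegahat'=\frac{\omegahat-1}{\omegahat}$, $\frac{\omega(\omegahat-1)}{\omega+\omegahat}\le\omega'\le\frac{\omega-\omegahat+1}{\omegahat}$ required by the Theorem; moreover $\omega'\ge\frac{(\omegahat-1)^2}{\omegahat}>0$ and $\omegahat'\in(0,1)$, so the quadruple lies in $(\bR_{>0}\cup\{+\infty\})^4$, and the case $\omega=+\infty$ is absorbed by the same ``obvious interpretation'' convention. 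The converse half of the Theorem then produces $(\xi,\eta)$ with $1,\xi,\eta$ linearly independent over $\bQ$ realising this quadruple; in particular $\omega(\xi,\eta)=\omega$ and $\omegahat(\xi,\eta)=\omegahat$.

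\emph{Where the work is.} Taking the preceding Theorem as a black box, there is no genuine obstacle: the one point to get right is the algebraic reduction of the two-sided bound on $\omega'$ to the single inequality $\omega\ge\omegahat(\omegahat-1)$ --- captured by the identity $(\omega+1)\bigl(\omega-\omegahat(\omegahat-1)\bigr)=\omega^2+\omega(1-A)-A$ with $A=\omegahat^2-\omegahat$ --- together with a careful check of the boundary case $\omega=+\infty$ and of the positivity conditions on $\omega'$ and $\omegahat'$ needed to invoke the Theorem. If instead one wanted a proof not relying on Laurent's full four-exponent Theorem, the direct inequality would have to be extracted from the sharp Jarn\'\i k-type transference estimates between $\omega,\omega',\omegahat,\omegahat'$ in dimension $2$ (compare Theorem~\ref{T:MichelLaurent}), and the converse would still require an explicit construction of pairs $(\xi,\eta)$ with prescribed exponents in the spirit of Roy's extremal numbers and of Jarn\'\i k's original examples --- that construction being the genuinely hard ingredient.
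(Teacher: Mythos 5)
Your proposal is correct and follows exactly the route the paper intends: the corollary is stated there as an immediate consequence of Laurent's four-exponent theorem, and your projection argument --- eliminating $\omega'$ via the factorization $(\omega+1)\bigl(\omega-\omegahat(\omegahat-1)\bigr)\ge 0$ for the direct part, and choosing $\omegahat'=(\omegahat-1)/\omegahat$, $\omega'=(\omega-\omegahat+1)/\omegahat$ for the converse --- simply makes explicit the deduction the paper leaves to the reader. The algebra and the positivity/infinite-value checks are all in order.
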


  \begin{cor}
The  exponents 
\m{ \omega'= \omega\left(\begin{matrix}\xi
\\
 \eta
\\
 \end{matrix}
 \right) }, \m{ \omegahat'=  \omegahat\left(\begin{matrix}\xi
\\
 \eta
\\
 \end{matrix}
 \right)  
}
are related by
\M{
\omega' \ge \frac{\omegahat'^2}{ 1-\omegahat'}  
\quad\text{and} \quad
 \frac{1}{ 2} \le \omegahat' \le  1.
}
Conversely, for any   \m{(\omega',\omegahat')} satisfying these conditions, there exists  \m{(\xi,\eta)} with
\M{
\omega\left(\begin{matrix}\xi
\\
 \eta
\\
 \end{matrix}
 \right)= \omega'
 \quad\text{and}\quad
 \omegahat\left(\begin{matrix}\xi
\\
 \eta
\\
 \end{matrix}
 \right) = \omegahat'.
}

\end{cor}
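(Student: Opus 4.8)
The plan is to obtain this Corollary by projecting the four--dimensional description of Laurent's preceding Theorem onto the $(\omega',\omegahat')$--plane, i.e.\ by eliminating the auxiliary exponents $\omega=\omega(\xi,\eta)$ and $\omegahat=\omegahat(\xi,\eta)$. The starting observation is that Laurent's Theorem pins down $\omegahat'$ entirely in terms of $\omegahat$ through $\omegahat'=(\omegahat-1)/\omegahat=1-1/\omegahat$, which is a strictly increasing bijection from $[2,+\infty]$ onto $[1/2,1]$. This already forces $1/2\le\omegahat'\le 1$, and it lets us invert the relation: $\omegahat=1/(1-\omegahat')$ and $\omegahat-1=\omegahat'/(1-\omegahat')$, so that in particular the quantity $(\omegahat-1)^2/\omegahat$ equals $\omegahat'^2/(1-\omegahat')$.

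First I would prove the necessity of $\omega'\ge\omegahat'^2/(1-\omegahat')$. By Laurent's Theorem,
$$
\frac{\omega(\omegahat-1)}{\omega+\omegahat}\le\omega'\le\frac{\omega-\omegahat+1}{\omegahat}\virgule
$$
so in particular the leftmost member is at most the rightmost one; clearing the (positive) denominators and cancelling the common factor $\omega+1>0$ shows that this is equivalent to $\omega\ge\omegahat(\omegahat-1)$. Now the map $\omega\mapsto L(\omega):=\omega(\omegahat-1)/(\omega+\omegahat)$ is strictly increasing, its derivative being $\omegahat(\omegahat-1)/(\omega+\omegahat)^2>0$ since $\omegahat\ge 2$. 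Hence
$$
\omega'\ge L(\omega)\ge L\bigl(\omegahat(\omegahat-1)\bigr)=\frac{(\omegahat-1)^2}{\omegahat}=\frac{\omegahat'^2}{1-\omegahat'}\cdotp
$$
(In the degenerate case $\omegahat'=1$, i.e.\ $\omegahat=+\infty$, the bound reads $\omega'=+\infty$, consistent with the convention in the Theorem.)

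For the converse I would, given $(\omega',\omegahat')$ with $1/2\le\omegahat'\le 1$ and $\omega'\ge\omegahat'^2/(1-\omegahat')$, construct a full quadruple meeting the hypotheses of Laurent's Theorem. Put $\omegahat=1/(1-\omegahat')$, so that $\omegahat\ge 2$ and $\omegahat'=(\omegahat-1)/\omegahat$ hold automatically, and choose $\omega$ so that the upper bound above is attained, namely $\omega=\omegahat\omega'+\omegahat-1$ (take $\omega=+\infty$ if $\omega'=+\infty$). Then $\omega'=(\omega-\omegahat+1)/\omegahat$ by construction, while the hypothesis $\omega'\ge(\omegahat-1)^2/\omegahat$ is exactly what forces $\omega=\omegahat\omega'+\omegahat-1\ge(\omegahat-1)^2+(\omegahat-1)=\omegahat(\omegahat-1)$, whence $L(\omega)\le(\omega-\omegahat+1)/\omegahat=\omega'$. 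Thus the whole chain of inequalities of the Theorem holds for $(\omega,\omega',\omegahat,\omegahat')$, and the Theorem supplies $(\xi,\eta)\in\bR^2$ with $1,\xi,\eta$ linearly independent over $\bQ$ realizing these four exponents; discarding $\omega$ and $\omegahat$ leaves precisely the required $(\omega',\omegahat')$.

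The computations involved are elementary, so there is no genuine analytic difficulty; the points needing care are the bookkeeping of the $+\infty$ cases (where one relies on the ``obvious interpretation'' clause of Laurent's Theorem) and the check that the two algebraic manipulations used above -- ``$L(\omega)$ at most the upper bound $\iff\omega\ge\omegahat(\omegahat-1)$'' and ``$\omega=\omegahat\omega'+\omegahat-1$ together with $\omega'\ge(\omegahat-1)^2/\omegahat$ gives $\omega\ge\omegahat(\omegahat-1)$'' -- are used as honest equivalences in the right direction, so that the projected region is exactly $\{\,\omega'\ge\omegahat'^2/(1-\omegahat'),\ 1/2\le\omegahat'\le 1\,\}$, with nothing omitted and nothing spurious added.
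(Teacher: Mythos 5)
Your proposal is correct and follows essentially the same route as the paper, which states this corollary as a direct consequence of Laurent's four-exponent theorem: you eliminate $\omega$ and $\omegahat$ by using $\omegahat'=(\omegahat-1)/\omegahat$ and the equivalence of the two-sided bound on $\omega'$ with $\omega\ge\omegahat(\omegahat-1)$, and for the converse you reconstruct an admissible quadruple and invoke the existence part of the theorem. The computations (monotonicity of $\omega\mapsto\omega(\omegahat-1)/(\omega+\omegahat)$, the value $(\omegahat-1)^2/\omegahat=\omegahat'^2/(1-\omegahat')$, and the choice $\omega=\omegahat\omega'+\omegahat-1$) are all accurate, so nothing further is needed beyond the care you already flag for the $+\infty$ cases.
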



The next open problem has been raised by M.~Laurent: 


\begin{Pb}[Laurent]

 Is there an extension of Jarn\'\i k's  equality (\ref{E:Jarnik})
in higher dimensions relating \m{ \omegahat(\theta)}  and  \m{ \omegahat(^t \theta)} for \m{\theta\in\bR^n}? 
\end{Pb}

\subsection{Approximation by hypersurfaces}
\label{SS:AH}

In dimension  $1$ an irreducible hypersurface is nothing else than a point. The exponents $\omega_n(\xi)$ and their hat companions in \S~\ref{SS:PASASN-PACN} measure $|P(\xi)|$ for $P\in\bZ[X]$,
while $\omega_n^*(\xi)$  of \S~\ref{SS:PASASN-AACN} measure the distance between a point $\xi\in\bC$ and   algebraic numbers $\alpha$. 

A generalisation of these questions in higher dimensions, where $\xi\in\bC^n$, is the study of $|P(\xi)|$ for $P\in\bZ[X_1,\ldots,X_n]$ and of $\min_\alpha |\xi-\alpha|$ where $\alpha$ runs over the set of  zeros of such $P$. As already mentioned   in the introduction of \S~\ref{S:SDAHD},  a lower bound for   $|P(\xi)|$ when the degree of $P$ is fixed is 
nothing else than a 
linear independence measure for $(\xi,\ldots,\xi^n)$. To consider such quantities also when the degree of $P$ varies yields a generalisation of Mahler's classification to several variables, which has been considered by Yu Kunrui 
\cite{MR88h:11049}. A generalisation to higher dimensions of both Mahler and Koksma classifications has been achieved by W.M.~Schmidt in \cite{SchmidtFunctionesApproximatio2006} who raises a number of open problems suggesting that the close connection between the two classifications in dimension $1$ does not extend to the classification of tuples.

In  \cite{SchmidtTransactions2007}
W.M.~Schmidt  deals with approximation to points $\xi$  in $\bR\sp{n}$ or  in $\bC\sp{n}$ by algebraic hypersurfaces, and more generally by algebraic varieties, defined over the rationals. 
\par
Let ${\mathcal M}$ be a nonempty finite set of monomials in $x\sb{1},\ldots,x\sb{n}$ with $|{\mathcal M}|$ elements. Denote by ${\mathcal P}({\mathcal M})$ the set of polynomials in $\bZ[x\sb{1},\ldots,x\sb{n}]$ which are linear combinations of monomials in ${\mathcal M}$. Using Dirichlet's box principle or Minkowski's theorem on linear forms, one shows the existence of nonzero elements in ${\mathcal P}({\mathcal M})$ for which  $|P(\xi)|$ is small. It is a much more difficult task to get the existence of nonzero elements in ${\mathcal P}({\mathcal M})$ for which  the distance $\delta\bigl(\xi,A(P)\bigr)$ between $\xi$ and the hypersurface $A(P)$ defined by $P=0$ is small. 
\par
W.M.~Schmidt  asks whether given $\xi$ and ${\mathcal M}$, there exists $c=c(\xi,{\mathcal M})>0$ such that there are infinitely many $P\in{\mathcal P}({\mathcal M})$ with 
$\delta\bigl(\xi,A(P)\bigr)\le c H(P)\sp{-m}$, where $m=|{\mathcal M}|$ in the real case $\xi\in\bR\sp n$ and $m=|{\mathcal M}|/2$ in the complex case $\xi\in\bC\sp n$.
He proves such an estimate when $|{\mathcal M}|=n+1$, and also in the real case when $|{\mathcal M}|=n+2$. In the  case $|{\mathcal M}|=n+1$ he proves a uniform result, in the sense of Y.~Bugeaud and M.~Laurent
\cite{MR2149403}: 
given $N\ge 1$, there is a $P\in{\mathcal P}({\mathcal M})$ with height $H(P)\le N$ for which 
$\delta\bigl(\xi,A(P)\bigr)\le c N\sp{1-m} H(P)\sp{-1}$. A number of further results are proved in which the exponent is not the conjectured one. The author also investigates the approximation by algebraic hypersurfaces (another reference on this topic is \cite{MR2002d:11091}). 
\par
Special cases of the very general and deep results of this paper were due to F.~Amoroso, W.D.~Brownawell, M.~Laurent and D.~Roy, P.~Philippon. Further previous results related with Wirsing's conjecture were also achieved by V.I.~Bernik and K.I.~Tishchenko. 
\par
An upper bound for the distance $\delta(\xi,A)$ means that there is a point on the hypersurface $A$ (or more generally the variety $A$) close to $\xi$. The author also investigates the ``size'' of the set of such elements. The auxiliary results  proved in  \cite{SchmidtTransactions2007}  on this question have independent interest.

\subsection{Further metrical results}\label{SS:FurtherMetricalResults}

The answer to the question of Schmidt on approximation of points
$\xi\in\R^m$ by algebraic hypersurfaces $A(P)$ is almost surely
affirmative. This follows from a general theorem established by
Beresnevich, Bernik, Kleinbock and Margulis. With reference to
section 3.5, let $\mathcal{M}$ be a set of monomials of cardinality
$m=|\mathcal{M}|$ in variables $x_1,\dots,x_k$, where we naturally
assume that $m\ge 2$. Further, let $P(\mathcal{M})$ be the set of
polynomials in $\Z[x_1,\dots,x_k]$ which are linear combinations of
monomials in $\mathcal{M}$. Given a function
  $\Psi:\N\to(0,+\infty)$, let
$$
\begin{array}{ccr}
  \cA_k(\Psi,\mathcal{M}) & = &
  \Big\{(\xi_1,\dots,\xi_k)\in[0,1]^k:
|P(\xi_1,\dots,\xi_k)|<H(P)^{-m+2}\Psi(H(P))\quad  \\[1ex]
  &  & \text{ for infinitely many } P\in P(\mathcal{M})\Big\}\,.
\end{array}
$$
We are interested in $|\cA_k(\Psi,\mathcal{M})|$, the
$k$-dimensional Lebesgue measure of $\cA_k(\Psi,\mathcal{M})$.

\begin{thm}[Beresnevich, Bernik, Kleinbock and Margulis]\label{bbkm}
  For any decreasing $\Psi$,
$$
|\cA_k(\Psi,\mathcal{M})|=\left\{\begin{array}{ccc}
                      0  & \text{if} & \sum_{h=1}^\infty
                      \Psi(h)<\infty\,,\\[2ex]
                      1  & \text{if} & \sum_{h=1}^\infty \Psi(h)=\infty\,.
                     \end{array}
\right.
$$
\end{thm}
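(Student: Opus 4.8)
\emph{The plan.} This is a Khintchine-type zero--one law, so I would treat the convergence and divergence halves separately after first reformulating the problem on a manifold. Write $\mathcal M=\{x^{\mathbf a_1},\dots,x^{\mathbf a_m}\}$ and let $\phi\colon[0,1]^k\to\R^m$ be the monomial map $\phi(\xi)=\bigl(\xi^{\mathbf a_1},\dots,\xi^{\mathbf a_m}\bigr)$. A polynomial $P\in P(\mathcal M)$ has the shape $P(\xi)=\mathbf q\cdot\phi(\xi)$ for its coefficient vector $\mathbf q\in\Z^m$, with $H(P)=|\mathbf q|_\infty$, so $\mathcal A_k(\Psi,\mathcal M)$ is exactly the set of $\xi$ for which the dual inequality $|\mathbf q\cdot\phi(\xi)|<|\mathbf q|^{-m+2}\Psi(|\mathbf q|)$ has infinitely many integer solutions $\mathbf q$; that is, a Khintchine-type problem for approximation of the $k$-dimensional submanifold $\phi([0,1]^k)\subset\R^m$ by rational hyperplanes. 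The exponent $-m+2$ is precisely the one that makes the naive sum $\sum_P|E_P|$, with $E_P=\{\xi:|P(\xi)|<H(P)^{-m+2}\Psi(H(P))\}$, comparable to the critical sum $\sum_h\Psi(h)$: there are $\asymp h^{m-1}$ polynomials of height $h$, and away from its critical locus a polynomial with a coefficient of size $\asymp h$ has $|\{|P|<\eta\}|\asymp\eta/h$.

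\emph{Convergence.} For $\sum_h\Psi(h)<\infty$ the plan is to establish $\sum_P|E_P|<\infty$ and invoke the Borel--Cantelli lemma, following the several-variable version of Bernik's argument behind Theorem~\ref{bernik1}. The heuristic bound $|E_P|\ll H(P)^{-m+1}\Psi(H(P))$ must be upgraded to hold for \emph{every} $P$, including near the critical locus of $P$; this is where the $(C,\alpha)$-good property of polynomials of bounded degree (the analytic core of Kleinbock--Margulis, cf.\ Theorem~\ref{km98}) enters. Covering $[0,1]^k$ by balls $B$ on which $\sup_B|P|$ is comparable to a dyadic value $t$, one has $|\{\xi\in B:|P(\xi)|<\eta\}|\ll(\eta/t)^{\alpha}|B|$ for some $\alpha>0$ depending only on the degree, and summing over the scales $t$ together with the lower bound $\sup_{[0,1]^k}|P|\gg H(P)$ gives the required estimate on $|E_P|$. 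Monotonicity of $\Psi$ is used here only to replace $\Psi(H(P))$ by its value at the dyadic scale of $H(P)$, in the standard way.

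\emph{Divergence, and the main obstacle.} This is the hard half. The strategy is to show that the ``resonant'' hypersurfaces $\{A(P):P\in P(\mathcal M)\}$, weighted by $H(P)$, form a \emph{locally ubiquitous system} for $[0,1]^k$ (equivalently, to produce a regular system of such hypersurfaces), and then to apply the divergence Borel--Cantelli lemma for ubiquitous systems, which yields $|\mathcal A_k(\Psi,\mathcal M)|=1$ as soon as $\sum_h\Psi(h)=\infty$. Local ubiquity asks that, for a suitable increasing sequence of heights $Q_j\to\infty$ and every ball $B\subset[0,1]^k$, the $\rho(Q_j)$-neighbourhood of $\bigcup_{H(P)\le Q_j}A(P)$ covers a fixed proportion of $B$, where $\rho(Q)$ is the natural Dirichlet scale. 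I would deduce this from two counting inputs: a lower bound \emph{on average}, where Minkowski's theorem on the convex body $\{|\mathbf q|\le Q,\ |\mathbf q\cdot\phi(\xi)|<Q^{-m+1}\}$ produces for every $\xi$ a nonzero $P$ with $H(P)\le Q$ and $\dist\bigl(\xi,A(P)\bigr)$ of Dirichlet size, so the neighbourhoods of the $A(P)$ with $H(P)\le Q$ cover all of $B$; and an upper bound \emph{uniform in $\xi$}, obtained from the Kleinbock--Margulis quantitative non-divergence estimate on $SL_{m+1}(\R)/SL_{m+1}(\Z)$ applied to the diagonally-translated unipotent lattices attached to $\phi(\xi)$, which bounds the number of $P$ of height $\le Q$ with $|P(\xi)|$ small by its expected order of magnitude. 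Feeding the uniform upper count into an overlap (second-moment) estimate converts ``covering on average'' into the genuine positive-proportion statement ubiquity requires. The main obstacle is exactly this step: the uniform count --- in effect, a sharp count of rational hyperplanes near $\phi([0,1]^k)$ --- must be precise enough (with the correct power of $Q$) to control those overlaps, and it is here that the nondegeneracy forced by $\mathcal M$ containing enough monomials is essential, since a degenerate monomial configuration would let the resonant hypersurfaces cluster and destroy ubiquity. Carrying this out for an arbitrary admissible $\mathcal M$ and an arbitrary decreasing $\Psi$, rather than for the Veronese curve and a power function, is the extra layer of work beyond the one-variable case of Theorem~\ref{bernik1}.
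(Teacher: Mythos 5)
First, for calibration: the paper does not actually prove Theorem~\ref{bbkm}. It is stated as a survey item, with the convergence case attributed to \cite{Beresnevich-02:MR1905790} and \cite{Bernik-Kleinbock-Margulis-01:MR1829381} and the divergence case to \cite{Beresnevich-Bernik-Kleinbock-Margulis-02:MR1944505}, so your outline can only be compared with that literature. Your divergence half is in the right family: local ubiquity of the resonant hypersurfaces $A(P)$, fed into the divergence Borel--Cantelli lemma, with quantitative non-divergence controlling the passage from the Minkowski/Dirichlet covering to a positive-proportion statement, is essentially how \cite{Beresnevich-Bernik-Kleinbock-Margulis-02:MR1944505} proceeds. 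One correction of emphasis: the ``sharp count of rational hyperplanes near $\phi([0,1]^k)$'' that you single out as the main obstacle is not what the proof needs (and such sharp counts are not known in this generality); what non-divergence actually supplies, and what suffices, is an upper bound on the \emph{measure} of the set of $\xi$ whose Dirichlet-type solution has small gradient, which is a much weaker statement than a counting result with the correct power of $Q$.

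The genuine gap is in your convergence half. You propose to prove $\sum_P|E_P|<\infty$ by upgrading the heuristic bound $|E_P|\ll H(P)^{-m+1}\Psi(H(P))$ so that it holds for \emph{every} $P$, using the $(C,\alpha)$-good property. That step fails: writing $\eta=H(P)^{-m+2}\Psi(H(P))$, $(C,\alpha)$-goodness together with $\sup_{[0,1]^k}|P|\gg H(P)$ only gives $|E_P|\ll\bigl(\eta/H(P)\bigr)^{\alpha}$ with some $\alpha<1$ depending on the degree (for polynomials $\alpha$ is of size $1/(k\deg)$), and the stronger per-polynomial bound $\eta/H(P)$ is simply false near the critical locus: for $k=1$, $\mathcal{M}=\{1,x,x^2\}$ and $P(x)=(qx-p)^2$ one has $H(P)\asymp q^2$ and $|\{x\in[0,1]:|P(x)|<\eta\}|\asymp\eta^{1/2}/q\asymp\bigl(\eta/H(P)\bigr)^{1/2}$, far larger than $\eta/H(P)$. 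Summing the bound $(\eta/H(P))^{\alpha}$ over the $\asymp h^{m-1}$ polynomials of height $h$ does not converge under $\sum_h\Psi(h)<\infty$, so the Borel--Cantelli argument as you describe it does not close. This is precisely why the convergence case (already Theorem~\ref{bernik1} in one variable) is hard: the known proofs do not estimate each $E_P$ separately, but split according to the size of $|\nabla P(\xi)|$, handle the large-gradient part by a covering/Borel--Cantelli argument, and treat the small-gradient part by a statement about the whole family of polynomials --- a count of $\xi$ at which some polynomial is simultaneously small and nearly critical, obtained from quantitative non-divergence in \cite{Bernik-Kleinbock-Margulis-01:MR1829381} (or from the classical essential/inessential domain analysis in \cite{Beresnevich-02:MR1905790}). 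So the key idea of the convergence proof is missing from your plan, not merely deferred to a known lemma.
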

The convergence case of this theorem has been independently
established by Beresnevich \cite{Beresnevich-02:MR1905790} and Bernik,
Kleinbock and Margulis \cite{Bernik-Kleinbock-Margulis-01:MR1829381}
using different techniques. The multiplicative analogue of the
convergence part of Theorem~\ref{bbkm}, where $H(P)$ is replaced with
$H^\times(P)$, has also been obtained in
\cite{Bernik-Kleinbock-Margulis-01:MR1829381}. In addition,
Theorem~\ref{bbkm} holds when $\mathcal{M}$ is a set of $m$ analytic
functions defined on $(0,1)^k$ and linearly independent over $\R$. The
analyticity assumption can also be relaxed towards a non--degeneracy
condition.

The divergence case is established in
\cite{Beresnevich-Bernik-Kleinbock-Margulis-02:MR1944505} in the
following stronger form connected with \S\ref{SS:AH}, 
where the notation
$\delta$ and $A(P)$ are explained.

\begin{thm}[Beresnevich, Bernik, Kleinbock and Margulis]\label{bbkm2}
Let $\Psi$ be decreasing and such that $\sum_{h=1}^\infty \Psi(h)$ diverges. Then   for almost
  all $\xi=(\xi_1,\dots,\xi_k)\in[0,1]^k$,
$$
    \delta(\xi,A(P))<H(P)^{-m+1}\Psi(H(P))
$$
has infinitely many solutions $P\in P(\mathcal{M})$.
\end{thm}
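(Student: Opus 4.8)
The plan is to convert the statement about distances to hypersurfaces into one about the simultaneous smallness of $|P(\xi)|$ and largeness of the gradient $\nabla P(\xi)$, and then to exhibit a full-measure $\limsup$ set using the theory of \emph{ubiquitous systems} already invoked in the proofs of Theorems~\ref{beresnevich1} and~\ref{bbkm}. First I would carry out the geometric reduction. For $P\in P(\mathcal M)$ every partial derivative of every monomial of $\mathcal M$ is bounded on $[0,1]^k$ by $c_1 H(P)$, so if $\mathbf{x}_0\in A(P)$ realizes $\delta(\xi,A(P))\le 1$, then $P(\xi)=P(\xi)-P(\mathbf{x}_0)=\nabla P(\mathbf{y})\cdot(\xi-\mathbf{x}_0)$ for some $\mathbf{y}$ on the segment $[\mathbf{x}_0,\xi]$, whence $\delta(\xi,A(P))\ge |P(\xi)|/(c_1 H(P))$. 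In the converse direction — the one needed here — a quantitative implicit-function/Newton argument on the box shows that if $|P(\xi)|$ is small and $|\nabla P(\xi)|\gg H(P)$, then $A(P)$ meets the ball of radius $\ll |P(\xi)|/|\nabla P(\xi)|$ about $\xi$. Hence it is enough to prove that for almost all $\xi\in[0,1]^k$ there are infinitely many $P\in P(\mathcal M)$ with
$$|P(\xi)|\ \ll\ H(P)^{-m+2}\Psi(H(P))\qquad\text{and}\qquad |\nabla P(\xi)|\ \gg\ H(P),$$
the set of $\xi$ admitting infinitely many $P$ with $|P(\xi)|$ small but $|\nabla P(\xi)|$ abnormally small being negligible by a Kleinbock–Margulis-type counting estimate for polynomials whose value and gradient at $\xi$ are simultaneously small.

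Next I would set up the ubiquity framework. Take the resonant sets to be the hypersurface pieces $\mathcal R_P=A(P)\cap[0,1]^k$, weighted by $H(P)$, and consider the $\limsup$ set $\bigcap_{Q\ge 1}\bigcup_{H(P)\ge Q}\Delta\bigl(\mathcal R_P,\,H(P)^{-m+1}\Psi(H(P))\bigr)$, where $\Delta(S,\rho)$ denotes the $\rho$-neighbourhood of $S$. By the ubiquity theorem of Beresnevich, Dickinson and Velani (cf.~\cite{Beresnevich-Dickinson-Velani-06:MR2184760}), the divergence of $\sum_h\Psi(h)$ together with the monotonicity of $\Psi$ (used to pass to dyadic blocks $N\asymp 2^j$) reduces the claim to the following \emph{ubiquity estimate}: there is a constant $\kappa\in(0,1)$ such that for every large $N$ the set of $\xi\in[0,1]^k$ lying within distance $\kappa N^{-m+1}$ of some $\mathcal R_P$ with $H(P)\le N$ has Lebesgue measure $\gg 1$, uniformly in $N$. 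The scale $N^{-m+1}$ here is precisely the Dirichlet scale produced by the reduction above, so this is the natural assertion to aim for.

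The heart of the matter, and the step I expect to be the main obstacle, is establishing this ubiquity estimate. By the reduction it follows from a lower bound of the form
$$\Bigl|\,\bigcup_{\substack{P\in P(\mathcal M),\ H(P)\le N}}\bigl\{\xi\in[0,1]^k:\ |P(\xi)|\le cN^{-m+1},\ |\nabla P(\xi)|\ge c'N\bigr\}\,\Bigr|\ \gg\ 1.$$
I would prove this in three moves: (i) an averaging argument over the $\asymp N^{m}$ admissible integer coefficient vectors, giving $\sum_{P}\bigl|\{\xi:|P(\xi)|\le cN^{-m+1}\}\bigr|\gg 1$; (ii) a quasi-independence-on-average estimate bounding the pairwise overlaps $\bigl|\{\xi:|P_1(\xi)|\le cN^{-m+1}\}\cap\{\xi:|P_2(\xi)|\le cN^{-m+1}\}\bigr|$ for $P_1\ne P_2$, which turns the sum of measures into the measure of the union; (iii) discarding the $\xi$ with small gradient, once more a convergence-type estimate. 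For a general monomial set $\mathcal M$ all three are instances of the quantitative non-divergence of unipotent flows on $SL_m(\mathbb R)/SL_m(\mathbb Z)$ together with the $(C,\alpha)$-good property of polynomials — the Kleinbock–Margulis technology \cite{Kleinbock-Margulis-98:MR1652916} already exploited in \cite{Bernik-Kleinbock-Margulis-01:MR1829381} and \cite{Beresnevich-02:MR1905790} for Theorem~\ref{bbkm}. The genuinely difficult point is (ii): the overlap estimate is what makes the divergence case deeper than the convergence case (the latter being in essence the Borel–Cantelli lemma), and it is here that the full strength of the dynamical input is required; this is the argument carried out in \cite{Beresnevich-Bernik-Kleinbock-Margulis-02:MR1944505}.

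Finally, full measure of the reduced $\limsup$ set, combined with the geometric reduction of the first paragraph, gives full measure of the set of $\xi$ for which $\delta(\xi,A(P))<H(P)^{-m+1}\Psi(H(P))$ holds for infinitely many $P\in P(\mathcal M)$, which is the theorem. The same scheme goes through for the complex case and, with ``polynomial'' replaced by ``analytic and non-degenerate'' throughout so that the $(C,\alpha)$-good estimates remain valid, for the case where $\mathcal M$ is a system of $m$ analytic functions on $(0,1)^k$ that are linearly independent over $\mathbb R$.
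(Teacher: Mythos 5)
There is nothing in the paper for your sketch to be compared with: the survey states this theorem without proof, quoting it from \cite{Beresnevich-Bernik-Kleinbock-Margulis-02:MR1944505}. Your text is an outline of the strategy of that cited paper, and it explicitly delegates the two genuinely hard ingredients --- the quantitative non--divergence $(C,\alpha)$-good estimates of \cite{Kleinbock-Margulis-98:MR1652916,Bernik-Kleinbock-Margulis-01:MR1829381} and the quasi--independence/ubiquity machinery of \cite{Beresnevich-Dickinson-Velani-06:MR2184760} --- to those same references. So it is a plausible plan, not a proof; that is acceptable for a survey, but it should be said plainly.

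Within the plan there is one concrete gap: the ubiquity radius is wrong. Dirichlet gives, for $H(P)\le N$, the value scale $|P(\xi)|\ll N^{-m+1}$; combined with $|\nabla P(\xi)|\gg N$ this places $\xi$ at distance $\ll N^{-m}$ from $A(P)$, so the resonant sets $\mathcal{R}_P$ must be shown ubiquitous relative to $\rho(N)\asymp N^{-m}$, not $N^{-m+1}$. With your radius $\kappa N^{-m+1}$ the covering statement is weaker, and the ubiquity theorem then yields the limsup set of radius $H^{-m+1}\Psi(H)$ only under the condition $\sum_n\Psi(2^n)=\infty$, which for decreasing $\Psi$ is strictly stronger than $\sum_h\Psi(h)=\infty$ (note $\sum_h\Psi(h)\asymp\sum_n 2^n\Psi(2^n)$). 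The case $k=1$, $m=2$, $\mathcal{M}=\{1,x\}$ is a sanity check: there your radius is $N^{-1}$, for which the covering assertion is essentially trivial, yet Khintchine's theorem certainly does not follow from it. The slip is repairable, because the union bound you actually propose to prove (value $\le cN^{-m+1}$ and gradient $\ge c'N$) does live at the correct distance scale $N^{-m}$; but the sentence claiming that $N^{-m+1}$ ``is precisely the Dirichlet scale produced by the reduction'' conflates the value scale with the distance scale, and the reduction as written fails. A secondary remark: your steps (i)--(ii) (mean value plus pairwise overlaps) are not how the covering is obtained in \cite{Beresnevich-Bernik-Kleinbock-Margulis-02:MR1944505}, and the pairwise overlap estimate you invoke is itself unproved and delicate; the standard route is that Dirichlet's pigeonhole already produces, for \emph{every} $\xi$, some $P$ of height $\le N$ with $|P(\xi)|\ll N^{-m+1}$, so the only task is to discard the set of $\xi$ where every such $P$ has small gradient --- which is exactly what the non--divergence estimates give --- while the quasi--independence needed for the divergence conclusion is built into the ubiquity theorem rather than supplied by you.
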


Taking $\Psi(h)=h^{-1}\log^{-1}h$, we get the following corollary
which answers Schmidt's question in  \S\ref{SS:AH} 
 in the affirmative for
almost all points:

\begin{corollary}
  For almost all $\xi=(\xi_1,\dots,\xi_k)\in\R^k$, the inequality
\begin{equation}\label{c}
    \delta(\xi,A(P))<H(P)^{-m}\log^{-1}H(P)
\end{equation}
has infinitely many solutions $P\in P(\mathcal{M})$.
\end{corollary}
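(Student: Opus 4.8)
The plan is to deduce this from Theorem~\ref{bbkm2} by making the natural choice of error function and then passing from the unit cube to all of $\R^k$.

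First I would take $\Psi(h)=h^{-1}(\log h)^{-1}$ for every integer $h\ge 2$, and set $\Psi(1)$ to be any positive number (say $\Psi(1)=1$); its value at a single point affects neither monotonicity on $h\ge 2$ nor the convergence type of the associated series. By Cauchy's condensation test --- or by comparison with $\int_2^\infty dt/(t\log t)=+\infty$ --- the series $\sum_{h\ge 1}\Psi(h)$ diverges. Thus $\Psi$ is a decreasing function meeting the hypotheses of Theorem~\ref{bbkm2}.

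Next I would apply Theorem~\ref{bbkm2} with this $\Psi$: for almost all $\xi=(\xi_1,\dots,\xi_k)\in[0,1]^k$ the inequality
$$
\delta(\xi,A(P))<H(P)^{-m+1}\Psi(H(P))=H(P)^{-m}(\log H(P))^{-1}
$$
has infinitely many solutions $P\in P(\mathcal M)$. The only thing to observe is that $H(P)\ge 2$ for all but finitely many $P\in P(\mathcal M)$, so discarding the finitely many $P$ with $H(P)=1$ (for which $\Psi$ was defined by fiat) costs nothing when one only wants infinitely many solutions; this yields exactly inequality~(\ref{c}) for almost every point of the cube.

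It remains to pass from $[0,1]^k$ to $\R^k$. Since $\R^k=\bigcup_{\vv v\in\Z^k}\bigl(\vv v+[0,1]^k\bigr)$ is a countable union of unit boxes, and since the results underlying Theorem~\ref{bbkm2} are local --- they hold verbatim over any bounded box, translation of the ambient point by an integer vector merely replacing the relevant non--degenerate map by a translate of itself --- the displayed inequality has infinitely many solutions $P\in P(\mathcal M)$ for almost every $\xi$ in each translate $\vv v+[0,1]^k$. A countable union of Lebesgue--null sets is null, so the exceptional set in $\R^k$ has measure zero, which is the assertion. I expect the last step, localising and then globalising the metrical statement, to be the only genuine subtlety; the rest is the verification that $\sum_h 1/(h\log h)$ diverges together with a direct substitution into Theorem~\ref{bbkm2}.
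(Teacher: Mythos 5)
Your proposal is correct and follows essentially the same route as the paper, whose entire proof is the substitution $\Psi(h)=h^{-1}\log^{-1}h$ into Theorem~\ref{bbkm2} after noting that $\sum_h 1/(h\log h)$ diverges. The extra care you take with the value $\Psi(1)$, the finitely many polynomials of height $1$, and the passage from $[0,1]^k$ to $\R^k$ by covering with unit boxes only makes explicit what the paper leaves implicit, and those steps are sound.
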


Another interesting corollary corresponds to the special case of
$\mathcal{M}$ being the set of all monomials of degree at most $d$.
In this case we simply have the case of approximation by
multivariable polynomials of degree at most $d$, where now
$$
\displaystyle m=\binom{k+d}{d}.
$$
In the case of convergence in Theorem~\ref{bbkm}, a lower bound for
the Hausdorff dimension of $\cA_k(\Psi,\mathcal{M})$ is implied by a
general theorem for manifolds of Dickinson and Dodson
\cite{DickinsonDodson-2000a}. Obtaining the corresponding upper
bound in general remains an open problem but see
Theorem~\ref{bernik2} and \cite{Baker-1978,
Beresnevich-Bernik-Dodson-02:MR2069553, DodsonRynneVickers-1989b}.
The Hausdorff measure version of Theorem~\ref{bbkm2} has been
established in \cite{Beresnevich-Dickinson-Velani-06:MR2184760}.

\bigskip

Yet another class of interesting problems concerns the measure of
transcendence and algebraic independence of numbers. Recall that
complex numbers $z_1,\dots,z_m$ are called algebraically independent
if,  for any non--zero polynomial $P\in\mathbb{Z}[x_1,\dots,x_m]$, the value 
$P(z_1,\dots,z_m) $ is not $0$.  Actually, $P(z_1,\dots,z_m)$ can still
get very small when $z_1,\dots,z_m$ are algebraically
independent. Indeed, using Dirichlet's Pigeonhole principle, one can
readily show that there is a constant $c_1>0$ such that for any real
numbers $x_1,\dots,x_m$, there are infinitely many polynomials
$P\in\Z[x_1,\dots,x_m]$ such that
\begin{equation}\label{e8}
|P(x_1,\dots,x_m)|<e^{-c_1 t(P)^{m+1}}\,,
\end{equation}
where $t(P)=\deg P+\log H(P)$ is called the type of $P$. A conjecture
of Mahler \cite{Mahler-71:MR0296029} proved by Nesterenko
\cite{Nesterenko-74:MR0347747} says that in the case $m=1$ for almost
all real numbers $x_1$ there is a constant $c_0>0$ such that
$|P(x_1)|>e^{-c_0 t(P)^2}$ for all non--zero $P\in\Z[x]$.  Nesterenko
has also shown that for $\tau=m+2$, for almost all
$(x_1,\dots,x_m)\in\R^m$ there is a constant $c_0>0$ such that
\begin{equation}\label{e9}
|P(x_1,\dots,x_m)|>e^{-c_0 t(P)^\tau}\qquad\text{for all non--zero
}P\in\Z[x_1,\dots,x_m]
\end{equation}
and conjectured that the latter is indeed true with the exponent
$\tau=m+1$. This has been verified by Amoroso
\cite{Amoroso-90:MR1096348} over $\C$ but the `real' conjecture has
been recently established by Nesterenko's student Mikhailov
\cite{Mikhailov-07:MR2362823}:

\begin{thm}[Mikhailov, 2007]
Let $\tau=m+1$. Then for almost all $(x_1,\dots,x_m)\in\R^m$ there
is a constant $c_0>0$ such that $(\ref{e9})$ holds.
\end{thm}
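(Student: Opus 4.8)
The plan is to run a Borel--Cantelli argument organised by the type $t(P)=\deg P+\log H(P)$, combined with an induction on the number $m$ of variables whose base case $m=1$ is Mahler's conjecture in the form established by Nesterenko. By countable additivity and invariance under integer translations and dilations (which alter $t(P)$ only by $O(\deg P)$ and a bounded factor), it suffices to produce, for almost every $\xi\in[0,1]^m$, a constant $c_0$ with $|P(\xi)|>e^{-c_0 t(P)^{m+1}}$ for all non--zero $P$; and it is enough to do this for one sufficiently large $c_0$, since if for that $c_0$ almost every $\xi$ satisfies the inequality for all but finitely many $P$, then, after discarding the null set of algebraic points, those finitely many exceptions have $|P(\xi)|>0$ and are absorbed by enlarging $c_0$ to some $c_0(\xi)$. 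Fix such a large $c_0$ and, for $k\ge 1$, put
\[
S_k=\Bigl\{\xi\in[0,1]^m:\ \exists\,P\in\Z[x_1,\dots,x_m]\setminus\{0\},\ 2^k\le t(P)<2^{k+1},\ 0<|P(\xi)|\le e^{-c_0 t(P)^{m+1}}\Bigr\}.
\]
Each non--zero $P$ lies in exactly one type--block, so by Borel--Cantelli it is enough to prove $\sum_k|S_k|<\infty$.

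I would first dispose of the bounded--degree contribution: for any fixed $D$, the set of $\xi$ for which $|P(\xi)|\le e^{-c_0 t(P)^{m+1}}$ holds for infinitely many $P$ with $\deg P\le D$ is Lebesgue--null by Theorem~\ref{bbkm}, applied in $m$ variables with $\mathcal M$ the set of all monomials of degree $\le D$. Indeed, for such $P$ the quantity $e^{-c_0 t(P)^{m+1}}$ decays in $H(P)$ faster than any power, so it is $\le H(P)^{-|\mathcal M|}\le H(P)^{-|\mathcal M|+2}\Psi(H(P))$ with, say, $\Psi(h)=h^{-2}$ once $H(P)$ is large, and the convergence half of Theorem~\ref{bbkm} applies. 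This does not finish the proof, because finitely many solutions per degree could still accumulate across infinitely many degrees; the real content is the regime $\deg P\asymp t(P)$.

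Write $T=2^k$. Here I would use the counting bound that the number of non--zero $P$ with $t(P)<2T$ is at most $e^{C_m T^{m+1}}$ (if $\deg P=d$ then $H(P)\le e^{2T-d}$ and there are $\binom{m+d}{d}$ monomials, and $(2T-d)\binom{m+d}{d}$ is maximised for $d$ proportional to $T$), together with a dichotomy on the gradient. If $|\nabla P(\xi)|\ge e^{-\delta T^{m+1}}$ for a small fixed $\delta$, then $\xi$ lies within distance $|P(\xi)|/|\nabla P(\xi)|\le e^{-(c_0-\delta)T^{m+1}}$ of the real hypersurface $\{P=0\}$, whose $(m-1)$--volume inside a unit cube is $O_m(\deg P)=O_m(T)$ by the Cauchy--Crofton formula and B\'ezout's theorem; hence the $r$--neighbourhood of this hypersurface has measure $O_m(rT)$, and with $r=e^{-(c_0-\delta)T^{m+1}}$, summing over the $\le e^{C_m T^{m+1}}$ polynomials gives a total $\le 2^{-k}$ as soon as $c_0>C_m+\delta+1$.

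The hard part, and the only place where the exponent $m+1$ rather than Nesterenko's $m+2$ is in question, is the degenerate case $|\nabla P(\xi)|<e^{-\delta T^{m+1}}$: then $P$ and all its first partials take exceptionally small values at $\xi$, so $\xi$ lies near the singular locus of $\{P=0\}$. The idea is to eliminate one variable: after a bounded linear change of coordinates, reduce to $P$ squarefree in $x_m$ (the repeated--factor subcase being handled separately through a factor of smaller degree), and consider $R=\operatorname{Res}_{x_m}(P,\partial_{x_m}P)\in\Z[x_1,\dots,x_{m-1}]\setminus\{0\}$; one shows that $|R(\xi_1,\dots,\xi_{m-1})|$ is exceptionally small, so by the inductive hypothesis in $m-1$ variables such points $(\xi_1,\dots,\xi_{m-1})$ form a null set, and Fubini concludes. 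The delicate point, and what I expect to be the main obstacle, is the quantitative accounting here: the crude sublevel estimate $|\{\,|P|<\varepsilon\,\}|\ll\varepsilon^{1/\deg P}$ is far too weak when $\deg P\asymp t(P)$, and a naive resultant carries type of order $t(P)^2$, which the induction cannot absorb. One must instead track precisely the type of the auxiliary polynomial, the number of witnesses it is responsible for, and the exact strength of its small--value bound, so that the total loss stays within the $m+1$ budget --- this is the real--variable counterpart of Amoroso's argument over $\C$, and is where Mikhailov's work concentrates. Granting this, the three contributions --- bounded degree via Theorem~\ref{bbkm}, the transverse case via counting, B\'ezout and the tube estimate, and the degenerate case via elimination and induction --- sum to $\sum_k|S_k|<\infty$, and the theorem follows.
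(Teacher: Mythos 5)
There is a genuine gap, and you have in fact flagged it yourself: the entire content of the theorem --- the passage from Nesterenko's exponent $\tau=m+2$, which is what your transverse case and crude counting alone would deliver, to the sharp exponent $\tau=m+1$ --- sits precisely in the degenerate case $|\nabla P(\xi)|<e^{-\delta t(P)^{m+1}}$, and there your argument ends with ``granting this''. The elimination-plus-induction scheme you outline is exactly where the difficulty lies: the resultant $R=\operatorname{Res}_{x_m}(P,\partial_{x_m}P)$ has type of order $t(P)^2$, so the inductive hypothesis in $m-1$ variables with exponent $m$ gives a lower bound of the shape $e^{-c\,t(P)^{2m}}$, which is useless; and the repeated-factor subcase, the passage from ``$R$ small at $(\xi_1,\dots,\xi_{m-1})$'' to a contradiction with the induction, and the bookkeeping of how many witnesses $P$ a single auxiliary polynomial can account for are all left unspecified. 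Since this is the real-variable analogue of Amoroso's multiplicity/elimination machinery and is exactly what Mikhailov's paper supplies, the proposal is an outline of a plausible attack rather than a proof. (For what it is worth, the surveyed text itself states the result with a citation to Mikhailov and gives no proof, so there is no internal argument to compare your sketch against.)

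Two smaller points in the parts you do carry out. First, the inequality $\operatorname{dist}(\xi,\{P=0\})\le |P(\xi)|/|\nabla P(\xi)|$ is not valid for polynomials without further input: a first-order Taylor step only works if the gradient stays comparable along the segment, so you need either control of the second derivatives (which, for $H(P)\le e^{2T}$ and $\deg P\asymp T$, costs another factor of the same exponential size and must be folded into the choice of $\delta$ and $c_0$) or a {\L}ojasiewicz-type argument; as stated the transverse case has a hole, though a reparable one. Second, the preliminary reduction via Theorem~\ref{bbkm} for bounded degree is harmless but redundant, since your block counting already handles every degree once the two cases are settled; the heart of the matter remains the near-singular regime, which is unproven here.
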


We conclude by discussing the interaction of metrical, analytic and
other techniques in the question of counting and distribution of
rational points near a given smooth planar curve $\Gamma$. In what
follows we will assume that the curvature of $\Gamma$ is bounded
between two positive constants. Let $N_\Gamma(Q,\delta)$ denote the
number of rational points $(p_1/q,p_2/q)$, where $p_1,p_2,q\in\Z$ with
$0<q\le Q$, within a distance at most $\delta$ from $\Gamma$.

Huxley \cite{Huxley-1994-rational_points} has proved that for any
$\ve>0$, \ $N_\Gamma(Q,\delta)\ll Q^{3+\ve}\delta+Q$. Until recently,
this bound has remained the only non--trivial result.  Furthermore,
very little has been known about the existence of rational points
near planar curves for $\delta<Q^{-3/2}$, that is whether
$N_\Gamma(Q,\delta)>0$ when $\delta<Q^{-3/2}$. An explicit question
of this type motivated by Elkies \cite{Elkies-2000} has been
recently raised by Barry Mazur who asks\,: \textsl{``given a
  smooth curve in the plane, how near to it can a point with rational
  coordinates get and still miss?''}  (Question~(3) in
\cite[\S\,11]{Mazur-04:MR2058289}). When $\delta=o(Q^{-2})$, the
rational points in question cannot miss $\Gamma$ if $\Gamma$ is a
rational quadratic curve in the plane (see
\cite{MR1727177}). This leads to $N_\Gamma(Q,\delta)$
vanishing for some choices of $\Gamma$ when $\delta=o(Q^{-2})$. For
example, the curve $\Gamma$ given by $x^2+y^2=3$ has no rational
points~\cite{Beresnevich-Dickinson-Velani-07}.  When $\delta\gg
Q^{-2}$, a lower bound on $N_\Gamma(Q,\delta)$ can be obtained using
Khintchine's transfer principle. However, such a bound
would be far from being close to the heuristic count of $Q^3\delta$
(see \cite{MR1727177}). The first sharp lower bound on
$N_\Gamma(Q,\delta)$ has been given by Beresnevich
\cite{Beresnevich-01:MR1947026}, who has shown that for the parabola
$\Gamma=(x,x^2)$, $N_\Gamma(Q,\delta)\gg Q^3\delta$ when $\delta\gg
Q^{-2}$.

Recently, Beresnevich, Dickinson and Velani
\cite{Beresnevich-Dickinson-Velani-07} have shown that for arbitrary
smooth planar curve $\Gamma$ with non--zero curvature
$N_\Gamma(Q,\delta)\gg Q^3\delta$ when $\delta\gg Q^{-2}$. Moreover,
they show that the rational points in question are uniformly
distributed in the sense that they form a ubiquitous system (see
\cite{Beresnevich-Bernik-Dodson-02:MR1975457} for a discussion on
ubiquity and related notions). They further apply this to get
various metric results about simultaneous approximation to points on
$\Gamma$. These include a Khintchine-type theorem and its Hausdorff
measure analogue. In particular, for any $w\in(1/2,1)$ they
explicitly obtain the Hausdorff dimension of the set of
$w$-approximable points on $\Gamma$:
\begin{align}\label{e11}
\notag
\dim\Big\{(x,y)\in\Gamma:\max\{
\|qx\|, & \|qy\|\}<q^{-w}
\\
&\text{ 
for  infinitely 
many }
 q\in\N\Big\}
=\frac{2-w}{1+w}
\cdotp
\end{align}
Here $\Gamma$ is a smooth planar curve with non--vanishing curvature.
Using analytic methods, Vaughan and Velani \cite{Vaughan-Velani-2006}
have shown that $\ve>0$ can be removed from Huxley's estimate for
$N_\Gamma(Q,\ve)$. Combining the results of
\cite{Beresnevich-Dickinson-Velani-07} and \cite{Vaughan-Velani-2006}
gives the following natural generalisation of Khintchine's theorem. 

\begin{thm}[Beresnevich, Dickinson, Vaughan, Velani]
Let $\psi:\N\to(0,+\infty)$ be monotonic. Let $\Gamma$ be a
$C^{(3)}$ planar curve of finite length $\ell$ with non--vanishing
curvature and let
\begin{align}\notag
\cA_2(\psi,\Gamma)
&=
\\
\notag
\Big\{(x,y)
&
\in\Gamma:\max\{\|qx\|,\|qy\|\}<\psi(q)\text{
holds for infinitely many }q\in\N\Big\}\,.
\end{align}
Then the arclength\footnote{one-dimensional Lebesgue measure on
$\Gamma$} $|\cA_2(\psi,\Gamma)|$ of $\cA_2(\psi,\Gamma)$
satisfies
$$
|\cA_2(\psi,\Gamma)|=
\left\{\begin{array}{ccc}
                      0  & \text{if} & \sum_{h=1}^\infty
                      h\,\psi(h)<\infty\,,\\[2ex]
                      \ell  & \text{if} & \sum_{h=1}^\infty h\,\psi(h)
=\infty\,.
                     \end{array}
\right.
$$
Furthermore, let $s\in(0,1)$ and let $\mathcal{H}^s$ denote the
$s$-dimensional Hausdorff measure. Then
\begin{equation}\label{e12}
\mathcal{H}^s(\cA_2(\psi,\Gamma))=\left\{\begin{array}{ccc}
                      0  & \text{if} \;  & \sum_{h=1}^\infty
                      h^{2-s}\psi(h)^s<\infty\,,\\[2ex]
                      +\infty  & \text{if} \; & \sum_{h=1}^\infty h^{2-s}\psi(h)^s=\infty\,.
                     \end{array}
\right.
\end{equation}

\end{thm}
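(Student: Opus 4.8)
\medskip
\noindent\textbf{Proof proposal.} The plan is to split the statement into its convergence and divergence halves and, in each, to read off the Hausdorff-measure assertion as a refinement of the Lebesgue one. As a preliminary reduction I would use that $\Gamma$ is $C^{(3)}$, of finite length, with curvature bounded away from $0$: this lets one cut $\Gamma$ into finitely many arcs each of which, after an affine change of the ambient coordinates, is the graph $y=f(x)$ of a $C^{(3)}$ function on a bounded $x$-interval with $f'$ bounded and $f''$ bounded away from $0$. It then suffices to prove the result on one such piece, measuring $\cA_2(\psi,\Gamma)$ by (Lebesgue, resp.\ $\mathcal{H}^s$) measure in the $x$-coordinate; only the finitely many points where the tangent to $\Gamma$ is vertical are lost, and these do not affect measures. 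The geometric fact I would use repeatedly is that, for a fixed $q$, the admissible points $(x,y)\in\Gamma$ are exactly those in the intersection of $\Gamma$ with the union of the squares of side $2\psi(q)/q$ centred at the rationals $(p_1/q,p_2/q)$, and non-vanishing curvature makes each non-empty such intersection a single short arc, coverable by one ball of radius $\asymp\psi(q)/q$.

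For the \emph{convergence} half, $\cA_2(\psi,\Gamma)$ is a $\limsup$ set and a Borel--Cantelli covering argument applies, the only input being an upper bound for $N_\Gamma(Q,\delta)$, the number of rationals of denominator at most $Q$ lying within $\delta$ of $\Gamma$. Here I would invoke Huxley's estimate $N_\Gamma(Q,\delta)\ll Q^{3+\varepsilon}\delta+Q$ in the sharp, $\varepsilon$-free form of Vaughan and Velani, $N_\Gamma(Q,\delta)\ll Q^{3}\delta+Q$. Working in dyadic ranges $Q/2<q\le Q$, on which the monotone $\psi$ is essentially constant, and covering the corresponding part of $\cA_2(\psi,\Gamma)$ by the balls above, each of radius $\asymp\psi(Q)/Q$, one bounds the total $s$-dimensional mass produced in block $Q$ by the number of relevant rationals times $(\psi(Q)/Q)^s$; summing over blocks this tends to $0$ exactly under the stated convergence hypothesis (with $s=1$ giving the Lebesgue conclusion $|\cA_2(\psi,\Gamma)|=0$), whence $\mathcal{H}^s(\cA_2(\psi,\Gamma))=0$. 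The $+Q$ term is harmless, contributing in each block a sum of the same convergence type.

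The \emph{divergence} half is where the real work lies, and it is a Khintchine--Jarn\'\i k statement for a $\limsup$ set, so the natural route is via ubiquity. The decisive input is the matching sharp \emph{lower} bound $N_\Gamma(Q,\delta)\gg Q^{3}\delta$ for $\delta\gg Q^{-2}$ of Beresnevich, Dickinson and Velani, strengthened to the statement that the rational points near $\Gamma$ are \emph{ubiquitous}: at each scale, balls of an appropriate radius centred at these points cover a fixed positive proportion of $\Gamma$, with implied constants depending only on $\Gamma$. Granted ubiquity, I would feed it into the general ``ubiquitous systems'' machinery (Beresnevich--Bernik--Dodson; equivalently the $\limsup$-set measure laws of Beresnevich--Dickinson--Velani): divergence of the relevant critical sum then forces $\cA_2(\psi,\Gamma)$ to have full arclength $\ell$, and the Hausdorff version of that machinery forces $\mathcal{H}^s(\cA_2(\psi,\Gamma))=\infty$ under divergence of the corresponding $s$-sum. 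Monotonicity of $\psi$ is used here to pass between individual $q$ and dyadic blocks, whereas in the convergence half it can be dispensed with.

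The step I expect to be the main obstacle is the establishment of ubiquity itself: the lower bound $N_\Gamma(Q,\delta)\gg Q^{3}\delta$ for $\delta\gg Q^{-2}$ \emph{and} the fact that these rational points spread uniformly along $\Gamma$ rather than cluster. This is precisely where non-vanishing curvature is indispensable and where combinatorial methods do not suffice: one estimates the exponential sums counting lattice points near $\Gamma$ by van der Corput / stationary-phase methods --- the heart of the Vaughan--Velani analysis that also removes the $\varepsilon$ from Huxley's bound --- and then complements this first-moment count by a second-moment (variance) estimate to rule out concentration. The remaining bookkeeping --- patching the finitely many graph pieces and absorbing the vertical-tangent points --- only perturbs the covering estimates by bounded factors and leaves the critical sums unchanged.
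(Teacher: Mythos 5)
Your overall route coincides with the one this survey itself indicates: the theorem is presented as the combination of the convergence theory of Vaughan and Velani \cite{Vaughan-Velani-2006} (Borel--Cantelli covering together with the sharp, $\ve$-free form $N_\Gamma(Q,\delta)\ll Q^3\delta+Q$ of Huxley's bound) with the divergence theory of Beresnevich, Dickinson and Velani \cite{Beresnevich-Dickinson-Velani-07} (the lower bound $N_\Gamma(Q,\delta)\gg Q^3\delta$ for $\delta\gg Q^{-2}$, upgraded to ubiquity of the rational points near $\Gamma$ and then fed into the ubiquitous-systems machinery). Your reduction to graph pieces, the covering of the $q$-level set by balls of radius $\asymp\psi(q)/q$ centred at rationals near $\Gamma$, and your identification of the analytic lower bound and uniform spreading as the hard step are exactly that blueprint.

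Where the proposal does not close is the final bookkeeping, and it matters. Carry out your own dyadic computation: for $Q/2<q\le Q$ there are $\ll Q^{2}\psi(Q)+Q$ relevant rationals, each covered by a ball of radius $\asymp\psi(Q)/Q$, so the $s$-cost per block is $\ll Q^{2-s}\psi(Q)^{1+s}+Q^{1-s}\psi(Q)^{s}$, and summing over blocks gives (for monotonic $\psi$) a quantity comparable to $\sum_h h^{1-s}\psi(h)^{1+s}$ plus a harmless term --- not to $\sum_h h^{2-s}\psi(h)^{s}$ as stated; at $s=1$ one gets $\sum_h\psi(h)^{2}$, not $\sum_h h\,\psi(h)$. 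For the convergence half this discrepancy is innocuous, since the stated sums dominate the computed ones, but your claim that the computation converges ``exactly under the stated hypothesis'' is not accurate, and for the divergence half it is fatal: the ubiquity machinery yields full arclength only when $\sum_h\psi(h)^2=\infty$ and $\mathcal{H}^s(\cA_2(\psi,\Gamma))=\infty$ only when $\sum_h h^{1-s}\psi(h)^{1+s}=\infty$, which are strictly stronger than the printed divergence conditions. Indeed the printed divergence assertion cannot be proved at all: for $\psi(h)=h^{-1}$ one has $\sum_h h\,\psi(h)=\infty$, yet by extremality of nondegenerate planar curves (Kleinbock--Margulis \cite{Kleinbock-Margulis-98:MR1652916}) almost no point of $\Gamma$ lies in $\cA_2(\psi,\Gamma)$; likewise, with the printed sums (\ref{e12}) would force Hausdorff dimension $1$ for every $w<2$, contradicting (\ref{e11}), whereas the corrected sums reproduce (\ref{e11}) exactly, giving $s=(2-w)/(1+w)$. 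So the defect is not in your strategy but in asserting that it matches the statement as printed: run the argument with the critical sums $\sum_h\psi(h)^2$ and $\sum_h h^{1-s}\psi(h)^{1+s}$ (the form in which the cited results are actually proved) and flag the misprinted sums in the statement rather than claiming an exact match.
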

Note that (\ref{e11}) is a consequence of (\ref{e12}).

In higher dimensions, Dru\c{t}u \cite{Drutu-05:MR2195121} has studied the
distribution of rational points on non--degenerate rational quadrics
in $\R^n$ and obtained a result similar to (\ref{e12}) in the case
when $\psi(q)=o(q^{-2})$. However, simultaneous Diophantine 
approximation on manifolds as well as the
distribution of rational points near manifolds (in particular
algebraic varieties) is little understood. In other words, the
higher-dimensional version of the `near-misses' question of Mazur
mentioned above has never been systematically considered.


 \def\cprime{$'$}
\providecommand{\bysame}{\leavevmode ---\ }
\providecommand{\og}{``}
\providecommand{\fg}{''}
\providecommand{\smfandname}{\&}
\providecommand{\smfedsname}{\'eds.}
\providecommand{\smfedname}{\'ed.}
\providecommand{\smfmastersthesisname}{M\'emoire}
\providecommand{\smfphdthesisname}{Th\`ese}

 \end{document}